\definecolor{lightgrey}{rgb}{0.7, 0.7, 0.7}
\DeclareRobustCommand{\SkipTocEntry}[5]{}
\newcommand\pig[1]{\scalerel*[5pt]{\big#1}{%
  \ensurestackMath{\addstackgap[1.5pt]{\big#1}}}}
\numberwithin{equation}{section}
\theoremstyle{plain}
\newtheorem{theorem}{Theorem}[section]
\newtheorem*{theorem*}{Theorem}
\newtheorem{prop}[theorem]{Proposition}
\newtheorem{lemma}[theorem]{Lemma}
\newtheorem{lem}[theorem]{Lemma}
\newtheorem{cor}[theorem]{Corollary}
\newtheorem*{conj*}{Conjecture}
\theoremstyle{definition}
\newtheorem{definition}[theorem]{Definition}
\newtheorem{dfn}[theorem]{Definition}
\newtheorem{rem}[theorem]{Remark}
\newcommand{\ehhe}{{\scriptscriptstyle{\mathbb{E}}}}
\newcommand{\effe}{{\scriptscriptstyle{\mathbb{F}}}}
\newcommand{\ggii}{{\scriptscriptstyle{\mathbb{G}}}}
\newcommand{\akka}{{\scriptscriptstyle{\mathbb{H}}}}
\newcommand{\akkajappa}{{\scriptscriptstyle{H}}}
\newcommand{\emme}{{\scriptscriptstyle{M}}}
\newcommand{\uhhu}{{\scriptscriptstyle{U}}}
\newcommand{\ikks}{{\scriptscriptstyle{X}}}
\newcommand{\zett}{{\scriptscriptstyle{Z}}}
 \newcommand{\E}{{\mathbb{E}}}
 \newcommand{\F}{{\mathbb{F}}}
\newcommand{\G}{{\mathbb{G}}}
 \newcommand{\kkk}{{\mathbb{k}}}
\newcommand{\frkg}{{\mathfrak g}}
\newcommand{\gd}{\delta} 
\newcommand{\gD}{\Delta} 
\newcommand{\gve}{\varepsilon} 
\newcommand{\gvf}{\varphi}  
\newcommand{\gl}{\lambda}
\newcommand{\gr}{\gvr} 
\newcommand{\gvr}{\varrho} 
\newcommand{\gs}{\sigma} 
\newcommand{\gS}{\Sigma}
\newcommand{\cD}{{\mathcal D}}
\newcommand{\cH}{{\mathcal H}}
\newcommand{\cO}{{\mathcal O}}
\newcommand{\Hom}{\operatorname{Hom}}
\newcommand{\ad}{{\operatorname{ad}}}
\newcommand{\coad}{{\operatorname{{coad}}}}
\newcommand{\Tor}{{\rm Tor}}
\newcommand{\Ext}{{\rm Ext}}
\newcommand{\ext}{\mathscr{E}\hspace*{-1pt}xt}
\newcommand{\Cotor}{\operatorname{Cotor}}
\newcommand{\id}{{\rm id}}
\newcommand{\g}{{\frkg}}                                                        %---Liealgebra----------------
\newcommand{\pr}{{\rm pr} \,}
\newcommand{\due}[3]{{}_{{#2 }} {#1}_{{ #3}}\,}    % Zweifachindex
\newcommand{\pl}{\partial}
\newcommand{{\Hl}}{{H^{\ell}}} 
\newcommand{{\mHop}}{{m_{H^{\rm op}}}} 
\newcommand{{\Hop}}{{H^{\rm op}}} 
\newcommand{{\mUop}}{{m_{U^{\rm op}}}} 
\newcommand{{\mUopp}}{{m_{\scriptscriptstyle{U^{\rm op}}}}} 
\newcommand{{\Uop}}{{U^{\rm op}}}
\newcommand{{\mVop}}{{m_{V^{\rm op}}}} 
\newcommand{{\Vop}}{{V^{\rm op}}}  
\newcommand{{\Ae}}{{A^{\rm e}}}
\newcommand{{\Be}}{{B^{\rm e}}}
\newcommand{{\Ue}}{{U^{\rm e}}}
\newcommand{{\He}}{{H^{\rm e}}}
\newcommand{{\Aop}}{{A^{\rm op}}}
\newcommand{{\Aope}}{({A^{\rm op}})^{\rm e}}
\newcommand{{\Aopl}}{{A^{\rm op}_\pl}}
\newcommand{{\Bop}}{{B^{\rm op}}}
\newcommand{{\Bopp}}{{\scriptscriptstyle{{B^{\rm op}}}}}
\newcommand{{\Bope}}{({B^{\rm op}})^{\rm e}}
\newcommand{{\Bpl}}{{B_\pl}}
\newcommand{{\op}}{{{\rm op}}}
\newcommand{{\coop}}{{{\rm coop}}}
\newcommand{{\sop}}{{*^{\rm op}}}
\newcommand{{\co}}{{{\rm co}}}
\newcommand{{\diag}}{{{\rm diag}}}
\newcommand{\amoda}{A^{\rm e}\mbox{-}\mathbf{Mod}}                  %
\newcommand{\umod}{U\mbox{-}\mathbf{Mod}}                     %  Modul-Kategorien
\newcommand{\yd}{{}^\uhhu_\uhhu\mathbf{YD}}                     %
\newcommand{\hyd}{{}^\akkajappa_\akkajappa\mathbf{YD}}                     %  
\newcommand{\ydr}{{}_\uhhu\!\mathbf{YD}{}^\uhhu} 
\newcommand{\hydr}{{}_\akkajappa\!\mathbf{YD}{}^\akkajappa}
\newcommand{\comodu}{\mathbf{Comod}\mbox{-}U}         
\newcommand{\ucomod}{U\mbox{-}\mathbf{Comod}}
\newcommand{\lact}{\smalltriangleright}                  
\newcommand{\ract}{\smalltriangleleft}
\newcommand{\blact}{\blacktriangleright}  
\newcommand{\bract}{\blacktriangleleft}
\newcommand{{\gog}}{{G \rightrightarrows G_0}}
\newcommand{{\rra}}{\rightrightarrows}
\newcommand{{\lra}}{\ \longrightarrow \ }
\newcommand{{\lla}}{\ \longleftarrow \ }
\newcommand{{\lma}}{\ \longmapsto \ }
\newcommand{{\bull}}{{\scriptscriptstyle{\bullet}}}
\newcommand{{\qqquad}}{{\quad\quad\quad}}
\newsavebox{\foobox}
\begin{document}

\title{Brackets and products from centres in extension categories} 

\author{Domenico Fiorenza}
\author{Niels Kowalzig}

\begin{abstract}
Building on Retakh's approach to $\Ext$ groups through categories of extensions, Schwede reobtained the well-known Gerstenhaber algebra structure on $\Ext$ groups over bimodules of associative algebras both from splicing extensions (leading to the cup product) and from a suitable loop in the categories of extensions (leading to the Lie bracket). We show how Schwede's construction admits a vast generalisation to general monoidal categories with coefficients of the $\Ext$ groups taken in (weak) left and right monoidal (or Drinfel'd) centres. In case of the category of left modules over bialgebroids and coefficients given by commuting pairs of braided (co)commutative (co)monoids in these categorical centres, we provide an explicit description of the 
algebraic structure obtained this way, and a complete proof that this leads to a Gerstenhaber algebra is then obtained from an operadic approach. This, in particular, considerably generalises the classical construction given by Gerstenhaber himself.
Conjecturally, the algebraic structure
we describe should produce a Gerstenhaber algebra for an arbitrary monoidal category enriched over abelian groups, but even the bilinearity of the cup product and of the Lie-type bracket defined by the abstract construction in terms of extension categories remain elusive in this general setting.
  \end{abstract}

\address{Dipartimento di Matematica, Universit\`a degli Studi di Roma La
Sapienza, P.le Aldo Moro 5, 00185 Roma, Italy}
\email{fiorenza@mat.uniroma1.it}

\address{Dipartimento di Matematica, Universit\`a di Roma Tor Vergata, Via della Ricerca Scientifica 1, 00133 Roma, Italy}
\email{kowalzig@mat.uniroma2.it}

\keywords{Loops of extensions, monoidal categories, centres, operads, bialgebroids, Gerstenhaber algebras}

\subjclass[2020]{18G15, 18M15, 18M70, 16T05, 16T15, 16E40
%{18D50, 16E40, 19D55, 16E45, 16T05.}
}

\vspace*{-.6cm}

\maketitle

\vspace*{-.6cm}

\tableofcontents

\section*{Introduction}

An apparently quite wide-spread theorem roughly states that for a monoidal category $({\mathscr{C}}, \otimes, \mathbb{1})$ fulfilling some mild conditions, the groups $\Ext_{\mathscr{C}}(\mathbb{1},  \mathbb{1})$ define a Gerstenhaber algebra: that is, a graded object equipped with a graded Lie bracket and a graded commutative product subject to a Leibniz rule. However, a detailed proof has appeared only very recently in  \cite{LowVdB:TBISOTDEAOTUIAMC} in a vast generality, using the language of properads.
This builds on
\cite{Sho:HATANFMC, Sho:DGCADC}, see also \cite{Her:MCATGBIHC} for an advanced attempt and
\cite{Volkov-Whiterspoon} for a treatment in terms of coderivations.

The {\em mild} conditions just mentioned refer to the problem that for an arbitrary monoidal category a priori the monoidal structure is not exact, and one hence assumes the existence of a full additive subcategory which has the desired properties and is implicitly dealt with instead of the original monoidal category.
The precise conditions needed, which we prefer not to discuss here in detail, are spelled out in \cite[\S2.1]{Sho:HATANFMC} or, somewhat differently, in \cite[Def.~5.1]{Sho:DGCADC},
or \cite[Lem.~2.1]{Schw:AESIOTLBIHC}
for a discussion adapted to the Hochschild context of associative algebras.

Less known and so far unproven is the following enhancement of the above statement, which we formulate as a conjecture here:

\begin{conj*}
%\label{conj1}
Let ${\mathscr{C}}$ be a monoidal category (fulfilling a few mild conditions),
let $Z$ be a braided commutative monoid in the left weak monoidal centre of ${\mathscr{C}}$, %${\mathscr{Z}}^\ell({\mathscr{C}})$,
and let $X$ be a braided cocommutative comonoid in the right weak monoidal centre of ${\mathscr{C}}$. %${\mathscr{Z}}^r({\mathscr{C}})$.
If $(X,Z)$ is a commuting pair, then $\Ext_{\mathscr{C}}(X,Z)$
is a Gerstenhaber~algebra.
\end{conj*}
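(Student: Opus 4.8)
The plan is to transport Schwede's two constructions to the monoidal setting: the cup product from (a monoidal analogue of) splicing extensions, and the Lie bracket from a canonical loop in a category of extensions, with the (co)monoid structures on $Z$ and $X$ and the half-braidings of the left and right weak centres taking the place of the bimodule-specific ingredients used in the Hochschild case. Throughout one works inside the full additive subcategory on which $\otimes$ is suitably exact, as prescribed by the ``mild conditions'' recalled above, so that the tensor product of two extensions is again an extension.

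\textbf{Cup product.} For $\xi\in\Ext^n_{\mathscr{C}}(X,Z)$ and $\eta\in\Ext^m_{\mathscr{C}}(X,Z)$, represented by exact sequences $0\to Z\to E_\bullet\to X\to 0$ and $0\to Z\to F_\bullet\to X\to 0$, I would form the tensor-product extension $\xi\otimes\eta$ of $X\otimes X$ by $Z\otimes Z$, of length $n+m$, pull it back along the comultiplication $\Delta\colon X\to X\otimes X$, push it out along the multiplication $\mu\colon Z\otimes Z\to Z$, and define $\xi\smile\eta\in\Ext^{n+m}_{\mathscr{C}}(X,Z)$ to be the class obtained. Associativity then follows from coassociativity of $\Delta$, associativity of $\mu$, functoriality of $\otimes$ and coherence of the associator. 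For graded commutativity one compares $\xi\smile\eta$ with $\eta\smile\xi$ via the transposition $\xi\otimes\eta\cong\eta\otimes\xi$: this transposition contributes the Koszul sign $(-1)^{nm}$ on the level of $\Ext$ classes, by the standard argument underlying graded commutativity of Yoneda products, and to bring it back to an extension of $X$ by $Z$ one threads it through $\Delta$ and $\mu$, using the half-braiding $c_{-,X}$ of $X$ in the right weak centre, the half-braiding $c_{Z,-}$ of $Z$ in the left weak centre, braided cocommutativity $c_{X,X}\circ\Delta=\Delta$, braided commutativity $\mu\circ c_{Z,Z}=\mu$, and the commuting-pair hypothesis to reconcile the two half-braidings where they interact.

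\textbf{Lie bracket.} By Retakh's description of $\Ext$ through extension categories, the category $\mathcal{E}xt^{k}_{\mathscr{C}}(X,Z)$ of length-$k$ extensions has $\pi_0$ naturally isomorphic to $\Ext^k_{\mathscr{C}}(X,Z)$, while its fundamental group at any basepoint is naturally isomorphic to $\Ext^{k-1}_{\mathscr{C}}(X,Z)$. I would attach to $(\xi,\eta)$ a canonical loop $\Lambda(\xi,\eta)$ based at the $(n+m)$-extension $\xi\smile\eta$, realising à la Schwede the failure of the two tensor factors of $\xi\otimes\eta$ to commute past one another coherently inside the spliced extension --- with the transpositions now implemented by the half-braidings $c_{Z,-}$ and $c_{-,X}$, and the path closed off by $\Delta$ at the $X$-end and by $\mu$ at the $Z$-end --- and define $[\xi,\eta]\in\Ext^{n+m-1}_{\mathscr{C}}(X,Z)$ to be its class in $\pi_1\bigl(\mathcal{E}xt^{\,n+m}_{\mathscr{C}}(X,Z)\bigr)$. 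Independence of the chosen representatives, and graded antisymmetry, would come from reversing the loop orientation together with the braiding bookkeeping of the previous step; the graded Jacobi identity and the Leibniz rule coupling $[-,-]$ with $\smile$ follow, as in Schwede's proof, by filling suitable two-dimensional diagrams in the relevant extension categories, with every permutation of tensor factors realised by the half-braidings of $Z$ and $X$ and every coincidence of such permutations supplied by (co)commutativity and by the commuting-pair condition.

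\textbf{Main obstacle.} The steps above are \emph{soft}: they invoke only coherence of the monoidal structure and of the centre braidings, so they do go through at the stated level of generality. What is not automatic is $\Z$-bilinearity of $\smile$ and of $[-,-]$. A priori these are operations on \emph{objects} of the extension categories, and to obtain a Gerstenhaber algebra one must show they descend to biadditive operations with respect to the Baer sum; in the Hochschild setting Schwede verifies this by explicit computation on the bar resolution, and no comparable resolution is available in a general monoidal category --- this is exactly the point the introduction flags as elusive. The route I would take is therefore twofold: establish all the structural identities abstractly, as above, and then, in the concrete case of left modules over a bialgebroid with coefficients in a commuting pair of braided (co)commutative (co)monoids in the centres, write down an explicit cochain model for $\Ext_{\mathscr{C}}(X,Z)$ and deduce biadditivity --- hence the full Gerstenhaber structure --- from an operadic (cyclic-operad) argument. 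Proving biadditivity for an arbitrary such monoidal category, and thereby the conjecture in full generality, I expect to remain the genuinely hard and, for now, open point.
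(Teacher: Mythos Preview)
Your outline matches the paper's strategy almost exactly: construct the cup product as $\mu\#(\E\otimes Z)\#(X\otimes\F)\#\Delta_X$, extract the bracket from a loop in $\ext^{p+q}_{\mathscr{C}}(X,Z)$ (the paper's ``mystic hexagon''), then abandon the topological picture in favour of an explicit operadic computation in the bialgebroid case. You have also correctly identified the main obstruction and the fact that the full conjecture remains open --- indeed the paper states this as a conjecture and does not prove it in general.

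One point deserves correction. You write that the structural identities (graded Jacobi, Leibniz) are ``soft'' and ``follow, as in Schwede's proof, by filling suitable two-dimensional diagrams in the relevant extension categories'', with only bilinearity being hard. This overstates what the topological approach delivers. The paper is explicit that even in the bimodule case the purely topological route ``leaves one one step before the conclusion: a proof that these operations define a Gerstenhaber algebra or the mere fact these operations are bilinear remain elusive''. Schwede himself does not verify Jacobi and Leibniz by filling $2$-cells in extension categories; he passes to Hochschild cocycle representatives and checks the axioms algebraically, recovering Gerstenhaber's original formulae. The paper does the same here: it extracts explicit cocycle formulae for $\cup$ and $\{-,-\}$ on $\Hom_U(\mathrm{Bar}_\bullet(U,X),Z)$, and then proves all of the Gerstenhaber axioms at once by exhibiting this as an operad with multiplication (Theorem~\ref{extobroid}). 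So the division of labour is not ``structural identities abstractly, bilinearity concretely'' but rather ``definitions and motivation abstractly, everything else concretely via the operad''.
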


Here, by left resp.\ weak centre of $({\mathscr{C}}, \otimes, \mathbb{1})$ we mean those (braided monoidal) categories $\mathscr{Z}^\ell({\mathscr{C}})$ resp.\
${\mathscr{Z}}^r({\mathscr{C}})$
whose objects are objects in ${\mathscr{C}}$ together with not necessarily invertible natural maps $Z \otimes M \to M \otimes Z$ resp.\ $M \otimes X \to X \otimes M$ 
that for arbitrary $M \in {\mathscr{C}}$ fulfil certain (hexagon) compatibilities in a cus\-to\-ma\-ry sense, and {\em commuting pair} refers to the case where these two maps coincide on $Z \otimes X$, see \S\ref{centres} for a more precise definition.

That these central objects need further structure such as a multiplication %$\mu$
resp.\ comultiplication
%$\gD_\ikks$
in order to define a Gerstenhaber algebra on their Ext groups becomes clear, {\em e.g.}, when observing that the usual Yoneda product is not an internal operation on $\Ext_{\mathscr{C}}(X,Z)$.

\par

In \S\ref{opihashi}, we motivate the above conjecture by mimicking Schwede's description \cite{Schw:AESIOTLBIHC}
of the Gerstenhaber algebra structure on the Hochschild cohomology of an associative algebra through Retakh's enhancement of $\Ext$ groups to $\ext$ spaces.
More precisely, if we denote by $\#$ the splicing of two extensions in $\ext$ (that induces the Yoneda product on $\Ext$ groups), and if we assume $X$ to be a comonoid in ${\mathscr{Z}}^r({\mathscr{C}})$ with comultiplication $\gD_\ikks$ and $Z$ a monoid in ${\mathscr{Z}}^\ell({\mathscr{C}})$ with multiplication $\mu$, we can consider both multiplication resp.\ comultiplication as extensions of length zero, and so as objects in $\ext_{\mathscr{C}}^0(X, X \otimes X)$ and $\ext_{\mathscr{C}}^0(Z \otimes Z, Z)$, respectively. Given two extensions $\E \in \ext_{\mathscr{C}}^p(X,Z)$ and $\F \in \ext_{\mathscr{C}}^q(X,Z)$ of length $p$ resp.\ $q$, one can then consider the splicing
$$
\E \cup \F \coloneq   \mu \# (\E \otimes Z)\# (X\otimes \F) \# \gD_\ikks
$$
and its connected component in $\ext_{\mathscr{C}}^{p+q}(X, Z)$, that for convenience will be denoted by the same symbol:
\[
\E\cup\F\in \pi_0\ext_{\mathscr{C}}^{p+q}(X, Z)\simeq \Ext_{\mathscr{C}}^{p+q}(X,Y).
\]
On top, if $\mathbf{Moloch}(\E, \F)$ denotes the truncated tensor product complex of two extensions $\E$ and $\F$,
one can devise the
following loop in $\ext_{\mathscr{C}}^{p+q}(X,Z)$:

\begin{equation*}
\xymatrix@C=0.5em{
  &&
    \mu \# \mathbf{Moloch}(\E,\F )\# \gD_\ikks 
     \ar[lld]%_{\tilde{\lambda}_{\ehhe,\effe}}
 \ar[rrd]%^{\, \tilde{\gvr}_{\ehhe,\effe}}
  &&
  \\
  \hbox to 5em{\hss$
    \E \cup \F=   \mu \# (\E \otimes Z)\# (X\otimes \F) \# \gD_\ikks 
\hss$}
  \ar@{<-}[dd]%_{\widetilde{(\sigma|\tau)}_{\ehhe,\effe}}
  &&
    &&
      \hbox to 5em{\hss$
 \mu \#(Z\otimes\F) \#(\E \otimes X) \# \gD_\ikks 
$\hss}
      \\ \\
            \hbox to 5em{\hss$
               \mu \# (Z\otimes\E) \#(\F \otimes X)  \# \gD_\ikks 
$\hss}
              &&
   &&
         \hbox to 5em{\hss$
           \F\cup \E = \mu \#(\F\otimes Z) \#(X\otimes\E) \# \gD_\ikks 
$\hss}
           \ar@{<-}[uu]%_{\widetilde{(\sigma|\tau)}_{\effe,\ehhe}}
  \\ 
  &&
   \mu \# \mathbf{Moloch}(\F,\E )  \# \gD_\ikks 
\ar[llu]%^{\tilde{\gvr}_{\effe,\ehhe}}
  \ar[rru]%_{\, \tilde{\lambda}_{\effe,\ehhe}}
}
\end{equation*}   
where the vertical arrows use in an essential way that $X$ and $Z$ form a commuting pair in the right and left weak centres of ${\mathscr{C}}$, respectively.

Choosing $\E \cup \F$ as a base point in $\ext_{\mathscr{C}}^{p+q}(X, Z)$, this gives an element 
\[
\{\E,\F\} \in
\pi_1(\ext_{\mathscr{C}}^{p+q}(X, Z),\E \cup \F)\simeq \Ext_{\mathscr{C}}^{p+q-1}(X,Z).
\]
This way, one has defined a degree zero cup product
\[
\cup\colon \Ext_{\mathscr{C}}^{p}(X,Z)\times \Ext_{\mathscr{C}}^{q}(X,Z) \to \Ext_{\mathscr{C}}^{p+q}(X,Z)
\]
and a degree $-1$ bracket
\[
\{-,-\}\colon \Ext_{\mathscr{C}}^{p}(X,Z)\times \Ext_{\mathscr{C}}^{q}(X,Z) \to \Ext_{\mathscr{C}}^{p+q-1}(X,Z).
\]
Unfortunately, even in the case of the category of bimodules over associative algebras, this topological approach leaves one one step before the conclusion: a proof that these operations define a Gerstenhaber algebra or the mere fact these operations are bilinear remain elusive and no simpler argument than those used in \cite{Sho:HATANFMC} seems to be available to establish these facts in a purely topological fashion.

In case of the category of bimodules over associative algebras, however, there is a convenient mixed topological/algebraic approach developed in the already mentioned work \cite{Schw:AESIOTLBIHC} by Schwede, which leads to a transparent proof of the fact that the cup product and the bracket obtained from a similar loop as above indeed do endow the groups $\Ext_\Ae(A,A)$ of an associative algebra $A$ with a  Gerstenhaber algebra structure: it is shown in {\em op.~cit.}\ how to obtain an explicit description in terms of  Hochschild cocycle representatives of the cup and bracket operations (dictated by the geometry of $\ext$ spaces) for which the Gerstenhaber algebra axioms can be directly checked, and which coincide with the cup product and Lie bracket of the classical Gerstenhaber algebra structure on $\Ext_\Ae(A,A)$ as originally introduced in \cite{Ger:TCSOAAR}.

Following the mentioned approach, we will show how to extract a cocycle description for the cup and bracket operations derived from the geometry of $\ext$ spaces of a commuting pair $(X,Z)$.
As even an accurate construction will not allow us here to obtain a proof for a Gerstenhaber algebra structure in full generality, we will be rather informal in this derivation, omitting all the needed technical (mainly categorical) assumptions in order to make the construction completely rigorous. Yet, we will be accurate enough to be able to provide in \S\ref{sec:explicit} an explicit cocycle description for the operations in the particular case that $\mathscr{C}$ is the monoidal category of left $U$-modules for a left bialgebroid $(U,A)$. In terms of this cocycle description we shall explicitly exhibit the Gerstenhaber algebra structure, thus turning the above Conjecture into Theorem \ref{extobroid}:

\begin{theorem*}[Theorem \ref{extobroid}]
  Let $(U,A)$ be a left bialgebroid, $Z$ a braided commutative monoid in the left weak centre of the monoidal category of left $U$-modules, 
and $X$ a braided cocommutative comonoid in its right weak  centre such that $(X,Z)$ constitutes a commuting pair.
  Then the cochain complex $\Hom_U(\mathrm{Bar}_\bullet(U,X),Z)$, computing $\Ext^\bull_U(X,Z)$ 
  for $A$-projective $U$,
  defines an operad with multiplication, which induces a Gerstenhaber algebra structure on the cohomology groups.
  \end{theorem*}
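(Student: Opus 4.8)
The argument runs through the operadic machinery. Recall that any non-symmetric operad $\mathcal{O}$ in $\kk$-modules carrying a \emph{multiplication} --- equivalently a pair $(e,m)$ with $e\in\mathcal{O}(0)$, $m\in\mathcal{O}(2)$, $m\circ_1 m = m\circ_2 m$ and $m\circ_1 e = m\circ_2 e = \mathrm{id}_{\mathcal{O}(1)}$ --- determines, via $\partial := [m,-]$, the cup product $f\cup g := \gamma(m;f,g)$ and the Gerstenhaber bracket built from the brace operations, a Gerstenhaber algebra on $H^\bullet(\mathcal{O},\partial)$; this is the Gerstenhaber--Voronov theorem (see also McClure--Smith). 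The whole content is therefore to exhibit $C^\bullet := \Hom_U(\mathrm{Bar}_\bullet(U,X),Z)$ as such an operad, compatibly with its differential, and to check that the operations so obtained are those read off from the geometry of $\ext$ spaces in \S\ref{opihashi}.

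First I would fix the cochain model. As $\mathrm{Bar}_n(U,X) = U\otimes_A U^{\otimes_A n}\otimes_A X$ is a complex of free left $U$-modules, and a projective resolution of $X$ exactly because $U$ is $A$-projective, the adjunction $\Hom_U(U\otimes_A-,Z)\cong\Hom_A(-,Z)$ gives $\mathcal{O}(n) := C^n \cong \Hom_A(U^{\otimes_A n}\otimes_A X,Z)$, all $A$-actions being prescribed by the source and target maps of $(U,A)$. Here $\mathcal{O}(0)\cong\Hom_A(X,Z)$ carries the unit $e := \eta_\zett\circ\varepsilon_\ikks$ --- the counit of the comonoid $X$ followed by the unit of the monoid $Z$, which is $U$-linear and represents the unit of the cup product in $\Ext^0_U(X,Z)$ --- while $\mathcal{O}(2)\cong\Hom_A(U\otimes_A U\otimes_A X,Z)$ carries the multiplication $m$, assembled from the two $U$-actions together with $\gD_\ikks$, $\varepsilon_\ikks$, $\mu$ and $\eta_\zett$; on the cochain level $m$ encodes the comonoid structure of $X$ and the monoid structure of $Z$ underlying the cup product of \S\ref{opihashi}.

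The substance is the construction of the partial compositions $\circ_i\colon\mathcal{O}(p)\otimes\mathcal{O}(q)\to\mathcal{O}(p+q-1)$, carried out explicitly in \S\ref{sec:explicit}: for $f\in\mathcal{O}(p)$, $g\in\mathcal{O}(q)$ the cochain $f\circ_i g$ is built from $f$, $g$ and the maps $\gD$, $\gD_\ikks$, $\varepsilon_\ikks$, $\mu$, $\eta_\zett$, the left and right central braidings and the $U$-actions --- the single $X$-argument being split by (iterated) $\gD_\ikks$, the $U$-factors shuffled into position by the central structures, and the two resulting $Z$-values recombined by $\mu$ --- the placement of these maps being dictated by the loops of \S\ref{opihashi}, and it is at the point where a factor must travel between the two copies of $X$ that the commuting-pair hypothesis enters. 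One then has to verify: (i) well-definedness, i.e.\ that every formula descends through all the relative tensor products over $A$, which is exactly what forces the hexagon constraints to be used where they are; and (ii) the operad axioms for the $\circ_i$ together with $m\circ_1 m = m\circ_2 m$, $m\circ_1 e = m\circ_2 e = \mathrm{id}$, and the identification of $\partial = [m,-]$ with the transpose of the bar differential on $C^\bullet$ --- so that $H^\bullet(\mathcal{O},\partial)\cong\Ext^\bull_U(X,Z)$. In (ii) the coassociativity and braided cocommutativity of $X$, the associativity and braided commutativity of $Z$, the coassociativity of $\gD$ and, again, the commuting-pair condition are all consumed.

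I expect step (ii) --- pushing the bialgebroid structure maps, the central braidings and the (co)monoid maps of $X$ and $Z$ through the operadic associativity relations --- to be the main obstacle: conceptually routine but combinatorially heavy, and precisely the place where the (co)commutativity and commuting-pair hypotheses are indispensable, since without them the $\circ_i$ are not even well defined. Granting it, $(\mathcal{O},e,m)$ is an operad with multiplication computing $\Ext^\bull_U(X,Z)$, so the Gerstenhaber--Voronov theorem supplies the Gerstenhaber algebra structure; unwinding the definitions identifies the resulting product and bracket with the cup product $\cup$ and the Lie-type bracket $\{-,-\}$ of \S\ref{opihashi}, which turns the Conjecture of the Introduction into a theorem in this case and, for $U=\Ae$ and $X=Z=A$, recovers Gerstenhaber's original structure on $\Ext_\Ae(A,A)$ from \cite{Ger:TCSOAAR}.
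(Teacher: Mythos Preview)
Your proposal is correct and follows essentially the same approach as the paper: set up $\mathcal{O}(n)=\Hom_U(\mathrm{Bar}_n(U,X),Z)$ as a non-$\Sigma$ operad with the explicit partial compositions of \S\ref{sec:operadic}, verify the operad-with-multiplication axioms, and invoke the Gerstenhaber--Schack result (your Gerstenhaber--Voronov). The paper's proof differs only in that it actually carries out the explicit verification of the parallel associativity axiom \eqref{danton} that you correctly flag as ``conceptually routine but combinatorially heavy'', displaying the long Sweedler-notation computation and pinpointing where each hypothesis (the YD conditions, braided (co)commutativity, and the commuting-pair identity \eqref{centralascentralcan2}) is consumed.
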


Taking $U=\Ae$ and $X=Y=A$, this recovers the usual Gerstenhaber al\-ge\-bra structure on the standard Hochschild cohomology of an associative algebra $A$ with coefficients in itself as a very particular case.

\vskip .4 cm

\begin{center}
* \ \, * \ \, *
\end{center}

\vskip .05 cm

  Finding higher structures on chain or cochain complexes, or more specifically on cohomology groups such as $\Ext$ groups, has risen some interest in the last two decades.
  For example, in \cite{FarSol:GSOTCOHA} 
  the case for a Hopf algebra over a field $\kkk$ was treated by identifying $\Ext^\bull_H(\kkk,\kkk)$ as a subalgebra of the Hochschild cohomology $\Ext^\bull_{\He}(H,H)$, where $H$ is merely seen as a $\kkk$-algebra. This has been elaborated on in \cite{Men:CMCMIALAM} by establishing a duality relation to the Gerstenhaber structure on $\Cotor$ groups using operadic techniques.
Both approaches have been generalised in \cite{Kow:BVASOCAPB} to bialgebroids using the centre construction, which, in particular, allows to establish that the well-known isomorphism
\begin{equation}
  \label{falegnameria}
  \tag{$\dagger$}
  \Ext^\bull_H(\kkk, \ad(H))
  = \Ext^\bull_{\He}(H,H)
\end{equation}
of $\kkk$-modules %(as follows from \cite[Thm.~VIII.3.1]{CarEil:HA})
  is actually one of Gerstenhaber algebras. Here, $\ad(H)$ refers to $H$ itself seen as a left $H$-module with respect to the left adjoint action.

  Independently from our work at hand,
in the context of finite tensor categories,
  a Gerstenhaber algebra structure on $\Ext^\bull_{\mathscr{C}}(X,Z)$ defined through a coproduct on $X$, a product on $Z$, and the respective lifts of these structure to the monoidal center, 
  appeared in Section 3 of the first (arXiv) version of
  \cite{SchweiWoi:TDGVFATDC}, compare \cite{SchweiWoi:HIOBCAATDCFFTC} as well for a separate treatment by the same authors.
This provides
a vast class of examples with applications relevant in quantum topology. In particular, due to the finiteness assumption on the tensor category, one has distinguished choices for $X$ and $Z$, in addition to the monoidal unit, given by the canonical coend and the canonical end, respectively.
Among the applications, see \cite[Ex.~5.10]{SchweiWoi:HIOBCAATDCFFTC}, one recovers \eqref{falegnameria} for a finite dimensional quantum group or Hopf algebra.

However, as the result in \cite{Kow:BVASOCAPB} shows, a finiteness assumption is not needed in order for an isomorphism of Gerstenhaber algebras as in \eqref{falegnameria}
to hold, and hence one obtains a large class of examples 
  from infinite dimensional quantum groups.
  Other interesting infinite dimensional examples include, in the same spirit,
$\Ext^\bull_H(\coad(H), \kkk)$ or, 
from a somewhat different viewpoint, $\Ext^\bull_{\cD(H,G)}(\kkk, \cH(H,G))$ for a generalised Heisenberg double $\cH(H,G)$ over a generalised Drinfel'd double $\cD(H,G)$ for two arbitrary Hopf algebras $H$ and $G$, as detailed in \S\ref{examples}.
Crossing braided commutative monoids in the monoidal centre of the category of $H$-modules with the Hopf algebra $H$ in question leads to another class of examples related to left Hopf algebroids.

\vskip .4 cm

\begin{center}
* \ \, * \ \, *
\end{center}

\vskip .05 cm

Throughout the whole text $\kkk$ will be a commutative ring, of characteristic zero if need be.
Unadorned tensor products are {\em not} to be understood over $\kkk$ but rather denote the product in a monoidal category.

\section{Operations in extension categories}
\label{opihashi}

We begin by recalling, in an informal way,  the basic definitions of the $\ext$ spaces we are going to use in the construction of the cup product and the bracket operation on $\Ext$ groups. Our aim here is not to provide a rigorous construction but rather to fix notation and orient the reader. Details on the formal construction and proofs of the statements can be found, {\em e.g.},  in the original works by Retakh on $\ext$ spaces \cite{Ret:HPOCOE} or by Neeman and Retakh \cite{NeeRet:ECATH}.

We will be working in a fixed monoidal category ${\mathscr{C}}$ and will denote the objects of ${\mathscr{C}}$ by the symbols $X, Y, Z,\dots$. We will also assume ${\mathscr{C}}$ is close enough to an abelian category with enough projectives  so that expressions like ``zero object'', ``short exact sequence'' or ``projective resolution'' make sense in ${\mathscr{C}}$. In particular, in ${\mathscr{C}}$ we will have a notion of an extension of an object $X$ by an object $Y$, meaning by this a short exact sequence of the form
\[
0\to Y\to E\to X\to 0
\]
in ${\mathscr{C}}$. Iterating this construction, one gets the notion of a $p$-fold extension of an object $X$ by an object $Y$, which will be denoted by
\[
\E \colon\quad 0 \to  Y\to E_{p-1}\to\cdots\to E_0\to X \to 0,
\]
so that
\[
\E _i=
\begin{cases}
0&\text{ if } i\leq -2\\
X&\text{ if } i=-1\\
E_i&\text{ if } 0\leq i\leq p-1\\
Y&\text{ if } i=p\\
0&\text{ if } i\geq p+1.
\end{cases}
\]
The maps $\E _i\to \E _{i-1}$ will be denoted by $d_{\E ,i}$ or simply by $d_i$, $d_{\E }$, or still simpler by $d$ when no or not much confusion is likely to arise. 
Occasionally, to make it manifest that $X$ is in place or degree $-1$ and $Y$ in place or degree $p$, we will formally use the degree $j$ shift symbol $[j]$, familiar from the theory of triangulated categories, and will write a $p$-fold extension of $X$ by $Y$ as
\[
\E \colon\quad 0 \to  Y[p]\to E_{p-1}\to\cdots\to E_0\to X[-1] \to 0.
\]
In other words, we will be implicitly assuming that objects without an index are placed in degree zero. More generally, an object denoted, for example, $E_i[j]$, is to be thought as placed in degree $i+j$.

If $\E $ and $\F$ are two $p$-fold extensions of $X$ by $Y$, a {\em morphism}  $f \colon \E \to \F$ {\em of $p$-fold extensions} is defined as a commutative diagram
\begin{equation}
  \label{morfi}
\xymatrix@C=2.1em{
  0 \ar[r]& Y\ar@{=}[d]%^{\id_Y}
  \ar[r]& E_{p-1}\ar[d]^{f_{p-1}}\ar[r]& \cdots \ar[r]& E_1\ar[r]\ar[d]^{f_1}& E_0\ar[d]^{f_0}\ar[r]& X \ar[r]\ar@{=}[d]%^{\id_\ikks}
  & 0\\
0 \ar[r]& Y\ar[r]& F_{p-1}\ar[r]& \cdots \ar[r]& F_1\ar[r]&F_0\ar[r]& X \ar[r]& 0
}
\end{equation}
that is, where the leftmost and rightmost vertical arrows are the identity.
If $f, g\colon \E  \to \F$ are two morphisms of $p$-fold extensions, then a {\em chain homotopy} between $f$ and $g$ is a degree $+1$ morphism $s$ of graded objects 
from $\E $ to $\F$ such that $f-g=[d,s]$. Explicitly,  remembering that
both $f$ and $g$
are identities
in degree $-1$ and $p$
and that both $\E _i$ and $\F_i$ are zero outside the range $[-1,p]$, this amounts to the datum of 
a family of morphisms
$s_i\colon \E _i \to \F_{i+1}$, for $i = -1, 0, 1, \ldots, p$, such that
\begin{equation*}
  \begin{array}{rcll}
    d_0 \circ s_{-1} &=& 0, &
    \\
    d \circ s_{i} + s_{i-1} \circ d &=& f_i - g_i  & \mbox{for} \ 0 \leq i \leq p-1,
    \\
   s_{p-1} \circ d_{p-1}&=& 0. &
  \end{array}
\end{equation*}
Similarly, one defines homotopies between homotopies, {\em etc}. This way, one defines an $\infty$-category $\Ext^p_{\mathscr{C}}(X,Y)$ whose objects are  $p$-fold extensions of $X$ by $Y$, whose $1$-morphisms are morphisms of $p$-fold extensions, whose $2$-morphisms are homotopies between morphisms, and so on. The space $\ext^p_{{\mathscr{C}}}(X,Y)$ is defined as the topological realisation of the simplicial set defined by this $\infty$-category $\Ext^p_{\mathscr{C}}(X,Y)$, that is, the simplicial set having
the $\kkk$-morphisms in $\Ext^p_{\mathscr{C}}(X,Y)$ as $\kkk$-simplices.
  For $p=0$, the space $\Ext^0_{\mathscr{C}}(X,Y)$ is defined as the set $\Hom_{\mathscr{C}}(X,Y)$ endowed with discrete topology.
The two basic properties of the $\ext$ spaces that we shall need and are going to use are the following relations to $\Ext$ groups in ${\mathscr{C}}$, which run under the name {\em Retakh's isomorphism} \cite{Ret:HPOCOE, NeeRet:ECATH}:
\begin{align*}
  \pi_0\ext^p_{\mathscr{C}}(X,Y)&=\Ext^p_{\mathscr{C}}(X,Y),
  \\
\pi_1(\ext^p_{\mathscr{C}}(X,Y);\E )&=\Ext^{p-1}_{\mathscr{C}}(X,Y),
\end{align*}
where the first line can be taken as a definition of the $\Ext$ group $\Ext^p_{\mathscr{C}}(X,Y)$. When ${\mathscr{C}}$ is an abelian category with enough projectives, this is equivalent to the classical definition of the $\Ext$ groups in ${\mathscr{C}}$.

\subsection{The hash operation}
\label{hashhash}
The {\em hash operation}, also known as {\em splicing}, is a concatenation type operation 
\[
\#\colon \ext_{\mathscr{C}}^p(Y,Z)\times \ext_{\mathscr{C}}^q(X,Y) \to \ext_{\mathscr{C}}^{p+q}(X,Z)
\]
on $\ext$ spaces,
implementing the Yoneda
product 
\[
\circ \colon \Ext_{\mathscr{C}}^p(Y,Z)\otimes \Ext_{\mathscr{C}}^q(X,Y) \to \Ext_{\mathscr{C}}^{p+q}(X,Z)
\]
on $\Ext$ groups
by passing to path connected components. It is defined in slightly different ways depending on whether $p$ or $q$ are zero or not.
When both $p$ and $q$ are zero, $\ext_{\mathscr{C}}^0(Y,Z)$, $\ext_{\mathscr{C}}^0(X,Y)$, and $\ext_{\mathscr{C}}^{0}(X,Z)$ are just the hom sets $\Hom_{\mathscr{C}}(Y,Z)$, $\Hom_{\mathscr{C}}(X,Y)$ and $\Hom_{\mathscr{C}}(X,Z)$, respectively, and the hash operation in this case is the composition of homomorphisms.

When $p>0$ and $q=0$, we are considering a hash of the form $\E \#\F$,
where $\F \colon X\to Y$ is a morphism. It is defined as the top horizontal row of the commutative diagram
\begin{equation}
  \label{pzero}
\xymatrix@C=2.1em{
  0 \ar[r]& Z\ar@{=}[d]%^{\id}
  \ar[r]& E_{p-1}\ar@{=}[d]%^{\id}
  \ar[r]& \cdots \ar[r]& E_1\ar[r]\ar@{=}[d]%^{\id}
  & E_0\times_{Y}X\ar[d]\ar[r]& X \ar[r]\ar[d]^{\F }& 0\\
0 \ar[r]& Z\ar[r]& E_{p-1}\ar[r]& \cdots \ar[r]& E_1\ar[r]&E_0\ar[r]& Y \ar[r]& 0
}
\end{equation}
where the rightmost commutative square is a pullback and the map $E_1\to E_0\times_{Y}X$ is induced by the commutative diagram 
\[
\xymatrix@R=2em@C=2.1em{
E_1\ar[r]^{0}\ar[d]&  X \ar[d]^{\F}\\
E_0\ar[r]& Y 
}
\]
and by the universal property of the pullback.

In case $p=0$ and $q>0$, dually, 
with $\E\colon Y\to Z$ a morphism, $\E \#\F$
is defined as the bottom horizontal row of the commutative diagram
\begin{equation}
  \label{qzero}
\xymatrix@C=2.1em{
  0 \ar[r]& Y\ar[d]^{\E}\ar[r]& F_{q-1}\ar[d]\ar[r]& F_{q-2}\ar@{=}[d]%^{\id}
  \ar[r]& \cdots \ar[r]& F_0\ar@{=}[d]%^{\id}
  \ar[r]& X \ar[r]\ar@{=}[d]%^{\id}
  & 0\\
0 \ar[r]& Z\ar[r]\ar[r]& Z\sqcup_Y F_{q-1}\ar[r]& F_{q-2}\ar[r]& \cdots \ar[r]& F_0\ar[r]& X \ar[r]& 0
}
\end{equation}
where the leftmost commutative square is a pushout.

It remains to be said what happens
when both $p$ and $q$ are greater than zero. In this case, points in the spaces $\ext_{\mathscr{C}}^p(Y,Z)$ and $\ext_{\mathscr{C}}^q(X,Y)$ are iterated extensions
\[
\E \colon
\quad 0 \to  Z\to E_{p-1}\to\cdots\to E_0\to Y \to 0,
\]
and
\[
\F\colon\quad 0 \to Y\to F_{q-1}\to \cdots \to F_0\to X \to 0,
\]
respectively. Their hash $\E \#\F$
is the iterated extension given by the top horizontal row in  the concatenation
%\begin{small}
\[
\xymatrix@C=1.1em@R=.7em
             {
\E \#\F\colon\quad 0 \ar[r] & Z\ar[r]& E_{p-1}\ar[r] & \cdots \ar[r]& E_0\ar[rr]
  %\ar@/_.7pc/[rd]
  && F_{q-1}\ar[r] &\cdots\ar[r]& F_0\ar[r]& X \ar[r] & 0.
  \\
  &&&&&Y
%\ar@{<-}`[lu][ul]
\ar`[ru][ur]
%\ar@{<-}@(r,d)`[ul][ul]
\ar@{<-}`l[ul][ul]
%\ar@{<-}`[u]`[l][ul]
%\ar@/_.7pc/[ru]
             }
             \]
%\end{small}

             Notice that the hash composition is associative (up to natural isomorphisms)
                            and unital with units given by the compositions $X\xrightarrow{\epsilon_X}\mathbb{1}\xrightarrow{1_Z}Z$, where $\mathbb{1}$ is the unit object of $\mathscr{C}$ and $\epsilon_X$ and $1_Z$ are the counit of $X$ and the unit of $Z$, respectively.

\begin{rem}
               \label{rem:degree-shift}
In the above concatenation, the lower indices no more correspond to the position in the sequence of iterated extensions. This may be quite confusing, so occasionally we will formally use the shift operator $[1]$ of triangulated categories as a symbol to restore the correct correspondence. That is, the iterated extension 
\[
\E \colon\quad 0 \to  Y\to E_{p-1}\to\cdots\to E_0\to X \to 0
\]
will be rewritten as
\[
\E \colon\quad 0 \to  Y[p]\to E_{p-1}\to\cdots\to E_0\to X[-1] \to 0,
\]
to stress that $X$ is in position (or degree) $-1$ in the sequence, and $Y$ in position $p$.
With this notation, the iterated extension $\E \#\F$ will read
\[
0 \to Z[p+q]\to E_{p-1}[q]\to \cdots \to E_0[q]\to F_{q-1}\to\cdots\to F_0\to X[-1] \to 0.
\]
Yet, when no confusion can arise, we will still adopt the less cumbersome
\[
0 \to Z\to E_{p-1}\to \cdots \to E_0\to F_{q-1}\to\cdots\to F_0\to X \to 0.
\]
\end{rem}

\subsection{The tensor product}\label{sec:tensor}
By using the monoidal structure in $\mathscr{C}$ and the hash composition, one has (at least) four natural ways of defining a map
\[
\ext_{\mathscr{C}}^p(X,Z)\times \ext_{\mathscr{C}}^q(X,Z)\to \ext_{\mathscr{C}}^{p+q}(X\otimes X,Z\otimes Z).
\]
Namely, given  $\E $ in  $\ext_{\mathscr{C}}^p(X,Z)$ and $\F$ in  $\ext_{\mathscr{C}}^q(X,Z)$, one can use the hash composition jointly with the tensor product to form
\begin{enumerate}
\item $(\E \otimes Z)\#(X\otimes \F) $,
\item $(Z\otimes \F)\#(\E \otimes X) $,
\item $(\F\otimes Z)\#(X\otimes \E ) $,
\item $(Z\otimes \E )\#(\F\otimes X) $.
\end{enumerate}

Both hashing and tensoring are well defined on path connected components and so the above define four operations 
\[
 \Ext_{\mathscr{C}}^p(X,Z)\times \Ext_{\mathscr{C}}^q(X,Z)\to \Ext_{\mathscr{C}}^{p+q}(X\otimes X,Z\otimes Z).
\]
Quite remarkably, the two operations induced on the $\Ext$ groups by {\it(i)} and {\it(ii)} coincide, as well as the two operations induced by {\it(iii)} and {\it(iv)}.
To see this, we need to exhibit a path in $\ext_{\mathscr{C}}^{p+q}(X\otimes X,Z\otimes Z)$ between $(\E \otimes Z) \# (X \otimes \F)$
and $(Z\otimes \F) \# (\E \otimes X)$.
We show how to do this in case $p,q\geq 1$, leaving to the reader the not particularly compelling task of completing the proof in case $p$ or $q$ are equal to zero. An easy way of realising such a path is as a span

\begin{equation}\label{eq:span}
(\E \otimes Z)\# (X\otimes \F)\xleftarrow{\quad\lambda_{\ehhe,\effe}\quad} \mathbf{Moloch}(\E ,\F)\xrightarrow{\quad\gvr_{\ehhe,\effe}\quad}(Z\otimes \F)\#(\E \otimes X) ,
\end{equation}
where $\mathbf{Moloch}(\E ,\F)$ is the totalisation of the diagram obtained by partially removing the top row and the rightmost column of $\E \otimes\F$ as follows:
\[
\xymatrix{ & & & &
 \hbox to 2em{\hss$
  X\otimes X
$\hss}
  \\
  Z\otimes F_0\ar[r] &E_{p-1}\otimes F_0\ar[r]&\cdots \ar[r]& E_0\otimes F_0\ar[ur] &
  \\
  \stackrel{\vdots}{\phantom{a}} \ar[u] & \stackrel{\vdots}{\phantom{a}}\ar[u] & \stackrel{\vdots}{\phantom{a}} \ar[u]&  \stackrel{\vdots}{\phantom{a}}\ar[u] &
  \\
Z\otimes F_{q-1}\ar[r]\ar[u] &E_{p-1}\otimes F_{q-1}\ar[r]\ar[u] &\cdots \ar[r]\ar[u]& E_0\otimes F_{q-1}\ar[u] &\\
Z\otimes Z\ar[r]\ar[u] &E_{p-1}\otimes Z\ar[r]\ar[u] &\cdots \ar[r]\ar[u]& E_0\otimes Z\ar[u] &
}
\]
One easily checks by explicit computation in the top two degrees and by the K\"unneth isomorphism in all the lower degrees that $\mathbf{Moloch}(\E ,\F)$ is indeed an object in $\ext_{\mathscr{C}}^{p+q}(X\otimes X,Z\otimes Z)$.

\begin{rem}
When $X=\mathbb{1}_{{\mathscr{C}}}$ is the unit object of the monoidal category ${\mathscr{C}}$, the complex $\mathbf{Moloch}(\E ,\F)$ is the tensor product of $\E$ and $\F$ as augmented complexes considered in \cite{Schw:AESIOTLBIHC}.
\end{rem}

To produce the span \eqref{eq:span}, notice that the iterated extension $(\E \otimes Z)\# (X\otimes \F)$ is given by the top horizontal row in 
\begin{small}
\begin{equation}
  \label{schnee2}
\xymatrix@C=1.1em@R=1em{
  Z\otimes Z\ar[r]& E_{p-1} \otimes Z\ar[r] & \cdots \ar[r]&
 \hbox to 3em{\hss$
E_0\otimes Z
  $\hss}
  \ar[rr]
  &&
 \hbox to 4em{\hss$
  X\otimes F_{q-1}
  $\hss}
  \ar[r] &\cdots\ar[r]&X\otimes F_0\ar[r]&
X\otimes X.
  \\
  &&&&
 \hbox to 3em{\hss$
   X\otimes Z
   $\hss}
\ar`[ru][ur]
\ar@{<-}`l[ul][ul]
}
\end{equation}
\end{small}
In the spirit of Remark \ref{rem:degree-shift}, one can use the shift operator to have a better control of positions or degrees and write this as
\begin{small}
\[
 Z[p]\otimes Z[q]\to E_{p-1} \otimes Z[q]\to \cdots \to
  E_0\otimes Z[q]\to
   X\otimes F_{q-1}\to\cdots\to X\otimes F_0\to X\otimes X[-1].
\] 
\end{small}
Similarly, the iterated extension $(Z\otimes \F)\#(\E \otimes X) $ is given by the top horizontal row in 
\begin{small}
\[
\xymatrix@C=1.1em@R=1em{
  Z\otimes Z\ar[r]& Z\otimes F_{q-1}\ar[r]& \cdots \ar[r]&
 \hbox to 3em{\hss$
  Z\otimes F_0
$\hss}
  \ar[rr]
  &&
 \hbox to 4em{\hss$
   E_{p-1}\otimes X
$\hss}
   \ar[r] &\cdots\ar[r]&E_0\otimes X\ar[r]&X\otimes X,
  \\
  &&&&
 \hbox to 3em{\hss$
   Z\otimes X
   $\hss}
\ar`[ru][ur]
\ar@{<-}`l[ul][ul]
}
\]
\end{small}
that is, in the degree-shifted notation,
\begin{small}
\[
 Z[p]\otimes Z[q]\to Z[p]\otimes F_{q-1}\to \cdots \to Z[p]\otimes F_0\to E_{p-1}\otimes X\to\cdots\to E_0\otimes X\to X\otimes X[-1].
\]
\end{small}
These combine with $\mathbf{Moloch}(\E ,\F)$ into the single commutative diagram
\[
\xymatrix{
 &E_{p-1}\otimes X\ar[r] &\cdots \ar[r]& E_0\otimes X\ar[r] &X\otimes X\\
  Z\otimes F_0
\ar`u[ru][ru]
%  \ar@/^1.2pc/[ru]
  \ar[r] &E_{p-1}\otimes F_0\ar[r]\ar[u] &\cdots \ar[r]\ar[u]& E_0\otimes F_0\ar[r]\ar[u]\ar[ur] &X\otimes F_0\ar[u]
  \\
   \stackrel{\vdots}{\phantom{a}}\ar[u] & \stackrel{\vdots}{\phantom{a}}\ar[u] & \stackrel{\vdots}{\phantom{a}} \ar[u]&  \stackrel{\vdots}{\phantom{a}}\ar[u] & \stackrel{\vdots}{\phantom{a}}\ar[u]
  \\
Z\otimes F_{q-1}\ar[r]\ar[u] &E_{p-1}\otimes F_{q-1}\ar[r]\ar[u] &\cdots \ar[r]\ar[u]& E_0\otimes F_{q-1}\ar[r]\ar[u] &X\otimes F_{q-1}\ar[u]\\
Z\otimes Z\ar[r]\ar[u] &E_{p-1}\otimes Z\ar[r]\ar[u] &\cdots \ar[r]\ar[u]& E_0\otimes Z
\ar`r[ru][ru]
%\ar@/_1.2pc/[ru]
\ar[u]
&
}
\]
The morphism 
\begin{equation}
  \label{hiddensee1}
\lambda_{\ehhe,\effe}\colon \mathbf{Moloch}(\E ,\F) \to (\E \otimes Z)\#(X\otimes \F) 
\end{equation}
is then given by the natural projection
\[
\mathbf{Moloch}(\E ,\F)_k\xrightarrow{\pi} E_{k-q}\otimes Z[q]
\]
for $q \leq k \leq q+p$, by the composition
\[
\mathbf{Moloch}(\E ,\F)_k\xrightarrow{\pi} E_0\otimes F_k \to X\otimes F_k
\]
for $0\leq k\leq q-1$, and by the identity of $X\otimes X[-1]$ for $k=-1$. The morphism
\begin{equation}
  \label{hiddensee2}
\gvr_{\ehhe,\effe}\colon \mathbf{Moloch}(\E ,\F) \to (Z\otimes \F)\#(\E \otimes X) 
\end{equation}
is defined analogously,
that is,
by the natural projection
\[
\mathbf{Moloch}(\E ,\F)_k \xrightarrow{\pi} (-1)^{pk-p} Z[p] \otimes F_{k-p}
\]
for $p \leq k \leq p+q$, by the composition
\[
\mathbf{Moloch}(\E ,\F)_k\xrightarrow{\pi} E_k\otimes F_0 \to %(-1)^k
E_k \otimes X
\]
for $0\leq k\leq p-1$, and again by the identity of $X\otimes X[-1]$ for $k=-1$.

Exchanging the r\^oles of $\E $ and $\F$, we obtain the span
\[
 (\F\otimes Z)\#(X\otimes \E )\xleftarrow{\quad\lambda_{\effe,\ehhe}\quad} \mathbf{Moloch}(\F,\E )\xrightarrow{\quad\gvr_{\effe,\ehhe}\quad}(Z\otimes \E )\#(\F\otimes X) ,
\]
showing that {\it(iii)} and {\it(iv)} above define the same map at the level of $\Ext$ groups. 

We would also like $(Z\otimes \F)\#(\E \otimes X)$ and $(\F\otimes Z)\#(X\otimes \E )$ to lie in the same path connected component so that all of the four operations would coincide at the level of $\Ext$ groups. As the two expressions we want to compare only differ by the order of the factors in the tensor products, a natural requirement to do in order to have them connected by a morphism in $\ext_{\mathscr{C}}^{p+q}(X\otimes X,Z\otimes Z)$ is to assume that $Z$ and $X$ are in the (left resp.\ right) centre of ${\mathscr{C}}$, that is,
come with natural maps
\begin{align*}
  \sigma_{\zett,-}\colon Z\otimes (-) &\to (-)\otimes Z,
  \\
\tau_{-,\ikks}\colon (-)\otimes X&\to X\otimes (-),
\end{align*}
the (left resp.\ right) {\em braidings}.
With this assumption, we obtain the diagram $(\sigma|\tau)_{\effe,\ehhe}$:
\begin{equation}\label{eq:almost-there}
\xymatrix@C=1.1em{
  Z\otimes Z\ar[ddd]_(.4){\sigma_{Z,Z}}\ar[r]& Z\otimes F_{q-1} \ar[r]\ar[ddd]_(.4){\sigma_{Z,F_{q-1}}} & \cdots \ar[r]& Z\otimes F_0\ar[rr]\ar[ddd]_(.4){\sigma_{Z,F_0}}\ar@/_1pc/[rd] && E_{p-1}\otimes X\ar[r]\ar[ddd]^(.4){\tau_{E_{p-1},X}}&\cdots\ar[r]&E_0\otimes X\ar[r]\ar[ddd]^(.4){\tau_{E_0, X}}&X\otimes X\ar[ddd]^(.4){\tau_{X,X}}
  \\
  &&&&
  Z\otimes X
  \ar@/_1pc/[ru] \ar@/_2pc/[d]_(.65){\sigma_{Z,X}}
  \ar@/^2pc/[d]^(.65){\tau_{Z,X}}
   \\
  &&&& X\otimes Z\ar@/^1pc/[rd]
  \\
Z\otimes Z\ar[r]& F_{q-1}\otimes Z\ar[r]&\cdots \ar[r]& F_0\otimes Z\ar[rr]\ar@/^1pc/[ru] && X\otimes E_{p-1}\ar[r] &\cdots\ar[r]&X\otimes E_0\ar[r]&X\otimes X.
}
\end{equation}
where, thanks to the naturality of the braidings
%\begin{align*}
$\sigma_{\zett,-}$ and
%\colon Z\otimes (-) &\to (-)\otimes Z\\
$\tau_{-,\ikks}$,
%\colon (-)\otimes X&\to X\otimes (-)
%\end{align*}
everything commutes except possibly  the small diagram in the middle with the two %parallel
arrows from $Z\otimes X$ to $X\otimes Z$. In order to have this part commute as well, we require
\begin{equation*}
\sigma_{\zett,\ikks}=\tau_{\zett,\ikks},
\end{equation*}
that is, $(X,Z)$ is a \emph{commuting pair} in the sense of Definition \ref{commpair}. Nevertheless, even when this is fulfilled and so $(\sigma|\tau)_{\effe,\ehhe}$ is a commutative diagram, we are not generally done yet: the diagram $(\sigma|\tau)_{\effe,\ehhe}$ does not define a morphism in $\ext_{\mathscr{C}}^{p+q}(X\otimes X,Z\otimes Z)$, see \eqref{morfi}, as the left- and rightmost vertical ``boundary'' morphisms $\sigma_{\zett,\zett}$ and $\tau_{\ikks,\ikks}$ are generally not the identity morphisms. We are going to show how to circumvent this problem by introducing a variant of the above construction in the following section.

\subsection{The cup product}
Assume now not only that $(X,Z)$ is a commuting pair in ${\mathscr{C}}$ in the sense of Definition \ref{commpair}, but also that $Z$ be a (braided) commutative monoid in the braided monoidal category ${\mathscr{Z}}^\ell({\mathscr{C}})$ and $X$ a (braided) commutative monoid in the braided monoidal category ${\mathscr{Z}}^r({\mathscr{C}})^\op$.
This allows for a cup product
\[
\cup \colon \ext_{\mathscr{C}}^p(X,Z)\times \ext_{\mathscr{C}}^q(X,Z)\to \ext_{\mathscr{C}}^{p+q}(X,Z)
\]
as follows. Let
\[
 \gD_\ikks \colon X \to X \otimes X \qquad \mbox{and} \qquad  \mu \colon Z\otimes Z\to Z
\]
be the morphisms defining the (co)monoid structures on $X$ and $Z$, respectively. Then, for $\E $ in  $\ext_{\mathscr{C}}^p(X,Z)$ and $\F$ in  $\ext_{\mathscr{C}}^q(X,Z)$, the cup product $\E \cup \F$ is defined as
\[
\E \cup \F\coloneq \mu \#(\E \otimes Z)\# (X\otimes \F)\# \gD_\ikks .
\]
Passing to path connected components, this yields a map
\[
\cup\colon \Ext_{\mathscr{C}}^p(X,Z)\times \Ext_{\mathscr{C}}^q(X,Z)\to \Ext_{\mathscr{C}}^{p+q}(X,Z).
\]
We already know from the results in \S\ref{sec:tensor} that $\E \cup \F$ is connected to
\[
 \mu \#(Z\otimes \F)\# (\E \otimes X) \# \gD_\ikks 
\]
via the span through $ \mu \#\mathbf{Moloch}(\E ,\F)\# \gD_\ikks $.
But now there is more: since $Z$ and $X$ are (braided) commutative monoids in ${\mathscr{C}}$ and in ${\mathscr{C}}^\op$, respectively, the commutative diagrams
\[
\raisebox{32pt}{\xymatrix{
X\otimes X\ar[dd]_{\tau_{X,X}}&\\
& X\ar[dl]^{ \gD_\ikks }\ar[ul]_{ \gD_\ikks }\\
X\otimes X&
}}
\qquad %;
\qquad\qquad
\raisebox{32pt}{\xymatrix{
&Z\otimes Z\ar[dd]^{\sigma_{Z,Z}}\\
    Z\ar@{<-}[dr]_{ \mu }
    \ar@{<-}[ur]^{ \mu }\\
&Z\otimes Z
}}
\]
precisely cure the issue with the boundary morphisms in \eqref{eq:almost-there}, changing the boundary morphisms $\sigma_{Z,Z}$ and $\tau_{X,X}$ into identity morphisms. Taking the relevant pullbacks and pushouts the diagram \eqref{eq:almost-there} induces a morphism
\[
\widetilde{(\sigma|\tau)}_{\effe,\ehhe}\colon
 \mu \#
(Z\otimes \F) \#(\E \otimes X)
\# \gD_\ikks 
\xrightarrow{\ \, \mu \#(\sigma|\tau)_{\effe,\ehhe}\# \gD_\ikks \ \, }
 \mu \#
(\F\otimes Z)\#(X\otimes \E )
\# \gD_\ikks 
\]
in $\ext_{\mathscr{C}}^{p+q}(X,Z)$.

\subsection{The mystic hexagon}
With the assumptions of the previous section,
we have a loop in $\ext_{\mathscr{C}}^{p+q}(X,Z)$ given by
%% \xymatrix@C=0.5em{
%%   &&
%%  \mu \# \mathbf{Moloch}(\F,\E )\# \gD_\ikks 
%%   \ar[lld]_{\tilde{\lambda}_{\ehhe,\effe}}
%%   \ar[rrd]^{\, \tilde{\gvr}_{\ehhe,\effe}}
%%   && \\
%% \E \cup \F=   \mu \# (\E \otimes X)\# (Z\otimes \F) \# \gD_\ikks \ar[dd]_{(\tau|\sigma)_{\ehhe,\effe}} &&
%%   &&  \mu \#(X\otimes \F)\# (\E \otimes Z)\# \gD_\ikks 
%%   \\ \\
%%     \mu \# (X\otimes \E )\# (\F\otimes Z) \# \gD_\ikks &&
%%   && \F\cup \E = \mu \# (\F\otimes X)\# (Z\otimes \E )\# \gD_\ikks \ar[uu]_{(\tau|\sigma)_{\effe,\ehhe}}
%%   \\ 
%%   &&
%%    \mu \# \mathbf{Moloch}(\F,\E )  \# \gD_\ikks 
%% \ar[llu]^{\tilde{\gvr}_{\effe,\ehhe}}
%%   \ar[rru]_{\, \tilde{\lambda}_{\effe,\ehhe}}
%% }
\begin{equation}
  \label{eq:loop}
\xymatrix@C=0.5em{
  &&
    \mu \# \mathbf{Moloch}(\E,\F )\# \gD_\ikks 
     \ar[lld]_{\tilde{\lambda}_{\ehhe,\effe}}
 \ar[rrd]^{\, \tilde{\gvr}_{\ehhe,\effe}}
  &&
  \\
  \hbox to 5em{\hss$
    \E \cup \F=   \mu \# (\E \otimes Z)\# (X\otimes \F) \# \gD_\ikks 
\hss$}
    \ar@{<-}[dd]_{\widetilde{(\sigma|\tau)}_{\ehhe,\effe}} &&
    &&
      \hbox to 5em{\hss$
 \mu \#(Z\otimes\F) \#(\E \otimes X) \# \gD_\ikks 
$\hss}
      \\ \\
            \hbox to 5em{\hss$
               \mu \# (Z\otimes\E) \#(\F \otimes X)  \# \gD_\ikks 
$\hss}
              &&
   &&
         \hbox to 5em{\hss$
           \F\cup \E = \mu \#(\F\otimes Z) \#(X\otimes\E) \# \gD_\ikks 
$\hss}
           \ar@{<-}[uu]_{\widetilde{(\sigma|\tau)}_{\effe,\ehhe}}
  \\ 
  &&
   \mu \# \mathbf{Moloch}(\F,\E )  \# \gD_\ikks 
\ar[llu]^{\tilde{\gvr}_{\effe,\ehhe}}
  \ar[rru]_{\, \tilde{\lambda}_{\effe,\ehhe}}
}
\end{equation}   
where we wrote
$$\tilde{\lambda}_{\ehhe,\effe}
\coloneq  \mu \#\lambda_{\ehhe,\effe}\# \gD_\ikks 
\qquad
\mbox{and}
\qquad
\tilde{\gvr}_{\ehhe,\effe} \coloneq  \mu \#\gvr_{\ehhe,\effe}\# \gD_\ikks ,
$$
respectively.
%
%$\tilde{\lambda}_{\ehhe,\effe}$ and %$\tilde{\gvr}_{\ehhe,\effe}$ for %$ \mu \#\lambda_{\ehhe,\effe}\# \gD_\ikks $ and %$ \mu \#\gvr_{\ehhe,\effe}\# \gD_\ikks $, respectively.
Choosing $\E \cup \F$ as a base point, 
this defines an element in the based loop space $\Omega_{\E \cup \F}\ext_{\mathscr{C}}^{p+q}(X,Z)$. 
Passing to connected components, we obtain a map
\[
\{\,\hspace*{1.3pt},\,\}\colon \Ext_{\mathscr{C}}^p(X,Z)\times \Ext_{\mathscr{C}}^q(X,Z)\to \Ext_{\mathscr{C}}^{p+q-1}(X,Z).
\]

\subsection{Cocycle representatives}
It is often convenient to represent an element $\E $ in $\ext_{\mathscr{C}}^p(X,Z)$ by a cocycle representative  $\phi$. By this one means the following: fix a projective resolution $\mathbb{P}^X \xrightarrow{\gve} X$ of the object $X$ and for $p=0$ consider the composition 
\begin{equation}
  \label{zerozero}
\phi_0\colon P^X_0\to X \xrightarrow{\E } Z,
\end{equation}
while for $p>0$ one considers a chain map $\phi\colon \mathbb{P}^X \to \E $ which is the identity over $X$, that is, a commutative diagram of the form
%\begin{small}
\begin{equation}
  \label{fame0}
\xymatrix{
   P^X_{p+1}\ar[d] \ar[r]^-d &P^X_p\ar[r]^-{d} \ar[d]_{\phi_p} & P^X_{p-1} \ar[r]^{d} \ar[d]^{\phi_{p-1}} & \ldots \ar[r]^-{d}
  %&  \mathrm{Bar}_1(U) \ar[r]^-{d} \ar[d]_{\phi_1}
  &  P^X_0 \ar[r]^\gve \ar[d]_{\phi_0} & X \ar@{=}[d]%^{\id}
 \ar[r] & 0
  \\
 0 \ar[r]^d & Z \ar[r]^{i_\ehhe}  & E_{p-1}  \ar[r]^d & \ldots \ar[r]^{d}
 %&  E_1 \ar[r]^d
 & E_0 \ar[r]^{p_\ehhe} & X  \ar[r] & 0
}
\end{equation}
%\end{small}
The chain map $\phi$ exists and is unique up to homotopy by the projectivity of $\mathbb{P}^X$. Moreover, since $\mathbb{P}^X$ is a resolution, the datum of the homotopy class of $\phi$ is equivalent to the datum of the cohomology class of $\phi_p$ in the complex $\Hom_{\mathscr{C}}(\mathbb{P}^X,Z)$. For this reason one usually identifies $p$-cocycles with closed elements in $\Hom_{\mathscr{C}}(P^X_{p},Z)$.
\begin{rem}
To get a more uniform presentation, $0$-cocycle representatives will be realised by commutative diagrams as follows:
\[
\xymatrix{
P^X_{0}\ar[d]_{\phi_0}\ar[r]^{\epsilon}& X\ar[d]^{\E}\\
Z\ar@{=}[r] & Z
}
\]
\end{rem}

\medskip

It will be convenient to consider more generally commutative diagrams of the form 
\begin{equation}
  \label{fame00}
\xymatrix{
   P^X_{p+1}\ar[d] \ar[r]^-d &P^X_p\ar[r]^-{d} \ar[d]_{\phi_p} & P^X_{p-1} \ar[r]^{d} \ar[d]^{\phi_{p-1}} & \ldots \ar[r]^-{d}
  %&  \mathrm{Bar}_1(U) \ar[r]^-{d} \ar[d]_{\phi_1}
  &  P^X_0 \ar[r]^\gve \ar[d]_{\phi_0} & X \ar[d]^{f}%^{\id}
 \ar[r] & 0
  \\
 0 \ar[r]^d & Z \ar[r]^{i_\ehhe}  & E_{p-1}  \ar[r]^d & \ldots \ar[r]^{d}
 %&  E_1 \ar[r]^d
 & E_0 \ar[r]^{p_\ehhe} & X  \ar[r] & 0
}
\end{equation}
for an arbitrary morphism $f\colon X\to X$. We will call these diagrams \emph{$f$-twisted cocycle representatives} or {\em $f$-twisted chain maps}, or still {\em chain maps over $f$.}
In degree zero, an $f$-twisted $0$-cocycle representative for $\E$ will be defined as a commutative diagram of the form
\begin{equation}\label{eq:tonight}
\xymatrix{
P^X_{0}\ar[d]_{\phi_0}\ar[r]^{\epsilon}& X\ar[d]^{\E\circ f}\\
Z\ar@{=}[r] & Z
}
\end{equation}

\begin{rem}
  \label{rem:astray}
  It is immediate from the definition of twisted cocycle representatives that if $\phi\colon \mathbb{P}^X\to \E$ is an $f$-twisted cocycle representative and $\gamma\colon \mathbb{P}^X\to \mathbb{P}^X$ is a morphism of chain complexes, then $\phi\circ\gamma\colon \mathbb{P}^X \to \E$ is a $( f\circ \gamma_{-1})$-twisted
  cocycle representative,  where $\gamma_{-1}\colon X\to X$ is the degree $-1$ component of $\gamma$; if $\phi\colon \mathbb{P}^X  \to \E$ is an $f$-twisted cocycle representative  and $\gamma\colon \E\to \F$ is a morphism of chain complexes, then $\gamma\circ \phi\colon  \mathbb{P}^X \to \F$ is a $(\gamma_{-1}\circ f)$-twisted
  cocycle representative,  where $\gamma_{-1}\colon X\to X$ is the degree $-1$ component of $\gamma$. Another immediate consequence of the definition (and of the additivity of our category ${\mathscr{C}}$) is that if $\phi,\psi\colon \mathbb{P}^X\to \E$ are an $f$-twisted and a $g$-twisted cocycle representative,  respectively, then $\phi+\psi$ is an $(f+g)$-twisted cocycle representative.
\end{rem}

\subsection{The cup product in terms of cocycle representatives}
\label{sunshine}
If $\phi$ and $\psi$ are cocycle representatives for $\E \in \ext_{\mathscr{C}}^p(Y,Z)$ and $\F\in \ext_{\mathscr{C}}^q(X,Y)$, one can easily write a cocycle representative $\phi\#\psi$ for $\E \#\F$.
As for the $\#$-operation on the $\ext$ spaces, we will need to distinguish four cases. When $p$ and $q$ are zero, $\E $ and $\F$ are morphisms from $Y$ to $Z$ and from $X$ to $Y$, respectively, and their representative $0$-cocycles are the compositions $P^Y_{0}\to Y\xrightarrow{\E } Z$ and ${P^X_{0}\to X\xrightarrow{\F} Y}$, respectively. In this case, $\phi\#\psi$ is simply the composition 
\[
P^X_{0}\to X\xrightarrow{\F } Y\xrightarrow{\E} Z.
\]
 When $p>0$ and $q=0$, one considers the commutative diagram
 \vskip .2 cm
\begin{footnotesize}
\[
\xymatrix{
P^X_{p+1}\ar[r]\ar[d]_{\zeta_{p+1}}&P^X_{p}\ar[r]\ar[d]_{\zeta_p}&P^X_{p-1}\ar[r]\ar[d]_{\zeta_{p-1}}&\cdots\ar[r]& P^X_{1}\ar[r]\ar[d]_{\zeta_1}& P^X_{0}\ar@/^1.4pc/[rr]\ar@{=}[r]\ar[d]_{\zeta_0}&P^X_{0}\ar[r] \ar[d]^{\psi_0}\ar[r]&X\ar[d]^{\F }\ar[r]&0\\
P^Y_{p+1}\ar[r]\ar[d]&P^Y_{p}\ar[r]\ar[d]_{\phi_p} &P^Y_{p-1}\ar[r]\ar[d]_{\phi_{p-1}}&\cdots\ar[r]& P^Y_{1}\ar[r]\ar[d]_{\phi_1}& P^Y_{0}\ar[d]_{\phi_0}\ar[r]&Y\ar@{=}[r]\ar@{=}[d]&Y\ar[r]\ar@{=}[d]&0\\
0\ar[r]&Z\ar[r] &E_{p-1}\ar[r]&\cdots\ar[r]& E_1\ar[r] \ar[r]& E_0\ar[r]\ar@/_1.4pc/[rr]&Y\ar@{=}[r]&Y\ar[r]&0
}
\]
\end{footnotesize}
\vskip .3 cm
\noindent where the top vertical arrows are defined by the projectivity of $\mathbb{P}^X$. By the universal property of the pullback, we obtain a commutative diagram
%\begin{small}
\[
\xymatrix{
P^X_{p+1}\ar[r]\ar[d]&P^X_{p}\ar[r]\ar[d]_{(\phi\#\psi)_p} &P^X_{p-1}\ar[r]\ar[d]_{(\phi\#\psi)_{p-1}}&\cdots\ar[r]& P^X_{1}\ar[r]\ar[d]_{(\phi\#\psi)_1} & P^X_{0} \ar[r]^{\gve} \ar[d]_{(\phi\#\psi)_0} & X\ar@{=}[d]\ar[r]&0
\\
0\ar[r]\ar[d]&Z\ar[r]\ar@{=}[d] &E_{p-1}\ar[r]\ar@{=}[d]&\cdots\ar[r]& E_{1}\ar[r]^-{(d,0)}\ar@{=}[d]& E_{0}{}^{p_\E}\!\!\!\times^{\effe}_YX\ar[r]\ar[d]&X\ar[d]^{\F }\ar[r]&0
\\
0\ar[r]&Z\ar[r] &E_{p-1}\ar[r]&\cdots\ar[r]& E_1\ar[r]& E_0\ar[r]&Y\ar[r]&0
}
\]
%\end{small}
whose top part exhibits a cocycle representative $\phi\#\psi$ for $\E \#\F$. Notice that for every $i>0$ one has $(\phi\#\psi)_i=\phi_i\circ\zeta_i$, while for $i=0$ that $(\phi\#\psi)_0$ is the lift of  $\phi_0\circ\zeta_0$ to the fibre product $E_{0}{}^{p_\E}\!\!\!\times^{\effe}_YX$, that is, $(\phi_0\circ\zeta_0, \gve)$. Similarly, and more directly, when $p=0$ and $q>0$, one considers the commutative diagram
\vskip .2 cm
%\begin{footnotesize}
\[
\xymatrix@C=1.9em{
P^X_{q+1}\ar[r]\ar[d]_{\zeta_1}&P^X_{q}\ar@/^1.4pc/[rr] \ar@{=}[r]\ar[d]_{\zeta_0} &P^X_{q}\ar[r]\ar[d]_{\psi_q}&P^X_{q-1}\ar[d]_{\psi_{q-1}}\ar[r]&\cdots\ar[r]& P^X_{1}\ar[r]\ar[d]_{\psi_1}& P^X_{0}\ar[r]\ar[d]_{\psi_0}&X\ar@{=}[d]\ar[r]&0
\\
P^Y_{1}\ar[d]\ar[r]&P^Y_{0}\ar[r]\ar[d]_{\phi_0}&Y\ar[d]_{\E}\ar[r] &F_{q-1}\ar[r]\ar[d]&\cdots\ar[r]& F_{1}\ar[r]\ar@{=}[d]& F_{0}\ar[r]\ar@{=}[d]&X\ar@{=}[d]\ar[r]&0
\\
0\ar[r]&Z\ar@/_1.4pc/[rr]\ar@{=}[r]&Z\ar[r] &Z\sqcup_Y F_{q-1}\ar[r]&\cdots\ar[r]& F_1\ar[r]& F_0\ar[r]&X\ar[r]&0
}
\]
%\end{footnotesize}
\vskip .3 cm
\noindent whose composite vertical arrows are a cocycle representative $\phi\#\psi$ for $\E \#\F$.
In degrees from $0$ to $q-2$, one has $(\phi\#\psi)_i=\psi_i$, whereas in degree $q-1$ one sees that $(\phi\#\psi)_{q-1}$ is the class of $(0,\psi_{q-1})$ in the quotient of $Z\oplus F_{q-1}$ by the image of $Y$, and
in the top degree $q$, one has $(\phi\#\psi)_q=\phi_0\circ\zeta_0=\E\circ \psi_q$.

Finally, in case $p,q>0$,
one uses the projectivity of $\mathbb{P}_{X}$ to define the maps $\zeta_i\colon P^X_{i+q} \to P^Y_{i}$, making the diagram
\begin{equation*}
\xymatrix@C=1.7em{
  P^X_{p+q}\ar[d]^{\zeta_{p}}\ar[r]& P^X_{p+q-1}\ar[r]\ar[d]^{\zeta_{p-1}} & \cdots \ar[r]& P^X_{q}\ar@/^1.4pc/[rr] \ar[d]^{\zeta_0}\ar@{=}[r] &P^X_{q}\ar[d]^{\psi_q}\ar[r]& P^X_{q-1}\ar[r]\ar[d]^{\psi_{q-1}} &\cdots\ar[r]& P^X_{0}\ar[d]^{\psi_{0}}\ar[r]& X\ar@{=}[d]
  \\
  P^Y_{p}\ar[d]_{\phi_{p}}\ar[r]& P^Y_{p-1}\ar[r]\ar[d]_{\phi_{p-1}} & \cdots \ar[r]& P^Y_{0}\ar[d]_{\phi_{0}}\ar[r] &Y\ar@{=}[d]\ar[r]& F_{q-1}\ar[r]\ar@{=}[d] &\cdots\ar[r]& F_0\ar@{=}[d]\ar[r]& X\ar@{=}[d]
  \\
  Z\ar[r]& E_{p-1}\ar[r] & \cdots \ar[r]& E_0\ar[r]\ar@/_1.4pc/[rr] &Y\ar[r]& 
   F_{q-1} \ar[r] &\cdots\ar[r]&  F_{0}\ar[r]& X
}
\end{equation*}

\vskip 8pt

\noindent commute. Since the cup product is a composition of $\#$-products, the above rules yield, in particular, an expression for a cocycle representative $\phi\cup\psi$ for the product $\E \cup\F$, given cocycle representatives $\phi$ and $\psi$ for $\E $ and $\F$, respectively.

\begin{rem}
  \label{rem:verbatim}
  By verbatim repeating the above construction of the case $p>0$ and $q=0$, one sees that if $\phi$ is an $f$-twisted cocycle representative  for $\F$ and $\psi_0$ a cocycle representative for a morphism $\F\colon X\to Y$, then 
%\begin{small}
\[
\xymatrix@C=2.3em{
P^X_{p+1}\ar[r]\ar[d]&P^X_{p}\ar[r]\ar[d]_{(\phi\#\psi)_p} &P^X_{p-1}\ar[r]\ar[d]_{(\phi\#\psi)_{p-1}}&\cdots\ar[r]& P^X_{1}\ar[r]\ar[d]_{(\phi\#\psi)_1} & P^X_{0} \ar[r]^{\gve} \ar[d]_{(\phi\#\psi)_0} & X\ar@{=}[d]\ar[r]&0
\\
0\ar[r]&Z\ar[r]&E_{p-1}\ar[r]&\cdots\ar[r]& E_{1}\ar[r]^-{(d,0)}& E_{0}{}^{p_\E}\!\!\!\times^{f\circ \F}_YX\ar[r]&X\ar[r]&0
}
\]
%\end{small}
is a cocycle representative for $\E\#(f\circ\F)$,
where $(\phi\#\psi)_0=(\phi_0\circ\zeta_0,\gve)$ and  $(\phi\#\psi)_i=\phi_i\circ\zeta_i$ for every $i>0$. 
\end{rem}

\subsection{The bracket $\{\,\hspace*{1.3pt},\,\}$ in terms of cocycle representatives}
\label{torloniasantisubito}
Let $\phi$ and $\psi$ be two cocycle representatives for $[\E ]\in \Ext_{\mathscr{C}}^p(X,Z)$ and $[\F]\in \Ext_{\mathscr{C}}^q(X,Z)$, respectively, and as before assume that $(X,Z)$ is a commuting pair in the sense of Definition \ref{commpair}, where $X$ is a (braided) commutative monoid in ${\mathscr{Z}}^r(\umod)^\op$ and $Z$ a (braided) commutative monoid in ${\mathscr{Z}}^\ell(\umod)$. 

In order to associate with $\phi$ and $\psi$ a cocycle representative  $\{\phi,\psi\}$ representing the element
$$
\{[\E ],[\F]\}\in \Ext_{\mathscr{C}}^{p+q-1}(X,Z)=\pi_1 \ext_{\mathscr{C}}^{p+q}(X,Z),
$$
defined by the loop \eqref{eq:loop} based at $\E \cup\F$, one applies the following procedure. For any $n\geq 0$, one considers the (higher) categories $\widetilde{\ext}_{\mathscr{C}}^{\raisebox{-3.5pt}{\scriptsize $n$}}(X,Z)$, whose objects are pairs $(\mathbb{G},\varphi)$, where $\mathbb{G}$ is an object in  $ \ext_{\mathscr{C}}^n(X,Z)$ and $\varphi\colon \mathbb{P}^X\to \mathbb{G}$ is a cocycle representing $\mathbb{G}$. Morphisms between $(\mathbb{G}_1,\varphi_1)$ and $(\mathbb{G}_2,\varphi_2)$ are pairs $(f,h)$ consisting of a morphism $f\colon \mathbb{G}_1\to \mathbb{G}_2$ in $ \ext_{\mathscr{C}}^n(X,Z)$ and of a homotopy $h$ between $f\circ \varphi_1$ and $\varphi_2$, that is, a degree $-1$ map $h\colon \mathbb{P}^X\to \mathbb{G}_2$ such that $[d,h]=\varphi_2-f\circ \varphi_1$. Higher morphisms are defined recursively in a similar way. One has a natural forgetful functor  
\[
\widetilde{\ext}_{\mathscr{C}}^{\raisebox{-3.5pt}{\scriptsize $n$}}(X,Z)\to \ext_{\mathscr{C}}^n(X,Z)
\]
whose fibre at the point $\mathbb{G}$ is the hom space $\Hom_{\ext_{\mathscr{C}}^n(X,Z)}(\mathbb{P}^X,\mathbb{G})$. Since $\mathbb{P}^X$ is a resolution of $X$, it is in particular an acyclic complex and so this hom space is contractible. Therefore, the projection $\widetilde{\ext}_{\mathscr{C}}^{\raisebox{-3.5pt}{\scriptsize $n$}}(X,Z)\to \ext_{\mathscr{C}}^n(X,Z)$ is a homotopy equivalence and one can read the homotopy groups of $\ext_{\mathscr{C}}^n(X,Z)$ in terms of the homotopy groups of $\widetilde{\ext}_{\mathscr{C}}^{\raisebox{-3.5pt}{\scriptsize $n$}}(X,Z)$. Moreover, $\widetilde{\ext}_{\mathscr{C}}^{\raisebox{-3.5pt}{\scriptsize $n$}}(X,Z)\to \ext_{\mathscr{C}}^n(X,Z)$ is a flat fibration: every path in $ \ext_{\mathscr{C}}^n(X,Z)$ has a canonical horizontal lift in $\widetilde{\ext}_{\mathscr{C}}^{\raisebox{-3.5pt}{\scriptsize $n$}}(X,Z)$, once a lift for the starting point is chosen:
if $(\mathbb{G}_1,\varphi_1)$ is a point in the fibre over $\mathbb{G}_1$ and $f\colon \mathbb{G}_1\to \mathbb{G}_2$ a morphism in $ \ext_{\mathscr{C}}^n(X,Z)$, then we have the canonical lift
\[
(f,0)\colon (\mathbb{G}_1,\varphi_1)\to (\mathbb{G}_2,f\circ \varphi_1).
\]
This gives a way to encode a loop $\gamma$ in $ \ext_{\mathscr{C}}^n(X,Z)$ based at a point $\mathbb{G}$ into a homotopy operator, {\em i.e.}, a degree $-1$ map $h_\gamma\colon \mathbb{P}^X\to \mathbb{G}$, once a representative cocycle $\mathbb{P}^X\to \mathbb{G}$ for $\mathbb{G}$ is chosen: one horizontally lifts the loop to a path in $\widetilde{\ext}_{\mathscr{C}}^{\raisebox{-3.5pt}{\scriptsize $n$}}(X,Z)$ with starting point $(\mathbb{G},\varphi)$. The final point of this path will be a point $(\mathbb{G},\tilde{\varphi})$ and, since they both represent the same element $[\mathbb{G}]$ in $\Ext_{\mathscr{C}}^n(X,Z)$, the two cocycle representatives  $\varphi$ and $\tilde{\varphi}$ will be homotopic via a degree $-1$ map $h_\gamma\colon \mathbb{P}^X\to \mathbb{G}$, which is unique up to higher homotopies. The homotopy class $[h_\gamma]$ is the element in $\Ext_{\mathscr{C}}^{n-1}(X,Z)$ representing the loop $\gamma$.

We now apply this abstract nonsense to the loop \eqref{eq:loop}. Starting with two cocycle representatives $\phi$ and $\psi$ for $\E $ and $\F$, respectively, one can form a representative cocycle $\phi\cup \psi$ for $\E \cup\F$ and exhibit a representative cocycle
\[
\phi \, \tilde{\cup}_{\scriptscriptstyle \otimes} \, \psi\colon \mathbb{P}^X\to \mu \# \mathbf{Moloch}(\E,\F )\# \gD_\ikks 
\]
for $ \mu \# \mathbf{Moloch}(\E,\F )\# \gD_\ikks $ such that $\phi\cup\psi=\tilde{\lambda}_{\ehhe,\effe}\circ (\phi\,\tilde{\cup}_{\scriptscriptstyle \otimes}\, \psi)$.
The first step
 in the horizontal lift of the loop consists in lifting $\tilde{\lambda}_{\ehhe,\effe}$. This is immediate: the horizontal lift is
\[
\big( \mu \# \mathbf{Moloch}(\E ,\F)\# \gD_\ikks ,\phi\,\tilde{\cup}_{\scriptscriptstyle \otimes}\, \psi\big)\xrightarrow{(\tilde{\lambda}_{\ehhe,\effe},0)} \big(\E \cup\F, \phi\cup\psi\big).
\]
Also the subsequent horizontal lift of $\widetilde{(\sigma|\tau)}_{\effe,\ehhe}\circ\tilde{\gvr}_{\ehhe,\effe}$ is immediate: it is
\[
\big( \mu \# \mathbf{Moloch}(\E ,\F)\# \gD_\ikks ,\phi\,\tilde{\cup}_{\scriptscriptstyle \otimes}\, \psi\big)
\xrightarrow{\big(\widetilde{(\sigma|\tau)}_{\effe,\ehhe}\circ\tilde{\gvr}_{\ehhe,\effe},0\big)}
\big(\F\cup\E , \widetilde{(\sigma|\tau)}_{\effe,\ehhe}\circ\tilde{\gvr}_{\ehhe,\effe}\circ (\phi\,\tilde{\cup}_{\scriptscriptstyle \otimes}\, \psi)\big).
\]
The next step is less trivial: one has to horizontally lift $\tilde{\lambda}_{\effe,\ehhe}$ in such a way that its endpoint matches the endpoint of $\big(\widetilde{(\sigma|\tau)}_{\effe,\ehhe}\circ\tilde{\gvr}_{\ehhe,\effe},0\big)$.
That is, one has to find a cocycle representative $\tilde{\kappa}_{\phi,\psi}$ for $ \mu \# \mathbf{Moloch}(\F,\E )\# \gD_\ikks $ such that
$$
\tilde{\lambda}_{\effe,\ehhe}\circ \tilde{\kappa}_{\phi,\psi}=\widetilde{(\sigma|\tau)}_{\effe,\ehhe}\circ\tilde{\gvr}_{\ehhe,\effe}\circ (\phi\,\tilde{\cup}_{\scriptscriptstyle \otimes}\, \psi).
$$
Writing $\tilde{\kappa}_{\phi,\psi}=\psi\,\tilde{\cup}_{\scriptscriptstyle \otimes}\, \phi+\tilde{\epsilon}_{\phi,\psi}$, this becomes the equation
\[
\tilde{\lambda}_{\effe,\ehhe}\circ(\psi\,\tilde{\cup}_{\scriptscriptstyle \otimes}\, \phi+\tilde{\epsilon}_{\phi,\psi})=\widetilde{(\sigma|\tau)}_{\effe,\ehhe}\circ \tilde{\gvr}_{\ehhe,\effe}\circ (\phi\,\tilde{\cup}_{\scriptscriptstyle \otimes}\, \psi).
\] 
Assuming one has been able (or lucky) enough to find an $\tilde{\epsilon}$ solving this equation, then one has the immediate horizontal lift
\begin{small}
\[
\big( \mu \# \mathbf{Moloch}(\F,\E )\# \gD_\ikks , \psi\,\tilde{\cup}_{\scriptscriptstyle \otimes}\, \phi+\tilde{\epsilon}_{\phi,\psi}\big)
\xrightarrow{\big(\widetilde{(\sigma|\tau)}_{\ehhe,\effe}\circ\tilde{\gvr}_{\effe,\ehhe},0\big)}
\big(\E \cup\F,\widetilde{(\sigma|\tau)}_{\ehhe,\effe}\circ \tilde{\gvr}_{\effe,\ehhe}(\psi\,\tilde{\cup}_{\scriptscriptstyle \otimes}\, \phi+\tilde{\epsilon}_{\phi,\psi})\big). 
\]
\end{small}
This concludes the procedure of horizontally lifting the loop based at $\E \cup\F$, so the corresponding element $\{\phi,\psi\}$ in $\Ext_{\mathscr{C}}^{p+q-1}(X,Z)$ is represented by a homotopy $\tilde{s}(\phi,\psi)$ between $\widetilde{(\sigma|\tau)}_{\ehhe,\effe}\circ\tilde{\gvr}_{\effe,\ehhe}(\psi\,\tilde{\cup}_{\scriptscriptstyle \otimes}\, \phi+\tilde{\epsilon}_{\phi,\psi})$ and $ \tilde{\lambda}_{\ehhe,\effe}\circ (\phi\,\tilde{\cup}_{\scriptscriptstyle \otimes}\, \psi)$, taken up to higher homotopies.
  Writing
  \[
  \tilde{\eta}(\phi,\psi)=
  \tilde{\lambda}_{\ehhe,\effe} (\phi\,\tilde{\cup}_{\scriptscriptstyle \otimes}\, \psi)
-
\widetilde{(\sigma|\tau)}_{\ehhe,\effe}\circ\tilde{\gvr}_{\effe,\ehhe}(\psi\,\tilde{\cup}_{\scriptscriptstyle \otimes}\, \phi
  +\tilde{\epsilon}_{\phi,\psi}),
\]
one therefore sees that
 the element $\{\phi,\psi\}$ is represented by the homotopy class of a solution $\tilde{s}(\phi,\psi)$ of the equation $\tilde{\eta}(\phi,\psi)=[d,\tilde{s}(\phi,\psi)]$, that is, by the cohomology class of the closed element $\tilde{s}_{p+q-1}(\phi,\psi)$ in $\Hom_{\mathscr{C}}(P^X_{p+q-1},Z)$. 
 \par
To explicitly compute the element $\tilde{s}_{p+q-1}(\phi,\psi)$, one can argue as follows. If $\alpha$ is a cocycle representative for an object $\mathbb{H}$ in $\ext^{p+q}_{\mathscr{C}}(X\otimes X, Z\otimes Z)$, then by the constructions from \S\ref{sunshine} one writes an explicit formula for a cocycle representative $\tilde{\alpha}$ for the object $\mathbb{G}=\mu\#\mathbb{H}\#\Delta_\ikks$ in $\ext_{\mathscr{C}}^{p+q}(X, Z)$, which can be straightforwardly extended to arbitrary collections of maps
 $$
\big\{\beta_k\colon P^{X\otimes X}_k \to H_{k}\big\}_{0\leq k\leq p+q} \qquad \mbox{and} \qquad
\big\{\nu_k\colon P^{X\otimes X}_k \to H_{k+1}\big\}_{0\leq k\leq p+q-1}.
 $$
Doing this, one obtains a graded linear map
$$
  \Phi_\bull\colon \textstyle%\smashoperator[r]
      {\bigoplus\limits_{{k=-1}}^r}\Hom_{\mathscr{C}}(P^{X\otimes X}_k ,{H}_{k})
      \to \textstyle%\smashoperator[r]
          {\bigoplus\limits_{k=-1}^r}\Hom_{\mathscr{C}}(P^{X}_k  ,{G}_{k}).
$$
Looking at the explicit expression for $\Phi$, it is rather easy to define a graded linear map
$$
    \Psi_\bull\colon \textstyle%\smashoperator[r]
        {\bigoplus\limits_{k=-1}^{r-1}}\Hom_{\mathscr{C}}(P^{X\otimes X}_k ,{H}_{k+1})\to \textstyle%\smashoperator[r]
        {\bigoplus\limits_{k=-1}^{r-1}}\Hom_{\mathscr{C}}(P^{X}_k  ,{G}_{k+1})
$$
such that
\[
 \Phi\circ [d,-]=[d,-]\circ \Psi.
\]
Writing
 \[
 \eta(\phi, \psi) =  \gl_{\ehhe, \effe}\big((\phi \cup_{\scriptscriptstyle \otimes} \psi) \circ \tau\big)
    - %(-1)^{pq}
(\sigma|\tau)_{\ehhe,\effe} \circ 
    \gvr_{\effe, \ehhe}\big((
    \psi \cup_{\scriptscriptstyle \otimes} \phi +\epsilon_{\phi,\psi}\big),
  \]  
  where $\phi \cup_{\scriptscriptstyle \otimes} \psi$ is a $\tau$-twisted cocycle representative  for $\mathbf{Moloch}(\E,\F)$, the map $\psi \cup_{\scriptscriptstyle \otimes} \phi$ is a $\tau$-twisted cocycle representative  for $\mathbf{Moloch}(\F,\E)$, and $\epsilon(\phi,\psi) = \epsilon_{\phi,\psi}$ is a $0$-twisted cocycle representative  for $\mathbf{Moloch}(\F,\E)$,
  then $\Phi(\eta(\phi, \psi)) = \tilde{\eta}(\phi,\psi)$ and the defining equation $\tilde{\eta}(\phi,\psi)=[d,\tilde{s}(\phi,\psi)]$ for $\tilde{s}(\phi,\psi)$ reduces to
  \[
  \Phi(\eta(\phi, \psi))=[d,\tilde{s}(\phi,\psi)].
  \]
If  ${s}(\phi,\psi)\in {\bigoplus_{k=-1}^{p+q-1}}\Hom_{\mathscr{C}}(P^{X\otimes X}_k , ((\E \otimes Z) \#(X \otimes \F))_{k+1})$
is tailored in such a way that $\Phi([d,{s}(\phi,\psi)])=\Phi(\eta(\phi,\psi))$, then one can take $\tilde{s}(\phi,\psi)= \Psi(s(\phi,\psi))$,
%  \franote{Questo seguir\`a dal fatto che separatamente sui due addendi di $\eta$, che sono cocycle representatives di certe estensioni, abbiamo $\Phi=\tilde{}$ per costruzione, quindi si conclude per la linearit\`a di $\Phi$.}
%
%  Therefore, if we find families of elements 
%  \begin{eqnarray*}
%   % \begin{split}
%    \big\{\xi_k\colon P^{X\otimes X}_k &\to& \big((X \otimes \F) \# (\E  \otimes Z)\big)_{k}\big\}_{0\leq k\leq p+q},
%    %\qquad
%\\
%\big\{s_k\colon P^{X\otimes X}_k &\to& \big((X \otimes \F) \# (\E  \otimes Z)\big)_{k+1}\big\}_{0\leq k\leq p+q-1}
%   % \end{split}
%    \end{eqnarray*}
% such that
%\[
%\begin{cases}
%\Phi([d,{s}(\phi,\psi)])=\Phi(\eta(\phi,\psi)) 
%\\[1mm]
%{\xi}(\phi,\psi))=[d,{s}(\phi,\psi)]
%\end{cases}
%\]
%then $\tilde{s}(\phi,\psi)= \Psi(s(\phi,\psi))$
%%
%\franote{In what follows, la tilde \`e un po' sparita, c'\`e solo la notazione $\Psi(s(\phi,\psi))$.}
%%
%will solve $\tilde{\eta}(\phi,\psi)=[d,\tilde{s}(\phi,\psi)]$, 
and deduces that $\{\phi,\psi\}$ is explicitly represented by the cohomology class of the closed element $\Psi_{p+q-1}(s_{p+q-1}(\phi,\psi))$ in $\Hom_{\mathscr{C}}(P^X_{p+q-1},Z)$.

In the next section, we will explicitly apply what we just developed.

%}

\section{An explicit approach for bialgebroids}\label{sec:explicit}

In this section, we shall describe how the aforementioned theory for extension categories can be made explicit in case of the category of left modules over a left bialgebroid, endowing the corresponding $\Ext$ groups with a Gerstenhaber algebra structure. To this end, a short account on bialgebroids $(U,A)$ and on the respective notions of left and right weak centre
in relation to (the categories of) {\em Yetter-Drinfel'd modules} can be found in Appendix \S\ref{biappendix}.

The main goal in the final part of this section (\S\ref{appassionata}) will be to see that $\Ext_U(X,Z)$ is a Gerstenhaber algebra if
$(X,Z)$ is a commuting pair in the sense of Definition \ref{commpair}.
There, the explicit proof works along standard lines, that is, one shows that the explicit cochain complex $\Hom_U(\mathrm{Bar}_n(U,X), Z)$ computing  $\Ext_U(X,Z)$  is an operad with multiplication, with the bedevilling difference that the operadic composition is somewhat complicated and not defining any endomorphism operad structure.

\subsection{$\Ext$ groups and extension categories over bialgebroid modules}
\label{cadutamassi}
%\label{naemlichhier}

Let $(U, A)$ be a left bialgebroid.
For simplicity, we will always and tacitly assume that $\due U \lact \ract$ is left-right projective over $A$,
that is, projective with respect to both the left $A$-action $\lact$ and the right $A$-action $\ract$,
see \eqref{pergolesi} for notation.

Let $\umod$ be the monoidal category of left $U$-modules.
    In generalising Schwede's approach from the Hochschild case of associative algebras to bialgebroids, that is, in passing from $\amoda$ to $\umod$, 
   we are facing the same kind of problems with respect to the exactness of the monoidal category in question. More precisely, the tensor product $\E \otimes_A \F$
    of two 
    extensions in $\umod$ is in general not exact but it is so if $\E$ and $\F$ are taken in the full subcategory $\underline{\ext}_U$ of the extension category $\ext_U$ that consists of those extensions in which all $U$-modules are left-right projective over $A$. One can then generalise Lemma 2.1 in \cite{Schw:AESIOTLBIHC} in a straightforward way and show that the inclusion of  $\underline{\ext}_U$ into $\ext_U$ induces a homotopy equivalence on classifying spaces. We omit the technical details here but feel therefore entitled to not explicitly distinguish between  $\underline{\ext}_U$ and $\ext_U$
in what follows.

As expounded in Appendix \S\ref{biappendix}, a standard result \cite[Prop.~4.4]{Schau:DADOQGHA}
      establishes an equivalence of braided monoidal categories between the left weak centre 
${\mathscr{Z}}^\ell(\umod)$ and $\yd$, the category
of left-left Yetter-Drinfel'd modules; likewise between ${\mathscr{Z}}^r(\umod)$ and $\ydr$, the category 
of left-right Yetter-Drinfel'd modules. In the former case, this is done by assigning to any $Z \in \yd$ the underlying left module $Z \in \umod$ along with what we might call a {\em left braiding}
\begin{equation}
     \label{sahnejoghurt0}
  \gs =  \gs_{\zett,\emme} \colon Z \otimes_A M \to M \otimes_A Z,
     \quad     z \otimes_A m \mapsto z_{(-1)}m \otimes_A z_{(0)}
\end{equation}
for any $M \in \umod$, 
to form an object $(Z, \sigma)$ in ${\mathscr{Z}}^\ell(\umod)$. Likewise, assign to any $X \in {\mathscr{Z}}^r(\umod)$ its underlying left module $X \in \umod$ along with the {\em right braiding}
 \begin{equation}
     \label{sahnejoghurt0a}
     \tau = \tau_{\emme, \zett} \colon M \otimes_A X \to X \otimes_A M,
     \quad
     m \otimes_A x \mapsto x_{[0]} \otimes_A x_{[1]}m
   \end{equation}
for any $M \in \umod$,
to give an object $(X, \tau)$ in ${\mathscr{Z}}^r(\umod)$, see
Appendix \ref{appendix} for more details and all notation used in what follows.

A cochain complex computing $\Ext^\bull_U(X,Z)$ is given by $\big(C^\bull(U, X, Z), \gd\big)$, where
\begin{equation}
  \label{caffe}
  C^n(U, X, Z) \coloneq \Hom_U(\mathrm{Bar}_n(U,X), Z),
\end{equation}
and
$$
\mathrm{Bar}_n(U,X) \coloneq (\due U \blact \ract)^{\otimes_\Aop {n+1}} \otimes_\Aop \due X {} \ract,
$$
with
$
\mathrm{Bar}_{-1}(U,X) \coloneq X
 $,
is the bar resolution of the left $U$-module $X$, seen as a right $A$-module $\due X {} \ract$ as in \eqref{forgetthis}, with differential $d = \sum^n_{i=0} (-1)^i d_i$ induced by the multiplication in $U$ in the first $n-1$ faces $d_i$ and the left $U$-action
\begin{equation}
  \label{leftaction}
  L\colon \mathrm{Bar}_0(U, X) = \due U {\blact} {} \otimes_\Aop \due X {} \ract \to X =  \mathrm{Bar}_{-1}(U, X),
\quad u \otimes_\Aop x \mapsto ux
\end{equation}
on $X$ that induces the last face map $d_n$. In a standard fashion, we then simply have $\gd \coloneq d^*$.
Observe that the tensor product used in the bar resolution is not the monoidal one \eqref{rain} of $\umod$ (in which factorwise multiplication would not be defined), but rather $\mathrm{Bar}_n(U,X)
\in \umod$ by left multiplication on the first tensor factor.

%in particular, $\mathrm{Bar}_0(U) = U \otimes_\Aop X$.

The Hochschild case for an associative $\kkk$-algebra $A$ and hence Schwede's construction in \cite{Schw:AESIOTLBIHC} is reobtained by setting $U = \Ae$ as well as $X=Z=A$ in what follows.

%As a under standing assumption in all what follows, let $(X,Z)$ be a commuting pair in the sense of \S\ref{quitecentral}.

To connect the context of a left bialgebroid to our general observations in \S\ref{opihashi}, 
let us write down in detail what the concrete setting is here: for two extensions $\E$ in $\ext^p_U(X,Z)$ and $\F$ in $\ext^q_U(X,Z)$, respectively, consider chain maps $\phi\colon \mathrm{Bar}(U, X) \to \E$ and
$\psi\colon \mathrm{Bar}(U, X) \to \F$  over the identity of $X$.
With respect to the bar resolution, Diagram \eqref{fame0} then takes the form
\begin{footnotesize}
\begin{equation}
  \label{fame}
\xymatrix@C=2.7em{
  0 \ar[r]^-d & \mathrm{Bar}_p(U,X) \ar[r]^-{d} \ar[d]_{\phi_p} & \mathrm{Bar}_{p-1}(U,X) \ar[r]^-{d} \ar[d]^{\phi_{p-1}} & \ldots \ar[r]^-{d}
  %&  \mathrm{Bar}_1(U) \ar[r]^-{d} \ar[d]_{\phi_1}
  &  \mathrm{Bar}_0(U,X) \ar[r]^-L \ar[d]_{\phi_0} & X \ar@{=}[d]%^{\id}
 \ar[r] & 0
  \\
 0 \ar[r]^d & Z \ar[r]^{i_\ehhe}  & E_{p-1}  \ar[r]^d & \ldots \ar[r]^{d}
 %&  E_1 \ar[r]^d
 & E_0 \ar[r]^{p_\ehhe} & X  \ar[r] & 0
}
\end{equation}
\end{footnotesize}
where $L$ is the left $U$-action on $X$ as in \eqref{leftaction}; likewise for $\psi$ and $\F$.

Observe that with respect to the differential $\gd = d^*$ of the cochain complex $C^n(U, X, Z)$ from \eqref{caffe}, the commutativity of this diagram can be expressed as
\begin{equation}
  \label{blumare}
  i_\ehhe \circ \phi_p = \phi_{p-1} \circ d = \gd\phi_{p-1},
  \qquad\qquad
  d \circ \phi_j = \phi_{j-1} \circ d = \gd\phi_{j-1},
  \end{equation}
for $j = 1, \ldots, p-1$, and likewise for $\psi$. In particular, the first square in Diagram \eqref{fame} implies that
\begin{equation}
  \label{famepigreco}
\phi_p \circ d = \gd \phi_p = 0,
\end{equation}
that is, $\phi_p$ is a cocycle (and likewise for $\psi_q$), whereas the last square reads
\begin{equation}
  \label{fame1047}
p_\ehhe\circ \phi_0=L.
\end{equation}

\subsection{The external cup product}
\label{external}
As above, let $(Z, \gs)$ be an object in the left weak centre ${\mathscr{Z}}^\ell(\umod) \simeq \yd$ and $(X, \tau)$ an object in the right weak centre ${\mathscr{Z}}^r(\umod) \simeq \ydr$,
and consider again two extensions
$$
\E\colon 0 \to Z \xrightarrow{\scriptscriptstyle i_\ehhe} E_{p-1}  \xrightarrow{d} \ldots  \xrightarrow{d} E_0 \xrightarrow{\scriptscriptstyle p_\ehhe} X \to 0,
$$
$$
\F\colon 0 \to Z \xrightarrow{\scriptscriptstyle i_\effe} F_{q-1}  \xrightarrow{d} \ldots  \xrightarrow{d} F_0 \xrightarrow{\scriptscriptstyle p_\effe} X \to 0,
$$
of $U$-modules, along with
two
families of morphisms $\phi_j \in C^j(U, X, E_j)$ for $j = 0, \ldots, p$ and $\psi_i \in  C^i(U, X, F_i)$ for $i = 0, \ldots, q$ as in Diagram \ref{fame}.

For these, one can define a sort of $1$-product, or {\em external cup product}   
%\begin{small}
\begin{equation*}
  \begin{array}{rcl}
    \cup_{\scriptscriptstyle \otimes} \colon C^j(U, X, E_j) \otimes C^i(U, X, F_i) 
    \!\!\!    &\to \!\!\! & C^{j+i}(U, X \otimes_A X, E_j \otimes_A F_i)
%{\color{purple}\to C^{j+q-1}(U, X \otimes_A X, Moloch_{??})} ,
%    \\[1mm]
%    (\phi_j \cup \psi_i)(u^0, \ldots, u^{i+j})
% \!\!\!   &\coloneq \!\!\!&    \phi_j(u^0_{(1)}, \ldots, u^j_{(1)}) \otimes_A % \psi_i(u^0_{(2)} \cdots u^j_{(2)}, u^{j+1}, \ldots, u^{j+i}).
\end{array}
\end{equation*}
%  \end{small}
for $i =1, \ldots, p$ and $j = 1, \ldots, q$,
by defining
  \begin{equation}
\label{cupcup}
  \begin{split}
&   (\phi_j \cup_{\scriptscriptstyle \otimes} \psi_i)(u^0, \ldots, u^{j+i}, m \otimes_A m')
    \\
    &
    \quad
    \coloneq     \phi_j\big(u^0_{(1)}, \ldots, u^j_{(1)}, (u^{j+1}_{(2)} \cdots u^{j+i}_{(2)} m')_{[0]}\big)
    \\
    &
    \qquad\qquad
    \otimes_A \psi_i\big(u^0_{(2)} \cdots u^j_{(2)}
(u^{j+1}_{(2)} \cdots u^{j+i}_{(2)} m')_{[1]}
    , u^{j+1}_{(1)}, \ldots, u^{j+i}_{(1)}, m\big),
\end{split}
  \end{equation}
where the Sweedler notation in square brackets refers to the right $U$-coaction on $X$, see \S\ref{ydydydyd} for notational details.

\begin{rem}\label{rem:annette}  Observe that
$$
C^{j+i}(U, X \otimes_A X, E_j \otimes_A F_i) \subset  C^{j+i}\big(U, X \otimes_A X, \mathbf{Moloch}(\E ,\F)\big)
$$
for $i, j \geq 0$, so we may regard $\cup_{\scriptscriptstyle \otimes}$ as a map into the latter. In other words, given two chain maps $\phi$ and $\psi$ as above, the operation $\cup_{\scriptscriptstyle \otimes}$ defines a collection of linear morphisms
\[
(\phi\cup_{\scriptscriptstyle \otimes}\psi)_k\colon \mathrm{Bar}_k(U,X \otimes_A X)\to \mathbf{Moloch}(\E ,\F)_k, \qquad k\geq 0,
\]
which we extend to degree $-1$ by setting $(\phi\cup_{\scriptscriptstyle \otimes}\psi)_{-1}\coloneq\tau\colon X\otimes_AX\to X\otimes_A X$.
\end{rem}

%for two chain maps  $\phi\colon \mathrm{Bar}(U) \to E$ and
%$\psi\colon \mathrm{Bar}(U) \to F$. More general, \eqref{cupcup} makes sense for any to left $U$-modules $M, N \in \umod$ in place of $E_j$ and $F_i$ and any two cochains $\phi_j \in C^j(U, M)$ and  $\psi_i \in C^i(U, N)$.

\begin{rem}\label{rem:theabyss}
By functoriality of the tensor product and the (left-right) YD condition \eqref{yd2}, it is not difficult to see that the map $\tau_\bull\colon  \mathrm{Bar}_\bull(U,X\otimes_A X) \to \mathrm{Bar}_\bull(U,X\otimes_A X)$ is a morphism of complexes, where $\tau_{-1}=\tau$. 
\end{rem}

\begin{lem}
\label{leibnizcup}
  Let $\phi_j \in C^j(U,X,E_j)$ and $\psi_i \in C^i(U, X, Z)$ and as before let $\gd = d^*$ be the pullback of the differential of the bar resolution.
  Then $\gd$ is a derivation of the external cup product \eqref{cupcup}, that is,
  the graded Leibniz rule
  \begin{equation}
    \label{cupleibniz}
    \gd(\phi_j \cup_{\scriptscriptstyle \otimes} \psi_i)
    = \gd \phi_j \cup_{\scriptscriptstyle \otimes} \psi_i
    + (-1)^{j} \phi_j \cup_{\scriptscriptstyle \otimes} \gd \psi_i
    \end{equation}
  holds for any $i, j \geq 0$.
\end{lem}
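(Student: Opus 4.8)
The plan is to verify the graded Leibniz rule \eqref{cupleibniz} by a direct computation, expanding both sides in terms of the explicit formula \eqref{cupcup} for $\cup_{\scriptscriptstyle \otimes}$ and the simplicial differential $d = \sum_{i=0}^n (-1)^i d_i$ of the bar resolution. First I would recall that $\gd\phi_j = d^*\phi_j = \phi_j \circ d = \phi_{j-1}\circ d$ under the chain-map identification, so that the statement is really a statement about the interaction of $\cup_{\scriptscriptstyle \otimes}$ with the alternating sum of face maps $d_i$ on $\mathrm{Bar}_\bullet(U, X\otimes_A X)$. The key combinatorial point is the usual one for cup products of simplicial cochains: the faces $d_0,\dots,d_{j-1}$ act only on the ``first block'' $u^0,\dots,u^j$ of arguments fed into $\phi_j$, the faces $d_{j+1},\dots,d_{j+i+1}$ act only on the ``second block'' $u^{j+1},\dots,u^{j+i},m$ fed into $\psi_i$, while the middle face $d_j$ straddles the two blocks and its two contributions — one from $\gd\phi_j$ evaluated on the concatenated arguments and one from $\phi_j \cup_{\scriptscriptstyle \otimes}\gd\psi_i$ — must be seen to cancel (or rather: to combine into exactly the straddling face of $d$ on the total complex).

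The key steps, in order: (1) write out $d\circ(\phi_j\cup_{\scriptscriptstyle \otimes}\psi_i)$ as $\sum_{k=0}^{j+i}(-1)^k d_k\big((\phi_j\cup_{\scriptscriptstyle \otimes}\psi_i)(\dots)\big)$ using the external-cup formula \eqref{cupcup}, being careful about where the coproducts $u^r_{(1)}, u^r_{(2)}$ and the coaction Sweedler legs $(\,\cdot\,)_{[0]},(\,\cdot\,)_{[1]}$ land; (2) split this sum into $k<j$, $k=j$, $k=j+1$, $k>j+1$, and observe that the $k<j$ terms reassemble (after re-indexing and using coassociativity of $\Delta$) into $(\gd\phi_j\cup_{\scriptscriptstyle \otimes}\psi_i)$ up to the missing straddling face, and the $k>j+1$ terms reassemble into $(-1)^j(\phi_j\cup_{\scriptscriptstyle \otimes}\gd\psi_i)$ up to the other straddling face; (3) check that the two straddling contributions (the $k=j$ term from the first reassembly and the $k=j$ or $k=j+1$ term from the second) match, which is exactly where the bialgebroid axioms — coassociativity, the module-algebra/coaction compatibility, and the left-right Yetter–Drinfel'd condition \eqref{yd2} — are used to move a coaction leg past a product $u^{j+1}_{(2)}\cdots u^{j+i}_{(2)}$; (4) handle the boundary degrees $k=0$ and $k=j+i$ separately, since there the action $L$ (rather than a multiplication face) enters, and confirm the formula still closes; (5) treat the degenerate cases $i=0$ or $j=0$ (where $\phi_0$ or $\psi_0$ sits over $X$ via the action $L$) by the same bookkeeping, using the degree $-1$ convention $(\phi\cup_{\scriptscriptstyle \otimes}\psi)_{-1} = \tau$ from Remark \ref{rem:annette} together with Remark \ref{rem:theabyss} that $\tau_\bullet$ is a chain map.

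The main obstacle I expect is step (3): tracking the Sweedler legs of the coaction through the product $u^{j+1}_{(2)}\cdots u^{j+i}_{(2)} m'$ when the middle face $d_j$ multiplies $u^j$ into $u^{j+1}$, so that the argument $(u^{j+1}_{(2)}\cdots u^{j+i}_{(2)} m')_{[0]}$ in $\phi_j$ and the argument $(u^{j+1}_{(2)}\cdots u^{j+i}_{(2)} m')_{[1]}$ in $\psi_i$ have to be correctly re-expressed after the multiplication, and verifying that this is governed precisely by the right $U$-coaction being an algebra map over $A$ together with the YD compatibility \eqref{yd2}. The signs also require care: the $(-1)^j$ twist in \eqref{cupleibniz} must emerge from shifting the summation index of the faces acting on the second block past the $j$ arguments of the first block, exactly as in the classical Hochschild Leibniz computation, and one should double-check this against the degree-shift conventions of Remark \ref{rem:degree-shift}. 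Once these three bookkeeping points (coassociativity re-indexing, coaction-through-product, sign shift) are pinned down, the identity \eqref{cupleibniz} follows by termwise comparison; there is no deeper structural input needed beyond the bialgebroid and Yetter–Drinfel'd axioms already collected in Appendix \S\ref{biappendix}.
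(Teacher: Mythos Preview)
Your approach is essentially the same as the paper's: a direct verification by expanding both sides of \eqref{cupleibniz} using the explicit formula \eqref{cupcup} and the simplicial faces of the bar differential, with the Yetter--Drinfel'd condition \eqref{yd2} supplying the one nontrivial identity needed to match the straddling face. The paper organises the bookkeeping slightly differently---it writes out each of $\gd(\phi_j\cup_{\scriptscriptstyle\otimes}\psi_i)$, $\gd\phi_j\cup_{\scriptscriptstyle\otimes}\psi_i$, and $(-1)^j\phi_j\cup_{\scriptscriptstyle\otimes}\gd\psi_i$ in full and compares termwise, rather than splitting the first into blocks---but the content is identical. A couple of minor slips in your outline: the face sum on $\mathrm{Bar}_{j+i+1}$ runs over $k=0,\dots,j+i+1$ (not $j+i$), and the action $L$ enters at the last face $d_{j+i+1}$ rather than $d_{j+i}$; also, the degree $-1$ convention with $\tau$ from Remark~\ref{rem:annette} is not needed here, since the lemma is a cochain-level identity for fixed $\phi_j,\psi_i$ rather than a statement about the full chain map.
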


\begin{proof}
  This is a direct verification using \eqref{cupcup}, the explicit form of the differential in the bar resolution as described below \eqref{caffe}, the fact that the coproduct is a ring morphism, along with the monoidal structure \eqref{rain} in $\umod$ and $X$ as an object in the weak right centre of $\umod$, that is, that the (left-right) YD condition \eqref{yd2} is true.
  Indeed, one has
   \begin{small}
  \begin{equation*}
    \begin{split}
      & \big(\gd(\phi_j \cup_{\scriptscriptstyle \otimes} \psi_i)\big)
      (u^0, \ldots, u^{j+i+1}, m \otimes_A m')
     % \\
     % &
     % \quad
      =
      (\phi_j \cup_{\scriptscriptstyle \otimes} \psi_i) \big(d(u^0, \ldots, u^{j+i+1}, m \otimes_A m')\big)
         \\
      &
      =
\textstyle\sum\limits_{k=0}^{j+i} (-1)^k      (\phi_j \cup_{\scriptscriptstyle \otimes} \psi_i) (u^0, \ldots, u^k u^{k+1}, \ldots, u^{j+i+1}, m \otimes_A m')
\\
&
\quad
+ (-1)^{j+i+1}  (\phi_j \cup_{\scriptscriptstyle \otimes} \psi_i) (u^0, \ldots, u^{j+i}, u^{j+i+1}_{(1)} m \otimes_A u^{j+i+1}_{(2)} m')
       \\
      &
      =
      \textstyle\sum\limits_{k=0}^{j} (-1)^k
     \phi_j\big(u^0_{(1)}, \ldots, u^{k}_{(1)} u^{k+1}_{(1)} , \ldots, u^{j+1}_{(1)}, (u^{j+2}_{(2)} \cdots u^{j+i+1}_{(2)} m')_{[0]}\big)
    \\
    &
    \qquad\qquad
    \otimes_A \psi_i\big(u^0_{(2)} \cdots u^k_{(2)} u^{k+1}_{(2)}  \cdots u^{j+1}_{(2)}
(u^{j+2}_{(2)} \cdots u^{j+i+1}_{(2)} m')_{[1]}
    , u^{j+2}_{(1)}, \ldots, u^{j+i+1}_{(1)}, m\big)
    \\
    &
    \quad
    +
          \textstyle\sum\limits_{k=j+1}^{j+i} (-1)^k
     \phi_j\big(u^0_{(1)},  \ldots, u^j_{(1)}, (u^{j+1}_{(2)} \cdots u^{k}_{(2)} u^{k+1}_{(2)}  \cdots u^{j+i+1}_{(2)} m')_{[0]}\big)
    \\
    &
    \qquad\quad
    \otimes_A \psi_i\big(u^0_{(2)} \cdots u^j_{(2)}
(u^{j+1}_{(2)} \cdots  u^{k}_{(2)} u^{k+1}_{(2)}  \cdots u^{j+i+1}_{(2)} m')_{[1]}
    , u^{j+1}_{(1)}, \ldots,  u^{k}_{(1)} u^{k+1}_{(1)}  \ldots, u^{j+i+1}_{(1)}, m\big)
    \\[2pt]
    &
    \quad
    + (-1)^{j+i+1}
 \phi_j\big(u^0_{(1)}, \ldots, u^j_{(1)}, (u^{j+1}_{(2)} \cdots u^{j+i+1}_{(2)} m')_{[0]}\big)
    \\[2pt]
    &
    \qquad\qquad
    \otimes_A \psi_i\big(u^0_{(2)} \cdots u^j_{(2)}
(u^{j+1}_{(2)} \cdots u^{j+i+1}_{(2)} m')_{[1]}
    , u^{j+1}_{(1)}, \ldots, u^{j+i}_{(1)},  u^{j+i+1}_{(1)} m\big)
     \end{split}
    \end{equation*}
\end{small}
  On the other hand,
  \begin{small}
  \begin{equation*}
    \begin{split}
      & (\gd \phi_j \cup_{\scriptscriptstyle \otimes} \psi_i)
      (u^0, \ldots, u^{j+i+1}, m \otimes_A m')
         \\
      &
      =
 \gd\phi_j\big(u^0_{(1)}, \ldots, u^{j+1}_{(1)}, (u^{j+2}_{(2)} \cdots u^{j+i+1}_{(2)} m')_{[0]}\big)
    \\
    &
    \qquad\qquad
    \otimes_A \psi_i\big(u^0_{(2)} \cdots u^{j+1}_{(2)}
(u^{j+2}_{(2)} \cdots u^{j+i+1}_{(2)} m')_{[1]}
    , u^{j+2}_{(1)}, \ldots, u^{j+i+1}_{(1)}, m\big)
         \\
      &
      =
      \textstyle\sum\limits_{k=0}^{j} (-1)^k \phi_j\big(u^0_{(1)}, \ldots,
 u^{k}_{(1)} u^{k+1}_{(1)}, \ldots,
      u^{j+1}_{(1)}, (u^{j+2}_{(2)} \cdots u^{j+i+1}_{(2)} m')_{[0]}\big)
    \\[-1pt]
    &
    \qquad\qquad
    \otimes_A \psi_i\big(u^0_{(2)} \cdots u^{j+1}_{(2)}
(u^{j+2}_{(2)} \cdots u^{j+i+1}_{(2)} m')_{[1]}
    , u^{j+2}_{(1)}, \ldots, u^{j+i+1}_{(1)}, m\big)
         \\[1pt]
      &
      \quad
      +
 (-1)^{j+1} \phi_j\big(u^0_{(1)}, \ldots,
     u^{j}_{(1)},  u^{j+1}_{(1)}(u^{j+2}_{(2)} \cdots u^{j+i+1}_{(2)} m')_{[0]}\big)
    \\
    &
    \qquad\qquad
    \otimes_A \psi_i\big(u^0_{(2)} \cdots u^{j+1}_{(2)}
(u^{j+2}_{(2)} \cdots u^{j+i+1}_{(2)} m')_{[1]}
    , u^{j+2}_{(1)}, \ldots, u^{j+i+1}_{(1)}, m\big)
       \\
      &
=
      \textstyle\sum\limits_{k=0}^{j} (-1)^k \phi_j\big(u^0_{(1)}, \ldots,
 u^{k}_{(1)} u^{k+1}_{(1)}, \ldots,
      u^{j+1}_{(1)}, (u^{j+2}_{(2)} \cdots u^{j+i+1}_{(2)} m')_{[0]}\big)
    \\[-1pt]
    &
    \qquad\qquad
    \otimes_A \psi_i\big(u^0_{(2)} \cdots  u^{k}_{(2)} u^{k+1}_{(2)} \cdots u^{j+1}_{(2)}
(u^{j+2}_{(2)} \cdots u^{j+i+1}_{(2)} m')_{[1]}
    , u^{j+2}_{(1)}, \ldots, u^{j+i+1}_{(1)}, m\big)
         \\[1pt]
      &
      \quad
      +
 (-1)^{j+1} \phi_j\big(u^0_{(1)}, \ldots,
     u^{j}_{(1)},  (u^{j+1}_{(2)} \cdots u^{j+i+1}_{(2)} m')_{[0]}\big)
    \\
    &
    \qquad\qquad
    \otimes_A \psi_i\big(u^0_{(2)} \cdots u^{j}_{(2)}
(u^{j+1}_{(2)} \cdots u^{j+i+1}_{(2)} m')_{[1]}u^{j+1}_{(1)}
    , u^{j+2}_{(1)}, \ldots, u^{j+i+1}_{(1)}, m\big), 
    \end{split}
    \end{equation*}
\end{small}
  where the Yetter-Drinfel'd condition \eqref{yd2} was used in the last step for the last summand. Moreover,
   \begin{small}
  \begin{equation*}
    \begin{split}
      & (-1)^j(\phi_j \cup_{\scriptscriptstyle \otimes} \gd \psi_i)
      (u^0, \ldots, u^{j+i+1}, m \otimes_A m')
         \\
      &
      =
 (-1)^j \phi_j\big(u^0_{(1)}, \ldots, u^j_{(1)}, (u^{j+1}_{(2)} \cdots u^{j+i+1}_{(2)} m')_{[0]}\big)
    \\
    &
    \qquad\qquad
    \otimes_A \gd \psi_i\big(u^0_{(2)} \cdots u^j_{(2)}
(u^{j+1}_{(2)} \cdots u^{j+i+1}_{(2)} m')_{[1]}
    , u^{j+1}_{(1)}, \ldots, u^{j+i+1}_{(1)}, m\big)
 \\
      &
 =
(-1)^j \phi_j\big(u^0_{(1)}, \ldots, u^j_{(1)}, (u^{j+1}_{(2)} \cdots u^{j+i+1}_{(2)} m')_{[0]}\big)
    \\
    &
    \qquad\qquad
    \otimes_A \psi_i\big(u^0_{(2)} \cdots u^j_{(2)}
(u^{j+1}_{(2)} \cdots u^{j+i+1}_{(2)} m')_{[1]}u^{j+1}_{(1)}, u^{j+2}_{(1)} \ldots, u^{j+i+1}_{(1)}, m\big)
    \\
    &
    \quad
    +
 \textstyle\sum\limits_{k=1}^{i} (-1)^{k+j} \phi_j\big(u^0_{(1)}, \ldots, u^j_{(1)}, (u^{j+1}_{(2)} \cdots u^{j+i+1}_{(2)} m')_{[0]}\big)
    \\[-1pt]
    &
    \qquad\quad
    \otimes_A \psi_i\big(u^0_{(2)} \cdots u^j_{(2)}
(u^{j+1}_{(2)} \cdots u^{j+i+1}_{(2)} m')_{[1]}
    , u^{j+1}_{(1)}, \ldots,  u^{k+j}_{(1)}u^{k+j+1}_{(1)}, \ldots, u^{j+i+1}_{(1)}, m\big)
       \\
    &
    \quad \
    +
 (-1)^{j+i+1} \phi_j\big(u^0_{(1)}, \ldots, u^j_{(1)}, (u^{j+1}_{(2)} \cdots u^{j+i+1}_{(2)} m')_{[0]}\big)
    \\[-1pt]
    &
    \qquad\qquad
    \otimes_A \psi_i\big(u^0_{(2)} \cdots u^j_{(2)}
(u^{j+1}_{(2)} \cdots u^{j+i+1}_{(2)} m')_{[1]}
    , u^{j+1}_{(1)}, \ldots,  u^{k+j+1}_{(1)}, u^{j+i+1}_{(1)}m\big).
    \end{split}
    \end{equation*}
\end{small} 
   By reindexing the second sum right above and subsequently comparing the three explicit expressions just obtained for
    $\gd(\phi_j \cup_{\scriptscriptstyle \otimes} \psi_i)$, as well as
    $\gd \phi_j \cup_{\scriptscriptstyle \otimes} \psi_i$ and $(-1)^{j} \phi_j \cup_{\scriptscriptstyle \otimes} \gd \psi_i$, one confirms that Eq.~\eqref{cupleibniz} is true. 
    \end{proof}

\subsection{A cocycle representative for the cup product}
\label{here}
We now have the technical prerequisites to explicitly illustrate the path indicated in \S\ref{torloniasantisubito}. As a start, we need to prove:

\begin{lemma}
  \label{lem:maywestart}
  If
$\phi\colon \mathrm{Bar}(U, X) \to \E$ and
$\psi\colon \mathrm{Bar}(U, X) \to \F$ 
  are chain maps as above, then the collection of maps $(\phi\cup_{\scriptscriptstyle \otimes} \psi)_k\colon \mathrm{Bar}_k(U,X\otimes_A X)\to \mathbf{Moloch}(\E,\F)_k$
  from Remark \ref{rem:annette}
induced by the external cup product \eqref{cupcup}
  defines  a $\tau$-twisted cocycle representative  $\phi\cup_{\scriptscriptstyle \otimes} \psi\colon \mathrm{Bar}(U,X\otimes_A X)\to \mathbf{Moloch}(\E,\F)$.
\end{lemma}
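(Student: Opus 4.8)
The plan is to check, one by one, the three conditions that make a collection of maps a $\tau$-twisted cocycle representative as in diagram \eqref{fame00}: that each $(\phi\cup_{\scriptscriptstyle \otimes}\psi)_k$ takes values in $\mathbf{Moloch}(\E,\F)_k$; that the collection intertwines the two internal differentials in positive degrees; and that at the bottom the square of the two augmentations commutes \emph{with the prescribed twist}, i.e.\ $(p_\ehhe\otimes_A p_\effe)\circ(\phi\cup_{\scriptscriptstyle \otimes}\psi)_0=\tau\circ L$, where $p_\ehhe\otimes_A p_\effe\colon E_0\otimes_A F_0\to X\otimes_A X$ is the augmentation of $\mathbf{Moloch}(\E,\F)$ and $L$ is the bar augmentation \eqref{leftaction} of $\mathrm{Bar}_\bull(U,X\otimes_A X)$. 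The first condition is precisely Remark \ref{rem:annette}: since $\phi_j\cup_{\scriptscriptstyle \otimes}\psi_i$ is valued in $E_j\otimes_A F_i$, which for $i+j=k$ is one of the summands making up $\mathbf{Moloch}(\E,\F)_k$ (with the conventions $E_p=Z$, $F_q=Z$ and $\mathbf{Moloch}(\E,\F)_{-1}=X\otimes_A X$), one simply sets $(\phi\cup_{\scriptscriptstyle \otimes}\psi)_k\coloneq\sum_{i+j=k}\phi_j\cup_{\scriptscriptstyle \otimes}\psi_i$, with the signs dictated by the totalisation, and $(\phi\cup_{\scriptscriptstyle \otimes}\psi)_{-1}\coloneq\tau$.

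For the chain-map condition in positive degrees the idea is to feed the graded Leibniz rule of Lemma \ref{leibnizcup} into the right-hand side and the chain-map identities \eqref{blumare} for $\phi$ and $\psi$ (which give $d_\E\circ\phi_j=\gd\phi_{j-1}$ and $d_\F\circ\psi_i=\gd\psi_{i-1}$ for $1\leq j\leq p$, $1\leq i\leq q$) into the left-hand side, and to match the two after a shift of summation indices. Concretely, the differential of $\mathbf{Moloch}(\E,\F)$ is the \emph{truncated} total differential of $\E\otimes_A\F$: over $E_j\otimes_A F_i$ with $i,j\geq 1$ it is $d_\E\otimes_A\id+(-1)^j\id\otimes_A d_\F$, over $E_j\otimes_A F_0$ (resp.\ $E_0\otimes_A F_i$) only $d_\E\otimes_A\id$ (resp.\ $\id\otimes_A d_\F$) survives, and over $E_0\otimes_A F_0$ it is $p_\ehhe\otimes_A p_\effe$. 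Using functoriality of $\cup_{\scriptscriptstyle \otimes}$ in each slot together with \eqref{blumare}, applying this differential to $(\phi\cup_{\scriptscriptstyle \otimes}\psi)_k$ and reindexing ($j\mapsto j+1$ in the $d_\E$-part, $i\mapsto i+1$ in the $d_\F$-part) produces, for $k\geq 1$,
\begin{align*}
d\circ(\phi\cup_{\scriptscriptstyle \otimes}\psi)_k
&=\sum_{i+j=k-1}\bigl(\gd\phi_j\cup_{\scriptscriptstyle \otimes}\psi_i+(-1)^j\phi_j\cup_{\scriptscriptstyle \otimes}\gd\psi_i\bigr)\\
&=\sum_{i+j=k-1}\gd(\phi_j\cup_{\scriptscriptstyle \otimes}\psi_i)
=(\phi\cup_{\scriptscriptstyle \otimes}\psi)_{k-1}\circ d,
\end{align*}
the middle equality being Lemma \ref{leibnizcup}. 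In the top degree the same manipulation, now using that $\phi_p$ and $\psi_q$ are cocycles \eqref{famepigreco}, gives $\gd(\phi_p\cup_{\scriptscriptstyle \otimes}\psi_q)=0$, which is the compatibility required there since $\mathbf{Moloch}(\E,\F)_{p+q+1}=0$.

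For the twisted bottom square, \eqref{fame1047} gives $p_\ehhe\circ\phi_0=L$ and $p_\effe\circ\psi_0=L$, so $(p_\ehhe\otimes_A p_\effe)\circ(\phi_0\cup_{\scriptscriptstyle \otimes}\psi_0)$ is what \eqref{cupcup} becomes in bidegree $(0,0)$ once $\phi_0$ and $\psi_0$ are replaced by the $U$-action $L$ on $X$; carrying out this substitution and comparing with $\tau\circ L$ — where $L$ on $\mathrm{Bar}_0(U,X\otimes_A X)$ uses the monoidal $U$-action \eqref{rain} and $\tau$ is the right braiding \eqref{sahnejoghurt0a} — turns the identity into exactly the left-right Yetter-Drinfel'd condition \eqref{yd2}, with the coaction term acting on the appropriate factor. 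This is literally the augmentation part of the statement of Remark \ref{rem:theabyss} that $\tau_\bull$ is a chain endomorphism of $\mathrm{Bar}_\bull(U,X\otimes_A X)$, so nothing new is needed.

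The one genuinely delicate point is the index- and sign-bookkeeping in the middle step: one must check that the components of the total differential that the truncation \emph{deletes} from the differential of $\mathbf{Moloch}(\E,\F)$ are exactly the terms the reindexed Leibniz rule does \emph{not} contain (these are the would-be ``$j=-1$'' and ``$i=-1$'' contributions, involving $L$ and landing in the removed row $E_\bull\otimes_A X$ and column $X\otimes_A F_\bull$), and that the Koszul sign $(-1)^j$ built into the totalisation is the one appearing in Lemma \ref{leibnizcup}. Everything else is formal.
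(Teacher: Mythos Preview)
Your proposal is correct and follows essentially the same route as the paper: the chain-map property in degrees $1\le k\le p+q$ is obtained by combining the graded Leibniz rule \eqref{cupleibniz} with the identities \eqref{blumare} (and \eqref{famepigreco} at the top), and the twisted bottom square is reduced via \eqref{fame1047} to the identity $L\cup_{\scriptscriptstyle\otimes}L=\tau\circ L$, which is a direct consequence of the YD condition \eqref{yd2}. Your explicit discussion of the truncation bookkeeping is a welcome addition that the paper leaves implicit; one small caveat is that no extra signs are inserted into the map $(\phi\cup_{\scriptscriptstyle\otimes}\psi)_k=\sum_{i+j=k}\phi_j\cup_{\scriptscriptstyle\otimes}\psi_i$ itself---the Koszul sign $(-1)^j$ lives entirely in the differential of $\mathbf{Moloch}(\E,\F)$.
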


\begin{proof}
  This is a direct consequence of Lemma \ref{leibnizcup} and the Leibniz rule \eqref{cupleibniz}: to start with, let us prove, for $k = 1, \ldots, p+q$, that the diagram
\begin{equation*}
\xymatrix{
  \mathrm{Bar}_k(U,X \otimes_A X) \ar[r]^-{d} \ar[d]_{\sum\limits_{j+i = k}\!\!\!(\phi_j \, \cup_{\otimes}  \, \psi_i)} & \mathrm{Bar}_{k-1}(U,X \otimes_A X)
 \ar[d]^{\sum\limits_{j+i = k-1} \!\!\!\!\!\!(\phi_j \, \cup_{\otimes}  \, \psi_i)}
\\
\textstyle\bigoplus\limits_{{j+i = k}} \!\!\! E_j \otimes_A F_i \ar[r]^-d & \textstyle\bigoplus\limits_{j+i = k-1} \!\!\!\!\!\!E_j \otimes_A F_i  
}
\end{equation*}
commutes, where $i, j \geq 0$.
Indeed, by means of \eqref{blumare}, one has
\begin{equation*}
  \begin{split}
    d \circ \big(\textstyle\sum\limits_{\mathclap{j+i = k}} \phi_j  \cup_{\otimes}   \psi_i \big)
    & =  {\textstyle\sum\limits_{j+i = k}} \big( (d \circ \phi_j)  \cup_{\otimes}   \psi_i + (-1)^j \phi_j  \cup_{\otimes}   (d \circ \psi_i) \big)
    \\
      & =  {\textstyle\sum\limits_{j+i = k}} \big( \gd \phi_{j-1}  \cup_{\otimes}   \psi_i + (-1)^j \phi_j  \cup_{\otimes}   \gd \psi_{i-1} \big).
    \end{split}
  \end{equation*}
On the other hand, using the Leibniz rule \eqref{cupleibniz}, we compute
\begin{equation*}
  \begin{split}
   \big(\textstyle\sum\limits_{\mathclap{j+i = k-1}} \phi_j  \cup_{\otimes}   \psi_i \big) \circ d
   & =
      \textstyle\sum\limits_{{j+i = k-1}} \gd\big(\phi_j  \cup_{\otimes}   \psi_i \big) 
    \\
    & =  {\textstyle\sum\limits_{j+i = k-1}} \big( \gd \phi_{j}  \cup_{\otimes}   \psi_i + (-1)^j \phi_j  \cup_{\otimes}   \gd \psi_{i} \big)
        \\
      & =  {\textstyle\sum\limits_{j+i = k}} \gd \phi_{j-1}  \cup_{\otimes}   \psi_i + (-1)^j  {\textstyle\sum\limits_{j+i = k}} \phi_j  \cup_{\otimes}   \gd \psi_{i-1},
    \end{split}
  \end{equation*}
which proves the commutativity of the diagram above; for $k > p$ or $k > q$, this also needs the cocycle condition \eqref{famepigreco}. In lowest degree, {\em i.e.}, for $k=0$, we obtain the diagram
\begin{equation*}
\xymatrix{
  \mathrm{Bar}_0(U,X \otimes_A X) \ar[r]^-{L} \ar[d]_{\phi_0 \, \cup_{\otimes}  \, \psi_0} & X \otimes_A X
 \ar[d]^{\tau}
\\
E_0 \otimes_A F_0 \ar[r]^-{p_\ehhe \otimes_A p_\F} & X \otimes_A X 
}
\end{equation*}
where $\tau$ is the braiding as in \eqref{sahnejoghurt0a}. By virtue of \eqref{fame1047}, this reduces to the claim
$
L \cup_{\scriptscriptstyle{\otimes}} L = \tau \circ L.
$
Indeed, with \eqref{cupcup} and the (left-right) YD condition \eqref{yd2}, we directly have
\begin{equation*}
  \begin{split}
(L \cup_{\scriptscriptstyle{\otimes}} L)(u^0, m \otimes_A m') &=
L(u^0_{(1)}, m'_{[0]}) \otimes_A  L(u^0_{(2)} m'_{[1]}, m)
= u^0_{(1)} m'_{[0]} \otimes_A  u^0_{(2)} m'_{[1]} m
\\
&=
(u^0_{(2)} m')_{[0]} \otimes_A  (u^0_{(2)} m')_{[1]} u^0_{(1)} m
\\
&= \tau(u^0_{(1)} m \otimes_A  u^0_{(2)} m')
\\
&
= (\tau \circ L)(u^0, m \otimes_A m'),
  \end{split}
  \end{equation*}
using the monoidal structure \eqref{rain} in the last step.
  \end{proof}

\begin{rem} For the external cup product of two $0$-cocycle representatives  $\E\colon X \to Z$ resp.\ $\F\colon X \to Z$, the statement of Lemma \ref{lem:maywestart} amounts to the commutativity of
\[
\xymatrix{
  \mathrm{Bar}_0(U,X \otimes_A X) \ar[r]^-{L} \ar[d]_{\phi_0 \, \cup_{\otimes}  \, \psi_0} & X \otimes_A X
  \ar[d]^{%(\E \otimes_A \F) \circ
    \tau}
\\
Z \otimes_A Z & X \otimes_A X \ar[l]^{\E \otimes_A \F}
%\ar@{=}[r] & Z \otimes_A Z 
}
\]
which again is a consequence of the YD condition \eqref{yd2}, where
$\phi_0 = \E \circ L$ resp.\ $\psi_0 = \F \circ L$ are
as in \eqref{zerozero}. Equivalently, this is the commutativity of
\[
\xymatrix{
  \mathrm{Bar}_0(U,X \otimes_A X) \ar[r]^-{L} \ar[d]_{\phi_0 \, \cup_{\otimes}  \, \psi_0} & X \otimes_A X
  \ar[d]^{(\E \otimes_A \F) \circ
    \tau}
\\
Z \otimes_A Z 
\ar@{=}[r] & Z \otimes_A Z 
}
\]
expressing that $\phi_0  \cup_{\otimes}   \psi_0$ is a $\tau$-twisted $0$-cocycle representative, see Eq.~\eqref{eq:tonight}.

\end{rem}
    
\begin{cor}
  \label{cor:thatsit}
  Let $\lambda_{\ehhe,\effe}\colon  \mathbf{Moloch}(\E,\F)\to (\E  \otimes_A Z) \#(X \otimes_A \F)  $ be the morphism of complexes from Eq.~\eqref{hiddensee1}. Then
  $$
  \lambda_{\ehhe,\effe}(\phi\cup_{\otimes}\psi)\circ \tau_\bull\colon \mathrm{Bar}(U,X\otimes_A X) \to (\E  \otimes_A Z)\# (X \otimes_A \F)  
  $$
  is a $\tau^2$-twisted cocycle representative.
\end{cor}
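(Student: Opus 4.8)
The plan is to obtain the corollary as a purely formal consequence of Lemma \ref{lem:maywestart}, Remark \ref{rem:theabyss}, and the twist--bookkeeping principle recorded in Remark \ref{rem:astray}, with no new computation beyond what has already been carried out in the excerpt.

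First, I would invoke Lemma \ref{lem:maywestart}: the collection $(\phi \cup_{\scriptscriptstyle \otimes} \psi)_\bull$, extended to degree $-1$ by $(\phi \cup_{\scriptscriptstyle \otimes} \psi)_{-1} = \tau$ as in Remark \ref{rem:annette}, is a $\tau$-twisted cocycle representative $\phi \cup_{\scriptscriptstyle \otimes} \psi \colon \mathrm{Bar}(U, X \otimes_A X) \to \mathbf{Moloch}(\E,\F)$. Next, I would record that, by its very construction in \eqref{hiddensee1}, the map $\lambda_{\ehhe,\effe} \colon \mathbf{Moloch}(\E,\F) \to (\E \otimes_A Z) \# (X \otimes_A \F)$ is a morphism of $(p+q)$-fold extensions of $X \otimes_A X$ by $Z \otimes_A Z$, hence in particular a morphism of complexes whose degree $-1$ component is $\id_{X \otimes_A X}$. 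Post-composing and applying the second half of Remark \ref{rem:astray}, the composite $\lambda_{\ehhe,\effe}(\phi \cup_{\scriptscriptstyle \otimes} \psi)$ is an $(\id_{X \otimes_A X} \circ\, \tau)$-twisted, that is, a $\tau$-twisted, cocycle representative for $(\E \otimes_A Z) \# (X \otimes_A \F)$. Finally, I would pre-compose with $\tau_\bull \colon \mathrm{Bar}(U, X \otimes_A X) \to \mathrm{Bar}(U, X \otimes_A X)$, which by Remark \ref{rem:theabyss} is a morphism of complexes with degree $-1$ component $\tau$; applying the first half of Remark \ref{rem:astray} once more then yields that $\lambda_{\ehhe,\effe}(\phi \cup_{\scriptscriptstyle \otimes} \psi) \circ \tau_\bull$ is a $(\tau \circ \tau)$-twisted, i.e.\ $\tau^2$-twisted, cocycle representative, which is exactly the assertion.

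There is no serious obstacle in this argument: the corollary is a concatenation of three facts that have already been established. The only points one must be careful about — and they have been dispatched where the corresponding objects were introduced — are that $\lambda_{\ehhe,\effe}$ strictly commutes with the differentials, including across the two degrees $k = q-1, q$ at which its piecewise definition changes form, and that $\tau_\bull$ is genuinely a chain map, the latter resting on the left--right Yetter--Drinfel'd condition \eqref{yd2} (Remark \ref{rem:theabyss}). Granting these, the chain of twist identities above is immediate.
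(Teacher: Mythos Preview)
Your proof is correct and follows exactly the approach the paper takes: the paper's proof reads simply ``Immediate from Lemma \ref{lem:maywestart} and Remark \ref{rem:astray},'' and you have merely unpacked this, making explicit the intermediate invocation of Remark \ref{rem:theabyss} (that $\tau_\bull$ is a chain map) needed to apply the pre-composition half of Remark \ref{rem:astray}.
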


\begin{proof}
  Immediate from Lemma \ref{lem:maywestart} and Remark \ref{rem:astray}.
 \end{proof}

Assuming from now on that $X$ is a comonoid in ${\mathscr{Z}}^r(\umod)$ with comultiplication  $\gD_\ikks \colon X \to X \otimes_A X$ and $Z$ a monoid in ${\mathscr{Z}}^\ell(\umod)$  with multiplication $  \mu \colon Z \otimes_A Z \to Z$,
our next step consists in giving explicit formul\ae{} for a cocycle representative for the object $\mu\#\mathbb{H}\#\Delta_\ikks$ in $\ext_U^\bull(X,Z)$, starting from a cocycle representative $\xi$ for an object $\mathbb{H}$ in $\ext^\bull_U(X\otimes_A X ,Z\otimes_A Z)$.

\begin{lem}
\label{monteolivetomaggiore}
Let $(Z, \mu)$ be a monoid in ${\mathscr{Z}}^\ell(\umod)$ and $(X, \gD_\ikks)$ a comonoid in ${\mathscr{Z}}^r(\umod)$, 
let $\mathbb{H}\in \ext^r_U(X\!\otimes_A\! X ,Z\!\otimes_A\! Z)$,
and $\xi\colon \mathrm{Bar}(U,X\! \otimes_A \!X)\to \mathbb{H}$ be a co- cycle representative for $\mathbb{H}$.
Then a cocycle representative for~$\mu\#\mathbb{H}\#\Delta_\ikks$~is~given~by
\[
\tilde{\xi}_k \coloneq
\begin{cases}
%  \begin{array}{ll}
 \id_\ikks
&
  \text{for}\quad k=-1,
\\
  \big(\xi_0 \circ (\id\otimes_\Aop  \gD_\ikks ),L\big)
&
  \text{for}\quad k=0,
\\
  \xi_{k} \circ (\id^{k+1}\otimes_\Aop  \gD_\ikks )
  & \text{for}\quad 1\leq k\leq r-2,
  \\
  \overline{\big(0,\xi_{r-1} \circ (\id^{r}\otimes_\Aop  \gD_\ikks )\big)} &
  \text{for}\quad k=r-1,
  \\
  \mu  \circ \xi_r \circ (\id^{r+1}\otimes_\Aop  \gD_\ikks) & \text{for}\quad k=r,
%  \end{array}
\end{cases}
\]
where
$\overline{(\, , \,)}$ indicates the projection onto the quotient $Z\sqcup H_{r-1}\to Z\sqcup_{Z \otimes_A Z} H_{r-1}$, and where we abbreviated $\id^k \coloneq \id_\uhhu^{\otimes_\Aop k}$.
\end{lem}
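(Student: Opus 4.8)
The plan is to read $\mu\#\mathbb{H}\#\gD_\ikks$, using the (natural) associativity of the hash operation recalled in \S\ref{hashhash}, as the two-step composite $\mu\#(\mathbb{H}\#\gD_\ikks)$, where $\mathbb{H}\#\gD_\ikks$ is the pullback of $\mathbb{H}$ along the morphism $\gD_\ikks\colon X\to X\otimes_A X$ (the hash with $p>0$, $q=0$) and $\mu\#(-)$ is the pushout along the morphism $\mu\colon Z\otimes_A Z\to Z$ (the hash with $p=0$, $q>0$). Accordingly I would build the cocycle representative $\tilde\xi$ in two stages, each time applying the explicit recipe for a cocycle representative of a hash from \S\ref{sunshine}; the only real input is the choice, at the first stage, of the \emph{explicit} lift of $\gD_\ikks$ through the bar resolutions in place of an abstract, projectivity-produced one.

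First stage. I claim that $\zeta_\bull\coloneq\id_\uhhu^{\otimes_\Aop\bull+1}\otimes_\Aop\gD_\ikks\colon \mathrm{Bar}_\bull(U,X)\to\mathrm{Bar}_\bull(U,X\otimes_A X)$ is a morphism of complexes sitting over $\gD_\ikks\colon X\to X\otimes_A X$. It visibly commutes with the face maps coming from the multiplication of $U$, since those do not touch the last tensor slot; compatibility with the last face (induced by the left $U$-action \eqref{leftaction}) and with the augmentation $L$ is exactly the identity $\gD_\ikks(ux)=u\,\gD_\ikks(x)$, i.e.\ the $U$-linearity of $\gD_\ikks$, which holds because $(X,\gD_\ikks)$ is a comonoid in ${\mathscr{Z}}^r(\umod)$ and hence $\gD_\ikks$ is in particular a morphism in $\umod$ (here one also invokes the monoidal structure \eqref{rain}). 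Consequently $\xi\circ\zeta_\bull\colon \mathrm{Bar}(U,X)\to\mathbb{H}$ is a chain map over $\gD_\ikks$ in the spirit of \eqref{fame00}. Feeding this into the pullback recipe of \S\ref{sunshine} (the $p>0$, $q=0$ case; cf.\ Remark \ref{rem:verbatim}), with $\zeta_\bull$ in the role of the lift $\zeta$ and $L$ in the role of the augmentation $\gve$, produces a cocycle representative $\eta$ for $\mathbb{H}\#\gD_\ikks\in\ext^r_U(X,Z\otimes_A Z)$ with $\eta_{-1}=\id_\ikks$, with $\eta_0=(\xi_0\circ(\id\otimes_\Aop\gD_\ikks),\,L)$ the canonical lift to the fibre product $H_0\times_{X\otimes_A X}X$, and with $\eta_k=\xi_k\circ(\id^{k+1}\otimes_\Aop\gD_\ikks)$ for $1\le k\le r$.

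Second stage. Now I would feed $\eta$ into the pushout recipe of \S\ref{sunshine} (the $p=0$, $q>0$ case) applied to the length-zero factor $\mu\colon Z\otimes_A Z\to Z$ and the $r$-fold extension $\mathbb{H}\#\gD_\ikks$. That recipe gives a cocycle representative $\tilde\xi$ for $\mu\#(\mathbb{H}\#\gD_\ikks)$ with $\tilde\xi_{-1}=\id_\ikks$, with $\tilde\xi_k=\eta_k$ in degrees $0\le k\le r-2$, with $\tilde\xi_{r-1}$ the class of $(0,\eta_{r-1})$ under the projection $Z\sqcup H_{r-1}\to Z\sqcup_{Z\otimes_A Z}H_{r-1}$, and with $\tilde\xi_r=\mu\circ\eta_r$. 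Substituting the formulas for $\eta$ from the first stage yields exactly the asserted table, and since $\tilde\xi$ is by construction a chain map over $\id_\ikks$ it is indeed a cocycle representative of $\mu\#\mathbb{H}\#\gD_\ikks$.

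The displayed formulas are written for $r\ge2$, where the pushout modifying degree $r-1$ and the pullback modifying degree $0$ do not interfere; the small cases $r=0,1$ are handled by merging the overlapping rows of the table in the evident way, using the conventions for $0$-cocycle representatives introduced after \eqref{zerozero}. I do not expect a genuine obstacle here: the single non-formal point is the verification that $\id_\uhhu^{\otimes_\Aop\bull+1}\otimes_\Aop\gD_\ikks$ is a chain map over $\gD_\ikks$ — a one-line check resting on $U$-linearity of $\gD_\ikks$ — and the rest is bookkeeping, the main care being to match the indexing of the generic recipe of \S\ref{sunshine}, phrased for an arbitrary projective resolution $\mathbb{P}^X$, with that of the bar resolution $\mathrm{Bar}(U,X)$.
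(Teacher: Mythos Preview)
Your proposal is correct and follows essentially the same approach as the paper: both compute $\tilde\xi$ in two stages, first producing a cocycle representative for $\mathbb{H}\#\gD_\ikks$ via the explicit lift $\zeta_\bull=\id^{\bull+1}\otimes_\Aop\gD_\ikks$ (whose chain-map property rests on the $U$-linearity of $\gD_\ikks$, i.e.\ \eqref{radicchio4}), and then applying the pushout recipe for $\mu\#(-)$. The paper just displays the two commutative diagrams more explicitly before reading off the same formulas.
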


\begin{proof}
  We start by computing a cocycle representative for $ \mathbb{H} \# \gD_\ikks$,
using the bar resolutions for $X$ and $X\otimes_A X$, respectively. Following the general prescription in \S\ref{sunshine}, we are to consider the commutative diagram  
\vskip .1 cm
\begin{footnotesize}
\[
\xymatrix{
\mathrm{Bar}_{r+1}(U,X)\ar[r]\ar[d]_{\id^{r+2}\otimes_\Aop  \gD_\ikks }
&\mathrm{Bar}_{r}(U,X)\ar[r]\ar[d]_{\id^{r+1}\otimes_\Aop  \gD_\ikks }&\mathrm{Bar}_{r-1}(U,X)\ar[r]\ar[d]_{\id^{r}\otimes_\Aop  \gD_\ikks }&\ldots\\
\mathrm{Bar}_{r+1}(U,X \otimes_A X)\ar[r]\ar[d]&\mathrm{Bar}_{r}(U,X \otimes_A X)\ar[r]\ar[d]_{\xi_{r}} &\mathrm{Bar}_{r-1}(U,X \otimes_A X)\ar[r]\ar[d]_{\xi_{r-1}}&\ldots\\
0\ar[r]&Z\otimes_A Z\ar[r] &H_{r-1}\ar[r]&\ldots
}
\]
\end{footnotesize}

\begin{footnotesize}
\[
\xymatrix{
  \ldots\ar[r]& \mathrm{Bar}_{1}(U,X)\ar[r]\ar[d]_{\id^{2}\otimes_\Aop  \gD_\ikks }& \mathrm{Bar}_{0}(U,X)\ar@/^1.4pc/[rr]\ar@{=}[r]\ar[d]_{\id\otimes_\Aop  \gD_\ikks }&\mathrm{Bar}_{0}(U,X)\ar[r]
  \ar[d]_{L \circ (\id\otimes_\Aop  \gD_\ikks )}\ar[r]&X\ar[d]^{ \gD_\ikks }\ar[r]&0
  \\
  \ldots\ar[r]& \mathrm{Bar}_1(U,X \otimes_A X) \ar[r] \ar[d]_{\xi_1}& \mathrm{Bar}_0(U,X \otimes_A X)
  \ar[d]_{\xi_0}\ar[r] &
         {X\otimes_A X}\ar@{=}[r]\ar@{=}[d] & X\otimes_A X\ar[r]\ar@{=}[d] & 0
         \\
\ldots\ar[r]& H_1\ar[r] \ar[r]& H_0\ar[r]\ar@/_1.4pc/[rr]&X\otimes_A X\ar@{=}[r]&{X\otimes_A X}\ar[r]&0
}
\]
\end{footnotesize}
\vskip .3 cm
\noindent Note that the commutativity of the upper two rows depends on the fact that $X$ is a comonoid in $\umod$, that is, explicitly from \eqref{radicchio4}.
From this diagram we obtain the representative cocycle
\[
\xymatrix@C=3.5em{
  \mathrm{Bar}_{r+1}(U,X)\ar[r]\ar[d]&\mathrm{Bar}_{r}(U,X)\ar[r]
  \ar[d]_{\xi_{r} \circ (\id^{r+1}\otimes_\Aop  \gD_\ikks )} &\mathrm{Bar}_{r-1}(U,X)\ar[r]
  \ar[d]_{\xi_{r-1} \circ (\id^{r}\otimes_\Aop  \gD_\ikks )}&\ldots\\
0\ar[r]&Z\otimes_A Z\ar[r] &H_{r-1}\ar[r]&\ldots
}
\]

\[
\xymatrix@C=3.5em{
\ldots\ar[r]& \mathrm{Bar}_{1}(U,X)\ar[r]\ar[d]_{\xi_{1} \circ (\id^{2}\otimes_\Aop  \gD_\ikks )}& \mathrm{Bar}_{0}(U,X)\ar[r]\ar[d]_{(L,\xi_{0} \circ (\id\otimes_\Aop  \gD_\ikks ))}&X\ar@{=}[d]\ar[r]&0
\\
\ldots\ar[r]& H_{1}\ar[r]_-{(0,d)}& X\times_{X\otimes_A X}H_{0}\ar[r]&X\ar[r]&0.
}
\]
Next, to compute a cocycle representative for $ \mu  \# \mathbb{H}\# \gD_\ikks $, we consider the commutative diagram
\vskip .2 cm
\begin{small}
\[
\xymatrix@C=3.4em{
  \mathrm{Bar}_{r+1}(U,X)\ar[r]\ar[d]_{\zeta_1}&\mathrm{Bar}_{r}(U,X)
  \ar@/^1.4pc/[rr]
  \ar@{=}[r]\ar[d]_{\zeta_0} &\mathrm{Bar}_{r}(U,X)\ar[r]
  \ar[d]_{\xi_{r} \circ (\id^{r+1}\otimes_\Aop  \gD_\ikks )}&\mathrm{Bar}_{r-1}(U,X)
  \ar[d]_{\xi_{r-1} \circ (\id^{r}\otimes_\Aop  \gD_\ikks )}\ar[r] & \ldots
  \\
  P_{1}^{Z\otimes_A Z}\ar[d]\ar[r]&P_{0}^{Z\otimes_A Z}\ar[r]\ar[d]_{} & Z\otimes_A Z\ar[d]_{ \mu }\ar[r]_-{i_\mathbb{H}} & H_{r-1}\ar[r]\ar[d]_{\overline{(0,\id)}}&\ldots
  \\
0\ar[r]&Z\ar@/_1.4pc/[rr]\ar@{=}[r] & Z \ar[r]^-{\overline{(-\id, 0)}} & Z\sqcup_{Z\otimes_A Z} H_{r-1} \ar[r]&\ldots
}
\]
\vskip .4 cm
\[
\xymatrix@C=3em{
\ldots\ar[r]& \mathrm{Bar}_{1}(U,X)\ar[r]\ar[d]_{\xi_{1} \circ (\id^{2}\otimes_\Aop  \gD_\ikks )}& \mathrm{Bar}_{0}(U,X)\ar[r]\ar[d]_{(\xi_{0} \circ (\id\otimes_\Aop  \gD_\ikks ),L)}&X\ar@{=}[d]\ar[r]&0
\\
\ldots\ar[r]& H_{1}\ar[r]\ar@{=}[d]& H_{0}\times_{X\otimes_A X}X\ar[r]\ar@{=}[d]&X\ar@{=}[d]\ar[r]&0
\\
\ldots\ar[r]& H_1\ar[r]& H_{0}\times_{X\otimes_A X}X\ar[r]&X\ar[r]&0,
}
\]
\end{small}
to see that $ \mu  \# \G\# \gD_\ikks $ is represented by the cocycle
\[
\xymatrix@C=4em@R=3em{
  \mathrm{Bar}_{r+1}(U,X)\ar[r]\ar[d]&\mathrm{Bar}_{r}(U,X)\ar[r]
  \ar[d]_{ \mu  \circ \xi_{r} \circ (\id^{r+1}\otimes_\Aop  \gD_\ikks )}&\mathrm{Bar}_{r-1}(U,X)
  \ar[d]_{\overline{(0,\xi_{r-1} \circ (\id^{r}\otimes_\Aop  \gD_\ikks ))}}\ar[r]&\ldots\\
0\ar[r]&Z\ar[r] & Z\sqcup_{Z \otimes_A Z} H_{r-1} \ar[r]&\ldots
}
\]

\[
\xymatrix@C=3em@R=3em{
  \ldots\ar[r]& \mathrm{Bar}_{1}(U,X)\ar[r]
  \ar[d]_{\xi_{1} \circ (\id^{2}\otimes_\Aop  \gD_\ikks )}& \mathrm{Bar}_{0}(U,X)\ar[r]\ar[d]_{(\xi_{0} \circ (\id\otimes_\Aop  \gD_\ikks ),L)}&X\ar@{=}[d]\ar[r]&0
\\
\ldots\ar[r]& H_{1}\ar[r]& H_{0}\times_{X\otimes_A X}X \ar[r]&X\ar[r]&0,
}
\]
from which the explicit expression for $\tilde{\xi}$ can be read off in all degrees.
\end{proof}

\begin{rem}
  \label{rem:casibassi0}
    For $r=2$, the statement of Lemma \ref{monteolivetomaggiore} has to be understood without the middle line, {\em i.e.},
   \begin{equation}
  \label{xitilde2}
\tilde{\xi}_k \coloneq
\begin{cases}
%  \begin{array}{ll}
 \id_\ikks
 &
  \text{for}\quad k=-1,
\\
  \big(\xi_0 \circ (\id\otimes_\Aop  \gD_\ikks ),L\big)
&
  \text{for}\quad k=0,
  \\
  \overline{\big(0,\xi_{1} \circ (\id^{2}\otimes_\Aop  \gD_\ikks )\big)} &
  \text{for}\quad k=1,
  \\
  \mu  \circ \xi_2 \circ (\id^{3}\otimes_\Aop  \gD_\ikks) & \text{for}\quad k=2.
%  \end{array}
\end{cases}
\end{equation}
If $r = 1$, the two central lines of \eqref{xitilde2} are merged and one has
   \begin{equation}
  \label{xitilde3}
\tilde{\xi}_k \coloneq
\begin{cases}
%  \begin{array}{ll}
 \id_\ikks
 &
  \text{for}\quad k=-1,
\\
   \overline{\big(0,\xi_0 \circ (\id\otimes_\Aop  \gD_\ikks ),L\big)}
&
  \text{for}\quad k=0,
  \\
  \mu  \circ \xi_1 \circ (\id^{2}\otimes_\Aop  \gD_\ikks) & \text{for}\quad k=1,
%  \end{array}
\end{cases}
\end{equation}
where $\tilde\xi_0$ is now taking values in  
   $$
Z\sqcup_{Z \otimes_A Z} H_{0} \times_{X\otimes_A X}X. 
   $$
Here we are omitting the parentheses thanks to the natural isomorphism of $U$-modules
$(Z\sqcup_{Z \otimes_A Z} H_{0}) \times_{X\otimes_A X}X\simeq Z\sqcup_{Z \otimes_A Z} (H_{0} \times_{X\otimes_A X}X)$.
Finally, for $r=0$ one removes the central line of \eqref{xitilde3}, obtaining
      \begin{equation*}
%  \label{xitilde4}
\tilde{\xi}_k \coloneq
\begin{cases}
%  \begin{array}{ll}
 \mathbb{H}
 &
  \text{for}\quad k=-1,
  \\
  \mu  \circ \xi_0 \circ (\id\otimes_\Aop  \gD_\ikks) & \text{for}\quad k=0,
%  \end{array}
\end{cases}
\end{equation*}
where $\mathbb{H}\colon X\otimes_A X\to Z\otimes_A Z$ is the element in $\ext^0_U(X\otimes_A X, Z\otimes_A Z)$ of which $\xi$ is a $0$-cocycle representative.
\end{rem}

The above Lemma \ref{monteolivetomaggiore} suggests the following.

\begin{definition}
  \label{def:nepo}
  Let
$X$ and $Z$ be as before and let
  $\mathbb{H}\in \ext^r_U(X\otimes_A X,Z\otimes_A Z)$
  as well as $\G\coloneq\mu\#\mathbb{H}\#\Delta_\ikks$.
  Define the graded linear map
$$
  \Phi_\bull\colon \textstyle%\smashoperator[r]
      {\bigoplus\limits_{{k=-1}}^r}\Hom\big(\mathrm{Bar}_k(U, X\otimes_A X),{H}_{k}\big)
      \to \textstyle%\smashoperator[r]
          {\bigoplus\limits_{k=-1}^r}\Hom\big(\mathrm{Bar}_k(U,X) ,{G}_{k}\big)
$$
as follows:
% Finally, if $r>2$, then
\begin{equation}
  \label{phi}
\Phi_k(\xi) \coloneq
\begin{cases}
%  \begin{array}{ll}
 0
 &
  \text{for}\quad k=-1,
\\
  \big(\xi_0 \circ (\id\otimes_\Aop  \gD_\ikks ),0\big)
&
  \text{for}\quad k=0,
\\
  \xi_{k} \circ (\id^{k+1}\otimes_\Aop  \gD_\ikks )
  & \text{for}\quad 1\leq k\leq r-2,
  \\
  \overline{\big(0,\xi_{r-1} \circ (\id^{r}\otimes_\Aop  \gD_\ikks )\big)} &
  \text{for}\quad k=r-1,
  \\
  \mu  \circ \xi_r \circ (\id^{r+1}\otimes_\Aop  \gD_\ikks) & \text{for}\quad k=r,
%  \end{array}
\end{cases}
\end{equation}
for any $\xi \in \textstyle\bigoplus_{k=-1}^r\Hom\big(\mathrm{Bar}_k(U, X\otimes_A X),H_{k}\big)$.
\end{definition}

\begin{rem}
  \label{rem:casibassi}
 Following Remark \ref{rem:casibassi0},
   for $r=2$, this has to be understood without the middle line, {\em i.e.},
   \begin{equation}
  \label{phi2}
\Phi_k(\xi)\coloneq
\begin{cases}
%  \begin{array}{ll}
 0
 &
  \text{for}\quad k=-1,
\\
  \big(\xi_0 \circ (\id\otimes_\Aop  \gD_\ikks ),0\big)
&
  \text{for}\quad k=0,
  \\
  \overline{\big(0,\xi_{1} \circ (\id^{2}\otimes_\Aop  \gD_\ikks )\big)} &
  \text{for}\quad k=1,
  \\
  \mu  \circ \xi_2 \circ (\id^{3}\otimes_\Aop  \gD_\ikks) & \text{for}\quad k=2.
%  \end{array}
\end{cases}
\end{equation}
If $r = 1$, the two central lines of \eqref{phi2} are merged and one has
   \begin{equation}
  \label{phi3}
\Phi_k(\xi) \coloneq
\begin{cases}
%  \begin{array}{ll}
 0
 &
  \text{for}\quad k=-1,
\\
   \overline{\big(0,\xi_0 \circ (\id\otimes_\Aop  \gD_\ikks ),0\big)}
&
  \text{for}\quad k=0,
  \\
  \mu  \circ \xi_1 \circ (\id^{2}\otimes_\Aop  \gD_\ikks) & \text{for}\quad k=1,
%  \end{array}
\end{cases}
\end{equation}
where $\Phi_0(\xi)$ takes values in  
   $
Z\sqcup_{Z \otimes_A Z} H_{0} \times_{X\otimes_A X}X
   $.
%Here we are omitting parenthesis thanks to the natural isomorphsm of $U$-modules $(Z\sqcup_{Z \otimes_A Z} H_{0}) \times_{X\otimes_A X}X\simeq Z\sqcup_{Z \otimes_A Z} (H_{0} \times_{X\otimes_A X}X)$.
Finally, for $r=0$ one removes the central line of \eqref{phi3} to obtain
      \begin{equation*}
%  \label{phi4}
\Phi_k(\xi) \coloneq
\begin{cases}
%  \begin{array}{ll}
 0
 &
  \text{for}\quad k=-1,
  \\
  \mu  \circ \xi_0 \circ (\id\otimes_\Aop  \gD_\ikks) & \text{for}\quad k=0,
%  \end{array}
\end{cases}
\end{equation*}
\end{rem}

  \begin{rem}
    \label{rem:Iwillhauntyou}
In terms of the linear map $\Phi$ from Definition \ref{def:nepo}, Lemma \ref{monteolivetomaggiore} can be stated by saying that if $\xi\colon \mathrm{Bar}(U, X\otimes_A X)\to\mathbb{H}$ is a cocycle representative for $\mathbb{H}\in \ext^r_U(X\otimes_A X,Z\otimes_A Z)$, with $r>0$, then 
\[
(0,\dots,0,(L,0),\id_\ikks)+ \Phi(\xi)
\]
is a cocycle representative for $\G = \mu\#\mathbb{H}\#\Delta_\ikks\in \ext^r_U(X,Z)$.
\end{rem}

\begin{lemma}
If $\xi\colon \mathrm{Bar}(U,X\otimes_A X) \to \mathbb{H}$ is an $f$-twisted cocycle
for the extension $\mathbb{H}\in \ext^r_U(X\otimes_A X,Z\otimes_A Z)$, with $r>0$, then $(0,\dots,0,(L,0),\id_\ikks)+ \Phi(\xi)$
is a cocycle representative for $\mu\#\mathbb{H}\#(f\circ \Delta_\ikks)\in \ext^r_U(X,Z)$.
\end{lemma}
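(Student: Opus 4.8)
The plan is to mimic, almost verbatim, the proof of Lemma \ref{monteolivetomaggiore}, the only change being that at the first splicing step one invokes the $f$-twisted splicing rule of Remark \ref{rem:verbatim} in place of its untwisted counterpart. Concretely, use associativity of $\#$ to factor $\mu\#\mathbb{H}\#(f\circ\gD_\ikks) = \mu\#\bigl(\mathbb{H}\#(f\circ\gD_\ikks)\bigr)$ and compute a cocycle representative in two stages.

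\emph{Stage 1.} First I would recall that the maps $\id^{k+1}\otimes_\Aop\gD_\ikks\colon \mathrm{Bar}_k(U,X)\to\mathrm{Bar}_k(U,X\otimes_A X)$ assemble into a chain map covering the morphism $\gD_\ikks$: being a chain map uses that $X$ is a comonoid in $\umod$ (coassociativity, as in \eqref{radicchio4}), and the fact that it covers $\gD_\ikks$ amounts to $L\circ(\id\otimes_\Aop\gD_\ikks)=\gD_\ikks\circ L$, i.e.\ to the $U$-linearity of $\gD_\ikks$. Taking this as the lift ``$\zeta$'' in the construction of \S\ref{sunshine}, Remark \ref{rem:verbatim}, applied to the extension $\mathbb{H}$, the morphism $\gD_\ikks\colon X\to X\otimes_A X$ and the twist $f\colon X\otimes_A X\to X\otimes_A X$, shows that $\mathbb{H}\#(f\circ\gD_\ikks)$ is represented by the cochain with components $\xi_k\circ(\id^{k+1}\otimes_\Aop\gD_\ikks)\colon \mathrm{Bar}_k(U,X)\to H_k$ for $k\geq 1$ and $\bigl(\xi_0\circ(\id\otimes_\Aop\gD_\ikks),\,L\bigr)\colon \mathrm{Bar}_0(U,X)\to H_0\times^{f\circ\gD_\ikks}_{X\otimes_A X}X$ in degree $0$. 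These are precisely the formulas in the proof of Lemma \ref{monteolivetomaggiore}; the twist $f$ affects only the augmentation ($\xi_{-1}=f$) and the fibre-product object in degree $0$, not the component maps, which involve $\xi$ only in nonnegative degrees, where twisting has no effect.

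\emph{Stage 2.} Splicing with $\mu$ on the left is then word-for-word the second half of the proof of Lemma \ref{monteolivetomaggiore}: it is a $p=0$, $q>0$ hash that involves only the extension $\mathbb{H}\#(f\circ\gD_\ikks)$ and not $f$, so it leaves degrees $\leq r-2$ untouched, turns the degree-$(r-1)$ component into its pushout class $\overline{(0,-)}$, and postcomposes the top component with $\mu$. The outcome is exactly the family $\tilde\xi$ of Lemma \ref{monteolivetomaggiore}, which by Remark \ref{rem:Iwillhauntyou} equals $(0,\dots,0,(L,0),\id_\ikks)+\Phi(\xi)$. The low-dimensional cases $r=0,1,2$ are handled exactly as in Remark \ref{rem:casibassi0}, with the collapsed formulas there, and present no new phenomenon.

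The one place that genuinely uses the twist — and hence the only real obstacle — is the well-definedness of the degree-$0$ component $\bigl(\xi_0\circ(\id\otimes_\Aop\gD_\ikks),L\bigr)$ as a map into the \emph{twisted} fibre product $H_0\times^{f\circ\gD_\ikks}_{X\otimes_A X}X$: this requires $p_{\mathbb{H}}\circ\xi_0\circ(\id\otimes_\Aop\gD_\ikks)=f\circ\gD_\ikks\circ L$, which follows by combining the defining relation $p_{\mathbb{H}}\circ\xi_0=f\circ L$ of an $f$-twisted cocycle (the right-hand square of \eqref{fame00}) with $L\circ(\id\otimes_\Aop\gD_\ikks)=\gD_\ikks\circ L$. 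Everything else is bookkeeping. Alternatively, one may note that an $f$-twisted cocycle representative for $\mathbb{H}$ is the very same datum as an untwisted cocycle representative for the pullback $\mathbb{H}\#f$, apply Remark \ref{rem:Iwillhauntyou} to $\mathbb{H}\#f$, and then use $\mu\#(\mathbb{H}\#f)\#\gD_\ikks=\mu\#\mathbb{H}\#(f\circ\gD_\ikks)$ together with the canonical identification of iterated fibre products $H_0\times_{X\otimes_A X}(X\otimes_A X)\times_{X\otimes_A X}X\cong H_0\times^{f\circ\gD_\ikks}_{X\otimes_A X}X$; this route is conceptually cleaner but forces one to track that isomorphism through the formula \eqref{phi} for $\Phi$.
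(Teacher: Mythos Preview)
Your proposal is correct and follows essentially the same route as the paper: the paper's proof is the one-liner ``Immediate from Remarks~\ref{rem:verbatim} \& \ref{rem:Iwillhauntyou}'', and what you have written is precisely an unpacking of those two remarks --- Remark~\ref{rem:verbatim} supplies the twisted-pullback step (your Stage~1), while Remark~\ref{rem:Iwillhauntyou} identifies the resulting formula with $(0,\dots,0,(L,0),\id_\ikks)+\Phi(\xi)$ (your Stage~2). Your explicit check that the degree-$0$ component lands in the correct fibre product is the only nontrivial point, and you handle it correctly.
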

\begin{proof}
Immediate from Remarks \ref{rem:verbatim} \& \ref{rem:Iwillhauntyou}.
\end{proof}

\begin{cor}
  \label{cor:theconductor}
Let $\xi\colon \mathrm{Bar}(U,X\otimes_A X) \to \mathbb{H}$ be an $f$-twisted cocycle
for the extension $\mathbb{H}\in \ext^r_U(X\otimes_A X,Z\otimes_A Z)$, with $r>0$, such that $f\circ\Delta_\ikks=\Delta_\ikks$. Then $(0,\dots,0,(L,0),\id_\ikks)+ \Phi(\xi)$ is a cocycle representative for $\mu\#\mathbb{H}\#\Delta_\ikks\in \ext^r_U(X,Z)$.
\end{cor}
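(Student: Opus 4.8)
The plan is to derive this statement directly from the (unnumbered) Lemma immediately preceding it, which is the only non-trivial ingredient. That Lemma already asserts that, for an $f$-twisted cocycle $\xi$ representing $\mathbb{H}\in\ext^r_U(X\otimes_A X,Z\otimes_A Z)$ with $r>0$, the tuple $(0,\dots,0,(L,0),\id_\ikks)+\Phi(\xi)$ is a cocycle representative for $\mu\#\mathbb{H}\#(f\circ\Delta_\ikks)$. So the first and essentially only step is to substitute the standing hypothesis $f\circ\Delta_\ikks=\Delta_\ikks$ into that conclusion: this replaces $\mu\#\mathbb{H}\#(f\circ\Delta_\ikks)$ by $\mu\#\mathbb{H}\#\Delta_\ikks$, and the claim follows with no further work.

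For the reader's orientation I would also recall briefly why the preceding Lemma holds, so that the corollary is genuinely self-contained. One combines Remark~\ref{rem:verbatim}, which produces a cocycle representative for the $\#$-product of a chain complex with a twisted cocycle by forming the relevant pullback (and whose degree-$0$ component is $(\phi_0\circ\zeta_0,\gve)$ while the higher ones are $\phi_i\circ\zeta_i$), with Remark~\ref{rem:Iwillhauntyou}, the restatement of Lemma~\ref{monteolivetomaggiore} in terms of the map $\Phi$ in the untwisted case. Tracking an $f$-twist through the pushout/pullback construction of \S\ref{sunshine} shows that the twist affects only the degree $-1$ component of the resulting chain map over $X$ — namely the composite $f\circ\Delta_\ikks$ sitting at the bottom of the diagrams in the proof of Lemma~\ref{monteolivetomaggiore} — while all components in degrees $\geq 0$, being assembled from $\Phi(\xi)$, $L$, and $\id_\ikks$, are insensitive to $f$.

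I do not expect any real obstacle here: the whole content has been offloaded to Lemma~\ref{monteolivetomaggiore}, Remark~\ref{rem:verbatim}, and Remark~\ref{rem:Iwillhauntyou}, and the remaining point is the purely bookkeeping observation that the $f$-dependence is confined to degree $-1$. Under the hypothesis $f\circ\Delta_\ikks=\Delta_\ikks$ that degree-$-1$ discrepancy vanishes, so $(0,\dots,0,(L,0),\id_\ikks)+\Phi(\xi)$ becomes verbatim the cocycle representative of $\mu\#\mathbb{H}\#\Delta_\ikks$ furnished by Lemma~\ref{monteolivetomaggiore} (equivalently, by Remark~\ref{rem:Iwillhauntyou}). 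Accordingly the written proof can be kept to a single sentence citing the previous Lemma together with the equality $f\circ\Delta_\ikks=\Delta_\ikks$.
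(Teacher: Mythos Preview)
Your proposal is correct and matches the paper's (implicit) argument: the corollary is an immediate specialisation of the preceding unnumbered Lemma upon substituting the hypothesis $f\circ\Delta_\ikks=\Delta_\ikks$, and the paper offers no further proof beyond this. Your additional paragraph recalling why that Lemma holds (via Remarks~\ref{rem:verbatim} and~\ref{rem:Iwillhauntyou}) is accurate supplementary context but not part of the paper's own presentation.
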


If we additionally assume from now on that
$X$ is a {\em braided cocommutative} comonoid in ${\mathscr{Z}}^r(\umod)$,
by summing up we obtain the following proposition, which we give for strictly positive degrees leaving to the reader the easy task of deriving the corresponding statement for $0$-cocycles.

\begin{prop}
  \label{prop:eventually}
  Let $Z$ be as before and $X$ a braided cocommutative co\-mon\-oid in ${\mathscr{Z}}^r(\umod)$,
let $\E\in \ext^p_U(X,Z)$ and $\F\in \ext^q_U(X,Z)$, with $p,q>0$, and let $\phi\colon \mathrm{Bar}(X,U)\to \E$ and $\psi\colon \mathrm{Bar}(X,U)\to \F$ be cocycle representatives for $\E$ and $\F$, respectively. Moreover, let  $\phi\cup_{\scriptscriptstyle \otimes} \psi\colon \mathrm{Bar}(U,X\otimes_A X)\to \mathbf{Moloch}(\E,\F)$ be the $\tau$-twisted cocycle representative  from Lemma \ref{lem:maywestart}, and let $\lambda_{\ehhe,\effe}\colon  \mathbf{Moloch}(\E,\F)\to (X \otimes_A \F) \# (\E  \otimes_A Z)$ be the morphism of complexes from Eq.~\eqref{hiddensee1}. Then
\[
(0,\dots,0,(L,0),\id_\ikks)+\Phi(\lambda_{\ehhe,\effe}(\phi\cup_{\otimes}\psi)\circ \tau_\bull))
\]
is a cocycle representative for $\E \cup \F=   \mu \# (\E\otimes_A Z)\#(X\otimes_A\F)  \# \gD_\ikks$.
\end{prop}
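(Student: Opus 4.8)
The plan is to obtain this proposition by chaining the corollaries already established, the only genuinely new ingredient being the translation of braided cocommutativity of $X$ into an identity for the braiding $\tau$.

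First I would introduce the abbreviations $\mathbb{H}\coloneq(\E\otimes_A Z)\#(X\otimes_A\F)$ and $\xi\coloneq\lambda_{\ehhe,\effe}(\phi\cup_{\otimes}\psi)\circ\tau_\bull$. Since $\E\otimes_A Z\in\ext^p_U(X\otimes_A Z,Z\otimes_A Z)$ and $X\otimes_A\F\in\ext^q_U(X\otimes_A X,X\otimes_A Z)$, their hash $\mathbb{H}$ is an extension in $\ext^r_U(X\otimes_A X,Z\otimes_A Z)$ of length $r\coloneq p+q>0$. By Corollary~\ref{cor:thatsit} (which itself rests on Lemma~\ref{lem:maywestart} and Remark~\ref{rem:astray}), the morphism $\xi$ is a $\tau^2$-twisted cocycle representative for $\mathbb{H}$, where $\tau^2=\tau_{X,X}\circ\tau_{X,X}$ is read as an endomorphism of $X\otimes_A X$.

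The next — and only non-routine — step is to check that the twist $f=\tau^2$ satisfies the hypothesis $f\circ\gD_\ikks=\gD_\ikks$ of Corollary~\ref{cor:theconductor}. This is precisely where the assumption that $X$ is a \emph{braided cocommutative} comonoid in ${\mathscr{Z}}^r(\umod)$ is used: under the equivalence ${\mathscr{Z}}^r(\umod)\simeq\ydr$ recalled in \S\ref{biappendix}, the braiding of ${\mathscr{Z}}^r(\umod)$ evaluated on the pair $(X,X)$ is identified with the explicit map $\tau_{X,X}$ of \eqref{sahnejoghurt0a}, so that braided cocommutativity of the comonoid $(X,\gD_\ikks)$ reads exactly $\tau_{X,X}\circ\gD_\ikks=\gD_\ikks$; composing once more, $\tau^2\circ\gD_\ikks=\tau_{X,X}\circ(\tau_{X,X}\circ\gD_\ikks)=\tau_{X,X}\circ\gD_\ikks=\gD_\ikks$, as wanted.

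With this at hand, Corollary~\ref{cor:theconductor} applied to $\xi$, $\mathbb{H}$, $f=\tau^2$ and $r=p+q>0$ yields immediately that $(0,\dots,0,(L,0),\id_\ikks)+\Phi(\xi)$ is a cocycle representative for $\mu\#\mathbb{H}\#\gD_\ikks=\mu\#(\E\otimes_A Z)\#(X\otimes_A\F)\#\gD_\ikks=\E\cup\F$, which is the claim. I do not anticipate a real obstruction here, since all the homological work has been pushed into the preceding lemmata and corollaries; the single point worth double-checking is the identification, through the Schauenburg-type equivalence ${\mathscr{Z}}^r(\umod)\simeq\ydr$, of the abstract braiding of the right weak centre on $(X,X)$ with the concrete map $\tau_{X,X}$ from \eqref{sahnejoghurt0a}, so that ``braided cocommutative'' delivers exactly the identity $\tau^2\circ\gD_\ikks=\gD_\ikks$ needed to invoke Corollary~\ref{cor:theconductor}.
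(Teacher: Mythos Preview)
Your proof is correct and follows exactly the same route as the paper's: invoke Corollary~\ref{cor:thatsit} to get a $\tau^2$-twisted cocycle representative, use braided cocommutativity \eqref{radicchio6a} to obtain $\tau^2\circ\gD_\ikks=\gD_\ikks$, and conclude by Corollary~\ref{cor:theconductor}. Your added remark on identifying the abstract braiding with the explicit $\tau_{X,X}$ is just making precise what the paper cites as \eqref{radicchio6a}.
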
    

\begin{proof}
  Corollary \ref{cor:thatsit} states that $\lambda_{\ehhe,\effe}(\phi\cup_{\otimes}\psi)\circ \tau_\bull \colon \mathrm{Bar}(U,X\otimes_A X) \to (\E  \otimes_A Z)\#(X \otimes_A \F)  $ is a $\tau^2$-twisted cocycle representative.
By assumption
$\tau\circ\Delta_\ikks=\Delta_\ikks$ 
as in \eqref{radicchio6a}, therefore $\tau^2\circ\Delta_\ikks=\Delta_\ikks$, and one concludes by Corollary \ref{cor:theconductor}.
\end{proof}

\begin{definition}
\label{first}
  Let $\E\in \ext^p_U(X,Z)$ resp.\ $\F\in \ext^q_U(X,Z)$ be extensions of length $p$ resp.\ $q$, with cocycle representatives $\phi\colon \mathrm{Bar}(X,U)\to \E$ resp.\ $\psi\colon \mathrm{Bar}(X,U)\to \F$. The cocycle representative $\phi\cup\psi$ for $\E\cup\F$ is defined as
\[
\phi\cup\psi
\coloneq
(0,\dots,0,(L,0),\id_\ikks)+\Phi_\bull\big(\lambda_{\ehhe,\effe}(\phi\cup_{\otimes}\psi)\circ \tau\big) 
\]
for any $p, q \geq 0$.
\end{definition}

\begin{rem}
  By expanding the above definition, one sees that the top degree component of $\phi\cup\psi$
  is, for any $p,q \geq 0$, given by
  \begin{equation}
\begin{split}
    \label{namelythis}
    &(\phi\cup\psi)_{p+q}
 (u^0, \ldots, u^{p+q}, m)
    = \Phi_{p+q}(\lambda_{\ehhe,\effe}(\phi\cup_{\otimes}\psi)\circ \tau) (u^0, \ldots, u^{p+q}, m)
    \\
  \quad  &
  = \mu \circ (\phi_p \cup_\otimes \psi_q) \circ \tau_{p+q} \circ
  (\id^{p+q+1} \otimes_\Aop \Delta_\ikks )
  (u^0, \ldots, u^{p+q}, m)
% \\
% \quad
%&
% = \mu \circ (\phi_p \cup_\otimes \psi_q) \circ
%(\id^{p+q+1} \otimes_\Aop \Delta_\ikks )
%(u^0, \ldots, u^{p+q}, m)
 \\
\quad
&
=     \phi_p\big(u^0_{(1)}, \ldots, u^p_{(1)}, (u^{p+1}_{(2)} \cdots u^{p+q}_{(2)} m_{(2)})_{[0]}\big)
    \\
    &
    \qquad\qquad
    \cdot_\zett \psi_q\big(u^0_{(2)} \cdots u^p_{(2)}
(u^{p+1}_{(2)} \cdots u^{p+q}_{(2)} m_{(2)})_{[1]}
    , u^{p+1}_{(1)}, \ldots, u^{p+q}_{(1)}, m_{(1)}\big),
\end{split}
  \end{equation}
  when inserting the external cup product \eqref{cupcup}, 
  the edge morphism $\lambda_{\ehhe,\effe}$ as in \eqref{hiddensee1}, the top degree of $\Phi$ as in \eqref{phi}, and the braided cocommutativity \eqref{radicchio6a} on $X$.
\end{rem}

\subsection{The cup product as internal operadic composition}
Before finding, in the spirit of \S\ref{torloniasantisubito}, a second cocycle representative for $\E \cup \F$, we would like to understand how to reproduce the cup product from Eq.~\eqref{namelythis} from more operadic ideas. To this end, we need to give both an external as well as an internal operadic vertical composition first.

  \subsubsection{The external operadic composition}
\label{int}
As said, let us introduce an, in a sense, $2$-product, or
({partial})
{external operadic} composition
$$
\circ^{\scriptscriptstyle \otimes}_i\colon C^j(U, X, E_j) \otimes C^q(U, X, Z) \to C^{j+q-1}(U, X \otimes_A X, E_j \otimes_A Z),
%\qquad 
%i = 1, \ldots, j,
$$
where
 $j = 1, \ldots, p$ and $i = 1, \ldots, j$, 
as follows:
\begin{small}
\begin{equation}
  \label{soso2}
  \begin{split}
    &(\phi_j  \circ^{\scriptscriptstyle \otimes}_i \psi_q)(u^0, u^1, \ldots, u^{p+q-1}, m \otimes_A m') \coloneq
    \\
    &
    \quad
    \phi_j\pig(u^0_{(1)}, \ldots, u^{i-1}_{(1)}, \psi_q\big(1, u^i_{(1)}, \ldots, u^{i+q-1}_{(1)},
    \\
    &
\quad
    (u^{i+q}_{(2)} \cdots u^{p+q-1}_{(2)} m')_{[0]} \big)_{(-1)}
    u^{i}_{(2)} \cdots u^{i+q-1}_{(2)} (u^{i+q}_{(2)} \cdots u^{p+q-1}_{(2)} m')_{[1]},
    u^{i+q}_{(1)}, \ldots,
        \\
&
    \quad
     u^{p+q-1}_{(1)} , m \pig)
       \otimes_A
       u^0_{(2)} \cdots u^{i-1}_{(2)}
     \psi_q\big(1, u^i_{(1)}, \ldots, u^{i+q-1}_{(1)}, (u^{i+q}_{(2)} \cdots u^{p+q-1}_{(2)} m')_{[0]} \big)_{(0)}.
    \end{split}
\end{equation}
\end{small}
Denote by
\begin{equation}
  \label{batteriefastalle}
\phi_j \,\bar\circ^{\scriptscriptstyle \otimes}\, \psi_q \coloneq \textstyle\sum\limits^{j}_{i=1} (-1)^{(i-1)(q-1)} \phi_j \circ^{\scriptscriptstyle \otimes}_i \psi_q
\end{equation}
the {\em full} external operadic composition (or {\em  external Gerstenhaber product}).
For a zero cochain, that is, for $j = 0$, put $\phi_0 \, \bar\circ^{\scriptscriptstyle \otimes}\, \psi_q \coloneq  0$.
The commutator with respect to this Gerstenhaber product will be denoted by
 $$
 [\psi_q,\phi_p]^{\bar\circ^{\scriptscriptstyle \otimes}} \coloneq
  \psi_q \, \bar\circ^{\scriptscriptstyle \otimes} \, \phi_p - (-1)^{(p-1)(q-1)} \phi_p
 \, \bar\circ^{\scriptscriptstyle \otimes} \, \psi_q. 
  $$

\begin{rem}
\label{auteur}
  Observe that it is, a priori, not so clear how to generalise \eqref{soso2} to a map
  $$
C^j(U, X, E_j) \otimes C^i(U, X, F_i) \to C^{j+i-1}(U, X \otimes_A X, E_j \otimes_A F_i)
$$
for $0 \leq i <q$
as there is in general no reason why, apart from $F_q = Z$, also the $U$-modules $F_i$ for $i \neq q$ should be $U$-comodules as well.
\end{rem}

\subsubsection{The internal operadic composition}\label{sec:operadic}
%Assuming now that $X$ is a comonoid in ${\mathscr{Z}}^r(\umod)$ with comultiplication  $\gD_\ikks \colon X \to X \otimes_A X$ and $Z$ a monoid in ${\mathscr{Z}}^\ell(\umod)$  with multiplication $  \mu \colon Z \otimes_A Z \to Z$,
Precomposing resp.\ postcomposing the partial  external operadic composition \eqref{soso2} in case $j = p$  with the comultiplication $\Delta_\ikks$ on $X$ and the multiplication $\mu$ on $Z$, respectively, gives a true ({partial}) ({internal}) {operadic composition} on
the family
  \begin{equation}
    \label{jour}
\cO(n) \coloneq C^n(U,X,Z) = \Hom_U(\mathrm{Bar}_n(U, X), Z)
\end{equation}
  of $\kkk$-modules. This results into a map
  $$
\cO(p) \otimes \cO(q) \to \cO(p+q-1)
$$
explicitly given for 
any $\phi_p \in \cO(p)$, $\psi_q \in \cO(q)$, any $u^j \in U$, and $m \in X$ by
\begin{small}
\begin{equation}
  \label{mainsomma}
  \begin{split}
    &(\phi_p  \circ_i \psi_q)(u^0, u^1, \ldots, u^{p+q-1}, m) \coloneq
    \\
    &
        \phi_p\pig(u^0_{(1)}, \ldots, u^{i-1}_{(1)}, \psi_q\big(1, u^i_{(1)}, \ldots, u^{i+q-1}_{(1)},
    \\
    &
    (u^{i+q}_{(2)} \cdots u^{p+q-1}_{(2)} m_{(2)})_{[0]} \big)_{(-1)}
    u^{i}_{(2)} \cdots u^{i+q-1}_{(2)} (u^{i+q}_{(2)} \cdots u^{p+q-1}_{(2)} m_{(2)})_{[1]},
    u^{i+q}_{(1)}, \ldots,
        \\
&
         u^{p+q-1}_{(1)} , m_{(1)} \pig)
     \cdot_\zett \pig( u^0_{(2)} \cdots u^{i-1}_{(2)}
     \psi_q\big(1, u^i_{(1)}, \ldots, u^{i+q-1}_{(1)}, (u^{i+q}_{(2)} \cdots u^{p+q-1}_{(2)} m_{(2)})_{[0]} \big)_{(0)}
    \pig),
  \end{split}
\end{equation}
\end{small}
for any $i = 1, \ldots, p$.
Observe that the full structure of $Z$ and $X$ is used here: being left $U$-modules, being (left resp.\ right) $U$-comodules, and having a multiplication on $Z$ resp.\ a comultiplication on $X$.
That \eqref{mainsomma} indeed defines partial composition maps in an operad in the standard sense ({\em cf.}~Appendix \ref{pamukkale1}) will not be proven before \S\ref{appassionata} under the three assumptions that $X$ is a (braided) cocommutative comonoid in ${\mathscr{Z}}^r(\umod)$ and $Z$ a (braided) commutative monoid in ${\mathscr{Z}}^\ell(\umod)$ such that $(X, Z)$ yields a commuting pair.

Analogously to \eqref{batteriefastalle}, let us introduce
the {\em full} (internal) operadic composition
by defining
\begin{equation}
  \label{museicapitolini}
\phi_j \,\bar\circ \, \psi_q \coloneq \textstyle\sum\limits^{j}_{i=1} (-1)^{(i-1)(q-1)} \phi_j \circ_i \psi_q,
\end{equation}
which, again, might be called the
({\em internal}) {\em Gerstenhaber product}.

For the sake of illustration, we list a couple of special instances: for arbitrary $Z$ and $X = A$, the base algebra itself, one obtains:
%for $\phi_p \in \cO(p)$ and $\psi_q \in \cO(q)$:
\begin{small}
\begin{equation}
  \label{soso}
  \begin{split}
&(\phi_p \circ_i \psi_q)(u^0, u^1, \ldots, u^{p+q-1})
    \\
    &
    = \phi_p\big(u^0_{(1)}, \ldots, u^{i-1}_{(1)},
    \psi_q(1, u^i_{(1)}, \ldots, u^{i+q-1}_{(1)})_{(-1)}
u^i_{(2)}  \cdots u^{i+q-1}_{(2)}, u^{i+q}, \ldots, u^{p+q-1}
\big)
\\
&
\qquad
    \cdot_Z
    \big(
    u^0_{(2)} \cdots u^{i-1}_{(2)}
    \psi_q(1, u^i_{(1)}, \ldots, u^{i+q-1}_{(1)})_{(0)}
    \big).
    \end{split}
\end{equation}
\end{small}
%where $i = 1, \ldots, p$.
For example, for $p = q =1$, this would be
\begin{equation}
            \label{barricade1}
(\phi_p \circ_1 \psi_q)(u, v)
= \phi_p\big(u_{(1)}, \psi_q(1, v_{(1)})_{(-1)} v_{(2)}\big) \cdot_Z \big(u_{(2)} \psi_q(1, v_{(1)})_{(0)}\big).
\end{equation}
On the other hand, let $Z =A$ and $X$ be arbitrary. Then \eqref{mainsomma} 
%for $\phi_p \in \cO(p)$ and $\psi_q \in \cO(q)$
reduces to
\begin{small}
\begin{equation*}
  \label{mainsomma2}
  \begin{split}
    &(\phi_p  \circ_i \psi_q)(u^0, u^1, \ldots, u^{p+q-1}, m) =
    \\
    &
    \;
    \phi_p\pig(u^0, \ldots, u^{i-1},
    \psi_q\big(1, u^i_{(1)}, \ldots, u^{i+q-1}_{(1)},
        \\
&
    \;\;
    (u^{i+q}_{(2)} \cdots u^{p+q-1}_{(2)} m_{(2)})_{[0]} \big) \lact
    u^{i}_{(2)} \cdots u^{i+q-1}_{(2)} (u^{i+q}_{(2)} \cdots u^{p+q-1}_{(2)} m_{(2)})_{[1]},
%        \\
%&
%    \qquad
    u^{i+q}_{(1)}, \ldots,
     u^{p+q-1}_{(1)} , m_{(1)} \pig).
  \end{split}
\end{equation*}
\end{small}
%for any $i = 1, \ldots, p$.
%
Finally, if both $X = Z = A$ equal the base algebra itself, that is, the unit object in the monoidal category $\umod$, we obtain
%for $\phi_p \in \cO(p)$ and $\psi_q \in \cO(q)$
\begin{small}
\begin{equation*}
%  \label{mainsomma3}
  \begin{split}
    &(\phi_p  \circ_i \psi_q)(u^0, u^1, \ldots, u^{p+q-1}) =
    \\
    &
    \quad
    \phi_p\pig(u^0, \ldots, u^{i-1}, \psi_q\big(1, u^i_{(1)}, \ldots, u^{i+q-1}_{(1)}\big) \lact
    u^{i}_{(2)} \cdots u^{i+q-1}_{(2)},
 u^{i+q}, \ldots,
     u^{p+q-1} \pig),
  \end{split}
\end{equation*}
\end{small}
%for any $i = 1, \ldots, p$,
which is sort of the nicest and makes it clear why people never bothered about introducing more general coefficients. Again, in case of the bialgebroid $(U, A) = (\Ae, A)$ and $X = Z = A$, employing the explicit bialgebroid structure of $\Ae$ as quoted in \eqref{purple}, this reduces to the classical Gerstenhaber insertion operations as they originally appeared in \cite{Ger:TCSOAAR},
that is,
\begin{small}
\begin{equation*}
%  \label{mainsomma3}
  \begin{split}
    &(\phi_p  \circ_i \psi_q)(a^1, \ldots, a^{p+q-1}) =
   \phi_p\big(a^1, \ldots, a^{i-1}, \psi_q(a^i, \ldots, a^{i+q-1}),
 a^{i+q}, \ldots,
     a^{p+q-1} \big),
  \end{split}
\end{equation*}
\end{small}
where now the commata mean a tensor product over $\kkk$ instead of over $\Aop$, and where we identified $\Hom_\Ae\big((\Ae)^{\otimes_\Aop n+1}, A\big) \simeq \Hom_k(A^{\otimes_k n}, A)$.

\subsubsection{The internal cup product}
Introducing a special element $ \mu  \in \cO(2)$, 
  $$
   \mu (u^0, u^1, u^2, m) \coloneq \gve_\ikks(u^0u^1u^2m),
   $$
   the {\em multiplication}, 
which will again serve in \S\ref{appassionata}, 
one can define
the {\em (internal) cup product}
\begin{equation*}
  \begin{array}{rcl}
    \cup\colon \cO(p) \otimes \cO(q)
\!\!\!    &\to \!\!\! & \cO(p+q),
\end{array}
\end{equation*}
  in the customary way, {\em i.e.},
  \begin{equation}
    \label{cupi}
\phi_p \cup \psi_q = ( \mu  \circ_2 \psi_q) \circ_1 \phi_p.
 \end{equation}
For $\phi_p \in \cO(p)$, $\psi_q \in \cO(q)$, any $u^j \in U$ and $m \in X$ this explicitly comes out as 
\begin{equation}
\label{cupcupcup}
  \begin{split}
&   (\phi_p \cup \psi_q)(u^0, \ldots, u^{p+q}, m)
    \\
    &
    \quad
    =     \phi_p \big(u^0_{(1)}, \ldots, u^p_{(1)}, (u^{p+1}_{(2)} \cdots u^{p+q}_{(2)} m_{(2)})_{[0]}\big)
    \\
    &
    \qquad\qquad
    \cdot_\zett \psi_q \big(u^0_{(2)} \cdots u^p_{(2)}
(u^{p+1}_{(2)} \cdots u^{p+q}_{(2)} m_{(2)})_{[1]}
    , u^{p+1}_{(1)}, \ldots, u^{p+q}_{(1)}, m_{(1)}\big).
\end{split}
\end{equation}

\begin{rem}
  \label{winsstr}
Observe that this is now the same formula as the one obtained by a different approach in \eqref{namelythis}, with the striking difference that
here one does not need to start from the assumption of dealing with cocycles; indeed, in Eq.~\eqref{cupcupcup} above, both $\phi_p$ and $\psi_q$ are merely cochains.
\end{rem}

\begin{rem}
  As already hinted at in Remark \ref{auteur}, observe again that
as in \eqref{cupi}
  analogously writing 
$\phi_j \cup_{\scriptscriptstyle \otimes} \psi_i = ( \mu  \circ^{\scriptscriptstyle \otimes}_2 \psi_i) \circ^{\scriptscriptstyle \otimes}_1 \phi_j$ for the external cup product \eqref{cupcup} in terms of the external operadic operation \eqref{soso2}
does not make sense, and therefore \eqref{cupcup} had to be defined, sort of, by hand.
\end{rem}

Let us again discuss some special cases: for arbitrary $Z$ and $X= A$, Eq.~\eqref{cupcupcup} reduces to
   \begin{equation*}
  \begin{split}
&   (\phi_p \cup \psi_q)(u^0, \ldots, u^{p+q})
  %  \\
  %  &
  %  \quad
    =     \phi_p \big(u^0_{(1)}, \ldots, u^p_{(1)}\big)
    \cdot_\zett \psi_q \big(u^0_{(2)} \cdots u^p_{(2)}, u^{p+1}, \ldots, u^{p+q} \big),
\end{split}
\end{equation*} 
   which, from a slightly different viewpoint, precisely reflects the DG coalgebra structure of the bar resolution, whereas for arbitrary $X$ and $Z = A$, Eq.~\eqref{cupcupcup} by replacing the multiplication in $Z$ with the product in $A$ does not really simplify. In case $Z=X=A$, we therefore obtain
      \begin{equation*}
  \begin{split}
&   (\phi_p \cup \psi_q)(u^0, \ldots, u^{p+q})
  %  \\
  %  &
  %  \quad
    =     \phi_p (u^0_{(1)}, \ldots, u^p_{(1)})
    \psi_q (u^0_{(2)} \cdots u^p_{(2)}, u^{p+1}, \ldots, u^{p+q}),
\end{split}
\end{equation*}
      which, again, in case $(U,A) = (\Ae,A)$ reduces to the well-known cup product from \cite{Ger:TCSOAAR}, that is
   \begin{equation*}
  \begin{split}
&   (\phi_p \cup \psi_q)(a^1, \ldots, a^{p+q})
  %  \\
  %  &
  %  \quad
    =     \phi_p(a^1, \ldots, u^p)
    \psi_q (a^{p+1}, \ldots, a^{p+q}),
\end{split}
\end{equation*}
      where, once more, the commata mean a tensor product over $\kkk$ instead of over $\Aop$, and where we identified $\Hom_\Ae\!\big((\Ae)^{\otimes_\Aop n+1}, A\big) \simeq \Hom_k(A^{\otimes_k n}, A)$.

\subsection{A second cocycle representative for $\E\cup\F$}
\label{there}
As in \S\ref{torloniasantisubito}, we are now going to construct a second cocycle representative for $\E \cup \F$, see Proposition \ref{prop:neverwhere}, that in the next subsection will turn out to be homotopic to the one from Definition \ref{first}. For this, in particular, we need to find the, sort of,
correction term $\epsilon$ along with a couple of technical lemmata.

As above, for the whole section, let $\E\in \ext^p_U(X,Z)$ and $\F\in \ext^q_U(X,Z)$, with $p,q>0$, and let $\phi\colon \mathrm{Bar}(X,U)\to \E$ and $\psi\colon \mathrm{Bar}(X,U)\to \F$ be chain maps, {\em i.e.}, cocycle representatives for $\E$ and $\F$, respectively.

By switching the r\^oles of $\E$ and $\F$ in Remark \ref{rem:annette} and in Lemma \ref{lem:maywestart}, we immediately obtain the following.

\begin{lemma}
  \label{lem:maywestart2}
  If $\phi,\psi$ are chain maps
  as above, then the collection of maps $(\psi\cup_{\scriptscriptstyle \otimes} \phi)_k\colon \mathrm{Bar}(U,X\otimes_A X)\to \mathbf{Moloch}(\F,\E)_k$ induced by the external cup product
defines a $\tau$-twisted cocycle representative  $\psi\cup_{\scriptscriptstyle \otimes} \phi\colon \mathrm{Bar}(U,X\otimes_A X)\to \mathbf{Moloch}(\F,\E)$.
\end{lemma}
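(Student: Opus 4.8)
The plan is to obtain this lemma by a pure relabelling in Lemma~\ref{lem:maywestart}. The hypotheses of the latter --- $\phi$ a chain map over $\id_\ikks$ to an extension in $\ext^p_U(X,Z)$ and $\psi$ a chain map over $\id_\ikks$ to an extension in $\ext^q_U(X,Z)$ --- are symmetric in the two arguments, and so is its conclusion once one simultaneously interchanges $\E\leftrightarrow\F$, $\phi\leftrightarrow\psi$, $p\leftrightarrow q$ and replaces $\mathbf{Moloch}(\E,\F)$ by $\mathbf{Moloch}(\F,\E)$. The three ingredients feeding that proof all survive the interchange: first, the graded Leibniz rule \eqref{cupleibniz}, which Lemma~\ref{leibnizcup} establishes for $\phi_j\cup_{\scriptscriptstyle \otimes}\psi_i$ with \emph{arbitrary} $i,j\geq 0$ and arbitrary target modules, hence in particular for $\psi_i\cup_{\scriptscriptstyle \otimes}\phi_j$; second, the chain-map relations \eqref{blumare} and the cocycle condition \eqref{famepigreco}, assumed for $\phi$ and for $\psi$ alike; third, the lowest-degree identity $L\cup_{\scriptscriptstyle \otimes}L=\tau\circ L$, deduced from the Yetter--Drinfel'd condition \eqref{yd2} and evidently insensitive to the order of the two copies of $L$.

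Concretely, I would rerun the three steps of the proof of Lemma~\ref{lem:maywestart} with the names transposed. First, Remark~\ref{rem:annette} applied with $\E$ and $\F$ swapped shows that the maps $(\psi\cup_{\scriptscriptstyle \otimes}\phi)_k$ take values in $\mathbf{Moloch}(\F,\E)_k$ for $k\geq 0$, and they are extended in degree $-1$ by $\tau\colon X\otimes_A X\to X\otimes_A X$ as in \eqref{sahnejoghurt0a}. Second, for $1\leq k\leq p+q$ one verifies that the square
\[
\xymatrix{
  \mathrm{Bar}_k(U,X \otimes_A X) \ar[r]^-{d} \ar[d]_{\sum\limits_{i+j = k}\!\!(\psi_i \, \cup_{\scriptscriptstyle \otimes}  \, \phi_j)} & \mathrm{Bar}_{k-1}(U,X \otimes_A X) \ar[d]^{\sum\limits_{i+j = k-1}\!\!\!\!(\psi_i \, \cup_{\scriptscriptstyle \otimes}  \, \phi_j)}
  \\
  \textstyle\bigoplus\limits_{i+j = k} \!\!F_i \otimes_A E_j \ar[r]^-d & \textstyle\bigoplus\limits_{i+j = k-1} \!\!\!\!F_i \otimes_A E_j
}
\]
commutes, by expanding $d\circ(\psi_i\cup_{\scriptscriptstyle \otimes}\phi_j)$ and $(\psi_i\cup_{\scriptscriptstyle \otimes}\phi_j)\circ d$ through \eqref{cupleibniz} and \eqref{blumare} and matching the resulting terms, invoking \eqref{famepigreco} when $k$ exceeds $p$ or $q$; this is verbatim the computation in the proof of Lemma~\ref{lem:maywestart}, read with $(\phi,\psi)$ and $(\E,\F)$ swapped. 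Third, in degree $k=0$ the square reduces, via \eqref{fame1047}, to the identity $L\cup_{\scriptscriptstyle \otimes}L=\tau\circ L$ already established there.

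I do not anticipate a genuine obstacle: the entire content is the symmetry of the external-cup-product construction under exchanging its two chain-map arguments, and every nontrivial ingredient has already been isolated in Lemma~\ref{leibnizcup}, Lemma~\ref{lem:maywestart} and Remark~\ref{rem:annette}. The one point meriting a line of attention is the bookkeeping of signs and indices after renaming the summation variable: one must ensure that the sign coming from \eqref{cupleibniz} is applied with exponent equal to the degree of the \emph{left} factor $\psi_i$ in the transposed sum rather than of $\phi_j$. This being routine, the proof collapses to the single sentence: apply Lemma~\ref{lem:maywestart} with $\E$, $\F$ (hence $\phi$, $\psi$, $p$, $q$) interchanged.
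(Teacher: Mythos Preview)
Your proposal is correct and matches the paper's own argument exactly: the paper introduces this lemma with the single sentence ``By switching the r\^oles of $\E$ and $\F$ in Remark~\ref{rem:annette} and in Lemma~\ref{lem:maywestart}, we immediately obtain the following,'' which is precisely the relabelling you carry out in detail.
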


\noindent  Let us introduce the
  {\em (doubly)
    braided cup commutator}
  \begin{equation}
  \label{gewitter}
[\phi_j,\psi_q]_{\cup_{\otimes}, \gs, \tau} \coloneq (\phi_j \cup_{\scriptscriptstyle \otimes} \psi_q) \circ \tau_{j+q} - (-1)^{jq} \sigma \circ (\psi_q \cup_{\scriptscriptstyle \otimes} \phi_j),
\end{equation}
  where we wrote
  \begin{equation}
    \label{temporaletrapoco}
  \tau_{k} \coloneq (U^{\otimes_\Aop k+1} \otimes_\Aop \tau)
  \end{equation}
  in order to enhance notational beauty. In this sense, $\tau_{-1} \coloneq \tau$.
  
 \begin{lem}
\label{lem:homotopy-formula}
  Let $\phi_j \in C^j(U,X,E_j)$ and $\psi_q \in C^q(U, X, Z)$ and as before let $\gd = d^*$ be the pullback of the differential of the bar resolution.
If $(X, Z)$ is a commuting pair in the sense of Eq.~\eqref{centralascentralcan1}, %--\eqref{centralascentralcan2}, 
  then the homotopy formula
  \begin{equation}
    \label{sahnejoghurt}
%  \phi_j \cup \psi_q - (-1)^{jq} \sigma(\psi_q \cup \phi_j)
(-1)^{qj}[\phi_j,\psi_q]_{\cup_{\otimes}, \gs, \tau} 
  =
  (-1)^{q} \phi_j \,\bar\circ^{\scriptscriptstyle \otimes}\, \gd \psi_q
  -  (-1)^{q} \gd(\phi_j \,\bar\circ^{\scriptscriptstyle \otimes} \, \psi_q)
  - \gd \phi_j \,\bar\circ^{\scriptscriptstyle \otimes}\, \psi_q
  \end{equation}
  holds,
  where
  $\gs$ and $\tau$ are the left resp.\ right braidings from \eqref{sahnejoghurt0} resp.\ \eqref{sahnejoghurt0a}.
%% %
%%   \begin{equation}
%%     \label{sahnejoghurt0}
%%     \gs\colon Z \otimes_A E_j \to E_j \otimes_A Z,
%%     \quad
%%     z \otimes_A e \mapsto z_{(-1)}e \otimes_A z_{(0)}
%%   \end{equation}
%%  is the left (Yetter-Drinfel'd) braiding, and
%%  \begin{equation}
%%     \label{sahnejoghurt0a}
%%     \tau\colon F_i \otimes_A X \to X \otimes_A F_i,
%%     \quad
%%     f \otimes_A x \mapsto x_{[0]} \otimes_A x_{[1]}f
%%   \end{equation}
%%  is the right one.
For $j = 0$, Eq.~\eqref{sahnejoghurt} reduces to
\begin{equation}
    \label{sahnejoghurt1}
[\phi_0,\psi_q]_{\cup_{\otimes}, \gs, \tau} 
  = - \gd \phi_0 \circ^{\scriptscriptstyle \otimes}_1 \psi_q.
    \end{equation}
\end{lem}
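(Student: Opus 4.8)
The plan is to prove the homotopy formula \eqref{sahnejoghurt} by a direct, if lengthy, computation with the explicit formulae \eqref{cupcup} for the external cup product, \eqref{soso2}--\eqref{batteriefastalle} for the external Gerstenhaber product, and the explicit form of the bar differential recalled below \eqref{caffe}. The strategy mirrors Gerstenhaber's classical pre-Lie identity argument: one expands $\gd(\phi_j \bar\circ^{\scriptscriptstyle\otimes} \psi_q)$ using the Leibniz-type bookkeeping for $\circ^{\scriptscriptstyle\otimes}_i$, and the terms sort themselves into three groups. The ``inner'' terms where a face map $d_k$ of the bar resolution falls inside the argument block fed to $\psi_q$ reassemble, after the sign $(-1)^{(i-1)(q-1)}$ is accounted for, into $(-1)^{q}\phi_j \bar\circ^{\scriptscriptstyle\otimes} \gd\psi_q$; the terms where $d_k$ acts entirely on the $\phi_j$-slots reassemble into $\gd\phi_j \bar\circ^{\scriptscriptstyle\otimes}\psi_q$ up to sign; and the ``boundary'' terms — where the $\psi_q$-block is inserted at the very first ($i=1$) or telescopes past the very last slot of $\phi_j$ — are precisely the two terms $(\phi_j\cup_{\scriptscriptstyle\otimes}\psi_q)\circ\tau_{j+q}$ and $\pm\,\sigma\circ(\psi_q\cup_{\scriptscriptstyle\otimes}\phi_j)$ constituting the braided cup commutator \eqref{gewitter}. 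This last identification is where the commuting pair hypothesis \eqref{centralascentralcan1} enters: matching the insertion-at-the-end boundary term with $\sigma\circ(\psi_q\cup_{\scriptscriptstyle\otimes}\phi_j)$ requires moving a copy of $Z$ past a copy of $X$ inside the coefficient module, and the two a priori different braidings $\sigma_{Z,X}$ and $\tau_{Z,X}$ must agree for this to close up.

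Concretely, I would first fix notation: write out $\phi_j\circ^{\scriptscriptstyle\otimes}_i\psi_q$ applied to a generic element $(u^0,\dots,u^{j+q-1},m\otimes_A m')$, keeping careful track of which Sweedler legs of the $u$'s and of $m'$ feed $\psi_q$ versus $\phi_j$, and which coaction legs $[0],[1]$ and $(-1),(0)$ appear. Then I would apply $\gd=d^*$, i.e.\ precompose with $d=\sum_{k}(-1)^k d_k$ on $\mathrm{Bar}_{j+q}(U,X\otimes_A X)$, and distribute. For each $k$ one checks where the multiplication $u^k u^{k+1}$ (or, for $k=j+q$, the module action) lands relative to the block $\{u^i,\dots,u^{i+q-1}\}$ that is the inner argument of $\psi_q$: (a) if $k+1<i$ or $k>i+q-1$ it is an ``outer'' face and contributes to $\gd\phi_j\bar\circ^{\scriptscriptstyle\otimes}\psi_q$ after reindexing $i$; (b) if $i\le k\le i+q-2$ it is an ``inner'' face of $\psi_q$ and, summed against the sign, produces $\phi_j\circ^{\scriptscriptstyle\otimes}_i(\gd\psi_q)$; (c) the edge cases $k=i-1$ and $k=i+q-1$ merge a $u$ from outside with the $\psi_q$-block and, summed over $i$, telescope — most cancel in pairs, and the two survivors at the extreme ends $i=1$ and $i=j$ (or $i=j+1$ in the telescoped index) are the cup-product boundary terms. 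Gerstenhaber's classical sign computation $(-1)^{(i-1)(q-1)}$ vs.\ $(-1)^{i(q-1)}$ handles the cancellations in (c). One must also invoke the Yetter--Drinfel'd condition \eqref{yd2} and coassociativity of the coproduct repeatedly to rewrite nested coaction legs so that the surviving boundary terms literally become $(\phi_j\cup_{\scriptscriptstyle\otimes}\psi_q)\circ\tau_{j+q}$ and $\sigma\circ(\psi_q\cup_{\scriptscriptstyle\otimes}\phi_j)$ with $\tau_{j+q}$ as defined in \eqref{temporaletrapoco}.

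The main obstacle I anticipate is precisely the sign and index bookkeeping in step (c) together with the braiding identification: the braided cup commutator \eqref{gewitter} carries the twist $\tau_{j+q}$ on the first summand and the left braiding $\sigma$ on the second, and the two boundary terms coming out of $\gd(\phi_j\bar\circ^{\scriptscriptstyle\otimes}\psi_q)$ will at first not manifestly be in that form — they will involve coaction legs arranged differently. Reconciling them is where one must use, in the right order, (i) the YD compatibility to convert a $U$-action composed with a coaction into the braiding $\tau$ of \eqref{sahnejoghurt0a}, (ii) braided (co)commutativity is \emph{not} yet assumed here (only in later statements), so one cannot shortcut, and (iii) the commuting-pair equation $\sigma_{Z,X}=\tau_{Z,X}$ from \eqref{centralascentralcan1} to collapse the remaining ambiguity. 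I expect the overall sign prefactor $(-1)^{qj}$ on the left-hand side to emerge naturally once the insertion index $i$ in the boundary terms is normalized. Finally, for $j=0$ the formula \eqref{sahnejoghurt1} is obtained simply by specialization: when $j=0$ the sum $\phi_0\bar\circ^{\scriptscriptstyle\otimes}\psi_q$ is zero by the convention stated after \eqref{batteriefastalle}, so the right-hand side of \eqref{sahnejoghurt} collapses to $-\gd\phi_0\bar\circ^{\scriptscriptstyle\otimes}\psi_q=-\gd\phi_0\circ^{\scriptscriptstyle\otimes}_1\psi_q$ (the single-term sum), while on the left $[\phi_0,\psi_q]_{\cup_{\otimes},\gs,\tau}$ is exactly the $j=0$ instance of \eqref{gewitter}; one checks directly from \eqref{cupcup} and \eqref{soso2} that these two expressions coincide, which is a short computation using only the YD condition and is essentially the degenerate case of the boundary-term analysis above.
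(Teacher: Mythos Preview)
Your approach is essentially the paper's: a direct Gerstenhaber-style expansion where the ``inner'' faces give $\phi_j\bar\circ^{\scriptscriptstyle\otimes}\gd\psi_q$, the ``outer'' faces give $\gd\phi_j\bar\circ^{\scriptscriptstyle\otimes}\psi_q$, and the two extremal cofaces yield the braided cup commutator. The paper organises this slightly differently: rather than expanding $\gd(\phi_j\bar\circ^{\scriptscriptstyle\otimes}\psi_q)$ and letting terms telescope, it isolates and proves two key identities up front,
\[
(\phi_j \cup_{\scriptscriptstyle \otimes} \psi_q)\circ\tau_{j+q}=(\gd_{j+1}\phi_j)\circ^{\scriptscriptstyle\otimes}_{j+1}\psi_q
\quad\text{and}\quad
\sigma\circ(\psi_q \cup_{\scriptscriptstyle \otimes} \phi_j)=(\gd_0\phi_j)\circ^{\scriptscriptstyle\otimes}_1\psi_q,
\]
and then declares the rest routine. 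These are precisely your two surviving boundary terms, so the substance is the same.

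One small correction to your bookkeeping: you locate the commuting-pair hypothesis in the identification of $\sigma\circ(\psi_q\cup_{\scriptscriptstyle\otimes}\phi_j)$, but in the paper's computation that identity uses only the (left-left) YD condition \eqref{yd} and $U$-linearity. The commuting-pair equation \eqref{centralascentralcan2} is instead what is needed to match $(\phi_j\cup_{\scriptscriptstyle\otimes}\psi_q)\circ\tau_{j+q}$ with the extremal insertion $(\gd_{j+1}\phi_j)\circ^{\scriptscriptstyle\otimes}_{j+1}\psi_q$: one has to convert an expression of the form $(\ldots m)_{[1]}\psi_q(\ldots)$ into $\psi_q(\ldots)_{(-1)}(\ldots)$, which is exactly the exchange $\tau_{\zett,\ikks}=\sigma_{\zett,\ikks}$ on $Z\otimes_A X$. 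This does not affect the validity of your plan, only the attribution of which step needs which hypothesis. Your treatment of the $j=0$ case is correct and matches the paper's derivation of \eqref{sahnejoghurt1} from the second displayed identity.
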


\begin{proof}
  This essentially works by
writing down and comparing all appearing terms one by one, not dissimilar to the operadic case;
which is a tedious but straightforward computation, we only indicate here a couple of decisive steps. 

  To this end, note if $\gd  = \sum_{k=0}^{j+1} (-1)^{k} \gd_k$ is the decomposition of the differential $\gd = d^*$ into its cofaces $\gd_k$, then one directly checks that
  %using \eqref{soso2} and \eqref{cupcup} and the counitality of the coaction that
%$$
%\phi_j \cup \psi_q = (\gd_{j+1} \phi_j) \circ^{\scriptscriptstyle \otimes}_2 \psi_q,
%    $$
  %
  \begin{small}
  \begin{equation*}
    \begin{array}{rcl}
      &&
((\phi_j \cup_{\scriptscriptstyle \otimes} \psi_q) \circ \tau_{j+q})(u^0, \ldots, u^{j+q}, m \otimes_A m')
\\[1mm]
      &
      \stackrel{\scriptscriptstyle{\eqref{ydbraid2}}}{=}
      &
(\phi_j \cup_{\scriptscriptstyle \otimes} \psi_q)(u^0, \ldots, u^{j+q}, m'_{[0]} \otimes_A m'_{[1]}m)
      \\[1mm]
      &
      \stackrel{\scriptscriptstyle{\eqref{cupcup}}}{=}
      &
      \phi_j\big(u^0_{(1)}, \ldots, u^j_{(1)}, (u^{j+1}_{(2)} \cdots u^{j+q}_{(2)}  m'_{[1]}m )_{[0]}\big)
\\[1mm]
      &&
  \qquad\qquad    
    \otimes_A \psi_q\big(u^0_{(2)} \cdots u^j_{(2)}
(u^{j+1}_{(2)} \cdots u^{j+q}_{(2)}  m'_{[1]}m)_{[1]}
    , u^{j+1}_{(1)}, \ldots, u^{j+q}_{(1)},  m'_{[0]}\big)
    \\[1mm]
          &
      \stackrel{\scriptscriptstyle{}}{=}
      &
      \phi_j\big(u^0_{(1)}, \ldots, u^j_{(1)}, (u^{j+1}_{(2)} \cdots u^{j+q}_{(2)}  m'_{[1]}m )_{[0]}\big)
\\[1mm]
      &&
  \qquad\qquad    
    \otimes_A u^0_{(2)} \cdots u^j_{(2)}
(u^{j+1}_{(2)} \cdots u^{j+q}_{(2)}  m'_{[1]}m)_{[1]} \psi_q\big(1
    , u^{j+1}_{(1)}, \ldots, u^{j+q}_{(1)},  m'_{[0]}\big)
            \\[1mm]
      &
      \stackrel{\scriptscriptstyle{\eqref{centralascentralcan2}}}{=}
      &
  \phi_j\big(u^0_{(1)}, \ldots, u^j_{(1)}, \psi_q\big(1
    , u^{j+1}_{(1)}, \ldots, u^{j+q}_{(1)},  m'_{[0]}\big)_{(-1)} u^{j+1}_{(2)} \cdots u^{j+q}_{(2)}  m'_{[1]}m \big)
\\[1mm]
      &&
  \qquad\qquad    
    \otimes_A u^0_{(2)} \cdots u^j_{(2)}
\psi_q\big(1
    , u^{j+1}_{(1)}, \ldots, u^{j+q}_{(1)},  m'_{[0]}\big)_{(0)} 
      \\[1mm]
      &
      \stackrel{\scriptscriptstyle{}}{=}
      &
   \gd_{j+1} \phi_j\big(u^0_{(1)}, \ldots, u^j_{(1)}, \psi_q\big(1
    , u^{j+1}_{(1)}, \ldots, u^{j+q}_{(1)},  m'_{[0]}\big)_{(-1)} u^{j+1}_{(2)} \cdots u^{j+q}_{(2)}  m'_{[1]}, m \big)
\\[1mm]
      &&
  \qquad\qquad    
    \otimes_A u^0_{(2)} \cdots u^j_{(2)}
\psi_q\big(1
    , u^{j+1}_{(1)}, \ldots, u^{j+q}_{(1)},  m'_{[0]}\big)_{(0)} 
      \\[1mm]
      &
      \stackrel{\scriptscriptstyle{\eqref{soso2}}}{=}
      &
 ((\gd_{j+1} \phi_j) \circ^{\scriptscriptstyle \otimes}_{j+1} \psi_q)(u^0, \ldots, u^{j+q}, m \otimes_A m'),
       \end{array}
   \end{equation*}
  \end{small}
    where the fact that $(X,Z)$ is a commuting pair was used in the fourth step;
  that is,
  \begin{equation}
  \label{bois1}
(\phi_j \cup_{\scriptscriptstyle \otimes} \psi_q) \circ \tau_{j+q} = (\gd_{j+1} \phi_j) \circ^{\scriptscriptstyle \otimes}_{j+1} \psi_q,
  \end{equation}
in analogy to the operadic case.
On the other hand,
 \begin{small}
  \begin{equation*}
    \begin{array}{rcl}
      &&
 \sigma\pig((\psi_q \cup_{\scriptscriptstyle \otimes} \phi_j)(u^0, \ldots, u^{q+j}, m \otimes_A m')\pig)
\\[1mm]
      &
      \stackrel{\scriptscriptstyle{\eqref{cupcup}}}{=}
      &
      \gs \pig(
 \psi_q\big(u^0_{(1)}, \ldots, u^q_{(1)}, (u^{q+1}_{(2)} \cdots u^{q+j}_{(2)} m')_{[0]}\big)
    \\
    &&
    \qquad\qquad
    \otimes_A \phi_j\big(u^0_{(2)} \cdots u^q_{(2)}
(u^{q+1}_{(2)} \cdots u^{q+j}_{(2)} m')_{[1]}
    , u^{q+1}_{(1)}, \ldots, u^{q+j}_{(1)}, m\big)\pig)     
\\[1mm]
    &
      \stackrel{\scriptscriptstyle{\eqref{ydbraid}}}{=}
      &
 \psi_q\big(u^0_{(1)}, \ldots, u^q_{(1)}, (u^{q+1}_{(2)} \cdots u^{q+j}_{(2)} m')_{[0]}\big)_{(-1)} \phi_j\big(u^0_{(2)} \cdots u^q_{(2)}
 (u^{q+1}_{(2)} \cdots u^{q+j}_{(2)} m')_{[1]},
 \\[1mm]
 &&
 \quad
     u^{q+1}_{(1)}, \ldots, u^{q+j}_{(1)}, m\big)
\otimes_A
 \psi_q\big(u^0_{(1)}, \ldots, u^q_{(1)}, (u^{q+1}_{(2)} \cdots u^{q+j}_{(2)} m')_{[0]}\big)_{(0)}
            \\[1mm]
      &
      \stackrel{\scriptscriptstyle{\eqref{yd}}}{=}
      &
\phi_j\pig(u^0_{(1)} \psi_q\big(1, u^1_{(1)}, \ldots, u^{q}_{(1)},
    \\
    &&
\quad
    (u^{q+1}_{(2)} \cdots u^{q+j}_{(2)} m')_{[0]} \big)_{(-1)}
    u^{1}_{(2)} \cdots u^{q}_{(2)} (u^{q+1}_{(2)} \cdots u^{q+j}_{(2)} m')_{[1]},
    u^{q+1}_{(1)}, \ldots,
        \\
&&
    \quad
     u^{q+j}_{(1)} , m \pig)
       \otimes_A
       u^0_{(2)} \psi_q\big(1, u^1_{(1)}, \ldots, u^{q}_{(1)}, (u^{q+1}_{(2)} \cdots u^{q+j}_{(2)} m')_{[0]} \big)_{(0)}
      \\[1mm]
      &
      \stackrel{\scriptscriptstyle{}}{=}
      &
  \gd_0 \phi_j\pig(u^0_{(1)}, \psi_q\big(1, u^1_{(1)}, \ldots, u^{q}_{(1)},
    \\
    &&
\quad
    (u^{q+1}_{(2)} \cdots u^{q+j}_{(2)} m')_{[0]} \big)_{(-1)}
    u^{1}_{(2)} \cdots u^{q}_{(2)} (u^{q+1}_{(2)} \cdots u^{q+j}_{(2)} m')_{[1]},
    u^{q+1}_{(1)}, \ldots,
        \\
&&
    \quad
     u^{q+j}_{(1)} , m \pig)
       \otimes_A
       u^0_{(2)} \psi_q\big(1, u^1_{(1)}, \ldots, u^{q}_{(1)}, (u^{q+1}_{(2)} \cdots u^{q+j}_{(2)} m')_{[0]} \big)_{(0)}
    \\[1mm]
      &
      \stackrel{\scriptscriptstyle{\eqref{soso2}}}{=}
      &
 ((\gd_0 \phi_j) \circ^{\scriptscriptstyle \otimes}_1 \psi_q)(u^0, \ldots, u^{q+j}, m \otimes_A m'),
       \end{array}
   \end{equation*}
  \end{small}
%
%
%
%
%% \begin{equation*}
%% \begin{split}
%% & \sigma\pig((\psi_q \cup \phi_j)(u^0, \ldots, u^{j+q})\pig)
%%   \\
%%  &=
%% \sigma  \pig(
%% \psi_q(u^0_{(1)}, \ldots, u^q_{(1)}) \otimes_A \phi_j(u^0_{(2)} \cdots u^q_{(2)}, u^{q+1}, \ldots, u^{q+j})\pig)
%%   \\
%%  &=
%%   \phi_j\big(\psi_q(u^0_{(1)}, \ldots, u^q_{(1)})_{(-1)} u^0_{(2)} \cdots u^q_{(2)}, u^{q+1}, \ldots, u^{q+j}\big)
%%   \otimes_A   \psi_q(u^0_{(1)}, \ldots, u^q_{(1)})_{(0)}
%%     \\
%%  &=
%%   \phi_j\big(u^0_{(1)} \psi_q(1, \ldots, u^q_{(1)})_{(-1)} u^1_{(2)} \cdots u^q_{(2)}, u^{q+1}, \ldots, u^{q+j}\big)
%%   \otimes_A  u^0_{(2)} \psi_q(1, \ldots, u^q_{(1)})_{(0)}
%%       \\
%%  &=
%%   (\gd_0 \phi_j)\big(u^0_{(1)}, \psi_q(1, \ldots, u^q_{(1)})_{(-1)} u^1_{(2)} \cdots u^q_{(2)}, u^{q+1}, \ldots, u^{q+j}\big)
%%       \otimes_A  u^0_{(2)} \psi_q(1, \ldots, u^q_{(1)})_{(0)}
%%            \\
%%            &=
%% \big((\gd_0 \phi_j) \circ^{\scriptscriptstyle \otimes}_1 \psi_q\big)(u^0, \ldots, u^{j+q}),
%% \end{split}
%% \end{equation*}
%% %
%
%
using the $U$-linearity of both $\psi_q$ and $\phi_j$, and where in the third step we used the fact that the braiding $\gs$ is morphism in $\umod$.
Hence,
\begin{equation}
  \label{bois2}
\sigma \circ (\psi_q \cup_{\scriptscriptstyle \otimes} \phi_j) = (\gd_0 \phi_j) \circ^{\scriptscriptstyle \otimes}_1 \psi_q,
\end{equation}
from which also \eqref{sahnejoghurt1} immediately follows.

With the two identities \eqref{bois1} and \eqref{bois2} at hand, verifying the homotopy \eqref{sahnejoghurt} becomes a feasible task and its full verification is left to the reader.
\end{proof}

Again by Remark \ref{auteur}, one cannot write down a version of the right hand side of \eqref{sahnejoghurt} with the positions of $\phi_j$ and $\psi_q$ interchanged (as one could, up to signs, in the standard operadic situation). Also, the left hand side, {\em i.e.}, the braided cup commutator \eqref{gewitter} would not make sense with 
 $\phi_j$ and $\psi_q$ in opposite order.

\begin{lem}
  \label{lem:xi-and-epsilon}
Let $\phi\colon \mathrm{Bar}(U, X) \to \E$ and
$\psi\colon \mathrm{Bar}(U, X) \to \F$ be as above, and let 
$$
\epsilon_k(\phi, \psi) \colon \mathrm{Bar}_k(U, X \otimes_A X) \to \mathbf{Moloch}(\F ,\E)_k
$$
for any $k = -1, 0, \ldots, p+q$ be defined by
\[
\epsilon_k(\phi,\psi)=
\begin{cases}
0 & \mbox{for} \ k < p,
\\[1mm]
-%(-1)^{pq}
[\psi_{k-p},\phi_p]_{\cup_{\otimes}, \gs, \tau}
  {}
\\
\
+
(-1)^{pk+k +1
}
(F_{k-p+1} \otimes_A i_\ehhe) \circ (\psi_{k-p+1} \,\bar\circ^{\scriptscriptstyle \otimes}\,\phi_p) &
\mbox{for} \ p \leq k \leq p+q-1,
  \\[1mm]
  -%(-1)^{pq}
  [\psi_q,\phi_p]_{\cup_{\otimes}, \gs, \tau}
& \mbox{for} \ k = p+q. 
  \end{cases}
  \]
If $(X, Z)$ is a commuting pair in the sense of Eq.~\eqref{centralascentralcan1},  then $\epsilon(\phi,\psi) \colon \mathrm{Bar}(U, X \otimes_A X) \to \mathbf{Moloch}(\F ,\E)$ is a $0$-twisted cocycle representative.
 \end{lem}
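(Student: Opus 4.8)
The plan is to check directly that $\epsilon(\phi,\psi)=\{\epsilon_k\}$ is a chain map $\mathrm{Bar}(U,X\otimes_A X)\to\mathbf{Moloch}(\F,\E)$ over the zero morphism of $X\otimes_A X$, i.e.\ that it fits into a commutative diagram of the shape \eqref{fame00} with $f=0$. Since $\epsilon_{-1}=0$ and $\epsilon_k=0$ for $k<p$ by definition, all squares in degrees $\le p-1$ commute trivially (the augmentation square just reads $0=0$), so it suffices to treat the degrees $p\le k\le p+q$.

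The core of the argument is to rewrite $\epsilon$ using Lemma \ref{lem:homotopy-formula}. Put $j=k-p$. Applying the homotopy formula \eqref{sahnejoghurt} (in the reduced form \eqref{sahnejoghurt1} when $j=0$) to the pair $(\psi_j,\phi_p)$ — legitimate since the second cochain $\phi_p\in C^p(U,X,Z)$ is valued in $Z$ — and using that $\phi_p$ is a cocycle, $\gd\phi_p=0$ by \eqref{famepigreco}, kills the first term on the right hand side of \eqref{sahnejoghurt}. Feeding in the chain map relations \eqref{blumare} for $\psi$ in the form $\gd\psi_j=d_{\F}\circ\psi_{j+1}$ for $0\le j\le q-1$ (with $d_{\F}=i_\effe$ on $\F_q=Z$) and $\gd\psi_q=0$, and using that the external Gerstenhaber product is natural in its target, $(d_{\F}\circ\psi_{j+1})\,\bar\circ^{\scriptscriptstyle \otimes}\,\phi_p=(d_{\F}\otimes_A\id_Z)\circ(\psi_{j+1}\,\bar\circ^{\scriptscriptstyle \otimes}\,\phi_p)$, one arrives, after reconciling the signs, at the compact identity
\begin{equation*}
\epsilon_{p+j}(\phi,\psi)=(-1)^{p(j+1)}\,\gd\big(\psi_j\,\bar\circ^{\scriptscriptstyle \otimes}\,\phi_p\big)\,+\,(-1)^{pj}\,d_{\mathbf{Moloch}}\circ\big(\psi_{j+1}\,\bar\circ^{\scriptscriptstyle \otimes}\,\phi_p\big),\qquad 0\le j\le q,
\end{equation*}
where $\psi_{j+1}\bar\circ^{\scriptscriptstyle \otimes}\phi_p$ is read as a map into the summand $F_{j+1}\otimes_A Z=\F_{j+1}\otimes_A\E_p$ of $\mathbf{Moloch}(\F,\E)_{p+j+1}$, the second term being understood to vanish for $j=q$ (where $\psi_{q+1}=0$) and for $j=0$ reducing to $\epsilon_p=d_{\mathbf{Moloch}}\circ(\psi_1\bar\circ^{\scriptscriptstyle \otimes}\phi_p)$ because $\psi_0\bar\circ^{\scriptscriptstyle \otimes}\phi_p=0$. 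The whole point of the coefficient $(-1)^{p(p+j)+p+j+1}$ appearing in the statement is that it is exactly $(-1)^{pj}$ times the Koszul sign with which the edge piece $\id_{F_{j+1}}\otimes_A i_\ehhe$ enters the total differential $d_{\mathbf{Moloch}}$ on the summand $\F_{j+1}\otimes_A\E_p$, so that the two edge contributions produced by the homotopy formula recombine into a single application of $d_{\mathbf{Moloch}}$.

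Granted this identity, the chain map conditions drop out formally. On the one hand, $d_{\mathbf{Moloch}}^2=0$ annihilates the second summand, so $d_{\mathbf{Moloch}}\circ\epsilon_{p+j}=(-1)^{p(j+1)}d_{\mathbf{Moloch}}\circ\gd(\psi_j\bar\circ^{\scriptscriptstyle \otimes}\phi_p)$; on the other hand, $d^2=0$ for the bar differential annihilates the first summand of $\epsilon_{p+j-1}$, so $\gd\epsilon_{p+j-1}=\epsilon_{p+j-1}\circ d=(-1)^{p(j-1)}d_{\mathbf{Moloch}}\circ\gd(\psi_j\bar\circ^{\scriptscriptstyle \otimes}\phi_p)$; since $(-1)^{p(j+1)}=(-1)^{p(j-1)}$, these coincide, establishing $d_{\mathbf{Moloch}}\circ\epsilon_{p+j}=\gd\epsilon_{p+j-1}$ for $p<p+j\le p+q-1$. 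For $k=p$ ($j=0$) one needs $d_{\mathbf{Moloch}}\circ\epsilon_p=\gd\epsilon_{p-1}=0$, which holds because $\epsilon_p=d_{\mathbf{Moloch}}\circ(\psi_1\bar\circ^{\scriptscriptstyle \otimes}\phi_p)$ and $d_{\mathbf{Moloch}}^2=0$. For the top degree $k=p+q$, the same computation with $j=q$ gives $d_{\mathbf{Moloch}}\circ\epsilon_{p+q}=\gd\epsilon_{p+q-1}$, and the remaining cocycle condition $\epsilon_{p+q}\circ d=0$ holds since $\epsilon_{p+q}=(-1)^{p(q+1)}(\psi_q\bar\circ^{\scriptscriptstyle \otimes}\phi_p)\circ d$ and $d^2=0$. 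Together with $\epsilon_{-1}=0$ this shows that the diagram \eqref{fame00} with $f=0$ commutes, i.e.\ $\epsilon(\phi,\psi)$ is a $0$-twisted cocycle representative of $\mathbf{Moloch}(\F,\E)$.

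The main obstacle lies in the sign bookkeeping hidden in ``after reconciling the signs'': one has to track the signs of the external Gerstenhaber product $\bar\circ^{\scriptscriptstyle \otimes}$ (which depend only on the inner arity $p$ and are therefore stable under the functoriality manoeuvres), the Koszul signs in the total differential of $\mathbf{Moloch}(\F,\E)$, and the sign $(-1)^{pj}$ coming out of Lemma \ref{lem:homotopy-formula}, and verify that they assemble into precisely the coefficient $(-1)^{p(p+j)+p+j+1}$ of the statement. One must also handle with some care the two extreme values, $j=0$ — where the convention $\psi_0\bar\circ^{\scriptscriptstyle \otimes}\phi_p=0$ is exactly what makes the square in degree $p$ commute — and $j=q$ — where $i_\effe$ takes over the role of $d_{\F}$ and one additionally invokes that $\psi_q$ is a cocycle. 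The commuting pair hypothesis \eqref{centralascentralcan1} enters only through Lemma \ref{lem:homotopy-formula}, whose statement already presupposes it.
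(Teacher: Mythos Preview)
Your approach is correct and rests on the same key input as the paper's proof, namely the homotopy formula of Lemma~\ref{lem:homotopy-formula} together with $\gd\phi_p=0$ and the chain-map relations \eqref{blumare} for $\psi$. The organization, however, is genuinely different and a bit more conceptual: you first rewrite $\epsilon_{p+j}$ as a sum of a $\gd$-exact piece and a $d_{\mathbf{Moloch}}$-exact piece (with shifted indices), after which the chain-map identities $d_{\mathbf{Moloch}}\circ\epsilon_k=\epsilon_{k-1}\circ d$ drop out formally from $d^2=0$ on both sides. The paper instead keeps $\epsilon$ in its original two-term form, computes $d_{\mathbf{Moloch}}\circ\epsilon_k$ and $\epsilon_{k-1}\circ d$ separately via the Leibniz rule \eqref{cupleibniz} and \eqref{hub}, and only then invokes the homotopy formula to match them. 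Your route is shorter once the compact identity is established; the paper's route avoids committing to an explicit sign convention for the totalisation $d_{\mathbf{Moloch}}$.

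One caution: your compact identity
\[
\epsilon_{p+j}=(-1)^{p(j+1)}\gd(\psi_j\,\bar\circ^{\scriptscriptstyle\otimes}\,\phi_p)+(-1)^{pj}\,d_{\mathbf{Moloch}}\circ(\psi_{j+1}\,\bar\circ^{\scriptscriptstyle\otimes}\,\phi_p)
\]
only holds under a specific Koszul convention for $d_{\mathbf{Moloch}}$ on the summand $F_{j+1}\otimes_A Z$, namely $d_{\mathbf{Moloch}}=(d_\F\otimes_A Z)+(-1)^{j+1}(F_{j+1}\otimes_A i_\ehhe)$, which the paper never writes down explicitly. You should state this convention up front rather than absorbing it into the phrase ``after reconciling the signs''; once it is fixed, the check at $j=0$ (using \eqref{sahnejoghurt1} and $\psi_0\,\bar\circ^{\scriptscriptstyle\otimes}\,\phi_p=0$) and at $j=q$ (using $\gd\psi_q=0$) goes through exactly as you describe, and matches the paper's separate treatment of these boundary degrees.
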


\begin{proof}
  By definition of $0$-twisted cocycle representative,  we have to show that $\epsilon$ is a chain map with $\epsilon_{-1}=0$. The last part of the statement is true by definition of $\epsilon$, so we have only to prove that $\epsilon$ is a chain map. To start with, let $p < k \leq p+q-1$; the cases for $k = p+q$ and $k = p$ will be proven separately.
  %Abbreviate
  %$$
%x \otimes_A y = (u^0_{(1)}, \ldots, u^{k-p}_{(1)}) \otimes_A (u^0_{(2)} \cdots %u^{k-p}_{(2)}, u^{k-p+1}, \ldots, u^{k}).
%  $$
  Observe first that since $d$ is a $U$-module map,
  we have from \eqref{sahnejoghurt0} and \eqref{sahnejoghurt0a} that
  \begin{equation}
    \label{hub}
    \begin{array}{rcl}
      (d \otimes_A Z) \circ \gs &=& \gs \circ (Z \otimes_A d),
      \\
      (X \otimes_A d) \circ \tau &=& \tau \circ (d \otimes_A X).
\end{array}
  \end{equation}
Using again the notation  $\tau_k = (U^{\otimes_\Aop k} \otimes_\Aop \tau)$ 
as in \eqref{temporaletrapoco}, 
we compute with $d \circ i_\ehhe = 0$, Eqs.~\eqref{cupcup}, \eqref{gewitter}, and \eqref{blumare} as well as \eqref{hub}:
\begin{equation*}
  \begin{split}
& d \circ \epsilon_k(\phi,\psi)
\\
    &= d \circ \pig(
    -[\psi_{k-p},\phi_p]_{\cup_{\otimes}, \gs, \tau}+(-1)^{pk+k +1} (F_{k-p+1} \otimes_A i_\ehhe) \circ (\psi_{k-p+1} \,\bar\circ^{\scriptscriptstyle \otimes}\,\phi_p)\pig)
    \\
    &=
    -(d \circ \psi_{k-p}) \cup_{\scriptscriptstyle \otimes} \phi_p \circ \tau_{k-1}
  +
    (-1)^{p(k+1)} \gs \circ \big(\phi_p \cup_{\scriptscriptstyle \otimes} (d \circ \psi_{k-p})\big)
    %+ (-1)^{k-p-1} \big(\psi_{k-p} \cup_{\scriptscriptstyle \otimes} (i_\ehhe \circ \phi_p)\big) \circ \tau_{k-1}
 \\
    &
    \quad
    %         + (-1)^{kp} (F_{k-p} \otimes_A i_\ehhe)
    %\circ \big(\gs \circ (\phi_p \cup_{\scriptscriptstyle \otimes} \psi_{k-p})\big)
      + (-1)^{k-p-1} (F_{k-p} \otimes_A i_\ehhe)\circ [\psi_{k-p},\phi_p]_{\cup_{\otimes}, \gs, \tau}
      \\
    &
    \quad
       +(-1)^{pk+k +1} (d \otimes_A i_\ehhe) \circ (\psi_{k-p+1} \,\bar\circ^{\scriptscriptstyle \otimes}\,\phi_p)
      \\
   &=
    -(\gd\psi_{k-p-1}) \cup_{\scriptscriptstyle \otimes} \phi_p \circ \tau_{k-1}
  +
    (-1)^{p(k+1)} \gs \circ \big(\phi_p \cup_{\scriptscriptstyle \otimes} (\gd\psi_{k-p-1})\big)
 \\
    &
    \quad
      + (-1)^{k-p-1} (F_{k-p} \otimes_A i_\ehhe)\circ [\psi_{k-p},\phi_p]_{\cup_{\otimes}, \gs, \tau}
       \\
    &
    \quad
       +(-1)^{pk+k +1} (F_{k-p} \otimes_A i_\ehhe) \circ (\gd\psi_{k-p} \,\bar\circ^{\scriptscriptstyle \otimes}\,\phi_p),
    \end{split}
  \end{equation*}
whereas, on the other hand, using the homotopy formula \eqref{sahnejoghurt} 
and the fact \eqref{cupleibniz} that $\gd$ is a derivation of the cup product, 
along with $\gd\phi_p = 0$, 
\begin{equation*}
  \begin{split}
    & \epsilon_{k-1}(\phi,\psi) \circ d
\\
    &= \pig(
    -[\psi_{k-p-1},\phi_p]^{\cup, \gs,\tau}+(-1)^{p(k-1)+k} (F_{k-p} \otimes_A i_\ehhe) \circ (\psi_{k-p} \,\bar\circ^{\scriptscriptstyle \otimes}\,\phi_p)\pig) \circ d
    \\
    &=
    - (\gd \psi_{k-p-1} \cup_{\scriptscriptstyle \otimes} \phi_p) \circ \tau_{k}
+ (-1)^{pk} \gs \circ \big(( \phi_p \cup_{\scriptscriptstyle \otimes} \psi_{k-p-1}) \circ d\big)
\\
&
\quad
+(-1)^{p(k-1)+k} (F_{k-p} \otimes_A i_\ehhe) \circ \big((\psi_{k-p} \,\bar\circ^{\scriptscriptstyle \otimes}\,\phi_p) \circ d\big)
  \\
    &=
    - (\gd \psi_{k-p-1} \cup_{\scriptscriptstyle \otimes} \phi_p) \circ \tau_{k}
  + (-1)^{p(k+1)} \gs \circ ( \phi_p \cup_{\scriptscriptstyle \otimes} \gd\psi_{k-p-1})
      \\
    &
    \quad
+(-1)^{p(k-1)+k} (F_{k-p} \otimes_A i_\ehhe) \circ \gd (\psi_{k-p} \,\bar\circ^{\scriptscriptstyle \otimes}\,\phi_p),
% \\
%    &=
%    - (\gd \psi_{k-p-1} \cup_{\scriptscriptstyle \otimes} \phi_p) \circ \tau_k
%+ (-1)^{p(k+1)} \gs \circ ( \phi_p \cup_{\scriptscriptstyle \otimes} \gd\psi_{k-p-1})
%\\
%&
%\quad
%+(-1)^{?} (F_{k-p} \otimes_A i_\ehhe)
%\circ (\gd \psi_{k-p} \,\bar\circ^{\scriptscriptstyle \otimes}\,\phi_p)
%+
%(-1)^{?} \big(\psi_{k-p} \cup_{\scriptscriptstyle \otimes} (i_\ehhe \circ %\phi_p)\big) \circ \tau_{k-1}
%\\
%&
%\quad
%+ (-1)^{?} (F_{k-p} \otimes_A i_\ehhe) \circ \big(\gs \circ (\phi_{p} %\cup_{\scriptscriptstyle \otimes} \psi_{k-p})\big), 
  \end{split}
\end{equation*}
%with \eqref{hub} used again in the fourth term,
which by the homotopy relation \eqref{sahnejoghurt} for the braided cup commutator is the same as before, hence $d \circ \epsilon_k(\phi, \psi) =\epsilon_{k-1}(\phi, \psi) \circ d$ for  $p < k \leq p+q-1$. The case $k = p+q$ is proven similar and left to the reader. As for the case $k = p$, one has to show that
$
       d \circ \pig([\psi_{0},\phi_p]_{\cup_{\otimes}, \gs, \tau}+ (F_{1} \otimes_A i_\ehhe) \circ (\psi_{1} \,\bar\circ^{\scriptscriptstyle \otimes}\,\phi_p)\pig) = 0.
$
Indeed, with the help of \eqref{sahnejoghurt1} and the fact that $(d_\effe \otimes_A Z)(F_0 \otimes_A Z) = 0$ in $\mathbf{Moloch}(\F ,\E)$, 
       \begin{equation*}
  \begin{split}
    &
       d \circ \big(
  [\psi_{0},\phi_p]_{\cup_{\otimes}, \gs, \tau}+ (F_{1} \otimes_A i_\ehhe) \circ (\psi_{1} \circ^{\scriptscriptstyle \otimes}_1\phi_p)\big) 
   \\
    &=
  (F_0 \otimes_A i_\ehhe) \circ [\psi_{0},\phi_p]_{\cup_{\otimes}, \gs, \tau}+ (d \otimes_A i_\ehhe) \circ (\psi_{1} \circ^{\scriptscriptstyle \otimes}_1 \phi_p)
   \\
    &=
     (F_0 \otimes_A i_\ehhe)  \circ [\psi_{0},\phi_p]_{\cup_{\otimes}, \gs, \tau} + (F_0 \otimes_A i_\ehhe) \circ (\gd \psi_{0} \circ^{\scriptscriptstyle \otimes}_1 \phi_p)
     \\
    &=
      (F_0 \otimes_A i_\ehhe)  \circ [\psi_{0},\phi_p]_{\cup_{\otimes}, \gs, \tau} - (F_0 \otimes_A i_\ehhe) \circ [\psi_{0},\phi_p]_{\cup_{\otimes}, \gs, \tau}=0,
\end{split}
\end{equation*}
       which ends the proof.
\end{proof}

\begin{cor}\label{cor:wheredowegofromhere}
  Let $\epsilon$ 
  be as in Lemma \ref{lem:xi-and-epsilon}
and $(X,Z)$ a commuting pair subject to the same assumptions as before, and let $\gvr_{\effe,\ehhe}\colon  \mathbf{Moloch}(\F,\E)\to (Z\otimes_A\E)\#(\F \otimes_A X) $ be the morphism of complexes obtained by switching the r\^oles of $\E$ and $\F$ in \eqref{hiddensee2}. Then 
\[
\gvr_{\effe,\ehhe}(\psi\cup_{\scriptscriptstyle \otimes} \phi+\epsilon(\phi,\psi))\colon \mathrm{Bar}(U,X\otimes_A X)\to (Z\otimes_A\E)\#(\F \otimes_A X) 
\]
is a $\tau$-twisted cocycle representative,  which implies that
\[
(\sigma|\tau)_{\ehhe,\effe}\circ \gvr_{\effe,\ehhe}(\psi\cup_{\scriptscriptstyle \otimes} \phi+\epsilon(\phi,\psi))\colon \mathrm{Bar}(U,X\otimes_A X)\to (\E\otimes_A Z)\#(X\otimes_A \F) 
\]
is a $\tau^2$-twisted cocycle representative.
\end{cor}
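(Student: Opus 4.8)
The plan is to obtain the statement formally from Lemmas~\ref{lem:maywestart2} and \ref{lem:xi-and-epsilon} together with the bookkeeping rules for twisted cocycle representatives recorded in Remark~\ref{rem:astray}. Although those rules are phrased there for $\mathbb{P}^X$ and a $p$-fold extension of $X$, the argument is purely a matter of composing chain maps and reading off degree~$-1$ components, so it applies verbatim with $X$ replaced by $X\otimes_A X$, with $\mathbb{P}^X$ replaced by the bar resolution $\mathrm{Bar}(U,X\otimes_A X)$, and with the target a $\mathbf{Moloch}$-type complex.

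The first step is to assemble a cocycle over $\mathbf{Moloch}(\F,\E)$. By Lemma~\ref{lem:maywestart2} the collection $\psi\cup_{\scriptscriptstyle \otimes}\phi$ is a $\tau$-twisted cocycle representative $\mathrm{Bar}(U,X\otimes_A X)\to\mathbf{Moloch}(\F,\E)$, and by Lemma~\ref{lem:xi-and-epsilon} (which is precisely where the commuting-pair hypothesis on $(X,Z)$ enters) the collection $\epsilon(\phi,\psi)$ is a $0$-twisted cocycle representative for the same complex. By the additivity clause of Remark~\ref{rem:astray}, the sum $\psi\cup_{\scriptscriptstyle \otimes}\phi+\epsilon(\phi,\psi)$ is then a $(\tau+0)=\tau$-twisted cocycle representative $\mathrm{Bar}(U,X\otimes_A X)\to\mathbf{Moloch}(\F,\E)$.

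The second step is to transport this cocycle along the two morphisms of complexes named in the statement. The morphism $\gvr_{\effe,\ehhe}\colon\mathbf{Moloch}(\F,\E)\to(Z\otimes_A\E)\#(\F\otimes_A X)$, obtained from \eqref{hiddensee2} by interchanging $\E$ and $\F$, has degree~$-1$ component $\id_{X\otimes_A X}$, so by the postcomposition clause of Remark~\ref{rem:astray} the composite $\gvr_{\effe,\ehhe}\circ(\psi\cup_{\scriptscriptstyle \otimes}\phi+\epsilon(\phi,\psi))$ is an $(\id\circ\tau)=\tau$-twisted cocycle representative valued in $(Z\otimes_A\E)\#(\F\otimes_A X)$, which is the first assertion. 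For the second assertion, recall from \S\ref{sec:tensor} that, precisely because $(X,Z)$ is a commuting pair, the sole potentially noncommuting square of the diagram $(\sigma|\tau)_{\ehhe,\effe}$ (the middle one comparing $\sigma_{Z,X}$ with $\tau_{Z,X}$, the analogue of \eqref{eq:almost-there} with $\E$ and $\F$ interchanged) commutes, so $(\sigma|\tau)_{\ehhe,\effe}\colon(Z\otimes_A\E)\#(\F\otimes_A X)\to(\E\otimes_A Z)\#(X\otimes_A\F)$ is a genuine morphism of complexes, with degree~$-1$ component the self-braiding $\tau=\tau_{X,X}$ (its top component $\sigma_{Z,Z}$ being irrelevant to the twisting). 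Applying Remark~\ref{rem:astray} once more shows that $(\sigma|\tau)_{\ehhe,\effe}\circ\gvr_{\effe,\ehhe}\circ(\psi\cup_{\scriptscriptstyle \otimes}\phi+\epsilon(\phi,\psi))$ is a $(\tau\circ\tau)=\tau^2$-twisted cocycle representative, as claimed.

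The only input that is not pure bookkeeping is that $\gvr_{\effe,\ehhe}$ and $(\sigma|\tau)_{\ehhe,\effe}$ really are chain maps; the former is immediate from the construction in \S\ref{sec:tensor}, and the latter is exactly the commutativity of \eqref{eq:almost-there} under the commuting-pair assumption. Thus no new computation is required here, and the genuine content of the corollary sits upstream, in the chain-map verification for $\epsilon(\phi,\psi)$ of Lemma~\ref{lem:xi-and-epsilon}, of which this statement is the formal consequence.
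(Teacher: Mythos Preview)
Your argument is correct and follows essentially the same route as the paper: combine Lemmata~\ref{lem:maywestart2} and~\ref{lem:xi-and-epsilon} via the additivity in Remark~\ref{rem:astray}, then postcompose with $\gvr_{\effe,\ehhe}$ (degree~$-1$ component the identity) and with $(\sigma|\tau)_{\ehhe,\effe}$ (degree~$-1$ component $\tau_{X,X}$) using the postcomposition clause of the same remark. The paper is terser, dispatching the second assertion as ``obvious'', but you have merely spelled out what that means.
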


\begin{proof}
  From Lemmata \ref{lem:maywestart2} \&  \ref{lem:xi-and-epsilon} we have that $\psi\cup_{\scriptscriptstyle \otimes} \phi+\epsilon(\phi,\psi)\colon \mathrm{Bar}(U,X\otimes_A X)\to \mathbf{Moloch}(\F,\E)$ is a $\tau$-twisted cocycle representative. By noticing that $(\gvr_{\effe,\ehhe})_{-1}=\id_{X\otimes_A X}$ and with the help of Remark \ref{rem:astray}, one proves the first statement. The second is then obvious.
 \end{proof}

%As an immediate consequence we obtain:
%
%\begin{cor}
%With the same notation as in Corollary \ref{cor:wheredowegofromhere}, we have %that
%\[
%(\tau|\sigma)_{\ehhe,\effe}\circ \gvr_{\ehhe,\effe}(\psi\cup_{\scriptscriptstyle \otimes} \phi+\epsilon(\phi,\psi))\colon \mathrm{Bar}(U,X\otimes_A X)\to (X\otimes_A \F)\# (\E\otimes_A Z)
%\]
%is a $\tau^2$-twisted cocycle representative.
%\end{cor}

Finally, 
reasoning as in the proof of Proposition \ref{prop:eventually}, we obtain:

\begin{prop}
  \label{prop:neverwhere}
In the same notation as above,
\[
(0,\dots,0,(L,0),\id_\ikks)+\Phi\big((\sigma|\tau)_{\ehhe,\effe}\circ \gvr_{\effe,\ehhe}(\psi\cup_{\scriptscriptstyle \otimes} \phi+\epsilon(\phi,\psi))\big)
\]
is as well a cocycle representative for $\E \cup \F=   \mu \# (\E\otimes_A Z)\# (X\otimes_A\F)  \# \gD_\ikks$, where $\Phi$ is defined in \eqref{phi}.
\end{prop}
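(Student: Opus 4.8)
The plan is to reduce Proposition~\ref{prop:neverwhere} to the same machinery already assembled for Proposition~\ref{prop:eventually}, applied to a different $f$-twisted cocycle representative of a (tensor-product) extension. Concretely, I would first invoke Corollary~\ref{cor:wheredowegofromhere}, which tells us that
\[
(\sigma|\tau)_{\ehhe,\effe}\circ \gvr_{\effe,\ehhe}\big(\psi\cup_{\scriptscriptstyle \otimes}\phi+\epsilon(\phi,\psi)\big)\colon \mathrm{Bar}(U,X\otimes_A X)\to (\E\otimes_A Z)\#(X\otimes_A\F)
\]
is a $\tau^2$-twisted cocycle representative for the extension $\mathbb{H}\coloneq(\E\otimes_A Z)\#(X\otimes_A\F)\in\ext^{p+q}_U(X\otimes_A X,Z\otimes_A Z)$. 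This puts us exactly in the hypotheses of the lemma following Corollary~\ref{cor:theconductor} (the one stating that $(0,\dots,0,(L,0),\id_\ikks)+\Phi(\xi)$ is a cocycle representative for $\mu\#\mathbb{H}\#(f\circ\Delta_\ikks)$ whenever $\xi$ is an $f$-twisted cocycle representative for $\mathbb{H}$), with $f=\tau^2$.

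Next I would discharge the hypothesis $f\circ\Delta_\ikks=\Delta_\ikks$. Since $X$ is assumed to be a braided cocommutative comonoid in ${\mathscr{Z}}^r(\umod)$, we have $\tau\circ\Delta_\ikks=\Delta_\ikks$ (the relation cited as \eqref{radicchio6a}), and hence $\tau^2\circ\Delta_\ikks=\tau\circ(\tau\circ\Delta_\ikks)=\tau\circ\Delta_\ikks=\Delta_\ikks$. Therefore Corollary~\ref{cor:theconductor} applies verbatim and yields that
\[
(0,\dots,0,(L,0),\id_\ikks)+\Phi\big((\sigma|\tau)_{\ehhe,\effe}\circ \gvr_{\effe,\ehhe}(\psi\cup_{\scriptscriptstyle \otimes}\phi+\epsilon(\phi,\psi))\big)
\]
is a cocycle representative for $\mu\#\mathbb{H}\#\Delta_\ikks=\mu\#(\E\otimes_A Z)\#(X\otimes_A\F)\#\Delta_\ikks=\E\cup\F$. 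This is precisely the assertion. The signed projection prefactors built into $\gvr_{\effe,\ehhe}$ in \eqref{hiddensee2} are already accounted for by Corollary~\ref{cor:wheredowegofromhere}, so nothing new needs to be checked there.

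The only point that requires genuine care—and which I expect to be the main obstacle—is making sure that the composite $\gvr_{\effe,\ehhe}\circ(\psi\cup_{\scriptscriptstyle\otimes}\phi+\epsilon(\phi,\psi))$ really does land in $(Z\otimes_A\E)\#(\F\otimes_A X)$ as a chain map in every degree (including the boundary degrees $-1$, $p$, $p+q-1$ and $p+q$ where the hash construction glues via pushouts and pullbacks), and that the subsequent application of $(\sigma|\tau)_{\ehhe,\effe}$ is well defined after taking the relevant pushouts and pullbacks exactly as in \S\ref{sunshine}; but all of this is exactly what Corollary~\ref{cor:wheredowegofromhere} already certifies, so the proof reduces to citing it together with Corollary~\ref{cor:theconductor} and the cocommutativity relation. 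Hence the argument is: apply Corollary~\ref{cor:wheredowegofromhere} to get a $\tau^2$-twisted cocycle representative, observe $\tau^2\circ\Delta_\ikks=\Delta_\ikks$ by braided cocommutativity, and conclude by Corollary~\ref{cor:theconductor}—that is, by ``reasoning as in the proof of Proposition~\ref{prop:eventually}.''

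\begin{proof}
  By Corollary \ref{cor:wheredowegofromhere}, the map
\[
(\sigma|\tau)_{\ehhe,\effe}\circ \gvr_{\effe,\ehhe}\big(\psi\cup_{\scriptscriptstyle \otimes} \phi+\epsilon(\phi,\psi)\big)\colon \mathrm{Bar}(U,X\otimes_A X)\to (\E\otimes_A Z)\#(X\otimes_A \F)
\]
is a $\tau^2$-twisted cocycle representative for the extension $(\E\otimes_A Z)\#(X\otimes_A\F)\in \ext^{p+q}_U(X\otimes_A X,Z\otimes_A Z)$. Since $X$ is a braided cocommutative comonoid in ${\mathscr{Z}}^r(\umod)$, we have $\tau\circ\Delta_\ikks=\Delta_\ikks$ as in \eqref{radicchio6a}, whence
\[
\tau^2\circ\Delta_\ikks=\tau\circ(\tau\circ\Delta_\ikks)=\tau\circ\Delta_\ikks=\Delta_\ikks.
\]
Therefore Corollary \ref{cor:theconductor}, applied with $f=\tau^2$ and $\mathbb{H}=(\E\otimes_A Z)\#(X\otimes_A\F)$, yields that
\[
(0,\dots,0,(L,0),\id_\ikks)+\Phi\big((\sigma|\tau)_{\ehhe,\effe}\circ \gvr_{\effe,\ehhe}(\psi\cup_{\scriptscriptstyle \otimes} \phi+\epsilon(\phi,\psi))\big)
\]
is a cocycle representative for $\mu\#\mathbb{H}\#\Delta_\ikks=\mu\#(\E\otimes_A Z)\#(X\otimes_A\F)\#\Delta_\ikks=\E\cup\F$, which is the claim. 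The case of $0$-cocycles is entirely analogous and left to the reader.
\end{proof}
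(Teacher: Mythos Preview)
Your proof is correct and follows exactly the approach the paper intends: the paper's own proof of Proposition~\ref{prop:neverwhere} is just the sentence ``reasoning as in the proof of Proposition~\ref{prop:eventually}'', and you have spelled out precisely that reasoning—invoke Corollary~\ref{cor:wheredowegofromhere} to obtain a $\tau^2$-twisted cocycle representative, use braided cocommutativity \eqref{radicchio6a} to get $\tau^2\circ\Delta_\ikks=\Delta_\ikks$, and conclude via Corollary~\ref{cor:theconductor}. The final remark about $0$-cocycles is superfluous here since the section assumes $p,q>0$, but this is harmless.
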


\subsection{Brackets from homotopies}
So far, in \S\ref{here} and \S\ref{there}, we have exhibited two representative cocycles for $\E \cup \F$, namely $\phi\cup\psi=(0,\dots,0,(L,0),\id_\ikks)+\Phi(\lambda_{\ehhe,\effe}(\phi\cup_{\otimes}\psi)\circ \tau_\bull))$ as well as $(0,\dots,0,(L,0),\id_\ikks)+\Phi((\sigma|\tau)_{\ehhe,\effe}\circ \gvr_{\effe,\ehhe}(\psi\cup_{\scriptscriptstyle \otimes} \phi+\epsilon(\phi,\psi)))$. We now want to produce an explicit homotopy between these two.
Writing
\begin{equation}
  \label{eq:eta}
 \eta(\phi, \psi) \coloneq  \gl_{\ehhe, \effe}\big((\phi \cup_{\scriptscriptstyle \otimes} \psi) \circ \tau\big)
    - %(-1)^{pq}
(\gs|\tau)_{\ehhe,\effe} \circ 
    \gvr_{\effe, \ehhe}\big((
    \psi \cup_{\scriptscriptstyle \otimes} \phi +\epsilon({\phi,\psi})\big),
  \end{equation}
where $(\gs|\tau)_{\ehhe,\effe}$ is the morphism of complexes defined in \eqref{eq:almost-there},
this amounts to exhibiting a homotopy between $\Phi(\eta(\phi,\psi))$ and zero.

%% {\color{lightgrey}
%% \begin{rem}
%% Since both $\gl_{\ehhe, \effe}(\phi \cup_{\scriptscriptstyle \otimes} \psi) \circ \tau_\bull$ and $(\tau|\gs)_{\ehhe,\effe} \circ 
%%     \gvr_{\effe, \ehhe}\big(
%%     \psi \cup_{\scriptscriptstyle \otimes} \phi +\epsilon_{\phi,\psi}\big)$ are $\tau^2$-twisted cocycles for $(\F \otimes_A X)\# (Z\otimes\E)$, one has that $ \eta(\phi, \psi)$ is a $0$-twisted cocycle representative. Therefore, $(\Phi(\eta(\phi,\psi))_{\geq 0},0)=\Phi(\eta(\phi,\psi))$, and so we are reduced to the problem of exhibiting a homotopy between $\Phi(\eta(\phi,\psi))$ and zero.
%% \end{rem}}

Before we do so, let us give an explicit expression for $\eta$ in \eqref{eq:eta}:

 \begin{lemma}
Under the standing assumptions, we have
\[
(\gs|\tau)_{\effe,\ehhe} \circ \gvr_{\ehhe, \effe}(
\phi \cup_{\scriptscriptstyle \otimes} \psi)
- %(-1)^{pq}
\gl_{\effe, \ehhe}\big((\psi \cup_{\scriptscriptstyle \otimes} \phi )\circ \tau_\bull  + \epsilon(\phi, \psi) \big)
=0,
\]
as well as
\begin{equation}
  \label{stabiloboss}
    \begin{split}
&\eta(\phi, \psi) =
 \gl_{\ehhe, \effe}\big((\phi \cup_{\scriptscriptstyle \otimes} \psi) \circ \tau_\bull\big)
    - %(-1)^{pq}
(\gs|\tau)_{\ehhe,\effe} \circ 
    \gvr_{\effe, \ehhe}\big((
  \psi \cup_{\scriptscriptstyle \otimes} \phi +\epsilon(\phi,\psi)\big)
\\
 &
\quad
=
%\left\{
%\begin{array}{lcl}
\begin{cases}
0 & \mbox{for} \ k < q,
\\
{[}\phi_{k-q}, \psi_q]_{\cup_{\otimes}, \gs, \tau} & \mbox{for} \ q \leq k \leq p+q-2,
\\
  {[}\phi_{p-1},\psi_q]_{\cup_{\otimes}, \gs, \tau} &
  \\
  \qquad
  + (-1)^{p+q} \gs \circ (Z \otimes_A i_\ehhe) \circ (\psi_q \,\bar\circ^{\scriptscriptstyle \otimes} \, \phi_p) 
& \mbox{for} \ k = p+q-1,
\\
  {[}\phi_{p},\psi_q]_{\cup_{\otimes}, \gs, \tau}
  %&
  %\\
  %\qquad
  + (-1)^{pq} \gs \circ {[}\psi_{q},\phi_p]_{\cup_{\otimes}, \gs, \tau}  & \mbox{for} \ k = p+q,
\end{cases}
%\end{array}
%\right.
\end{split}
  \end{equation}
 for any two chain maps $\phi
  \colon \mathrm{Bar}(U,X) \to \E$ and
$\psi\colon \mathrm{Bar}(U,X) \to \F$ as before.
\end{lemma}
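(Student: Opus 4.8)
The strategy is to unwind the definitions of the two $\tau$-twisted cocycle representatives coming from Corollaries \ref{cor:thatsit} and \ref{cor:wheredowegofromhere} and compare their components degree by degree. The first claimed identity, that $(\gs|\tau)_{\effe,\ehhe}\circ\gvr_{\ehhe,\effe}(\phi\cup_{\scriptscriptstyle\otimes}\psi) = \gl_{\effe,\ehhe}\big((\psi\cup_{\scriptscriptstyle\otimes}\phi)\circ\tau_\bull+\epsilon(\phi,\psi)\big)$, is really the defining property of the correction term $\epsilon$: it is precisely the equation that $\epsilon_{\phi,\psi}$ was introduced to solve in \S\ref{torloniasantisubito}, now transported to the bialgebroid setting. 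So the first step is to verify, using the explicit formulas \eqref{hiddensee1} for $\gl$, \eqref{hiddensee2} (with the roles of $\E$ and $\F$ swapped) for $\gvr$, the diagram \eqref{eq:almost-there} for $(\gs|\tau)$, the external cup product \eqref{cupcup}, and the formula for $\epsilon$ from Lemma \ref{lem:xi-and-epsilon}, that these agree componentwise. The key inputs here are the naturality of the braidings $\gs$ and $\tau$, the commuting pair hypothesis \eqref{centralascentralcan1}, and the homotopy formula \eqref{sahnejoghurt} of Lemma \ref{lem:homotopy-formula} together with the identities \eqref{bois1}, \eqref{bois2} extracted in its proof.

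Next I would compute the components of $\eta(\phi,\psi)$ directly. Since $\gl$ and $\gvr$ are, in the appropriate degree ranges, just the natural projections onto the tensor factors of the truncated complex $\mathbf{Moloch}$, and since $(\gs|\tau)$ acts by the composite braiding $\sigma_{Z,-}$ resp.\ $\tau_{-,X}$ on the summands, the difference $\eta$ vanishes in degrees $k<q$ for trivial reasons (there $\mathbf{Moloch}(\E,\F)$ and $\mathbf{Moloch}(\F,\E)$ contribute only via the lower Künneth part and the two spans restrict to the same morphism). In the middle range $q\leq k\leq p+q-2$, the component of $\eta$ reduces to the braided cup commutator $[\phi_{k-q},\psi_q]_{\cup_\otimes,\gs,\tau}$ after using \eqref{cupcup} and the definition \eqref{gewitter}; this is a matching-of-terms computation. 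In the top two degrees $k=p+q-1$ and $k=p+q$ one must additionally keep track of the boundary contributions to $\mathbf{Moloch}$ (the $Z\otimes_A i_\ehhe$ and $i_\ehhe$ terms) and the $\epsilon$-correction, which is exactly where the extra summands $(-1)^{p+q}\gs\circ(Z\otimes_A i_\ehhe)\circ(\psi_q\,\bar\circ^{\scriptscriptstyle\otimes}\,\phi_p)$ and $(-1)^{pq}\gs\circ[\psi_q,\phi_p]_{\cup_\otimes,\gs,\tau}$ appear.

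I expect the main obstacle to be bookkeeping rather than conceptual: tracking the signs (which come from the Koszul sign rule applied to the degree-shift convention of Remark \ref{rem:degree-shift} and the tensor-product totalisation defining $\mathbf{Moloch}$), and correctly identifying which of the three pieces of $\gl$ (the projection for $q\leq k\leq q+p$, the composition through $E_0\otimes F_k\to X\otimes F_k$ for $0\leq k\leq q-1$, and the identity in degree $-1$) is relevant in each degree — together with the analogous trichotomy for $\gvr$. The subtle point is the degree $k=p+q-1$ case, where the pushout structure of $Z\sqcup_{Z\otimes_A Z}H_{r-1}$ interacts with the quotient $\overline{(\,,\,)}$ appearing in $\epsilon_{p+q-1}$; here one should argue as in the proof of Lemma \ref{lem:xi-and-epsilon} (using $d\circ i_\ehhe=0$ and that $(d_\effe\otimes_A Z)(F_0\otimes_A Z)=0$) to see that the boundary term survives precisely as written. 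Since the verification is a direct, if lengthy, comparison of explicit formulas entirely analogous to the computations already carried out in Lemmata \ref{leibnizcup}, \ref{lem:maywestart}, and \ref{lem:homotopy-formula}, it is reasonable to present the two identities and leave the term-by-term check to the reader, as the paper does elsewhere.
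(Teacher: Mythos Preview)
Your plan is essentially the paper's own approach: a degree-by-degree unwinding of the edge morphisms $\gl$, $\gvr$ and of $(\gs|\tau)$ applied to the external cup products and to $\epsilon$. The paper organises this slightly more cleanly by first isolating how $\gvr_{\effe,\ehhe}$ and $\gl_{\effe,\ehhe}$ act on $\epsilon$ alone, and then separately computing the differences $(\gs|\tau)\circ\gvr(\phi\cup_{\scriptscriptstyle\otimes}\psi)-\gl((\psi\cup_{\scriptscriptstyle\otimes}\phi)\circ\tau)$ and its companion, before recombining; this separation makes the piecewise formulas fall out immediately.

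One correction: there is no pushout or quotient $\overline{(\,,\,)}$ present at this stage. The map $\epsilon$ takes values in $\mathbf{Moloch}(\F,\E)$ itself, and $\eta$ lives in $(\E\otimes_A Z)\#(X\otimes_A\F)$; the pushout $Z\sqcup_{Z\otimes_A Z}H_{r-1}$ only enters after one applies $\mu\#(-)\#\Delta_\ikks$, i.e.\ via the map $\Phi$ in the subsequent Lemma~\ref{lem:krzysztof}. So the subtlety you flag at $k=p+q-1$ is not the interaction with a pushout but simply that the top-degree piece of $\gvr_{\effe,\ehhe}$ picks out the $Z\otimes_A E_{p-1}$ summand (via $Z\otimes_A i_\ehhe$), whence the extra $(-1)^{p+q}\gs\circ(Z\otimes_A i_\ehhe)\circ(\psi_q\,\bar\circ^{\scriptscriptstyle\otimes}\,\phi_p)$ term. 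With that adjustment your plan matches the paper.
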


\begin{proof}
  Observe first that explicitly looking at the edge morphisms \eqref{hiddensee1} and \eqref{hiddensee2} yields
  \begin{equation}
    \label{fierydawn1}
  \gvr^{}_{\effe,\ehhe}\epsilon_k(\phi,\psi)=
\begin{cases}
  %\left\{
  %\begin{array}{lcl}
0 & \mbox{for} \ k < p+q-1,
\\
%(-1)^p \gs \circ (Z \otimes_A i_{\ehhe}) \circ (\psi_q \bar\circ^{\scriptscriptstyle \otimes}\, \phi_p)
(-1)^{p+q} (Z \otimes_A i_{\ehhe}) \circ (\psi_q \,\bar\circ^{\scriptscriptstyle \otimes}\, \phi_p)
  & \mbox{for} \ k = p+q-1,
\\
%\gs \circ  {[}\phi_p,\psi_q]^{\cup, \gs}
-(-1)^{pq}[\psi_q,\phi_p]_{\cup_{\otimes}, \gs, \tau}
& \mbox{for} \ k = p+q, 
  %\end{array}
  %\right.
\end{cases}
\end{equation}
as well as
\begin{equation}
\label{fierydawn2}
  \lambda^{}_{\effe,\ehhe}\epsilon_k(\phi,\psi)=
\begin{cases}
%\left\{
%  \begin{array}{lcl}
0 & \mbox{for} \ k < p,
\\
-
%(-1)^{pq}
[\psi_{k-p},\phi_p]_{\cup_{\otimes}, \gs, \tau} & \mbox{for} \ p \leq k \leq p+q.
%  \end{array}
%  \right.
\end{cases}
\end{equation}
Slightly less straightforward, one also has
\begin{equation}
  \label{fierydawn3}
\begin{split}
 & \textstyle\sum\limits_{j+i = k}\pig( (\gs|\tau)^{}_{\effe,\ehhe} \circ \gvr^{}_{\ehhe,\effe}(
  \phi_j \cup_{\scriptscriptstyle \otimes} \psi_i) - %(-1)^{pq}
  \gl^{}_{\effe,\ehhe}\big((\psi_i \cup_{\scriptscriptstyle \otimes} \phi_j)\circ \tau_{i+j}\big)\pig)
  \\
&
\qquad\qquad\qquad\qquad
   =
  \begin{cases}
0 & \mbox{for} \ k < p,
\\
{[}\psi_{k-p},\phi_p]_{\cup_{\otimes}, \gs, \tau}
& \mbox{for} \ p \leq k \leq p+q,
%{[}\psi_q,\phi_p]^{\cup, \gs}
%\phi_p \cup \psi_q - (-1)^{pq} \sigma(\psi_q \cup \phi_p) 
%& \mbox{for} & k = p+q 
  \end{cases}
  \end{split}
  \end{equation}
%\item
which is seen as follows:
first, in case $k < p$, one directly computes
\begin{small}
  \begin{equation*}
    \begin{array}{rcl}
      &&
      \displaystyle
      \textstyle\sum\limits_{j+i = k}
 (\gs|\tau)^{}_{\effe,\ehhe} \circ   \gvr^{}_{\ehhe,\effe}
      \pig((\phi_j \cup_{\scriptscriptstyle \otimes} \psi_i)(u^0, \ldots, u^{i+j}, m \otimes_A m')\pig) 
\\[1mm]
      &
      \stackrel{\scriptscriptstyle{\eqref{cupcup}}}{=}
      &
      \displaystyle
\textstyle\sum\limits_{j+i = k}    (\gs|\tau)^{}_{\effe,\ehhe} \circ     \gvr^{}_{\ehhe,\effe}
      \pig(
\phi_j\big(u^0_{(1)}, \ldots, u^j_{(1)}, (u^{j+1}_{(2)} \cdots u^{j+i}_{(2)} m')_{[0]}\big)
    \\
    & &
    \qquad\qquad
    \otimes_A \psi_i\big(u^0_{(2)} \cdots u^j_{(2)}
(u^{j+1}_{(2)} \cdots u^{j+i}_{(2)} m')_{[1]}
    , u^{j+1}_{(1)}, \ldots, u^{j+i}_{(1)}, m\big)
      \pig)
      \\[1mm]
      &
      \stackrel{\scriptscriptstyle{\eqref{hiddensee2}, \eqref{eq:almost-there}}}{=}
      &
 \tau \circ \big(   
\phi_k(u^0_{(1)}, \ldots, u^k_{(1)}, m'_{[0]})
     \otimes_A p^{}_\effe \psi_0(u^0_{(2)} \cdots u^k_{(2)} m'_{[1]},
 m)
      \big)
            \\[1mm]
      &
      \stackrel{\scriptscriptstyle{\eqref{fame1047}}}{=}
      &
 \tau \circ \big(   
\phi_k(u^0_{(1)}, \ldots, u^k_{(1)}, m'_{[0]})
     \otimes_A L(u^0_{(2)} \cdots u^k_{(2)} m'_{[1]},
 m)
      \big)
      \\[1mm]
      &
      \stackrel{\scriptscriptstyle{\eqref{leftaction}, \eqref{ydbraid2}}}{=}
      &
(u^0_{(2)} \cdots u^k_{(2)} m'_{[1]}m)_{[0]}
    \otimes_A
(u^0_{(2)} \cdots u^k_{(2)} m'_{[1]}m)_{[1]}
      \phi_k(u^0_{(1)}, \ldots, u^k_{(1)}, m'_{[0]}),
    \end{array}
   \end{equation*}
\end{small}
whereas
\begin{small}
  \begin{equation*}
    \begin{array}{rcl}
      &&
      \displaystyle
      \textstyle\sum\limits_{j+i = k}
      %\pig(  (-1)^{pq}
      \gl_{\effe, \ehhe}
      \pig(\big((\psi_i \cup_{\scriptscriptstyle \otimes} \phi_j) \circ \tau_{i+j}\big)(u^0, \ldots, u^{i+j}, m \otimes_A m')\pig)
      %\pig)
\\[1mm]
      &
      \stackrel{\scriptscriptstyle{\eqref{ydbraid2}}}{=}
      &
        \displaystyle
      \textstyle\sum\limits_{j+i = k}
      %\pig(  (-1)^{pq}
      \gl_{\effe, \ehhe}
      \pig((\psi_i \cup_{\scriptscriptstyle \otimes} \phi_j)(u^0, \ldots, u^{i+j}, m'_{[0]} \otimes_A m'_{[1]}m)\pig)
      %\pig)
\\[1mm]
      &
      \stackrel{\scriptscriptstyle{\eqref{cupcup}}}{=}
      &   
      \displaystyle
      \textstyle\sum\limits_{j+i = k}  
      \gl_{\effe, \ehhe}
      \pig(
\psi_i\big(u^0_{(1)}, \ldots, u^i_{(1)}, (u^{i+1}_{(2)} \cdots u^{i+j}_{(2)} m'_{[1]}m)_{[0]}\big)
    \\
    & &
    \qquad\qquad
    \otimes_A \phi_j\big(u^0_{(2)} \cdots u^i_{(2)}
(u^{i+1}_{(2)} \cdots u^{i+j}_{(2)} m'_{[1]}m)_{[1]}
    , u^{i+1}_{(1)}, \ldots, u^{i+j}_{(1)}, m'_{[0]}\big)
      \pig)
      \\[1mm]
      &
      \stackrel{\scriptscriptstyle{\eqref{hiddensee1}}}{=}
      &  
 p_\effe \psi_0\big(u^0_{(1)}, (u^{1}_{(2)} \cdots u^{k}_{(2)} m'_{[1]}m)_{[0]}\big)
 \\[1mm]
 && \qquad \qquad 
 \otimes_A \phi_k\big(u^0_{(2)} (u^{1}_{(2)} \cdots u^{k}_{(2)} m'_{[1]}m)_{[1]}
    , u^{1}_{(1)}, \ldots, u^{k}_{(1)}, m'_{[0]}\big)
         \\[1mm]
      &
      \stackrel{\scriptscriptstyle{\eqref{fame}, \eqref{leftaction}}}{=}
      &
 u^0_{(1)}(u^{1}_{(2)} \cdots u^{k}_{(2)} m'_{[1]}m)_{[0]}
 \\[1mm]
 && \qquad \qquad 
 \otimes_A \phi_k\big(u^0_{(2)} (u^{1}_{(2)} \cdots u^{k}_{(2)} m'_{[1]}m)_{[1]}
    , u^{1}_{(1)}, \ldots, u^{k}_{(1)}, m'_{[0]}\big)
      \\[1mm]
      &
      \stackrel{\scriptscriptstyle{\eqref{yd2}}}{=}
      &
 (u^{0}_{(2)} \cdots u^{k}_{(2)} m'_{[1]}m)_{[0]}
 \otimes_A (u^{0}_{(2)} \cdots u^{k}_{(2)} m'_{[1]}m)_{[1]} \phi_k\big(u^0_{(1)}, \ldots, u^{k}_{(1)}, m'_{[0]}\big),
    \end{array}
   \end{equation*}
\end{small}
where in the last step we also used the $U$-linearity of $\phi_k$; hence, the same expression as above and the claim follows. The case in which $p \leq k \leq p+q$ again directly follows from how the edge morphisms are defined, see \eqref{hiddensee1}--\eqref{hiddensee2}, once more.
Finally, one proves in an analogous way that
\begin{equation}
  \label{fierydawn4}
\begin{split}
  & \textstyle\sum\limits_{j+i = k}\pig( 
    \gl^{}_{\ehhe,\effe}\big((\phi_j \cup_{\scriptscriptstyle \otimes} \psi_i) \circ \tau_{j+i} \big)
    - %(-1)^{pq}
(\gs|\tau)^{}_{\ehhe,\effe} \circ 
    \gvr^{}_{\effe,\ehhe}(
  \psi_i \cup_{\scriptscriptstyle \otimes} \phi_j) 
  \pig)
\\
&
\qquad\qquad\qquad\qquad
   =
\begin{cases}
0 & \mbox{for} \ k < q,
\\
{[}\phi_{k-q},\psi_q]_{\cup_{\otimes}, \gs, \tau}
& \mbox{for} \ q \leq k \leq p+q.
  \end{cases}
  \end{split}
  \end{equation}
%\end{enumerate}
Gathering Eqs.~\eqref{fierydawn1}--\eqref{fierydawn4} yields the two claims in this lemma.
  \end{proof}

\begin{lem}
  \label{lem:xi}
Let $\phi\colon \mathrm{Bar}(U,X) \to \E$ and
$\psi\colon \mathrm{Bar}(U,X) \to \F$ be as above, and let
$$
\xi_k(\phi,\psi)\colon \mathrm{Bar}_k(U,X \otimes_A X) \to \big( (\E  \otimes_A Z) \#(X \otimes_A \F) \big)_{k} 
  $$
be the map given by
\begin{equation}
  \label{finallyhere}
\xi_k(\phi,\psi)=
  %\left\{
\begin{cases}
  %  \begin{array}{lcl}
0 & \mbox{for} \ k < q
\\
%-{[}\phi_{k-q}, \psi_q]_{\cup_{\otimes}, \gs, \tau}
{[}\phi_{k-q}, \psi_q]_{\cup_{\otimes}, \gs, \tau}
& \mbox{for} \ q \leq k \leq p+q-2
\\
%-{[}\phi_{p-1},\psi_q]_{\cup_{\otimes}, \gs, \tau}
{[}\phi_{p-1},\psi_q]_{\cup_{\otimes}, \gs, \tau}
&
\\
\qquad
+ (-1)^{p+q} (i_\ehhe \otimes_A Z) \circ  (\psi_q \, \bar\circ^{\scriptscriptstyle \otimes} \, \phi_p)
%\phi_p \cup \psi_q - (-1)^{pq} \sigma(\psi_q \cup \phi_p) 
& \mbox{for} \ k = p+q-1
\\
0 & \mbox{for} \ k = p+q.
\end{cases}
%\end{array}
 % \right.
\end{equation}
If $(X, Z)$ is a commuting pair,
then $\xi(\phi,\psi)$ is a homotopically trivial $0$-twisted cocycle
for $(\E  \otimes_A Z) \# (X \otimes_A \F)  $. A homotopy between $\xi(\phi,\psi)$ and zero is given by 
  $$
s_k(\phi,\psi)\colon \mathrm{Bar}_k(U,X \otimes_A X) \to \big((\E \otimes_A Z)\# (X \otimes_A \F)  \big)_{k+1} 
  $$
defined for any $k = -1, 0 , \ldots, p+q-1$ 
by 
\begin{equation}
  \label{museiincomune}
s_k(\phi,\psi)
=
\begin{cases}
%\left\{
%  \begin{array}{lcl}
0 & \mbox{for} \ k < q,
\\[.7mm]
(-1)^{q(k+1)}\phi_{k-q+1}\,\bar\circ^{\scriptscriptstyle \otimes}\, \psi_q & \mbox{for} \ q \leq k \leq p+q-2,
\\[.7mm]
(-1)^{p+q}[\psi_q,\phi_p]^{\bar\circ^{\scriptscriptstyle \otimes}}
& \mbox{for} \ k = p+q-1. 
\end{cases}
% \end{array}
 % \right.
  \end{equation}
\end{lem}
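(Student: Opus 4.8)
The plan is to establish the single chain-homotopy identity
\[
[d,s(\phi,\psi)]_k \;=\; d\circ s_k(\phi,\psi) \;+\; s_{k-1}(\phi,\psi)\circ d \;=\; \xi_k(\phi,\psi)
\]
in every degree $k$, where the left-hand $d$ is the differential of the spliced complex $(\E\otimes_A Z)\#(X\otimes_A\F)$ and the right-hand $d$ is that of the bar resolution $\mathrm{Bar}(U,X\otimes_A X)$. Once this is done, $\xi(\phi,\psi)$ is a $d$-boundary, hence automatically a chain map, and it is a $0$-twisted cocycle representative because $\xi_{-1}(\phi,\psi)=0$ and $s_{-1}(\phi,\psi)=0$ hold trivially (the degree $-1$ falls into the range $k<q$, as $q>0$); homotopical triviality and the explicit homotopy $s(\phi,\psi)$ are then exactly the content of the identity above. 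The verification is split according to the case distinctions in \eqref{finallyhere} and \eqref{museiincomune}: the uninteresting range $k<q$ (both sides vanish since $s_k,s_{k-1}$ and $\xi_k$ are zero there), the generic range $q\le k\le p+q-2$, the endpoint $k=q$, and the two boundary degrees $k=p+q-1$ and $k=p+q$.

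In the generic range one substitutes $s_k(\phi,\psi)=(-1)^{q(k+1)}\phi_{k-q+1}\,\bar\circ^{\scriptscriptstyle \otimes}\,\psi_q$. Since in degrees strictly above $q$ the differential of the spliced complex is, up to the Koszul sign coming from the shift convention of Remark \ref{rem:degree-shift} and \eqref{hiddensee1}, the map $d_\ehhe\otimes_A Z$ acting on the first tensor factor, and since $\bar\circ^{\scriptscriptstyle \otimes}$ is $U$-linear in that first argument, one gets $d\circ s_k(\phi,\psi)=\pm\,(d_\ehhe\circ\phi_{k-q+1})\,\bar\circ^{\scriptscriptstyle \otimes}\,\psi_q=\pm\,(\gd\phi_{k-q})\,\bar\circ^{\scriptscriptstyle \otimes}\,\psi_q$ by \eqref{blumare}, while $s_{k-1}(\phi,\psi)\circ d=\pm\,\gd(\phi_{k-q}\,\bar\circ^{\scriptscriptstyle \otimes}\,\psi_q)$. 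The sum of these two is then recognised, using the cocycle condition $\gd\psi_q=0$ from \eqref{famepigreco} together with the homotopy formula \eqref{sahnejoghurt} of Lemma \ref{lem:homotopy-formula}, as the braided cup commutator $[\phi_{k-q},\psi_q]_{\cup_{\otimes},\gs,\tau}=\xi_k(\phi,\psi)$. The endpoint $k=q$ is identical in spirit, except that, since $\phi_0$ now occurs, one invokes the degree-zero specialisation \eqref{sahnejoghurt1} of the homotopy formula in place of \eqref{sahnejoghurt}.

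At the boundary degrees the connecting maps of the splicing enter. For $k=p+q-1$ one uses $d\circ s_{p+q-1}(\phi,\psi)=(-1)^{p+q}(i_\ehhe\otimes_A Z)\circ[\psi_q,\phi_p]^{\bar\circ^{\scriptscriptstyle \otimes}}$ (the differential $Z\otimes_A Z\to E_{p-1}\otimes_A Z$ being $i_\ehhe\otimes_A Z$), together with \eqref{blumare} in the form $\gd\phi_{p-1}=i_\ehhe\circ\phi_p$ and the cocycle condition $\gd\phi_p=0$; applying \eqref{sahnejoghurt} to $s_{p+q-2}(\phi,\psi)\circ d$ and matching terms then reproduces $[\phi_{p-1},\psi_q]_{\cup_{\otimes},\gs,\tau}+(-1)^{p+q}(i_\ehhe\otimes_A Z)\circ(\psi_q\,\bar\circ^{\scriptscriptstyle \otimes}\,\phi_p)=\xi_{p+q-1}(\phi,\psi)$. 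For $k=p+q$ one must show $s_{p+q-1}(\phi,\psi)\circ d=0$, i.e.\ that the top component $(-1)^{p+q}[\psi_q,\phi_p]^{\bar\circ^{\scriptscriptstyle \otimes}}$ is $\gd$-closed; this follows by applying the homotopy formula \eqref{sahnejoghurt} twice — once to $\phi_p\,\bar\circ^{\scriptscriptstyle \otimes}\,\psi_q$ and once, with the roles of $\phi$ and $\psi$ exchanged, to $\psi_q\,\bar\circ^{\scriptscriptstyle \otimes}\,\phi_p$ — and using $\gd\phi_p=\gd\psi_q=0$, after which the two expressions combine into $\gd[\psi_q,\phi_p]^{\bar\circ^{\scriptscriptstyle \otimes}}=0$. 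The main obstacle throughout is sign bookkeeping: one must simultaneously track the Koszul signs produced by the degree shifts in $(\E\otimes_A Z)\#(X\otimes_A\F)$ (cf.\ \eqref{hiddensee1}–\eqref{hiddensee2}), the signs $(-1)^{(i-1)(q-1)}$ built into $\bar\circ^{\scriptscriptstyle \otimes}$, and the $(-1)^{qj}$ weighting in \eqref{sahnejoghurt}; the $k=p+q$ case is the most delicate because, as noted after Lemma \ref{lem:homotopy-formula}, the right-hand side of \eqref{sahnejoghurt} is not symmetric under $\phi\leftrightarrow\psi$, so the closedness of $[\psi_q,\phi_p]^{\bar\circ^{\scriptscriptstyle \otimes}}$ genuinely requires invoking the formula in both orders and relies on both $\phi_p$ and $\psi_q$ being cocycles.
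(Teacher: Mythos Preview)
Your proposal is correct and follows essentially the same approach as the paper: a direct degree-by-degree verification of $\xi_k=[d,s]_k$ using the identity $d\circ(\phi_j\circ^{\scriptscriptstyle\otimes}_i\psi_q)=\gd\phi_{j-1}\circ^{\scriptscriptstyle\otimes}_i\psi_q$ (which the paper records explicitly as an auxiliary equation), the homotopy formula \eqref{sahnejoghurt} and its specialisation \eqref{sahnejoghurt1}, and the cocycle conditions $\gd\phi_p=\gd\psi_q=0$. The case distinctions and their treatment match the paper's argument, including the observation that the top-degree case $k=p+q$ reduces to $\gd[\psi_q,\phi_p]^{\bar\circ^{\scriptscriptstyle\otimes}}=0$ via two applications of \eqref{sahnejoghurt}.
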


\begin{proof}
The condition $\xi_{-1}=0$ is true by definition of $\xi$, so we only have to show that $\xi$ is a homotopically trivial morphism of complexes $\xi\colon \mathrm{Bar}(U,X \otimes_A X) \to (\E  \otimes_A Z) \#(X \otimes_A \F)  $, with homotopy given by $s(\phi,\psi)$. This means we need to show that $\xi_k = d \circ s_k + s_{k-1} \circ d = [d, s]_k$, where we denote, somewhat ambiguously,
  by $d$ not only the differential on $\mathrm{Bar}(U,X \otimes_A X)$, but also the differential of both extensions $\E$ and $\F$ as well as on $(\E \otimes_A Z) \#(X \otimes_A \F) $, analogously to diagrams \eqref{fame} and \eqref{schnee2}, respectively.

  Note first that 
\begin{equation}
  \label{infoesame}
d \circ (\phi_j \circ^{\scriptscriptstyle \otimes}_i \psi_q) =
\gd\phi_{j-1} \circ^{\scriptscriptstyle \otimes}_i \psi_q
\end{equation}
for any $i = 1, \ldots, j$,
as follows from the precise form of the differential $d$ on the spliced extension
$(\E \otimes_A Z)\#(X \otimes_A \F)  $
as in \eqref{schnee2}, along with
\eqref{blumare}.
Now, for $k<q$, one clearly has $[d,s(\phi,\psi)]_k=0$. 
 In case $k = q$, we have with Eq.~\eqref{infoesame}
$$
[d, s(\phi, \psi)]_q = d \circ s_q(\phi, \psi_q) = - d \circ (\phi_1 \, \bar\circ^{\scriptscriptstyle \otimes}\, \psi_q)
= - \gd \phi_{0} \circ^{\scriptscriptstyle \otimes}_1 \psi_q = [\phi_0,\psi_q]_{\cup_{\otimes}, \gs, \tau},
$$
where the last step follows from \eqref{sahnejoghurt1}.
Using \eqref{infoesame} again, for $q+1 \leq k \leq p+q-2$ along with the homotopy formula \eqref{sahnejoghurt},
one 
immediately obtains
\begin{equation*}
  \begin{split}
     [d,s(\phi,\psi)]_k &= d \circ s_k(\phi, \psi) + s_{k-1}(\phi, \psi) \circ d
     \\
     &=
     (-1)^{q(k+1)} d \circ (\phi_{k-q+1} \,\bar\circ^{\scriptscriptstyle \otimes} \psi_q) + (-1)^{qk} (\phi_{k-q} \,\bar\circ^{\scriptscriptstyle \otimes} \psi_q) \circ d
     \\
     &=
     (-1)^{q(k+1)} \gd \phi_{k-q} \,\bar\circ^{\scriptscriptstyle \otimes} \psi_q + (-1)^{qk} \gd (\phi_{k-q} \,\bar\circ^{\scriptscriptstyle \otimes} \psi_q)
=  [\phi_{k-q},\psi_q]_{\cup_{\otimes}, \gs, \tau},
  \end{split}
\end{equation*}
whereas, for $k=p+q-1$ we have, again with \eqref{infoesame} in the third step,
\begin{align*}
  [d,s(\phi,\psi)]_{p+q-1}&=  d \circ s_{p+q-1}(\phi, \psi) + s_{p+q-2}(\phi, \psi) \circ d
  \\
  &=
  (-1)^{p+q}d \circ [\psi_q,\phi_p]^{\bar\circ^{\scriptscriptstyle \otimes}}
  +(-1)^{pq}\gd(\phi_{p-1} \,\bar\circ^{\scriptscriptstyle \otimes}\, \psi_{q})
  \\
    &=
(-1)^{p+q}d \circ (\psi_q \,\bar\circ^{\scriptscriptstyle \otimes}\, \phi_p) 
  +(-1)^{pq}\big( (-1)^{q-1}\gd \phi_{p-1} \,\bar\circ^{\scriptscriptstyle \otimes}\, \psi_q
  - \gd(\phi_{p-1} \,\bar\circ^{\scriptscriptstyle \otimes}\, \psi_{q})
  \big)
\\
&=(-1)^{p+q}(i_\ehhe \otimes_A Z) \circ (\psi_q \,\bar\circ^{\scriptscriptstyle \otimes}\, \phi_p)
+ [\phi_{p-1},\psi_q]_{\cup_{\otimes}, \gs, \tau},
\end{align*}
where we interchangeably write $d = i_\ehhe \otimes_A Z$ in the highest degree $Z \otimes_A Z$ of the spliced extension $(\E \otimes_A Z)\#(X \otimes_A \F)  $. Finally, for $k = p+q$, we have
$$
s_{p+q-1}(\phi, \psi) \circ d = (-1)^{p+q}\gd [\psi_q,\phi_p]^{\bar\circ^{\scriptscriptstyle \otimes}} = 0,
$$
as $\gd$ is a derivation of $[\psi_q,\phi_p]^{\bar\circ^{\scriptscriptstyle \otimes}}$, which directly follows from \eqref{sahnejoghurt}, and since both $\phi_p$ and $\psi_q$ are cocycles.
To sum up, $\xi =[d,s]$ in each degree, as desired.
\end{proof}

We add one final assumption, {\em i.e.}, from now on not only the comonoid
$X$ 
in ${\mathscr{Z}}^r(\umod)$ is supposed to be braided cocommutative
but also the monoid
$Z$ {\em braided commutative} in ${\mathscr{Z}}^\ell(\umod)$. We can then formulate:

  \begin{lemma}
    \label{lem:krzysztof}
    Let the comonoid $X \in {\mathscr{Z}}^r(\umod)$ be braided cocommutative and let the monoid
$Z \in {\mathscr{Z}}^\ell(\umod)$ be braided commutative such that $(X,Z)$ constitutes a commuting pair. Furthermore, 
let $\xi,\eta\colon \mathrm{Bar}(U,X \otimes_A X) \to (\E  \otimes_A Z) \#(X \otimes_A \F)  $ be the $0$-twisted cocycle representatives  defined by \eqref{eq:eta} and in \eqref{finallyhere}, respectively. Then we have
\[
\Phi(\eta(\phi,\psi))=\Phi(\xi(\phi,\psi)).
\]
\end{lemma}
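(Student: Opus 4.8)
The plan is to compare $\Phi(\eta(\phi,\psi))$ and $\Phi(\xi(\phi,\psi))$ degree by degree, since by \eqref{phi} the map $\Phi$ of Definition~\ref{def:nepo} is built from its argument componentwise. Here $\mathbb{H}=(\E\otimes_A Z)\#(X\otimes_A\F)\in\ext^{r}_U(X\otimes_A X,Z\otimes_A Z)$ with $r=p+q$, its top term is $H_r=Z\otimes_A Z$ and the term $H_{r-1}$ is $E_{p-1}\otimes_A Z$, with structure map $i_{\mathbb{H}}=i_\ehhe\otimes_A Z\colon Z\otimes_A Z\to E_{p-1}\otimes_A Z$ in top degree. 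Comparing the explicit formul\ae{} \eqref{stabiloboss} for $\eta$ with \eqref{finallyhere} for $\xi$, one reads off at once that $\eta_k=\xi_k$ for every $-1\le k\le p+q-2$, so $\Phi_k(\eta)=\Phi_k(\xi)$ in that range and only the two top degrees $k=p+q-1$ and $k=p+q$ require an argument.

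In degree $k=r-1=p+q-1$, subtracting the third lines of \eqref{stabiloboss} and \eqref{finallyhere} gives
\[
\eta_{r-1}-\xi_{r-1}=(-1)^{p+q}\big(\gs\circ(Z\otimes_A i_\ehhe)-(i_\ehhe\otimes_A Z)\big)\circ(\psi_q\,\bar\circ^{\scriptscriptstyle \otimes}\,\phi_p).
\]
I would then invoke naturality of the left braiding $\gs$ of \eqref{sahnejoghurt0} in its second slot, applied to the $U$-linear map $i_\ehhe\colon Z\to E_{p-1}$, to rewrite $\gs\circ(Z\otimes_A i_\ehhe)=(i_\ehhe\otimes_A Z)\circ\gs_{\zett,\zett}$, so that $\eta_{r-1}-\xi_{r-1}=(-1)^{p+q}(i_\ehhe\otimes_A Z)\circ(\gs_{\zett,\zett}-\id)\circ(\psi_q\,\bar\circ^{\scriptscriptstyle \otimes}\,\phi_p)$. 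By \eqref{phi}, $\Phi_{r-1}$ sends a morphism $\beta$ valued in $H_{r-1}$ to the class $\overline{(0,\beta\circ(\id^r\otimes_\Aop\gD_\ikks))}$ in the pushout $Z\sqcup_{Z\otimes_A Z}H_{r-1}$ defining the top term of $\mu\#\mathbb{H}$; since the above difference factors through $i_{\mathbb{H}}=i_\ehhe\otimes_A Z$, the defining relation of that pushout identifies $\overline{(0,i_{\mathbb{H}}(w))}$ with $\pm\overline{(\mu(w),0)}$ for all $w\in Z\otimes_A Z$, whence
\[
\Phi_{r-1}(\eta)-\Phi_{r-1}(\xi)=\pm(-1)^{p+q}\,\overline{\big(\mu\circ(\gs_{\zett,\zett}-\id)\circ(\psi_q\,\bar\circ^{\scriptscriptstyle \otimes}\,\phi_p)\circ(\id^r\otimes_\Aop\gD_\ikks),\,0\big)},
\]
which vanishes because $\mu\circ\gs_{\zett,\zett}=\mu$ by braided commutativity of $Z$.

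In the top degree $k=r=p+q$ one has $\xi_r=0$, hence $\Phi_r(\xi)=0$, so it remains to show $\Phi_r(\eta)=\mu\circ\eta_r\circ(\id^{r+1}\otimes_\Aop\gD_\ikks)=0$. I would write $\eta_r=[\phi_p,\psi_q]_{\cup_{\otimes}, \gs, \tau}+(-1)^{pq}\gs\circ[\psi_q,\phi_p]_{\cup_{\otimes}, \gs, \tau}$, expand both braided cup commutators via \eqref{gewitter}, and simplify using two facts: first, $\mu\circ\gs_{\zett,\zett}=\mu$ (braided commutativity of $Z$), which in particular collapses the outer $\gs$ in front of the second commutator; second, $\tau_{p+q}\circ(\id^{r+1}\otimes_\Aop\gD_\ikks)=\id^{r+1}\otimes_\Aop(\tau\circ\gD_\ikks)=\id^{r+1}\otimes_\Aop\gD_\ikks$ by braided cocommutativity \eqref{radicchio6a} of $X$ (recall $\tau_{p+q}$ acts only on the $X\otimes_A X$-factor, cf.~\eqref{temporaletrapoco}), which erases every occurrence of $\tau_{p+q}$ after precomposition with $\id^{r+1}\otimes_\Aop\gD_\ikks$. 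After these substitutions the four surviving summands are $\mu\circ(\phi_p\cup_{\scriptscriptstyle \otimes}\psi_q)\circ(\id^{r+1}\otimes_\Aop\gD_\ikks)$ and $(-1)^{pq}\mu\circ(\psi_q\cup_{\scriptscriptstyle \otimes}\phi_p)\circ(\id^{r+1}\otimes_\Aop\gD_\ikks)$, each occurring once with a plus and once with a minus sign, so they cancel in pairs and $\Phi_r(\eta)=0=\Phi_r(\xi)$.

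Assembling the three ranges gives $\Phi(\eta(\phi,\psi))=\Phi(\xi(\phi,\psi))$. When $r=p+q=2$ the map $\Phi$ degenerates to the form of Remark~\ref{rem:casibassi}, but the argument is unaffected: the middle range is empty and only the two top-degree computations are needed. I expect the main obstacle to be purely bookkeeping — correctly identifying $H_{r-1}$ with $E_{p-1}\otimes_A Z$ and the top structure map $i_{\mathbb{H}}$ with $i_\ehhe\otimes_A Z$, and tracking signs through the pushout identification in degree $r-1$ — after which braided (co)commutativity together with naturality of the braidings does all the substantive work.
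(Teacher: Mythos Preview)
Your proof is correct and follows essentially the same route as the paper: a degree-by-degree comparison in which the components $\eta_k$ and $\xi_k$ agree for $k\le p+q-2$, while in the two top degrees one uses the pushout relation \eqref{pushpush} together with naturality of $\gs$ and braided commutativity $\mu\circ\gs=\mu$ (for $k=p+q-1$), and braided (co)commutativity \eqref{radicchio2b}, \eqref{radicchio6a} (for $k=p+q$). The only cosmetic difference is that in degree $p+q-1$ you compute $\Phi_{r-1}(\eta-\xi)$ directly and show it vanishes, whereas the paper computes $\Phi_{r-1}(\xi)$ and $\Phi_{r-1}(\eta)$ separately and matches them; the substance is identical.
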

\begin{proof}
  This is proven by a degree-wise comparison of the images of $\xi(\phi,\psi)$ as in \eqref{finallyhere} resp.\ the explicit expression of $\eta(\phi,\psi)$ obtained in \eqref{stabiloboss} under the map $\Phi$ defined
in Eq.~\eqref{phi}.
  %in Def.~\ref{def:nepo}. 

  To start with, for the top degree component of the former we have
\begin{equation}
  \label{top}
  \Phi_{p+q}(\xi) =
  \mu  \circ \xi_{p+q} \circ (\id^{p+q+1}\otimes  \gD_\ikks )\colon \mathrm{Bar}_{p+q}(U,X)\to Z,
\end{equation}
which is simply the zero map as seen from \eqref{finallyhere}. In one degree less, that is, in degree $p+q-1$, with the help of \eqref{radicchio6a}, we read off
\begin{equation}
  \begin{split}
  \label{minestra}
\Phi_{p+q-1}(\xi) &=
  \overline{\big(0,\xi_{p+q-1} \circ (\id^{p+q}\otimes  \gD_\ikks )\big)}
  \\
  \quad
  &=
\overline{\big(0,
{[}\phi_{p-1},\psi_q]_{\cup_{\otimes}, \gs} \circ \gD_\ikks + (-1)^p (i_\ehhe \otimes_A Z) \circ  (\psi_q \, \bar\circ^{\scriptscriptstyle \otimes} \, \phi_p) \circ \gD_\ikks\big) },
\\
&=
\overline{\big(0,
  {[}\phi_{p-1},\psi_q]_{\cup_{\otimes}, \gs} \circ \gD_\ikks \big)}
+ (-1)^{p-1} \overline{ \big(  \mu  \circ  (\psi_q \, \bar\circ^{\scriptscriptstyle \otimes} \, \phi_p) \circ \gD_\ikks, 0 \big) },
  \end{split}
\end{equation}
where
 ${[}\phi_{p-1},\psi_q]_{\cup_{\otimes}, \gs}$ denotes the (simply) braided commutator in the sense of \eqref{gewitter}, that is, with no $\tau$ present anymore (or the identity on $X \otimes_A X$). Here, as before, 
$\overline{(\, , \,)}$ indicates the class
\begin{equation}
  \label{pushpush}
\overline{\big(-  \mu (z \otimes_A z'), 0\big)} = \overline{\big(0,(i_\ehhe \otimes_A Z)(z \otimes_A z')\big)} 
\end{equation}
for $z, z' \in Z$ in the quotient $Z\sqcup_{Z \otimes_A Z} (E_{p-1} \otimes_A Z)$.

On the other hand, the same considerations hold for
$\Phi\big(\eta(\phi,\psi)\big)$:
%$ \mu  \# \eta(\phi,\psi)\# \gD_\ikks $:
here,
from an analogous expression as the one in \eqref{top} with $\eta_{p+q}$ instead of
  $\xi_{p+q}$, it is clear
by means of \eqref{radicchio2b} and \eqref{radicchio6a} that the top degree is again the zero map. As the lower degrees of $\xi$ and $\eta$ are the same except for (the second summand in) degree $p+q-1$, we are left with comparing this one: 
\begin{equation*}
  \begin{split}
&\overline{\big(0,\eta_{p+q-1} \circ (\id^{p+q}\otimes  \gD_\ikks )\big)}
  \\
  \quad
  &=
\overline{\big(0,
{[}\phi_{p-1},\psi_q]_{\cup_{\otimes}, \gs} \circ \gD_\ikks + (-1)^{p} \gs \circ (Z \otimes_A i_\ehhe) \circ (\psi_q \,\bar\circ^{\scriptscriptstyle \otimes} \, \phi_p) \circ \gD_\ikks \big) },
\\
&=
\overline{\big(0,
  {[}\phi_{p-1},\psi_q]_{\cup_{\otimes}, \gs} \circ \gD_\ikks \big)}
+ (-1)^{p} \overline{ \big(0,  (i_\ehhe \otimes_A Z) \circ \gs  \circ (\psi_q \,\bar\circ^{\scriptscriptstyle \otimes} \, \phi_p) \circ \gD_\ikks \big)
  \big) }
\\
&=
\overline{\big(0,
  {[}\phi_{p-1},\psi_q]_{\cup_{\otimes}, \gs} \circ \gD_\ikks \big)}
+ (-1)^{p-1} \overline{ \big(  \mu  \circ \gs  \circ (\psi_q \,\bar\circ^{\scriptscriptstyle \otimes} \, \phi_p) \circ \gD_\ikks, 0\big) }
\\
&=
\overline{\big(0,
  {[}\phi_{p-1},\psi_q]_{\cup_{\otimes}, \gs} \circ \gD_\ikks \big)}
+ (-1)^{p-1} \overline{ \big(  \mu  \circ  (\psi_q \, \bar\circ^{\scriptscriptstyle \otimes} \, \phi_p) \circ \gD_\ikks, 0 \big) },
  \end{split}
\end{equation*}
by the identification \eqref{pushpush},
the naturality of $\sigma$, that is,
$
(i_\ehhe \otimes_A Z) \circ \gs = \gs \circ (Z \otimes_A i_\ehhe)
$
as in \eqref{hub}, and finally
the braided commutativity \eqref{radicchio2b} again, 
which hence coincides with the expression in \eqref{minestra}.
\end{proof}

  \begin{definition}
    \label{def:nepo1}
    Let $\mathbb{H}\in \ext^r_U(X\otimes_A X,Z\otimes_A Z)$ and let $\G=\mu\#\mathbb{H}\#\Delta_\ikks$.
    Define the graded linear map
$$
    \Psi_\bull\colon \textstyle%\smashoperator[r]
        {\bigoplus\limits_{k=-1}^{r-1}}\Hom(\mathrm{Bar}_k(U, X\otimes_A X),{H}_{k+1})\to \textstyle%\smashoperator[r]
        {\bigoplus\limits_{k=-1}^{r-1}}\Hom(\mathrm{Bar}_k(U,X) ,{G}_{k+1})
$$
as follows:
\begin{equation}
\label{psi}
\Psi_k(\nu) \coloneq
\begin{cases}
%  \begin{array}{ll}
% 0
%&
%  \text{for}\quad k=-2,
%\\
  (\nu_{-1} \circ \gD_\ikks,0 )
&
  \text{for}\quad k=-1,
\\
  \nu_{k} \circ (\id^{k+1}\otimes_\Aop  \gD_\ikks )
  & \text{for}\quad 0\leq k\leq r-3,
  \\
  \overline{(0,\nu_{r-2} \circ (\id^{r-1} \otimes_\Aop  \gD_\ikks ))} &
  \text{for}\quad k=r-2,
  \\
  \mu  \circ \nu_{r-1} \circ (\id^{r} \otimes_\Aop  \gD_\ikks) & \text{for}\quad k=r-1,
%  \end{array}
\end{cases}
\end{equation}

for any $\nu \in \textstyle\bigoplus_{k=-1}^{r-1}\Hom(\mathrm{Bar}_k(U, X\otimes_A X),{H}_{k+1})$.
\end{definition}

\begin{rem}
For $r\leq 3$, Eq.~\eqref{psi} is to be interpreted as in Remark \ref{rem:casibassi}.
\end{rem}

\begin{lemma}
  \label{lem:lukaspassion}
Under the same assumptions as in Lemma \ref{lem:krzysztof},
  consider two elements
  $
  \xi \in \textstyle\bigoplus_{k=-1}^r\!\Hom(\mathrm{Bar}_k(U, X\!\otimes_A \!X), {H}_{k})
  $
and
  %\ \,  and \ \,
  $
  \nu \in  \textstyle\bigoplus_{k=-1}^{r-1}\!\Hom(\mathrm{Bar}_k(U, X\!\otimes_A \!X), {H}_{k+1})
  $
%  \\[2mm]
  subject to the property that $\xi =[d,\nu]$.
%  that is, $\xi_k=d_{\mathbb{H}}\circ \nu_k - (-1)^k \nu_{k-1}\circ d_\mathrm{Bar}$,
%  for any $k=-1,\dots,r-1$, along
%  with
%  $
%  \xi_r= (-1)^{r-1} \nu_{r-1}\circ d_\mathrm{Bar}.
%  $
  %
%
  Then
  $
  \Phi(\xi)=[d,\Psi(\nu)].
  $
\end{lemma}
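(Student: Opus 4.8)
The plan is to verify the identity $\Phi(\xi)=[d,\Psi(\nu)]$ degree by degree, using the explicit piecewise descriptions of $\Phi$ from \eqref{phi} and of $\Psi$ from \eqref{psi}, together with the hypothesis $\xi=[d,\nu]$, which unwinds degree-wise as $\xi_k = d\circ \nu_k + \nu_{k-1}\circ d$ on $\mathrm{Bar}_k(U, X\otimes_A X)$. First I would record the two key structural facts that make the comparison possible: (a) both $\Phi$ and $\Psi$ are built from the \emph{same} two ingredients, namely precomposition with the iterated comultiplication $\id^{j}\otimes_\Aop \gD_\ikks$ and postcomposition with $\mu$ (in top degree) or with the canonical pushout/pullback maps $\overline{(0,-)}$ (in the two critical degrees); (b) the comultiplication maps are themselves a morphism of bar complexes, i.e.\ $(\id^{k}\otimes_\Aop\gD_\ikks)\circ d = d\circ(\id^{k+1}\otimes_\Aop\gD_\ikks)$, which is exactly the content of the coassociativity-type relation \eqref{radicchio4} that was already invoked in the proof of Lemma \ref{monteolivetomaggiore}. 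These two facts together say, informally, that $\Phi$ and $\Psi$ are obtained by applying a single ``push forward along $\mu\#(-)\#\gD_\ikks$'' operation, shifted by one degree in the case of $\Psi$, and that this operation is compatible with the differentials; the lemma is then the assertion that this compatibility survives the degree bookkeeping.

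Concretely, I would split into the relevant ranges of $k$ dictated by \eqref{phi} and \eqref{psi}. In the ``generic'' range $1\le k\le r-3$, both $\Phi_k$ and $\Psi_k$ act simply by $(-)\circ(\id^{k+1}\otimes_\Aop\gD_\ikks)$, so using fact (b) one gets
\[
[d,\Psi(\nu)]_k = d\circ\nu_k\circ(\id^{k+1}\otimes\gD_\ikks) + \nu_{k-1}\circ(\id^k\otimes\gD_\ikks)\circ d
= \big(d\circ\nu_k + \nu_{k-1}\circ d\big)\circ(\id^{k+1}\otimes\gD_\ikks)
= \xi_k\circ(\id^{k+1}\otimes\gD_\ikks) = \Phi_k(\xi),
\]
where the second equality is fact (b) applied to the $\nu_{k-1}$ term and the third is the hypothesis $\xi=[d,\nu]$. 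The degrees $k=-1$ and $k=0$ are handled the same way once one notes that the extra $(L,0)$ / $(\,\cdot\,,0)$ coordinates and the augmentation $L$ are precisely the ones appearing in the source commutative diagrams of \S\ref{sunshine}, and that $d$ annihilates them as in \eqref{fame1047}; here one also uses $\nu_{-1}\circ\gD_\ikks$ paired against the identity component, exactly as in Remark \ref{rem:Iwillhauntyou}. The top degree $k=r$ of $\Phi$ pairs against $k=r-1$ of $\Psi$: there $\Phi_r(\xi)=\mu\circ\xi_r\circ(\id^{r+1}\otimes\gD_\ikks)$ and $[d,\Psi(\nu)]_r = d\circ\Psi_{r}(\nu)\text{(absent)} + \Psi_{r-1}(\nu)\circ d$, so one must check $\mu\circ\xi_r\circ(\id^{r+1}\otimes\gD_\ikks) = \mu\circ\nu_{r-1}\circ(\id^r\otimes\gD_\ikks)\circ d$; since $\xi_r = \nu_{r-1}\circ d$ in top degree (the $d\circ\nu_r$ term vanishes because $\nu_r$ would land outside the complex), this again reduces to fact (b).

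The main obstacle — and the one genuinely nontrivial point — is the pair of ``junction'' degrees $k=r-1$ (for $\Phi$) and $k=r-2$ (for $\Psi$), where the maps involve the projection $\overline{(0,-)}$ onto the pushout quotient $Z\sqcup_{Z\otimes_A Z}H_{r-1}$ rather than a literal map. There I would argue that the pushout projection $\overline{(0,-)}$ is itself a chain map (it is the edge morphism in the pushout square \eqref{qzero} that defines $\mu\#\mathbb{H}$), so it commutes with $d$ up to the identification \eqref{pushpush} of $\overline{(-\mu(z\otimes z'),0)}$ with $\overline{(0,(i_\ehhe\otimes Z)(z\otimes z'))}$; carrying the $d$ through the quotient and using $d\circ\nu_{r-2} + \nu_{r-3}\circ d = \xi_{r-2}$ plus once more fact (b) then yields $\Phi_{r-1}(\xi)=[d,\Psi(\nu)]_{r-1}$. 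One should double-check that the signs implicit in the shift $[1]$ (cf.\ Remark \ref{rem:degree-shift}) and in the pushout identification match up; I expect they do, precisely because both $\Phi$ and $\Psi$ inherit the \emph{same} sign conventions from the construction of $\S\ref{sunshine}$, but this is where a careless computation would go wrong. For $r\le 3$ the degenerate cases are to be read as in Remark \ref{rem:casibassi}, and the argument in each collapsed case is a subset of the one just given. This completes the degree-by-degree verification, and hence $\Phi(\xi)=[d,\Psi(\nu)]$.
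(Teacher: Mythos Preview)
Your proposal is correct and follows essentially the same approach as the paper's proof: a degree-by-degree verification using the key compatibility $(\id^{j}\otimes_\Aop\gD_\ikks)\circ d^X_{\mathrm{Bar}} = d^{X\otimes_A X}_{\mathrm{Bar}}\circ(\id^{j+1}\otimes_\Aop\gD_\ikks)$ and the explicit pushout/pullback descriptions of $\mu\#\mathbb{H}\#\gD_\ikks$. The only organisational difference is that the paper splits the computation into proving the two identities $\Psi_{k-1}(\nu)\circ d^X_{\mathrm{Bar}} = \Phi_k(\nu\circ d^{X\otimes_A X}_{\mathrm{Bar}})$ and $d_{\mathbb{G}}\circ\Psi_k(\nu) = \Phi_k(d_{\mathbb{H}}\circ\nu)$ separately and then adds them, whereas you compute $[d,\Psi(\nu)]_k$ as a whole; the paper's split makes the junction degrees $k=r-1,\,r-2$ slightly cleaner (it writes down the explicit commutative square for the pushout rather than invoking the identification \eqref{pushpush} informally), but the substance is identical.
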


\begin{proof}
  We will only consider the case $r>3$ here and leave the adaptation for $r \leq 3$ to the reader.
  
By the hypothesis
$\xi =[d,\nu]$ we explicitly mean
$
\xi_k=d_{\akka}\circ \nu_k
+ \nu_{k-1}\circ d_\mathrm{Bar},
$
for any $k=0,\dots,r-1$, along
with
$
\xi_r= \nu_{r-1}\circ d_\mathrm{Bar} 
$
as well as 
$\xi_{-1} =d_{\akka}\circ \nu_{-1}$.
Hence, it is enough to show
$
\Psi_{k-1}(\nu)\circ d^{X}_{\mathrm{Bar}} =
\Phi_k(\nu \circ d^{X \otimes_A X}_{\mathrm{Bar}}) 
$
for $k = 0, \ldots, r$,
along with
$
d_{\ggii} \circ \Psi_{k}(\nu) =
\Phi_k(d_\akka \circ \nu)
$
for $k = -1, 0, \ldots, r-1$.

The first of these statements follows directly, by looking at the explicit definition of $\Phi$ and $\Psi$ in \eqref{phi} resp.\ \eqref{psi}, from the fact that $X$ is a comonoid in $\umod$, that is, from \eqref{radicchio4}, which along with the monoidal structure \eqref{rain} implies
\begin{equation}
  \label{lost}
(\id^{j} \otimes_\Aop \gD_\ikks)
\circ d^{X}_{\mathrm{Bar}} =
d^{X \otimes_A X}_{\mathrm{Bar}} \circ (\id^{j+1} \otimes_\Aop \gD_\ikks),
\end{equation}
for $j=0, \ldots, r$,
as already used in the proof of Lemma \ref{monteolivetomaggiore}. This proves the case for $k = 1, \ldots, r$, where for $k = r-1$ one uses that
$
\overline{(0,\nu_{r-2} \circ (\id^{r-1} \otimes_\Aop  \gD_\ikks ))} \circ d^{X}_{\mathrm{Bar}}
=
\overline{(0,\nu_{r-2} \circ (\id^{r-1} \otimes_\Aop  \gD_\ikks ) \circ d^{X}_{\mathrm{Bar}})};
$
Eq.~\eqref{lost} also already proves $\Phi_r(\xi) = [d, \Psi(\nu)]_r$ in top degree.
In case $k=0$,
\begin{equation*}
  \begin{split}
\Phi_0(\nu \circ d^{X \otimes_A X}_{\mathrm{Bar}}) 
&= \big(0 \circ d^{X \otimes_A X}_{\mathrm{Bar}}, \nu_{-1} \circ d^{X \otimes_A X}_{\mathrm{Bar}} \circ (\id \otimes \Delta_\ikks)\big)  
\\
&= (0, \nu_{-1} \circ \Delta_\ikks \circ d^{X}_{\mathrm{Bar}})
= \Psi_{-1}(\nu) \circ d^{X}_{\mathrm{Bar}}.
  \end{split}
  \end{equation*}
As for the second statement,
first consider the degrees $r-1$ and $r-2$: 
as in \eqref{qzero}, and similar to the proof of Lemma \ref{monteolivetomaggiore}, one has a commutative diagram
\[
\xymatrix@C=3.6em@R=1.7em{
   Z \otimes_A Z \ar[r]^{i_\akka} \ar[d]_\mu & {H}_{r-1}
   \ar[d]^{\overline{(0,\id)}}
\ar[r]^{d_\akka} & {H}_{r-2} \ar[d]^\id
  \\
  Z \ar[r]^-{\overline{(-\id,0)}} & Z\sqcup_{Z \otimes_A Z} {H}_{r-1} \ar[r]^-{ d_\akka \circ \pr_{\!2}} & H_{r-2}
  }
\]
and hence, for $\G = \mu \# \mathbb{H} \# \gD_\ikks$, 
\begin{equation*}
  \begin{split}
d_\ggii \circ \Psi_{r-1}(\nu)
&=
\overline{(-\id, 0)} \circ \big( \mu \circ \nu_{r-1} \circ (\id^r \otimes_\Aop \gD_\ikks)\big)
\\
&=
\overline{(0,\id)} \circ \big(i_\akka \circ \nu_{r-1} \circ (\id^r \otimes_\Aop \gD_\ikks)\big)
\\
&=
\overline{\big(0, (i_\akka \circ \nu_{r-1}) \circ (\id^r \otimes_\Aop \gD_\ikks)\big)}
\\
&=
\Phi_{r-1}(d_\akka \circ \nu),
  \end{split}
  \end{equation*}
considering that $d_\akka = i_\akka$ in highest degree. In degree $r-2$, we compute
\begin{equation*}
  \begin{split}
d_\ggii \circ \Psi_{r-2}(\nu)
&=
(d_\akka  \circ \pr_{\!2}) \circ \overline{\big(0, \nu_{r-2} \circ (\id^{r-1} \otimes_\Aop \gD_\ikks)\big)}
\\
&=
(d_\akka  \circ \nu_{r-2}) \circ (\id^{r-1} \otimes_\Aop \gD_\ikks)
\\
&=
\Phi_{r-2}(d_\akka \circ \nu).
  \end{split}
\end{equation*}
In degrees $1 \leq k \leq r-3$, there is nothing to prove.
As for the cases $k=-1$ and $k=0$,
recall from \eqref{pzero} that the sequence
$
G_1 \xrightarrow{d_\ggii} G_0 \xrightarrow{p_\ggii}  X 
$
is actually given by
$$
\xymatrix@C=3em{
H_1 \ar[r]^-{(d_\akka,0)} &  H_0\times_{X \otimes_A X} X  \ar[r]^-{\pr_{\!2}} & X, 
  }
$$
%considering that the map $d_\ggii\colon G_1 \to G_0$ actually is the map
%$
%(0, d_\akka):
%H_1 \to X \times_{X \otimes_A X} H_0, 
%$
and therefore
\begin{equation*}
  \begin{split}
d_\ggii \circ \Psi_0(\nu)
&= (d_\akka,0) \circ \big( \nu_0 \circ (\id \otimes_\Aop \gD_\ikks)\big)
\\
&
= \big(d_\akka \circ \nu_0 \circ (\id \otimes_\Aop \gD_\ikks),0\big)
= \Phi_0(d_\akka \circ \nu),
  \end{split}
  \end{equation*}
which proves the case $k=0$. Finally, 
for 
$k=-1$, one has:
\begin{equation*}
  \begin{split}
p_\ggii \circ \Psi_{-1}(\nu)
&= \pr_{\!2} \circ (\nu_{-1} \circ \gD_\ikks,0)
= 0
= \Phi_{-1}(d_\akka \circ \nu),
  \end{split}
  \end{equation*}
which ends the proof.
\end{proof}

 As an immediate consequence we find:.

\begin{prop}
  Let $X, Z$ be as above and as before $\phi$ and $\psi$ be two cocycle representatives for
  $\E \in \ext^p_U(X, Z)$ and  $\F \in \ext^q_U(X, Z)$, respectively. Let
$$
  s_\bull(\phi,\psi)\in  \textstyle%\smashoperator[r]
  {\bigoplus\limits_{k=-1}^{p+q-1}} \Hom\pig(\mathrm{Bar}_k(U,X \otimes_A X), \big((\E \otimes_A Z)\# (X \otimes_A \F)  \big)_{k+1}\pig)
$$
be the map defined by Eq.~\eqref{museiincomune}.
Then 
\[
\Phi(\eta(\phi,\psi))=\big[d,\Psi(s(\phi,\psi))\big].
%
%\Phi(\eta(\phi,\psi))=[d,\tilde{s}(\phi,\psi)].
\]
\end{prop}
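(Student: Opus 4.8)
The plan is to chain together the three results that have just been proved, feeding the output of one into the hypothesis of the next. First I would recall from Lemma \ref{lem:xi} that $\xi(\phi,\psi)$ as defined in \eqref{finallyhere} is a homotopically trivial $0$-twisted cocycle for $(\E\otimes_A Z)\#(X\otimes_A\F)$, with an explicit homotopy $s(\phi,\psi)$ of \eqref{museiincomune} witnessing $\xi(\phi,\psi)=[d,s(\phi,\psi)]$. Applying the graded linear map $\Phi$ of Definition \ref{def:nepo} (with $\mathbb H = (\E\otimes_A Z)\#(X\otimes_A\F)$ and $r=p+q$) to both sides, we immediately get $\Phi(\xi(\phi,\psi))=\Phi([d,s(\phi,\psi)])$.

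Next I would invoke Lemma \ref{lem:krzysztof}, under the now-standing assumptions that $X$ is braided cocommutative in ${\mathscr{Z}}^r(\umod)$ and $Z$ braided commutative in ${\mathscr{Z}}^\ell(\umod)$ with $(X,Z)$ a commuting pair: it says precisely that $\Phi(\eta(\phi,\psi))=\Phi(\xi(\phi,\psi))$, where $\eta(\phi,\psi)$ is the $0$-twisted cocycle \eqref{eq:eta}. Combining this with the previous display yields $\Phi(\eta(\phi,\psi))=\Phi([d,s(\phi,\psi)])$. Finally, Lemma \ref{lem:lukaspassion} is exactly the statement that $\Phi$ intertwines the differential with $\Psi$ of Definition \ref{def:nepo1} on the relevant graded pieces: taking $\xi=[d,s(\phi,\psi)]$ and $\nu=s(\phi,\psi)$ (so that the hypothesis $\xi=[d,\nu]$ of that lemma is the defining relation $\xi(\phi,\psi)=[d,s(\phi,\psi)]$ from Lemma \ref{lem:xi}), we conclude $\Phi([d,s(\phi,\psi)])=[d,\Psi(s(\phi,\psi))]$. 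Stringing the three equalities together gives $\Phi(\eta(\phi,\psi))=[d,\Psi(s(\phi,\psi))]$, which is the assertion.

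There is, strictly speaking, no real obstacle left — the proposition is designed to be the formal concatenation of Lemmata \ref{lem:xi}, \ref{lem:krzysztof}, and \ref{lem:lukaspassion}, and the proof is a one-line composition of these. The only point requiring minimal care is bookkeeping: one must check that the indexing ranges match up, i.e.\ that $s(\phi,\psi)$ genuinely lies in $\bigoplus_{k=-1}^{p+q-1}\Hom(\mathrm{Bar}_k(U,X\otimes_A X),((\E\otimes_A Z)\#(X\otimes_A\F))_{k+1})$ so that $\Psi$ applies to it with $r=p+q$, and that the two flavours of "$0$-twisted cocycle" appearing in Lemma \ref{lem:krzysztof} (the one from \eqref{eq:eta} and the one from \eqref{finallyhere}) are being compared under the \emph{same} $\Phi$. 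Both of these are immediate from the statements as given. One should also note in passing that for $p+q\leq 3$ one invokes the degenerate versions of $\Phi$ and $\Psi$ described in Remarks \ref{rem:casibassi} and the remark following Definition \ref{def:nepo1}, but the argument is verbatim the same.

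\begin{proof}
  This is now an immediate consequence of the preceding lemmata. By Lemma \ref{lem:xi}, the map $s(\phi,\psi)$ of \eqref{museiincomune} is a homotopy realising $\xi(\phi,\psi) = [d, s(\phi,\psi)]$, with $\xi(\phi,\psi)$ as in \eqref{finallyhere}. Applying $\Phi$ to both sides and then using Lemma \ref{lem:krzysztof}, which gives $\Phi(\eta(\phi,\psi)) = \Phi(\xi(\phi,\psi))$, we obtain
  \[
  \Phi(\eta(\phi,\psi)) = \Phi\big([d, s(\phi,\psi)]\big).
  \]
  Now Lemma \ref{lem:lukaspassion}, applied with $\xi = [d, s(\phi,\psi)]$ and $\nu = s(\phi,\psi)$ (so that the hypothesis $\xi = [d,\nu]$ holds by the above), yields $\Phi([d,s(\phi,\psi)]) = [d, \Psi(s(\phi,\psi))]$. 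Combining the two displayed identities gives $\Phi(\eta(\phi,\psi)) = [d, \Psi(s(\phi,\psi))]$, as claimed.
\end{proof}
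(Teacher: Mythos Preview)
Your proof is correct and follows essentially the same approach as the paper's own proof: both chain together Lemmata \ref{lem:xi}, \ref{lem:lukaspassion}, and \ref{lem:krzysztof}, with the paper simply stating this concatenation in two sentences. Your version is slightly more explicit about the bookkeeping (ranges of indices, the low-degree cases), which is harmless but not needed.
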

\begin{proof}
By Lemma \ref{lem:lukaspassion} and Lemma \ref{lem:xi}, we have $[d,\Psi(s(\phi,\psi))]=\Phi(\xi(\phi,\psi))$. The conclusion then immediately follows from Lemma \ref{lem:krzysztof}.
\end{proof}

  As motivated at the end of \S\ref{torloniasantisubito}, the following definition now makes sense:

\begin{definition}
  The {\em Gerstenhaber bracket} of two cocycle representatives $\phi$ and $\psi$ as above is, up to a sign, defined
  as the top degree component of the homotopy $\Psi_{p+q-1}(s(\phi,\psi))$.
  More precisely,
\begin{equation*}
\{\phi,\psi\} \coloneq (-1)^{pq} \Psi_{p+q-1}(s(\phi,\psi))
%\{\phi,\psi\}=\tilde{s}_{p+q-1}(\phi,\psi)
\end{equation*}
\end{definition}

This definition of the Gerstenhaber bracket can be completely made explicit:

\begin{cor}
One has
\begin{equation}
  \label{greifenhagener}
\{\phi_p,\psi_q\}
=
%[\phi_p,\psi_q]^{\bar\circ} \coloneq
%
\phi_p \, \bar\circ \, \psi_q - (-1)^{(p-1)(q-1)} \psi_q
\, \bar\circ \, \phi_p, 
%  $$
\end{equation}
where $\bar\circ$ is the full (internal) operadic composition defined in \eqref{museicapitolini}.
\end{cor}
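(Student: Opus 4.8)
The plan is to obtain the formula by simply unwinding the definition $\{\phi_p,\psi_q\}\coloneq(-1)^{pq}\Psi_{p+q-1}(s(\phi,\psi))$ just given, together with the explicit descriptions of $\Psi$ and $s$; all the substantive work has already been carried out in the lemmata preceding the corollary, so what remains is essentially bookkeeping. First I would observe that, since the Gerstenhaber bracket is defined as the \emph{top} degree component $\Psi_{p+q-1}(s(\phi,\psi))$, only the top slots of the two ingredients enter. By \eqref{psi} with $r=p+q$, the top component of $\Psi$ is $\Psi_{p+q-1}(\nu)=\mu\circ\nu_{p+q-1}\circ(\id^{p+q}\otimes_\Aop\gD_\ikks)$, and by \eqref{museiincomune} the top slot of the homotopy is $s_{p+q-1}(\phi,\psi)=(-1)^{p+q}[\psi_q,\phi_p]^{\bar\circ^{\scriptscriptstyle\otimes}}$. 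Substituting the latter into the former yields
\[
\{\phi_p,\psi_q\}=(-1)^{pq+p+q}\;\mu\circ[\psi_q,\phi_p]^{\bar\circ^{\scriptscriptstyle\otimes}}\circ(\id^{p+q}\otimes_\Aop\gD_\ikks).
\]

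The second step is to recognise the right-hand side as an \emph{internal} Gerstenhaber product. By the very construction in \S\ref{sec:operadic}, the internal partial composition $\phi_p\circ_i\psi_q$ is obtained from the external one $\phi_p\circ^{\scriptscriptstyle\otimes}_i\psi_q$ by postcomposing with $\mu$ and precomposing with $\id^{p+q}\otimes_\Aop\gD_\ikks$; this is legitimate and uniform here because in the extensions $\E$ and $\F$ the top modules are $E_p=F_q=Z$, so that \emph{both} external operadic products occurring in the commutator $[\psi_q,\phi_p]^{\bar\circ^{\scriptscriptstyle\otimes}}$ take values in $Z\otimes_A Z$, to which $\mu$ applies. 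Forming the signed sums \eqref{batteriefastalle} and \eqref{museicapitolini}, one gets $\mu\circ(\phi_p\,\bar\circ^{\scriptscriptstyle\otimes}\,\psi_q)\circ(\id^{p+q}\otimes_\Aop\gD_\ikks)=\phi_p\,\bar\circ\,\psi_q$, and likewise with the roles of $\phi_p$ and $\psi_q$ exchanged; hence $\mu\circ[\psi_q,\phi_p]^{\bar\circ^{\scriptscriptstyle\otimes}}\circ(\id^{p+q}\otimes_\Aop\gD_\ikks)=\psi_q\,\bar\circ\,\phi_p-(-1)^{(p-1)(q-1)}\phi_p\,\bar\circ\,\psi_q$.

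Finally I would discharge the sign bookkeeping: since $(p-1)(q-1)=pq-p-q+1$ one has $(-1)^{pq+p+q}=-(-1)^{(p-1)(q-1)}$, and inserting this into the two displays above gives
\[
\{\phi_p,\psi_q\}=-(-1)^{(p-1)(q-1)}\big(\psi_q\,\bar\circ\,\phi_p-(-1)^{(p-1)(q-1)}\phi_p\,\bar\circ\,\psi_q\big)=\phi_p\,\bar\circ\,\psi_q-(-1)^{(p-1)(q-1)}\psi_q\,\bar\circ\,\phi_p,
\]
which is the claimed identity. There is no genuine obstacle in this argument; the only points requiring mild care are matching the degree and index conventions between \eqref{psi}, \eqref{museiincomune} and \S\ref{sec:operadic}, and recording explicitly the identity $\mu\circ(-\,\bar\circ^{\scriptscriptstyle\otimes}\,-)\circ(\id^{\bullet}\otimes_\Aop\gD_\ikks)=-\,\bar\circ\,-$, which is immediate from the definition of the internal composition but deserves to be stated. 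For $p=0$ or $q=0$ the formula is read off directly from the degenerate version of \eqref{museiincomune} together with the convention $\phi_0\,\bar\circ^{\scriptscriptstyle\otimes}\,\psi_q\coloneq 0$.
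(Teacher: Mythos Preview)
Your proof is correct and follows essentially the same route as the paper's own argument: both compute the top component $\Psi_{p+q-1}(s(\phi,\psi))=\mu\circ s_{p+q-1}(\phi,\psi)\circ(\id^{p+q}\otimes_\Aop\gD_\ikks)$, insert the explicit $s_{p+q-1}$ from \eqref{museiincomune}, and translate the resulting external commutator into the internal one via the relation between \eqref{soso2} and \eqref{mainsomma}. You have simply made the sign bookkeeping explicit where the paper declares it ``straightforward''.
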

\begin{proof}
Analogously to what we saw in the proof of Lemma \ref{monteolivetomaggiore},
the top degree $\Psi_{p+q-1}(s(\phi,\psi))$
is obtained by precomposing with $\id^{p+q} \otimes_\Aop  \Delta_\ikks$ and postcomposing with $\mu$, that is,
$\Psi_{p+q-1}(s(\phi,\psi)) =\mu\circ s_{p+q-1}(\phi,\psi)  \circ (\id^{p+q}\otimes_\Aop  \gD_\ikks)$. With the explicit form of $s$ given in \eqref{museiincomune} along with the external and internal operadic compositions of Eqs.~\eqref{soso2} and \eqref{mainsomma}, respectively, the claim is straightforward.
\end{proof}

\begin{rem}
Similarly to what was said in Remark \ref{winsstr}, Eq.~\eqref{greifenhagener} now makes sense even if one drops the assumption that $\phi_p$ and $\psi_q$ are cocycles. In fact, the internal operadic composition \eqref{museicapitolini} only asks for cochains. 
\end{rem}

 However, that this {\em is} a Gerstenhaber bracket indeed, that is, a graded Lie bracket fulfilling a graded Leibniz rule with respect to the (internal) cup product \eqref{cupcupcup}, will explicitly only follow from the results in the subsequent \S\ref{appassionata}.

\subsection{The $\Ext$ groups as a Gerstenhaber algebra}
\label{appassionata}
The explicit Gerstenhaber algebra structure on $\Ext_U^\bull(X,Z)$ is in a direct way obtained by adding a multiplication to the operad $\cO=\Hom_U(\mathrm{Bar}_\bullet(U, X), Z)$ from Section \ref{sec:operadic}, along with an identity and a unit. Define $ \mu  \in \cO(2), \mathbb{1} \in \cO(1)$, and $e \in \cO(0)$ as
\begin{equation}
  \begin{array}{rcl}
  \label{vibrant}
   \mu (u^0, u^1, u^2, m) &\coloneq& \gve_\ikks(u^0u^1u^2m) \lact 1_\zett,
  \\
  \mathbb{1}(u^0, u^1, m) &\coloneq& \gve_\ikks(u^0u^1m) \lact 1_\zett,
  \\
  e(u^0, m) &\coloneq& \gve_\ikks(u^0m) \lact 1_\zett.
  \end{array}
  \end{equation}
The $U$-linearity of these maps follows directly from
Eqs.~\eqref{radicchio4a} and \eqref{radicchio1a}, and it is a straightforward check that
$ \mu  \circ_1  \mu  =  \mu  \circ_2  \mu $ as well as $ \mu  \circ_1 e = \mathbb{1} =  \mu  \circ_2 e$. We then have:

\begin{theorem}
  \label{extobroid}
  Let $(U,A)$ be a left bialgebroid, $Z \in {\mathscr{Z}}^\ell(\umod)$ a braided commutative monoid and $X\in  {\mathscr{Z}}^r(\umod)$ a braided cocommutative comonoid such that $(X,Z)$ constitutes a commuting pair in the sense of Definition \ref{commpair}. %resp.\ Eq.~\eqref{centralascentralcan2}.
  Then
$(\cO,  \mu , \mathbb{1}, e)$ defines an operad with multiplication, 
and its cohomology with respect to the differential $\gd\phi\coloneq (-1)^{\phi +1} \{ \mu , \phi\}$, which coincides with $\Ext^\bull_U(X,Z)$ if
$U_\ract$ is projective over $A$,
  becomes a Gerstenhaber algebra.
  \end{theorem}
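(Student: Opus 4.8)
The plan is to reduce the theorem to the Gerstenhaber--Voronov principle recalled in Appendix~\ref{pamukkale1}: once $(\cO,\{\circ_i\})$ is known to be an operad in $\kkk$-modules and $\mu$ is a multiplication on it (i.e.\ $\mu\circ_1\mu=\mu\circ_2\mu$, with two-sided unit $e$ and identity $\mathbb{1}$), the cohomology of the cochain complex $(\cO(\bullet),\gd)$, $\gd\phi\coloneq(-1)^{\phi+1}\{\mu,\phi\}$, automatically carries a Gerstenhaber algebra structure whose graded-commutative product is the class of the cup product $\phi\cup\psi=(\mu\circ_2\psi)\circ_1\phi$ of~\eqref{cupi}--\eqref{cupcupcup} and whose graded Lie bracket is the class of the commutator~\eqref{greifenhagener} of the full operadic composition~\eqref{museicapitolini}. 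So the proof splits into three points: \emph{(a)} the family $\cO(n)=\Hom_U(\mathrm{Bar}_n(U,X),Z)$ with the partial compositions~\eqref{mainsomma} is an operad; \emph{(b)} $\mu,\mathbb{1},e$ of~\eqref{vibrant} form a multiplication with identity and unit; \emph{(c)} $H^\bullet(\cO,\gd)\cong\Ext^\bullet_U(X,Z)$, with the transported structure equal to the one extracted from the geometry of $\ext$ spaces in \S\ref{here}--\S\ref{there}.

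Point \emph{(b)} is the routine verification already indicated before the statement: the $U$-linearity of $\mu,\mathbb{1},e$ is immediate from the counit identities~\eqref{radicchio4a} and~\eqref{radicchio1a}, and the relations $\mu\circ_1\mu=\mu\circ_2\mu$, $\mu\circ_1e=\mathbb{1}=\mu\circ_2e$ follow by inserting $\mu(u^0,u^1,u^2,m)=\gve_\ikks(u^0u^1u^2m)$ into~\eqref{mainsomma}, where the coactions on $X$ collapse against the counit and only coassociativity of the $U$-coproduct and the module/comodule axioms remain. For \emph{(c)} one first computes, directly from~\eqref{museicapitolini} and~\eqref{mainsomma}, that $\gd\phi=\mu\,\bar\circ\,\phi-(-1)^{n-1}\phi\,\bar\circ\,\mu$ for $\phi\in\cO(n)$, and that plugging in $\mu=\gve_\ikks\circ(\text{product})$ makes this collapse, term by term, to the alternating sum of the $n+2$ faces of the bar differential described below~\eqref{caffe} (the first $n$ coming from $\phi\circ_i\mu$ and the multiplication in $U$, the remaining two from $\mu\circ_1\phi$, $\mu\circ_2\phi$ and the action map~\eqref{leftaction}); hence the operadic $\gd$ coincides with $d^*$ on $C^\bullet(U,X,Z)$. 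Since $U$ is $A$-left--right projective and $U_\ract$ is $A$-projective, each $\mathrm{Bar}_n(U,X)$ is a projective left $U$-module and $\mathrm{Bar}_\bullet(U,X)\to X$ is a $U$-projective resolution, so $H^\bullet(\cO,\gd)=\Ext^\bullet_U(X,Z)$; the identifications already recorded in Remark~\ref{winsstr} (cup product, comparing~\eqref{cupcupcup} with~\eqref{namelythis} and Definition~\ref{first}) and in the corollary to~\eqref{greifenhagener} (bracket) then show that the transported Gerstenhaber structure is exactly the one built from loops and splicings in \S\ref{here}--\S\ref{there}.

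The substance of the proof, and the step I expect to be the main obstacle, is point \emph{(a)}: establishing the operad associativity axioms for the composition~\eqref{mainsomma}, i.e.\ the nested relation $(\phi\circ_i\psi)\circ_{i+j-1}\chi=\phi\circ_i(\psi\circ_j\chi)$, the disjoint relation $(\phi\circ_i\psi)\circ_\ell\chi=(-1)^{(|\psi|-1)(|\chi|-1)}(\phi\circ_\ell\chi)\circ_i\psi$ for $\ell>i+|\psi|-1$, and the unit axioms for $\mathbb{1}\in\cO(1)$. As stressed in Remark~\ref{auteur} and in \S\ref{sec:operadic}, this operad is \emph{not} pulled back from an endomorphism operad, so there is no structural shortcut: each identity must be checked by hand, expanding both sides in Sweedler notation for the $U$-coproduct, the right $U$-coaction on $X$, the comultiplication $\gD_\ikks$ and the multiplication of $Z$, and then matching the resulting expressions. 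The ingredients are coassociativity and the bialgebroid axioms, the module--algebra and comodule--coalgebra compatibilities, the two Yetter--Drinfel'd conditions~\eqref{yd} and~\eqref{yd2}, the hexagon identities for the braidings $\gs,\tau$, the braided cocommutativity of $X$ and braided commutativity of $Z$, and --- decisively --- the commuting-pair hypothesis $\gs_{\zett,\ikks}=\tau_{\zett,\ikks}$ of Definition~\ref{commpair}, which is precisely what makes the two ways of sliding a $Z$-factor past an $X$-factor agree, exactly as in \S\ref{sec:tensor}. This is a long but entirely mechanical computation, parallel to Gerstenhaber's original one in the Hochschild case; once it is in place, the theorem follows from the reduction described in the first paragraph.
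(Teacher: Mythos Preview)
Your plan matches the paper's proof almost exactly: both reduce everything to verifying, by direct Sweedler-notation expansion, that the partial compositions~\eqref{mainsomma} satisfy the associativity relations~\eqref{danton}, after which the Gerstenhaber--Schack result for operads with multiplication (Appendix~\ref{pamukkale1}) does the rest. The paper carries out the case $j<i$ of~\eqref{danton} in full detail, flags braided commutativity~\eqref{radicchio3a}, the YD-derived identity~\eqref{cortes}, the commuting-pair condition~\eqref{centralascentralcan2}, and braided cocommutativity~\eqref{radicchio6} as the decisive manipulations, and leaves the nested case to the reader --- exactly the list of ingredients you give.

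One correction: the ``disjoint'' operad relation you wrote carries a spurious Koszul sign $(-1)^{(|\psi|-1)(|\chi|-1)}$. The axioms~\eqref{danton} for a non-$\Sigma$ operad are signless; the signs enter only later, in the definition of the full composition~\eqref{museicapitolini} and of the bracket, not in the partial-composition associativity itself. If you tried to verify your signed version directly from~\eqref{mainsomma} you would not succeed. Also, in your point~\emph{(c)} the paper singles out the comodule--coalgebra counit compatibility~\eqref{radicchio5a} as the extra ingredient needed to identify the operadic differential $\gd\phi=(-1)^{\phi+1}\{\mu,\phi\}$ with the bar-complex differential; this is buried in your phrase ``module/comodule axioms'' but is worth making explicit, since it is precisely what makes the last-face term involving the $X$-coaction collapse correctly.
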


\begin{proof}
  The proof is a cumbersome but straightforward explicit computation that consists in verifying the associativity \eqref{danton} for the partial compositions on $\cO$ defined in \eqref{mainsomma}.

  Let $\phi_p \in \cO(p)$, $\psi_q \in \cO(q)$,
and $\chi_r \in \cO(r)$.  We provide a detailed proof for the case $j < i$ only, the other cases being completely analogous and left to the reader. We have to show that
\begin{equation}
  \label{thisequaltothat}
\begin{split}
  & ((\phi_p  \circ_i \psi_q) \circ_j \chi_r)(u^0,\dots, u^{p+q+r-2}, m)
\\
  = \quad & ((\phi_p  \circ_j \chi_r) \circ_{i+r-1} \psi_q)(u^0,\dots, u^{p+q+r-2}, m).
\end{split}
\end{equation}
For better readability in later computations, let us introduce the following shorthand notation:
  \[ 
u^{[i,j]} \coloneqq (u^i,\dots,u^j), \qquad u^{(i,j)} \coloneqq u^i \cdots u^j,
\]
for $i \leq j$. 
We will be dealing with the following intervals:
%$[0,j-1]$, $[j,j+r-1]$, $[j+r,i+r-2]$, $[i+r-1,i+q+r-2]$, and $[i+q+r-1, %p+q+r-2]$. To keep notation short, let us 
%set
\[
\begin{aligned}
I_1 &\coloneqq[0,j-1]
\\
I_2 &\coloneqq[j,j+r-1]
\\
I_3 &\coloneqq [j+r,i+r-2]
\\
I_4 &\coloneqq [i+r-1,i+q+r-2]
\\
I_5 &\coloneqq [i+q+r-1, p+q+r-2].
\end{aligned}
\]
%denote these intervals by $I_1,I_2,I_3,I_4$, and $I_5$, respectively. 
To keep notation even shorter, let us moreover set
 \[ 
u^{I_1} \coloneqq u^{[0,j-1]}, \qquad u^{(I_1)} \coloneqq u^{(0,j-1)}, 
\]
and similar for the intervals $I_2$ to $I_5$. 
%
%We will also use the symbols $u^{(I_1)}$, etc. in accordance with  the notation introduced immediately below \eqref{thisequaltothat} and
Finally, 
we will write $I_{i,j}$ for $I_i\cup I_j$, and so on.
As a first step, we fully expand the left hand side of \eqref{thisequaltothat}:
%\begin{small}
%\begin{equation*}
%  \begin{split}
%\begin{array}{rcl}
 \begin{align*}
  &((\phi_p  \circ_i \psi_q) \circ_j \chi_r)(u^0, u^1, \ldots, u^{p+q+r-2}, m)
  \\[2mm]
&=
      \big(\phi_p \circ_i \psi_q\big)\pig(u^{I_1}_{(1)}, \chi_r\big(1, u^{I_2}_{(1)},    (u^{(I_{3,4,5})}_{(2)} m_{(2)})_{[0]} \big)_{(-1)}
      u^{(I_2)}_{(2)} (u^{(I_{3,4,5})}_{(2)} m_{(2)})_{[1]},
    u^{I_{3,4,5}}_{(1)} , m_{(1)} \pig)
        \\
&\qquad
    \cdot_\zett \pig( u^{(I_1)}_{(2)} 
     \chi_r\big(1, u^{I_2}_{(1)},    (u^{I_{3,4,5}}_{(2)} m_{(2)})_{[0]} \big)_{(0)}
     \pig)
       \\[2mm]
  &
  =    \phi_p\pig(u^{I_1}_{(1)}, \chi_r\big(1, u^{I_2}_{(1)},
    (u^{(I_{3,4,5})}_{(3)}  m_{(3)})_{[0]} \big)_{(-2)}
    u^{(I_2)}_{(2)} (u^{(I_{3,4,5})}_{(3)} m_{(3)})_{[1]},
    u^{I_3}_{(1)}, 
     \\
&\qquad
    \psi_q\big(1, u^{I_4}_{(1)},
    (u^{(I_5)}_{(2)}  m_{(2)})_{[0]} \big)_{(-1)}
    u^{(I_4)}_{(2)}  (u^{(I_5)}_{(2)}  m_{(2)})_{[1]},
     u^{I_5}_{(1)}, m_{(1)} \pig)
   \\
    &\qquad
   \cdot_\zett
   \pig( u^{(I_1)}_{(2)} 
\chi_r\big(1, u^{I_2}_{(1)}, 
(u^{(I_{3,4,5})}_{(3)}  m_{(3)})_{[0]} \big)_{(-1)}
     u^{(I_2)}_{(3)} (u^{(I_{3,4,5})}_{(3)}  m_{(3)})_{[2]}
     u^{(I_3)}_{(2)} 
   \psi_q\big(1, u^{I_4}_{(1)},
      (u^{(I_5)}_{(2)} m_{(2)})_{[0]} \big)_{(0)}
        \pig)
  \\
&\qquad
  \cdot_\zett
  \pig( u^{(I_1)}_{(3)}
     \chi_r\big(1, u^{I_2}_{(1)}, (u^{(I_{3,4,5})}_{(3)} m_{(3)})_{[0]} \big)_{(0)}
     \pig).
 \end{align*}    
%     \end{array}
%\end{equation*}
%\end{small}
 Now, we manipulate this expression using the identity
 \begin{equation*}
   \begin{split}
   &
   \phi_p\big(u_{(1)},  
  z_{(-2)} v^1, \ldots, v^{p}, m\big) \cdot_\zett (u_{(2)} z_{(-1)} z') \cdot_\zett (u_{(3)} z_{(0)})
 \\
 = \quad &
 \phi_p\big(u_{(1)},  
   z_{(-1)} v^1, \ldots, v^{p}, m\big) \cdot_\zett (u_{(2)} z_{(0)}) \cdot_\zett (u_{(3)} z')
   \end{split}
   \end{equation*}
 for any $u, v^0, \ldots, v^{p-1} \in U$, $m \in X$, and $z, z' \in Z$,
 that follows as an immediate
 consequence of  \eqref{radicchio3a}, 
%i.e., of
% $$
% (u_{(1)} z_{(-1)} z') \cdot_\zett (u_{(2)} z_{(0)})
% =
% (u_{(1)} z) \cdot_\zett (u_{(2)}z')
%$$.
to obtain:

\begin{align*}
 &\phi_p\pig(\overbracket{u^{I_1}_{(1)}}^{u_{(1)}}, 
\overbracket{\chi_r\big(1, u^{I_2}_{(1)},
    (u^{(I_{3,4,5})}_{(3)}  m_{(3)})_{[0]} \big)_{(-2)}}^{z_{(-2)}}
    u^{(I_2)}_{(2)} (u^{(I_{3,4,5})}_{(3)} m_{(3)})_{[1]},
    u^{I_3}_{(1)}, 
     \\
&\qquad
    \psi_q\big(1, u^{I_4}_{(1)},
    (u^{(I_5)}_{(2)}  m_{(2)})_{[0]} \big)_{(-1)}
    u^{(I_4)}_{(2)}  (u^{(I_5)}_{(2)}  m_{(2)})_{[1]},
     u^{I_5}_{(1)}, m_{(1)} \pig)
   \\
    &\qquad
    \cdot_\zett \pig( \overbracket{u^{(I_1)}_{(2)}}^{u_{(2)}} 
\overbracket{\chi_r\big(1, u^{I_2}_{(1)}, 
(u^{(I_{3,4,5})}_{(3)}  m_{(3)})_{[0]} \big)_{(-1)}}^{z_{(-1)}}
     \, \overbracket{u^{(I_2)}_{(3)} (u^{(I_{3,4,5})}_{(3)}  m_{(3)})_{[2]}
     u^{(I_3)}_{(2)} 
     \psi_q\big(1, u^{I_4}_{(1)},
      (u^{(I_5)}_{(2)} m_{(2)})_{[0]} \big)_{(0)}}^{z'}
        \pig)
  \\
&\qquad
        \cdot_\zett \pig( \overbracket{u^{(I_1)}_{(3)}}^{u_{(3)}}
     \overbracket{\chi_r\big(1, u^{I_2}_{(1)}, (u^{(I_{3,4,5})}_{(3)} m_{(3)})_{[0]} \big)_{(0)}}^{z_{(0)}}
     \pig)
     \\[2mm]
     = \ &
       \phi_p\pig(\overbracket{u^{I_1}_{(1)}}^{u_{(1)}}, 
\overbracket{\chi_r\big(1, u^{I_2}_{(1)},
    (u^{(I_{3,4,5})}_{(3)}  m_{(3)})_{[0]} \big)_{(-1)}}^{z_{(-1)}}
    u^{(I_2)}_{(2)} (u^{(I_{3,4,5})}_{(3)} m_{(3)})_{[1]},
    u^{I_3}_{(1)}, 
     \\
&\qquad
    \psi_q\big(1, u^{I_4}_{(1)},
    (u^{(I_5)}_{(2)}  m_{(2)})_{[0]} \big)_{(-1)}
    u^{(I_4)}_{(2)}  (u^{(I_5)}_{(2)}  m_{(2)})_{[1]},
     u^{I_5}_{(1)}, m_{(1)} \pig)
   \\
    &\qquad
   \cdot_\zett \pig( \overbracket{u^{(I_1)}_{(2)}}^{u_{(2)}} 
\overbracket{\chi_r\big(1, u^{I_2}_{(1)}, 
(u^{(I_{3,4,5})}_{(3)}  m_{(3)})_{[0]} \big)_{(0)}}^{z_{(0)}}\pig)    
  \\
&\qquad
       \cdot_\zett \pig( \overbracket{u^{(I_1)}_{(3)}}^{u_{(3)}}\, \overbracket{u^{(I_2)}_{(3)} (u^{(I_{3,4,5})}_{(3)}  m_{(3)})_{[2]}
     u^{(I_3)}_{(2)} 
     \psi_q\big(1, u^{I_4}_{(1)},
      (u^{(I_5)}_{(2)} m_{(2)})_{[0]} \big)_{(0)}}^{z'}
     \pig). 
 \end{align*} 
Next, we use that the the YD condition \eqref{yd2} implies 
\begin{equation}\label{eq:dallapiccola}
  \begin{split}
&  (u_{(2)} m)_{[0]} \otimes_A (u_{(2)} m)_{[1]} \otimes_A (u_{(2)} m)_{[2]} u_{(1)}
\\
  = \quad & 
  (u_{(1)} m_{[0]})_{[0]} \otimes_A (u_{(1)} m_{[0]})_{[1]} \otimes_A u_{(2)} m_{[1]} 
\end{split}
  \end{equation}
for any $u \in U$ and $m \in X$, to obtain: 
\begin{align*}
      & \phi_p\pig(u^{I_1}_{(1)}, 
\chi_r\big(1, u^{I_2}_{(1)},
    \overbracket{(u^{(I_3)}_{(3)} u^{(I_{4,5})}_{(3)}  
    m_{(3)})_{[0]}}^{(u_{(2)}m)_{[0]}} \big)_{(-1)}
     u^{(I_2)}_{(2)} \overbracket{(u^{(I_{3})}_{(3)} u^{(I_{4,5})}_{(3)} m_{(3)})_{[1]}}^{(u_{(2)}m)_{[1]}},
    u^{I_3}_{(1)}, 
     \\
&\qquad
    \psi_q\big(1, u^{I_4}_{(1)},
    (u^{(I_5)}_{(2)}  m_{(2)})_{[0]} \big)_{(-1)}
    u^{(I_4)}_{(2)}  (u^{(I_5)}_{(2)}  m_{(2)})_{[1]},
     u^{I_5}_{(1)}, m_{(1)} \pig)
   \\
    &\qquad
    \cdot_\zett \pig( u^{(I_1)}_{(2)} 
\chi_r\big(1, u^{I_2}_{(1)}, 
\overbracket{(u^{(I_{3})}_{(3)} u^{(I_{4,5})}_{(3)}  m_{(3)})_{[0]}}^{(u_{(2)} m)_{[0]}} \big)_{(0)}\pig)     
  \\
&\qquad
        \cdot_\zett \pig( u^{(I_{1,2})}_{(3)} \overbracket{(u^{(I_{3})}_{(3)}u^{(I_{4,5})}_{(3)}  m_{(3)})_{[2]}
     u^{(I_3)}_{(2)}}^{(u_{(2)}m)_{[2]}u_{(1)}} 
     \psi_q\big(1, u^{I_4}_{(1)},
      (u^{(I_5)}_{(2)} m_{(2)})_{[0]} \big)_{(0)}
     \pig)\\
   = \ & 
     \phi_p\Big(u^{I_1}_{(1)}, \chi_r\pig(1, u^{I_2}_{(1)},
     \overbracket{ \big(
    u^{(I_3)}_{(2)}  (u^{(I_{4,5})}_{(3)} m_{(3)})_{[0]}\big)_{[0]}}^{(u_{(1)}m_{[0]})_{[0]}}
     \pig)_{(-1)}  u^{(I_2)}_{(2)} 
\overbracket{\big(u^{(I_3)}_{(2)} (u^{(I_{4,5})}_{(3)} m_{(3)})_{[0]}\big)_{[1]}}^{(u_{(1)}m_{[0]})_{[1]}}, u^{I_3}_{(1)}, 
    \\
    &\qquad
    \psi_q\big(1, u^{I_4}_{(1)},
    (u^{(I_5)}_{(2)}  m_{(2)})_{[0]} \big)_{(-1)} 
    u^{(I_4)}_{(2)}  (u^{(I_5)}_{(2)}  m_{(2)})_{[1]},
     u^{I_5}_{(1)},  m_{(1)} \Big)
  \\
    &\qquad
  \cdot_\zett
  \Big( u^{(I_1)}_{(2)}
        \chi_r\pig(1, u^{I_2}_{(1)},\overbracket{\big(u^{(I_3)}_{(2)} (u^{(I_{4,5})}_{(3)} m_{(3)})_{[0]}\big)_{[0]}}^{(u_{(1)}m_{[0]})_{[0]}}
        \pig)_{(0)}
     \Big)
    \\
    &\qquad
    \cdot_\zett \pig( u^{(I_{1,2})}_{(3)} \overbracket{u^{(I_{3})}_{(3)}
        (u^{(I_{4,5})}_{(3)}  m_{(3)})_{[1]}}^{u_{(2)}m_{[1]}}
    \psi_q\big(1, u^{I_4}_{(1)},
      (u^{(I_5)}_{(2)} m_{(2)})_{[0]} \big)_{(0)}
    \pig).     
 \end{align*} 
Finally, we use that for all
$x \in X$ and
$z \in Z$ one has
\begin{equation*}
 x_{[0]}  \otimes_A x_{[1]}z = 
  z_{(-1)} x \otimes_A z_{(0)} 
\end{equation*}
in $X \otimes_A  Z$,
as follows from the explicit expressions for the braidings \eqref{ydbraid2} and \eqref{ydbraid}, respectively. This implies
\begin{equation*}
 z_{(-1)}\otimes_A  x_{[0]}  \otimes_A x_{[1]}z_{(0)} = 
  z_{(-2)}\otimes_A z_{(-1)} x \otimes_A z_{(0)}
\end{equation*}
as elements in $U \otimes_A  X \otimes_A  Z$,
and so we obtain:
\begin{align*}
     &\phi_p\Big(u^{I_1}_{(1)}, \chi_r\pig(1, u^{I_2}_{(1)},
    \big(
     u^{(I_3)}_{(2)}
       \overbracket{(u^{(I_{4,5})}_{(3)} m_{(3)})_{[0]}}^{x_{[0]}}\big)_{[0]}
       \pig)_{(-1)}  u^{(I_2)}_{(2)}
       %\cdot
    % \\
    % &\qquad
\big(u^{(I_3)}_{(2)}
  \overbracket{(u^{(I_{4,5})}_{(3)} m_{(3)})_{[0]}}^{x_{[0]}}\big)_{[1]}, 
    u^{I_3}_{(1)}, 
\\
  &\qquad
    {\overbracket{\psi_q\big(1, u^{I_4}_{(1)},
    (u^{(I_5)}_{(2)}  m_{(2)})_{[0]} \big)_{(-1)}}^{z_{(-1)}}} 
    u^{(I_4)}_{(2)}  (u^{(I_5)}_{(2)}  m_{(2)})_{[1]},
     u^{I_5}_{(1)},  m_{(1)} \Big)
  \\
    &\qquad
  \cdot_\zett
  \Big( u^{(I_1)}_{(2)}
        \chi_r\pig(1, u^{I_2}_{(1)},\big(u^{(I_3)}_{(2)} \overbracket{(u^{(I_{4,5})}_{(3)} m_{(3)})_{[0]}}^{x_{[0]}}\big)_{[0]}
        \pig)_{(0)}
     \Big)
    \\
    &\qquad
    \cdot_\zett \pig( u^{(I_{1,2,3})}_{(3)} 
        \overbracket{(u^{(I_{4,5})}_{(3)}  m_{(3)})_{[1]}}^{x_{[1]}}
    {\overbracket{\psi_q\big(1, u^{I_4}_{(1)},
      (u^{(I_5)}_{(2)} m_{(2)})_{[0]} \big)_{(0)}}^{z_{(0)}}}
    \pig)    
    \\[2mm]
    = \ &
         \phi_p\pig(u^{I_1}_{(1)}, \chi_r\big(1, u^{I_2}_{(1)},     \big(u^{(I_3)}_{(2)} 
 \overbracket{\psi_q\big(1, u^{I_4}_{(1)},  (u^{(I_5)}_{(2)} m_{(2)})_{[0]} \big)_{(-1)}}^{z_{(-1)}}
     \overbracket{u^{(I_{4,5})}_{(3)} m_{(3)}}^{x}\big)_{[0]}
     \big)_{(-1)}
    u^{(I_2)}_{(2)}\cdot
    \\
    &\qquad 
    \big(u^{(I_3)}_{(2)}  \overbracket{\psi_q\big(1, u^{I_4}_{(1)},
(u^{(I_5)}_{(2)}  m_{(2)})_{[0]} \big)_{(-1)}}^{z_{(-1)}}
\overbracket{u^{(I_{4,5})}_{(3)}  m_{(3)}}^{x}\big)_{[1]},
    \\
&\qquad
u^{I_3}_{(1)}, 
     \overbracket{\psi_q\big(1, u^{I_4}_{(1)},
    (u^{(I_5)}_{(2)}  m_{(2)})_{[0]} \big)_{(-2)}}^{z_{(-2)}}
    u^{(I_4)}_{(2)}  (u^{(I_5)}_{(2)}  m_{(2)})_{[1]},
     u^{I_5}_{(1)}, m_{(1)} \pig)
  \\
    &\qquad
        \cdot_\zett \pig( u^{(I_1)}_{(2)}
\chi_r\big(1, u^{I_2}_{(1)},
     \big(u^{(I_3)}_{(2)} 
 \overbracket{\psi_q\big(1, u^{I_4}_{(1)},
 (u^{(I_5)}_{(2)}  m_{(2)})_{[0]} \big)_{(-1)}}^{z_{(-1)}}
     \overbracket{u^{(I_{4,5})}_{(3)}  m_{(3)}}^{x}\big)_{[0]}
     \big)_{(0)}
     \pig)
    \\
&\qquad
    \cdot_\zett \pig( u^{(I_{1,2,3})}_{(3)} 
    \overbracket{\psi_q\big(1, u^{I_4}_{(1)},
      (u^{(I_5)}_{(2)}  m_{(2)})_{[0]} \big)_{(0)}}^{z_{(0)}}
    \pig). 
 \end{align*} 
For the right hand side of \eqref{thisequaltothat} we perform a similar computation. First, let us expand
\begin{align*}
  &((\phi_p  \circ_j \chi_r) \circ_{i+r-1} \psi_q)(u^0, u^1, \ldots, u^{p+q+r-2}, m)
  \\[2mm]
  & =
          \big(\phi_p \circ_j \chi_r\big)\pig(u^{I_{1,2,3}}_{(1)}, \psi_q\big(1, u^{I_4}_{(1)},    (u^{(I_5)}_{(2)}  m_{(2)})_{[0]} \big)_{(-1)}
      u^{(I_4)}_{(2)}  (u^{(I_5)}_{(2)}  m_{(2)})_{[1]},
    u^{I_5}_{(1)} , m_{(1)} \pig)
        \\
&
 \qquad    \cdot_\zett \pig( u^{(I_{1,2,3})}_{(2)} 
     \psi_q\big(1, u^{I_4}_{(1)}, (u^{(I_5)}_{(2)}  m_{(2)})_{[0]} \big)_{(0)}
     \pig)
       \\[2mm]
  &=
  \phi_p\pig(u^{I_1}_{(1)},
  \chi_r\pig(1,  u^{I_2}_{(1)},
  \big( u^{(I_3)}_{(2)}  \psi_q\big(1, u^{I_4}_{(1)},
  (u^{(I_5)}_{(3)}  m_{(3)})_{[0]} \big)_{(-1)}
  u^{(I_4)}_{(3)} (u^{(I_5)}_{(3)}  m_{(3)})_{[2]}u^{(I_5)}_{(2)}  m_{(2)}
 \big)_{[0]}
  \pig)_{(-1)}
  \\
  &\qquad
   u^{(I_2)}_{(2)}
 \big(u^{(I_3)}_{(2)} \psi_q\big(1, u^{I_4}_{(1)},
 (u^{(I_5)}_{(3)}  m_{(3)})_{[0]} \big)_{(-1)}
 u^{(I_4)}_{(3)} (u^{(I_5)}_{(3)}  m_{(3)})_{[2]}
 u^{(I_5)}_{(2)} 
 m_{(2)}\big)_{[1]},
 \\
 &\qquad
 u^{I_3}_{(1)},
 \psi_q\big(1, u^{I_4}_{(1)},
 (u^{(I_5)}_{(3)}  m_{(3)})_{[0]} \big)_{(-2)}
 u^{(I_4)}_{(2)} (u^{(I_5)}_{(3)}  m_{(3)})_{[1]},
 u^{I_5}_{(1)}, m_{(1)}\pig)\\
 &\qquad
  \cdot_\zett
  \pig(
 u^{(I_1)}_{(2)} 
  \chi_r\pig(1,  u^{I_2}_{(1)},
  \big( u^{(I_3)}_{(2)}  \psi_q\big(1, u^{I_4}_{(1)},
   (u^{(I_5)}_{(3)}  m_{(3)})_{[0]} \big)_{(-1)}
  u^{(I_4)}_{(3)} (u^{(I_5)}_{(3)}  m_{(3)})_{[2]}
  u^{(I_5)}_{(2)}  m_{(2)}
 \big)_{[0]} 
  \pig)_{(0)}
  \pig)
  \\
  &\qquad
 \cdot_\zett \pig( u^{(I_{1,2,3})}_{(3)} 
  \psi_q\big(1, u^{I_4}_{(1)},    (u^{(I_5)}_{(3)}  m_{(3)})_{[0]} \big)_{(0)}
     \pig).
\end{align*}
Using \eqref{eq:dallapiccola} again, we have

\begin{align*}
&  \phi_p\pig(u^{I_1}_{(1)},
  \chi_r\pig(1,  u^{I_2}_{(1)},
  \big( u^{(I_3)}_{(2)}  \psi_q\big(1, u^{I_4}_{(1)},
  \overbracket{(u^{(I_5)}_{(3)}m_{(3)})_{[0]}}^{(u_{(2)}m)_{[0]}} \big)_{(-1)}
  u^{(I_4)}_{(3)} \overbracket{(u^{(I_5)}_{(3)}  m_{(3)})_{[2]}u^{(I_5)}_{(2)}}^{(u_{(2)}m)_{[2]}u_{(1)}}  m_{(2)}
 \big)_{[0]}
  \pig)_{(-1)}
  \\
  &\qquad
   u^{(I_2)}_{(2)}
 \big(u^{(I_3)}_{(2)} \psi_q\big(1, u^{I_4}_{(1)},
 \overbracket{(u^{(I_5)}_{(3)} m_{(3)})_{[0]}}^{(u_{(2)}m)_{[0]}} \big)_{(-1)}
 u^{(I_4)}_{(3)}  \overbracket{(u^{(I_5)}_{(3)} m_{(3)})_{[2]}
  u^{(I_5)}_{(2)}}^{(u_{(2)}m)_{[2]}u_{(1)}}
 m_{(2)}\big)_{[1]}, u^{I_3}_{(1)},
 \\
 &\qquad
 \psi_q\big(1, u^{I_4}_{(1)},
 \overbracket{(u^{(I_5)}_{(3)} m_{(3)})_{[0]}}^{(u_{(2)}m)_{[0]}} \big)_{(-2)}
 u^{(I_4)}_{(2)} \overbracket{(u^{(I_5)}_{(3)} m_{(3)})_{[1]}}^{(u_{(2)}m)_{[1]}},
 u^{I_5}_{(1)}, m_{(1)}\pig)\\
 &\qquad
  \cdot_\zett
  \pig(
 u^{(I_1)}_{(2)} 
  \chi_r\pig(1,  u^{I_2}_{(1)},
  \big( u^{(I_3)}_{(2)}  \psi_q\big(1, u^{I_4}_{(1)},
   \overbracket{(u^{(I_5)}_{(3)} m_{(3)})_{[0]} }^{(u_{(2)}m)_{[0]}} \big)_{(-1)}
  u^{(I_4)}_{(3)} \overbracket{(u^{(I_5)}_{(3)} m_{(3)})_{[2]}
  u^{(I_5)}_{(2)}}^{(u_{(2)}m)_{[2]}u_{(1)}}  m_{(2)}
 \big)_{[0]} 
  \pig)_{(0)}
  \pig)
  \\
  &\qquad
 \cdot_\zett \pig( u^{(I_{1,2,3})}_{(3)} 
  \psi_q\big(1, u^{I_4}_{(1)},    \overbracket{(u^{(I_5)}_{(3)} m_{(3)})_{[0]} }^{(u_{(2)}m)_{[0]}} \big)_{(0)}
     \pig)
       \\[2mm]
  = \ &
   \phi_p\pig(u^{I_1}_{(1)},
   \chi_r\pig(1,  u^{I_2}_{(1)},
  \big( u^{(I_3)}_{(2)}  \psi_q\big(1, u^{I_4}_{(1)},
  \overbracket{(u^{(I_5)}_{(2)} m_{(3)[0]})_{[0]}}^{(u_{(1)}m_{[0]})_{[0]}} \big)_{(-1)}
  u^{(I_{4})}_{(3)}  \overbracket{u^{(I_{5})}_{(3)}m_{(3)[1]}}^{u_{(2)}m_{[1]}} m_{(2)}
 \big)_{[0]}
  \pig)_{(-1)}
  \\
  &\qquad
  u^{(I_2)}_{(2)}
 \big(u^{(I_3)}_{(2)}  \psi_q\big(1, u^{I_4}_{(1)},
 \overbracket{(u^{(I_5)}_{(2)} m_{(3)[0]})_{[0]}}^{(u_{(1)}m_{[0]})_{[0]}} \big)_{(-1)}
 u^{(I_{4})}_{(3)} \overbracket{u^{(I_{5})}_{(3)}m_{(3)[1]}}^{u_{(2)}m_{[1]}} m_{(2)}\big)_{[1]}, u^{I_3}_{(1)},
 \\
 &\qquad
 \psi_q\big(1, u^{I_4}_{(1)},
 \overbracket{(u^{(I_5)}_{(2)} m_{(3)[0]})_{[0]}}^{(u_{(1)}m_{[0]})_{[0]}} \big)_{(-2)}
 u^{(I_4)}_{(2)} \overbracket{(u^{(I_5)}_{(2)}m_{(3)[0]})_{[1]}}^{(u_{(1)}m_{[0]})_{[1]}},
 u^{I_5}_{(1)}, m_{(1)}\pig)
 \\
 &\qquad
 \cdot_\zett
  \pig(
 u^{(I_1)}_{(2)} 
 \chi_r\pig(1,  u^{I_2}_{(1)},
 \big( u^{(I_3)}_{(2)}  \psi_q\big(1, u^{I_4}_{(1)},
  \overbracket{(u^{(I_5)}_{(2)} m_{(3)[0]})_{[0]} }^{(u_{(1)}m_{[0]})_{[0]}} \big)_{(-1)}
  u^{(I_{4})}_{(3)} \overbracket{u^{(I_{5})}_{(3)}m_{(3)[1]}}^{u_{(2)}m_{[1]}} m_{(2)}
 \big)_{[0]}
  \pig)_{(0)}
  \pig)
  \\
  &\qquad
 \cdot_\zett \pig( u^{(I_{1,2,3})}_{(3)} 
  \psi_q\big(1, u^{I_4}_{(1)},    \overbracket{(u^{(I_5)}_{(2)} m_{(3)[0]})_{[0]}}^{(u_{(1)}m_{[0]})_{[0]}} \big)_{(0)}
     \pig).
  \end{align*}
Finally, using
$$
x_{(2)[0]} \otimes_A x_{(2)[1]} x_{(1)} = x_{(1)} \otimes_A x_{(2)},
$$
that is, the braided cocommutativity of $X$
from \eqref{radicchio6}, 
 we obtain:
\begin{align*}
&   \phi_p\pig(u^{I_1}_{(1)},
   \chi_r\pig(1,  u^{I_2}_{(1)},
  \big( u^{(I_3)}_{(2)}  \psi_q\big(1, u^{I_4}_{(1)},
  (u^{(I_5)}_{(2)}  \overbracket{m_{(3)[0]}}^{x_{(2)[0]}})_{[0]} \big)_{(-1)}
  u^{(I_{4,5})}_{(3)}  \overbracket{m_{(3)[1]} m_{(2)}}^{x_{(2)[1]}x_{(1)}}
 \big)_{[0]}
  \pig)_{(-1)}
  \\
  &\qquad
  u^{(I_2)}_{(2)}
 \big(u^{(I_3)}_{(2)}  \psi_q\big(1, u^{I_4}_{(1)},
 (u^{(I_5)}_{(2)}  \overbracket{m_{(3)[0]}}^{x_{(2)[0]}})_{[0]} \big)_{(-1)}
 u^{(I_{4,5})}_{(3)} \overbracket{m_{(3)[1]} m_{(2)}}^{x_{(2)[1]}x_{(1)}}\big)_{[1]}, u^{I_3}_{(1)},
 \\
 &\qquad 
 \psi_q\big(1, u^{I_4}_{(1)},
 (u^{(I_5)}_{(2)}  \overbracket{m_{(3)[0]}}^{x_{(2)[0]}})_{[0]} \big)_{(-2)}
 u^{(I_4)}_{(2)} (u^{(I_5)}_{(2)}  \overbracket{m_{(3)[0]}}^{x_{(2)[0]}})_{[1]},
 u^{I_5}_{(1)}, m_{(1)}\pig)
 \\
 &\qquad
 \cdot_\zett
  \pig(
 u^{(I_1)}_{(2)} 
 \chi_r\pig(1,  u^{I_2}_{(1)},
 \big( u^{(I_3)}_{(2)}  \psi_q\big(1, u^{I_4}_{(1)},
  (u^{(I_5)}_{(2)}  \overbracket{m_{(3)[0]}}^{x_{(2)[0]}})_{[0]} \big)_{(-1)}
  u^{(I_{4,5})}_{(3)}  \overbracket{m_{(3)[1]} m_{(2)}}^{x_{(2)[1]}x_{(1)}}
 \big)_{[0]}
  \pig)_{(0)}
  \pig)
  \\
  &\qquad
 \cdot_\zett \pig( u^{(I_{1,2,3})}_{(3)} 
  \psi_q\big(1, u^{I_4}_{(1)},    (u^{(I_5)}_{(2)}  \overbracket{m_{(3)[0]}}^{x_{(2)[0]}})_{[0]} \big)_{(0)}
     \pig)
     \\[2mm]
\end{align*}
\begin{align*}
     =  \ &
   \phi_p\pig(u^{I_1}_{(1)},
   \chi_r\pig(1,  u^{I_2}_{(1)},
  \big( u^{(I_3)}_{(2)}  \psi_q\big(1, u^{I_4}_{(1)},
  (u^{(I_5)}_{(2)}  \overbracket{m_{(2)}}^{x_{(1)}})_{[0]} \big)_{(-1)}
  u^{(I_{4,5})}_{(3)}  \overbracket{m_{(3)}}^{x_{(2)}}
 \big)_{[0]}
  \pig)_{(-1)}
  \\
  &\qquad
  u^{(I_2)}_{(2)}
 \big(u^{(I_3)}_{(2)}  \psi_q\big(1, u^{I_4}_{(1)},
 (u^{(I_5)}_{(2)}  \overbracket{m_{(2)}}^{x_{(1)}})_{[0]} \big)_{(-1)}
 u^{(I_{4,5})}_{(3)}  \overbracket{m_{(3)}}^{x_{(2)}}\big)_{[1]},
 u^{I_3}_{(1)},
  \\
 &\qquad
 \psi_q\big(1, u^{I_4}_{(1)},
 (u^{(I_5)}_{(2)}  \overbracket{m_{(2)}}^{x_{(1)}})_{[0]} \big)_{(-2)}
 u^{(I_4)}_{(2)} (u^{(I_5)}_{(2)}  \overbracket{m_{(2)}}^{x_{(1)}})_{[1]},
 u^{I_5}_{(1)}, m_{(1)}\pig)
 \\
 &\qquad
 \cdot_\zett
  \pig(
 u^{(I_1)}_{(2)} 
 \chi_r\pig(1,  u^{I_2}_{(1)},
 \big( u^{(I_3)}_{(2)}  \psi_q\big(1, u^{I_4}_{(1)},
  (u^{(I_5)}_{(2)}  \overbracket{m_{(2)}}^{x_{(1)}})_{[0]} \big)_{(-1)}
  u^{(I_{4,5})}_{(3)}  \overbracket{m_{(3)} }^{x_{(2)}}
 \big)_{[0]}
  \pig)_{(0)}
  \pig)
  \\
  &\qquad
 \cdot_\zett \pig( u^{(I_{1,2,3})}_{(3)} 
  \psi_q\big(1, u^{I_4}_{(1)},    (u^{(I_5)}_{(2)}  \overbracket{m_{(2)}}^{x_{(1)}})_{[0]} \big)_{(0)}
     \pig),
\end{align*}
which is the same as the
left hand side of \eqref{thisequaltothat} computed a moment or two ago
(and if it is not, it is merely a typo); hence, this
proves the first line of the partial operadic associativity \eqref{danton}.
Verifying the second line in \eqref{danton} is left to the reader, and the third line is actually redundant as it says the same as the first (but the associativity properties are easier to read or memorise written this way). This
concludes the proof that $\cO$ is an operad with multiplication.

That the differential $\gd$ on the complex \eqref{jour} can be expressed as $\gd \phi = (-1)^{\phi+1} \{ \mu , \phi\}$ additionally uses the property \eqref{radicchio5a}. That an operad with multiplication induces a Gerstenhaber structure on cohomology follows from a well-known (by now) classical result \cite{GerSch:ABQGAAD}, {\em cf.}~Appendix \S\ref{pamukkale1}.
\end{proof}

  \section{Examples}
  \label{examples}

  \subsection{Hopf algebra cohomology and adjoint (co)\-rep\-re\-sen\-ta\-tions}
It is a standard result that any Hopf algebra $H$ can be seen as a braided commutative monoid in $\hyd$ when using the left adjoint action
$H \otimes_{\, \kkk} H \to H, \ h \otimes_{\, \kkk} g \mapsto h_{(1)} g S(h_{(2)})$ 
  and the coproduct in $H$ as coaction, a situation we denote by $\ad(H)$. Hence, Theorem \ref{extobroid} implies that the cohomology groups
  $ \Ext^\bull_H(k, \ad(H)) $ form a Gerstenhaber algebra, a fact already noted in \cite[Ex.~3.5]{Kow:BVASOCAPB}. What is more, in {\em loc.~cit.}~it has been shown that the customary $\kkk$-module isomorphism
  \begin{equation*}
  %\tag{$\dagger$}
  \Ext^\bull_H(\kkk, \ad(H))
  = \Ext^\bull_{\He}(H,H)
\end{equation*}
  that follows by applying \cite[Thm.~VIII.3.1]{CarEil:HA} is not only an isomorphism on the level of chains, but remarkably enough also an isomorphism of Gerstenhaber algebras, where the right hand side as the Hochschild cohomology of $H$ seen as a $\kkk$-algebra is a Gerstenhaber algebra by the primordial construction in \cite{Ger:TCSOAAR}.

  If the antipode of $H$ is invertible, then an equally standard result says that $H$ is a braided cocommutative comonoid in $\hydr$ by means of the right adjoint coaction $H \to H \otimes_{\, \kkk} H, \ h \mapsto h_{(2)}  \otimes_{\, \kkk} h_{(3)} S^{-1}(h_{(1)})$ and by the action given by the multiplication in $H$, a situation we denote by $\coad(H)$. Hence,
  $
  \Ext^\bull_H(\coad(H), \kkk)
  $
by Theorem \ref{extobroid}  becomes a Gerstenhaber algebra as well.  It would be interesting to see whether these coadjoint action examples are related by any means to Hochschild cohomology (with nontrivial coefficients) as happens in the case of the adjoint action above.

  As a final remark, putting both the adjoint and the coadjoint action together, one might be tempted to say that the groups
  $
  \Ext^\bull_H(\coad(H), \ad(H))
  $
  form a Gerstenhaber algebra. Yet, Theorem \ref{extobroid} requires  $(\coad(H), \ad(H))$ to be a commuting pair for this to be true, and this only happens if $H$ is simultaneously commutative and cocommutative. But then the adjoint and the coadjoint actions are trivial and so is $ \Ext^\bull_H(\coad(H), \ad(H))$, which means that $ \Ext^\bull_H(\coad(H), \ad(H))$ is indeed a Gerstenhaber algebra, but a trivial one.

\subsection{Generalised Drinfel'd and Heisenberg doubles}
By means of a (not necessarily nondegenerate) Hopf pairing
$
\gvf\colon H \otimes_{\, \kkk} G \to \kkk
$
between arbitrary Hopf algebras $H$ and $G$, one can construct a generalised Drinfel'd double $\cD(H, G)$
that can be seen as a bicrossed product $H \bowtie G$
on the underlying vector space $H \otimes_{\, \kkk} G$, see \cite[\S X.2]{Kas:QG} or \cite[Thm.~3.2]{KasRosTur:QGAKI} for full details, we only give a few hints here in a slightly different convention.

The Hopf pairing allows for a right $H$-action on $G$, resp.\ for a left $G$-action on $H$, given by
\begin{equation}
  \label{waterman}
\begin{array}{rl}
  G \otimes_{\, \kkk} H \to G, & g \otimes_{\, \kkk} h \mapsto g \leftslice h \coloneq \gvf(h, g_{(2)}) g_{(1)},
  \\
G \otimes_{\, \kkk} H \to H, & g \otimes_{\, \kkk} h \mapsto g \rightslice h \coloneq \gvf(S^{-1}(h_{(1)}), g) h_{(2)},
\end{array}
\end{equation}
respectively. The generalised Drinfel'd double $\cD(H, G) = H \bowtie G$ is then a Hopf algebra when equipped with the coproduct, resp.\ counit, given by the standard coproduct, resp.\ counit, on the tensor product of $H$ and $G$,
and the product
$$
(h \otimes_{\, \kkk} g) \cdot (h' \otimes_{\, \kkk} g')
\coloneq h(g_{(1)} \rightslice h'_{(1)})  \otimes_{\, \kkk} (g_{(2)} \leftslice h'_{(2)}) g',  
$$
making the two Hopf algebras $H$ and $G$ Hopf subalgebras by means of the canonical injection. 
In contrast to the classical construction that uses $H = (G^\op)^*$ if $G$ is finite dimensional, $\cD(H, G)$ is not necessarily a braided Hopf algebra but is so if both $H, G$ are finite dimensional and the braiding $\gvf$ is nondegenerate (which is, of course, the case for the canonical pairing between $G$ and its dual $G^*$ but, for example, not so if one uses the trivial pairing induced by the two counits of two arbitrary Hopf algebras $H$ and $G$).

  %% This does not require any finiteness assumptions, and is also not empty as their always is a (trivial) pairing $(h,g) = \epsilon(h)\epsilon(g)$. In this case, the generalised Drinfel'd double is just the tensor product of Hopf algebras, which is a Hopf algebra again (fully standard). The classical Drinfel'd doule is obtained for $G= H^*$, but this is only Hopf if $H$ is finite.

In the same spirit, one can define a generalised Heisenberg double $\cH(H,G)$ as a generalisation of both the Heisenberg algebra and the Heisenberg double for $H = G^*$ in case of finite dimensions. This is essentially the smash product $H \# G$ with respect to the left action from \eqref{waterman}, that is, the tensor product
$H \otimes_{\, \kkk} G$
with product
$$
(h \otimes_{\, \kkk} g) \cdot (h' \otimes_{\, \kkk} g')
\coloneq h(g_{(1)} \rightslice h')  \otimes_{\, \kkk} g_{(2)} g'.  
$$
Along the lines of the customary case for $H = G^*$, as for example spelled out in \cite{Sem:HDHBSAABCYDMAOTDD},
one can define a left action, resp.\ a left coaction, on the Heisenberg double $H \# G$ of, resp.\ over, the Drinfel'd double $H \bowtie G$ that turn $H \# G$ into a braided commutative monoid in the category of left-left Yetter-Drinfel'd modules over $H \bowtie G$, and hence the cohomology groups
$$
\Ext^\bull_{\cD(H,G)}(\kkk, \cH(H,G)) = \Ext^\bull_{H \bowtie G}(\kkk, H \# G) 
$$
carry the structure of a  Gerstenhaber algebra. 

Infinite dimensional versions of the Heisenberg double appear, for example,
in \cite{MelSkoSto:LATNCPSAHA} in the study of noncommutative phase spaces of Lie type as being isomorphic to $U(\g) \# U(\g)^* \simeq U(\g) \# \hat S(\g^*)$, where on the right one uses a suitable completion of the symmetric algebra.
Other applications include the extension of the Hall algebra formalism to derived categories of an abelian category as in \cite{Kap:HDADC}.

  \subsection{Crossed product bialgebroids}
  If $H$ is a $\kkk$-bialgebra and $Z$ a braided commutative monoid in $\hydr$, the smash product $Z \# H$ is a left {\em bialgebroid} over $Z$ as shown in \cite[Thm.~4.1]{BrzMil:BBAD}.
  As $Z$ is the monoidal unit in the category of left $Z \# H$-modules, this hence implies that 
  $\Ext^\bull_{Z \# H}(Z,Z)$ is a Gerstenhaber algebra as any base algebra of a bialgebroid $U$ is trivially a braided commutative monoid in $\yd$.

  On the other hand, if $H$ is in addition a Hopf algebra with invertible antipode $S$, by turning the right $H$-coaction on $Z$ into a left one by using $S$ and the left $H$-action into a left $H^\op$-action by means of $S^{-1}$, the opposite algebra $Z^\op$ becomes a braided commutative monoid in ${}^{\scriptscriptstyle{H^\op}}_{\scriptscriptstyle{H^\op}}\mathbf{YD}$, and hence   
  $\Ext^\bull_{H^\op}(\kkk,Z^\op)$ is a Gerstenhaber algebra as well by Theorem \ref{extobroid}.

  Moreover, one has a $\kkk$-module isomorphism between the cochain complexes computing   $\Ext^\bull_{Z \# H}(Z,Z)$ and $\Ext^\bull_{H^\op}(\kkk,Z^\op)$,
  which we indicate here for shortness in exposition and better readability in degree two only:
  \begin{eqnarray*}
\eta\colon    \Hom_{Z^\op}\big( (Z \# H) \otimes_{V^\op} (Z \# H), Z\big) & \stackrel{\simeq}{\to} &    \Hom_{\, \kkk}( H \otimes_{\, \kkk} H, Z^\op),
    \\
    f & \mapsto & \big\{\, h \otimes_{\, \kkk} h' \mapsto f\big((1 \# h) \otimes_{V^\op} (1 \# h') \big)  \, \big\},
 %   \\
 %    \big\{\tilde f\big(S(z_{[1]}(h_{(1)}z'_{[0]})_{[1]}) h_{(2)} z'_{[1]} \otimes_{\, \kkk} S(z'_{[2]})h' \big)\cdot_\zett z_{[0]} \cdot_\zett (h_{(1)}z'_{[0]})_{[0]}  \mapsfrom (z \# h) \otimes_{V^\op} (z' \# h') \big\} & \mapsfrom & \tilde f,
    \end{eqnarray*}
  with inverse
  \begin{equation*}
\begin{split}
\tilde f  \mapsto \big\{ \, & (z \# h) \otimes_{V^\op} (z' \# h') 
\\
& \quad \mapsto 
\tilde f\pig(S\big(z_{[1]}(h_{(1)}z'_{[0]})_{[1]}\big) h_{(2)} z'_{[1]} \otimes_{\, \kkk} S(z'_{[2]})h' \pig)\cdot_\zett z_{[0]} \cdot_\zett (h_{(1)}z'_{[0]})_{[0]}  \, \big\}, 
\end{split}
\end{equation*}
  where we used the definition of the bialgebroid cochain complex in \eqref{jour} along with the target map on the left $Z$-bialgebroid $Z \# H$  given by the right $H$-coaction $z \mapsto z_{[0]} \otimes_{\, \kkk} z_{[1]}$, as detailed in \cite[Thm.~4.1]{BrzMil:BBAD}.
  
  Moreover, one can show that $\eta$ and its inverse are chain maps (easy for $\eta$, naturally more involved for its inverse) and hence one has
  \begin{equation}
    \label{lastequationIwanttowrite}
\Ext^\bull_{Z \# H}(Z,Z) \simeq \Ext^\bull_{H^\op}(\kkk,Z^\op).
  \end{equation}
  Finally, the induced isomorphism \eqref{lastequationIwanttowrite} can be, with some effort, shown to be an isomorphism
  of Gerstenhaber algebras, the detailed exposition of which being, however, not within the scope of this article. %due to its remarkable dullness. 

\appendix
\addtocontents{toc}{\protect\setcounter{tocdepth}{1}}

\section{Centres, bialgebroids, and operads}
\label{appendix}

\subsection{Centres in monoidal categories}
\label{centrappendix}

The following standard definition can be found, {\em e.g.}, in \cite[Def.~4.3]{Schau:DADOQGHA} or \cite[Def.~7.13.1]{EtiGelNikOst:TC}:

\begin{dfn}
\label{centres}
The {\em left weak centre} ${\mathscr{Z}}^\ell({\mathscr{C}})$ of a monoidal category $({\mathscr{C}}, \otimes, \mathbb{1})$ is the category whose objects are pairs $(Z, \gs_{\zett,-})$, where $Z \in {\mathscr{C}}$, such that 
$$
\gs_{\zett,\emme}\colon Z \otimes M \to M \otimes Z 
$$
is natural in $M \in {\mathscr{C}}$ and such that
$
\gs_{\zett,\emme {\scriptscriptstyle{\otimes}} \emme'} = (M \otimes \gs_{\zett,\emme'})(\gs_{\zett,\emme} \otimes M')
$
holds
for all $M, M' \in {\mathscr{C}}$
(which amounts to a hexagon axiom if involving associators), along with $\gs_{\zett, \mathbb{1}} = \id_\zett$.
\end{dfn}

The left weak centre is a monoidal category again, in particular a braided one.
At times, we only write $(Z, \gs)$ instead of $(Z, \gs_{\zett,-})$ for objects in ${\mathscr{Z}}^\ell({\mathscr{C}})$. % if no confusion can arise.

Likewise, one defines the {\em right weak centre} ${\mathscr{Z}}^r({\mathscr{C}})$ of a monoidal category ${\mathscr{C}}$ as the category with objects  $(X, \tau_{-,\ikks})$, where this time
$$
\tau_{\emme,\ikks}\colon M \otimes X \to X \otimes M 
$$
is natural in $M \in {\mathscr{C}}$, 
subject to analogous conditions as above.

\begin{dfn}
  \label{commpair}
  A {\em commuting pair} in a monoidal category ${\mathscr{C}}$ is a pair $(X, Z) \in {\mathscr{Z}}^r({\mathscr{C}}) \times {\mathscr{Z}}^\ell({\mathscr{C}})$ such that
  $$
\gs_{\zett,\ikks} = \tau_{\zett,\ikks}
  $$
  holds as maps $Z \otimes X \to X \otimes Z$.
  \end{dfn}

\begin{rem}
  Since we wanted to deal with left and right centres and hence involve two different braided monoidal categories, the above definition is a slight variation of the following more standard one:
  in a braided monoidal cat\-e\-go\-ry $({\mathscr{Z}}, \gs)$, a pair  $(X, Z)$ of its objects is called a {\em commuting pair} or a pair of {\em commuting objects} if $\sigma_{\zett,\ikks} \circ \sigma_{\ikks,\zett} = \id_{\ikks {\scriptscriptstyle \otimes} \zett}$. For example, 
the {\em braided tensor product algebra} (see \cite[Lem.~2]{Bae:HHIABTC})
of two braided commutative monoids $X, Z$ in $({\mathscr{Z}}, \sigma)$ is not necessarily braided commutative again but is so if $(X, Z)$ is a commuting pair, see {\em op.~cit.}, Lemma 3. See Remark \ref{bridge} for further discussion.
  \end{rem}

\subsection{Left bialgebroids, their (co)modules, and centres}
\label{biappendix}
In this subsection, we gather all the necessary material on bialgebroids, their modules, their comodules, the left and right weak centres in question and how explicitly commuting pairs translate to this context. 

\subsubsection{Bialgebroids}
\label{bialgebroids1}
A {\em left bialgebroid} $(U, A, \gD, \gve, s, t)$, or $(U,A)$ for short, is a generalisation of a $\kkk$-bialgebra to a bialgebra object over a noncommutative base ring $A$ (typically a $\kkk$-algebra), and consists of a $\kkk$-algebra $U$ which is simultaneously an $\Ae$-ring and an $A$-coring that are compatible in mostly the standard way, at least formally. In particular, one has a ring homomorphism resp.\ antihomomorphism $s,t \colon A \to U$ ({\em source} resp.\ {\em target}) inducing four commuting $A$-module structures on $U$
\begin{equation}
  \label{pergolesi}
  a \blact b \lact u \ract c \bract d \coloneq t(c)s(b)us(d)t(a)
\end{equation}
  for $u \in U, \, a,b,c,d \in A$, abbreviated by symbols like
$
\due U {\blact \lact} {\ract \bract}
$
or similar, depending on the relevant action(s) in a specific situation. Also introduce the $\Ae$-ring
$$
U \times_{\scriptscriptstyle A} U   \coloneq
   \big\{ {\textstyle \sum_i} u_i \otimes  v_i \in U_{\!\ract}  \otimes_A  \!  \due U \lact {} \mid {\textstyle \sum_i} a \blact u_i \otimes v_i = {\textstyle \sum_i} u_i \otimes v_i \bract a,  \ \forall a \in A  \big\}
$$
the {\em Sweedler-Takeuchi product}, which allows to define on $U$ a 
comultiplication
$$
 \gD \colon U \to U \times_A  U \subset \due U {} \ract \otimes_A  \due U \lact {}, \ u \mapsto u_{(1)} \otimes_A  u_{(2)},
$$
along with a counit $\gve \colon U \to A$ subject to certain identities that
 at some points differ from those in the bialgebra case, see \cite{Tak:GOAOAA} for the original construction.

 \subsubsection{Example}%(Enveloping algebras of associative algebras)
   The simplest example of a (noncommutative noncocommutative) left bialgebroid is given by $(\Ae, A)$, where $\Ae \coloneq A \otimes_{\hskip .7pt\kkk} \Aop$ is the enveloping algebra of an associative $\kkk$-algebra $A$ (which, in particular, allows to consider Schwede's approach in \cite{Schw:AESIOTLBIHC} as a special case of our results in the main text). Its structure maps are,
for any $a, b \in A$,
   given by
 \begin{equation}
   \label{purple}
   \begin{array}{rclcrcl}
     \gD(a \otimes_{\hskip .7pt \kkk}
     b) &=& (a \otimes_{\hskip .7pt \kkk} 1) \otimes_A (1 \otimes_{\hskip .7pt \kkk} b),
     & \gve(a \otimes_{\hskip .7pt \kkk} b) &=& ab,
     \\
     s(a) &=&  a \otimes_{\hskip .7pt \kkk} 1, &      t(a) &=&  1 \otimes_{\hskip .7pt \kkk} a,
        \end{array}
   \end{equation}
 along with the factorwise multiplication on $A \otimes_{\hskip .7pt \kkk} \Aop$ for the product.

\subsubsection{Bialgebroid modules and comodules}
 
A {\em left $U$-module} $M$ over a left bialgebroid $(U,A)$ is a left module over the underlying ring $U$. The category of left $U$-modules will be denoted by $\umod$ and we usually write all $U$-actions just by juxtaposition.
The forgetful functor $\umod
\to \amoda$ is induced by
\begin{equation}
  \label{forgetthis}
a \lact m \ract b \coloneq s(a)t(b) m, \qquad \forall\, a,b \in A, \, m \in M,
\end{equation}
with respect to which one forms the tensor product $M \otimes_A M'$ of left $U$-modules, and which, similar to the bialgebra case, is a left $U$-module again by
\begin{equation}
\label{rain}
u(m \otimes_A m') \coloneq \gD(u)(m \otimes_A m') = u_{(1)} m \otimes_A u_{(2)}m',
%\qquad u \in U, \, m \in M, \, m' \in M'. 
\end{equation}
for $u \in U, \, m \in M, \, m' \in M'$. In other words, $\umod$ is a (strict) monoidal category with $\otimes_A$ as monoidal product and $A$ as its unit object.

A {\em left} (and analogously {\em right}) {\em comodule} over a left bialgebroid $(U,A)$ is a comodule over the underlying $A$-coring \cite[\S3]{BrzWis:CAC}: a left $A$-module $M$ along with a coassociative and counital coaction
$\gl \colon M \to U_\ract \otimes_A  M, \ m \mapsto m_{(-1)} \otimes_A  m_{(0)}$,
which, by defining the right $A$-action $ ma\coloneq \gve(m_{(-1)} \bract a) m_{(0)}$ for all $a \in A$ on $M$, effectively
corestricts to a map
%\begin{equation*}
%  \label{takcom}
$
\gl: M \to U_\ract \times_A  M,
$
%  \end{equation*}
where %$U \times_A  M \subset U \otimes_A  M$ is defined as
%\begin{small}
\begin{equation*}
    \label{takcom2}
   U_\ract \times_{\scriptscriptstyle A} M   \coloneq
   \big\{ {\textstyle \sum_i} u_i \otimes  m_i \in U_\ract  \otimes_A  M
    \mid {\textstyle \sum_i} a \blact u_i \otimes m_i = {\textstyle \sum_i} u_i \otimes m_ia,  \ \forall a \in A  \big\}
\end{equation*}
%\end{small}
is a subspace in $ U_\ract \otimes_A  M$.
The left coaction is $A$-bilinear in the sense of 
$
%\begin{equation}
%  \label{taklinco}
\gl(amb) =  a \lact m_{(-1)} \bract b \otimes_A  m_{(0)} %, \qquad a,b \in A.
%\end{equation}
$
for all $m \in M$, $a, b \in A$. Again, the category $\ucomod$ of left $U$-comodules (resp.~the category $\comodu$ of right $U$-comodules) is a (strict) monoidal category with monoidal product $\otimes_A$ and unit object $A$.

\subsubsection{} %Yetter-Drinfel'd modules and centres}
\label{ydydydyd}
\hspace*{-.2cm}
A {\em left-left Yetter Drinfel'd (YD) module} $Z$ over a left bialgebroid $(U,A)$ is
simultaneously a left $U$-module (with action denoted by juxtaposition) and a left $U$-comodule with coaction
$\gl\colon Z \to U_\ract \otimes_A Z, \ z \mapsto z_{(-1)} \otimes_A z_{(0)}$ such that the two forgetful functors $\umod \to \amoda$ and $\ucomod \to \amoda$ induce the same $A$-bimodule structure on $Z$, and such that 
\begin{equation}
  \label{yd}
u_{(1)} z_{(-1)} \otimes_A u_{(2)} z_{(0)} = (u_{(1)} z)_{(-1)}  u_{(2)} \otimes_A  (u_{(1)} z)_{(0)}  
\end{equation}
holds for all $u \in U$ and $z \in Z$, the corresponding category of which is braided  monoidal (see below) with respect to $\otimes_A$, unit object given by the base algebra $A$, and denoted by $\yd$.

\subsubsection{} %Yetter-Drinfel'd modules and centres}
\label{ydydydydr}
\hspace*{-.2cm}
Likewise, 
a {\em left-right Yetter Drinfel'd (YD) module} $X$ over a left bialgebroid $(U,A)$ is
simultaneously a left $U$-module (with action denoted by juxtaposition) and a right $U$-comodule with coaction $\gr\colon X \to X \otimes_A \due U \lact {}, \ x \mapsto x_{[0]} \otimes_A x_{[1]}$ such that the two forgetful functors $\umod \to \amoda$ and $\comodu \to \amoda$ induce the same $A$-bimodule structure on $X$, and such that 
\begin{equation}
  \label{yd2}
u_{(1)} x_{[0]} \otimes_A u_{(2)} x_{[1]} = (u_{(2)} x)_{[0]}  \otimes_A  (u_{(2)} x)_{[1]}u_{(1)}  
\end{equation}
holds for all $u \in U$ and $x \in X$, the corresponding category of which is braided  monoidal (see below) with respect to $\otimes_A$, unit object given by the base algebra $A$, and denoted by $\ydr$.

\subsubsection{} %Yetter-Drinfel'd modules and centres}
\hspace*{-.2cm}
Observe that, in contrast to bialgebras, there are {\em no} right-left or right-right YD modules over left bialgebroids.

\subsubsection{Weak centres and commuting pairs for left bialgebroids} 
\label{resaid}
As mentioned at the beginning of \S\ref{cadutamassi}, a well-known fact \cite[Prop.~4.4]{Schau:DADOQGHA} establishes an equivalence of braided monoidal categories between the left weak centre 
${\mathscr{Z}}^\ell(\umod)$
of the monoidal category $\umod$ in the sense of Definition \ref{centres},
and the category $\yd$, and analogously between the right weak centre ${\mathscr{Z}}^r(\umod)$ and $\ydr$: as for the first case, assign to any $Z \in \yd$ its underlying left module $Z \in \umod$ along with the {\em left braiding}
\begin{equation}
     \label{ydbraid}
 \gs =    \gs_{\zett,\emme}\colon Z \otimes_A M \to M \otimes_A Z,
     \quad     z \otimes_A m \mapsto z_{(-1)}m \otimes_A z_{(0)}
\end{equation}
for any $M \in \umod$, 
to form an object $(Z, \sigma)$ in ${\mathscr{Z}}^\ell(\umod)$. Much the same way, assign to any $X \in \ydr$ its underlying left module $X \in \umod$ along with the {\em right braiding}
 \begin{equation}
     \label{ydbraid2}
   \tau =   \tau_{\emme,\ikks}\colon M \otimes_A X \to X \otimes_A M,
     \quad
     m \otimes_A x \mapsto x_{[0]} \otimes_A x_{[1]}m
   \end{equation}
for any $M \in \umod$,
to give an object $(X, \tau)$ in ${\mathscr{Z}}^r(\umod)$.

A pair $(X,Z)$ of objects in
 ${\mathscr{Z}}^r(\umod) \times {\mathscr{Z}}^\ell(\umod)$ resp.\ $\ydr \times \yd$ is then called a {\em commuting pair} in the sense of Definition \ref{commpair} if
\begin{equation}
  \label{centralascentralcan1}
  \tau_{\zett,\ikks} = \sigma_{\zett,\ikks}
\end{equation}
holds, or, if for all
$x \in X$ and
$z \in Z$
\begin{equation}
  \label{centralascentralcan2}
x_{[0]}  \otimes_A x_{[1]}z = 
  z_{(-1)} x \otimes_A z_{(0)} 
\end{equation}
is true,
using explicitly the braidings \eqref{ydbraid2} and \eqref{ydbraid}, respectively.

\begin{rem}
  \label{bridge}
  If the braiding \eqref{ydbraid} is invertible, which happens if more structure is present as, for example, if a bialgebra is a Hopf algebra with invertible antipode, one sets $\tau_{\emme,\zett} = \gs^{-1}_{\emme, \zett}$, and left and right weak centres are equivalent as braided monoidal categories, which we then refer to simply as {\em the} centre, also called the {\em Drinfel'd} centre and denoted here by ${\mathscr{Z}}_1(\umod)$.

  As a braided monoidal category this has a centre again (the centre of a centre, so to speak), sometimes referred to as {\em M\"uger} or {\em Rehren} centre, defined by those objects $Z$ in ${\mathscr{Z}}_1(\umod)$ for which
$\gs_{\zett, \emme} = \gs^{-1}_{\emme, \zett}$ for all $M \in \umod$, and denoted  ${\mathscr{Z}}_2(\umod)$, see \cite[Def.~5.12]{Mue:FSTCATIITQDOTCAS} or \cite{Reh:BGSATSR} for more information. The category  ${\mathscr{Z}}_2(\umod)$ is obviously symmetric monoidal (and can be defined for any monoidal category ${\mathscr{C}}$).
  By mere definition, each pair of objects in ${\mathscr{Z}}_2(\umod)$ yields a  commuting pair in the sense of \eqref{centralascentralcan1} but at least for ${\mathscr{C}} = \umod$ not very interesting ones: these are essentially $A$-modules (or direct sums thereof) as is the case if $U$ were a Hopf algebra over $A = k$.

 However, the notion of commuting pairs is more general: for example, both $(M, A)$ and $(A,N)$ for arbitrary $M, N \in {\mathscr{Z}}_1(\umod)$ obviously are commuting pairs, but not necessarily in ${\mathscr{Z}}_2(\umod)$. Slightly less trivial, for a {\em cocommutative} left bialgebroid, each $M \in \umod$ can be made into a (left-left) YD module by means of the trivial left coaction and hence each pair $(M,N)$ of left $U$-modules
  %(or, in the situation of the preceding lemma, of braided commutative monoids in $\umod$)
  yields a pair of commuting objects.
\end{rem}

\subsubsection{Braided commutative monoids in $\yd$}
\label{arancia}
A braided commutative monoid in $\yd$ is, roughly speaking, an object $Z$ that has four properties: it is a (left-left) YD module, it is a left $U$-module algebra, a left $U$-comodule algebra, and finally it is braided commutative.

More in detail, writing the action and coaction of an object $Z \in \yd$ as in \S\ref{ydydydyd}, assume that
$(Z,   \mu , 1_\zett)$
with multiplication $z \cdot_\zett z' \coloneq   \mu (z,z')$ and unit $1_\zett$ is an $A$-ring. Being a left $U$-module algebra then explicitly means
\begin{equation}
  \label{radicchio1}
u(z \cdot_\zett z') = (u_{(1)} z) \cdot_\zett (u_{(2)}z'), 
\end{equation}
along with
\begin{equation}
  \label{radicchio1a}
u1_\zett = \gve(u) \lact 1_\zett.
\end{equation}
On the other hand,
$(Z,   \mu , 1_\zett)$
also being a left $U$-comodule algebra implies
\begin{equation*}
  \label{radicchio2}
\gl_\zett(z \cdot_\zett z') = \gl_\zett(z) \gl_\zett(z') =   z_{(-1)} z'_{(-1)} \otimes_A z_{(0)} \cdot_\zett z'_{(0)},
  \end{equation*}
along with
$ %\begin{equation*}
  \label{radicchio2a}
\gl_\zett(1_\zett) = 1_\uhhu \otimes_A 1_\zett.
$ %\end{equation*}
Moreover, %$(Z,   \mu , 1_\zett)$ is supposed to be braided commutative, that is,
\begin{equation}
  \label{radicchio2b}
  \mu  \circ \gs =   \mu, 
\end{equation}
which says that $(Z,   \mu , 1_\zett)$ is supposed to be braided commutative; explicitly,
\begin{equation}
  \label{radicchio3}
z \cdot_\zett z' = (z_{(-1)} z') \cdot_\zett z_{(0)}.
\end{equation}
Observe that it is precisely (and tautologically) the YD condition \eqref{yd} on $Z$ that guarantees \eqref{radicchio3} to be well-defined, even when thinking of $U$-modules:
\begin{equation}
  \label{radicchio3a}
\begin{split}
  u(z \cdot_\zett z')
  &
  \stackrel{\phantom{\scriptscriptstyle \eqref{radicchio3}}}{=}
  u_{(1)} z \cdot_\zett u_{(2)}z'
 % \\
%&
  \stackrel{\scriptscriptstyle \eqref{radicchio3}}{=}
  (u_{(1)} z)_{(-1)} u_{(2)} z' \cdot_\zett (u_{(1)} z)_{(0)}
  \\
  &
  \stackrel{\scriptscriptstyle \eqref{yd}}{=}
  u_{(1)} z_{(-1)} z' \cdot_\zett u_{(2)} z_{(0)}
 % \\
 % &
  \stackrel{\phantom{\scriptscriptstyle \eqref{radicchio3}}}{=}
  u \big((z_{(-1)} z') \cdot_\zett z_{(0)}\big).
  \end{split}
    \end{equation}
%for any $u \in U$.

Note that there is no relation whatsoever to whether the multiplication $  \mu $ in $Z$ itself is commutative or not: for example, the base algebra $A$ in a left bialgebroid itself is a braided commutative monoid in $\yd$ but not necessarily commutative as a $\kkk$-algebra.

\subsubsection{Braided cocommutative comonoids in $\ydr$}
\label{mandarino}
In much the same way as in \S\ref{arancia}, a braided cocommutative comonoid in $\ydr$ (equivalently, a braided commutative monoid in $(\ydr)^\op$) is again an object equipped with four properties: it is a (left-right) YD module, it is a left $U$-module coalgebra as well as a right $U$-comodule coalgebra, and finally it is braided cocommutative.

More precisely, writing the action and coaction on an object $X \in \ydr$ as in \S\ref{ydydydydr},
assume that 
$(X, \gD_\ikks, \gve_\ikks)$
is an $A$-coring
with coproduct $\gD_\ikks \colon X \to X \otimes_A X, \  x \mapsto x_{(1)} \otimes_A x_{(2)}$ and counit $\gve_\ikks\colon X \to A$.
Being a left $U$-module coalgebra then explicitly means
\begin{equation}
  \label{radicchio4}
\gD_\ikks(ux) = \gD(u) \gD_\ikks(x) = u_{(1)} x_{(1)} \otimes_A u_{(2)} x_{(2)}, 
\end{equation}
along with
\begin{equation}
  \label{radicchio4a}
\gve_\ikks(ux) = \gve(u \bract \gve_\ikks(x)).
\end{equation}
On the other hand, $(X, \gD_\ikks, \gve_\ikks)$ is also %supposed to be 
a right $U$-comodule coalgebra, which
means
$
(\gD_\ikks \otimes_A U) \circ \gvr_\ikks = \gvr_{\scriptscriptstyle{X \otimes_A X}} \circ \gD_\ikks, 
$
and which amounts to 
\begin{equation*}
  \label{radicchio5}
x_{[0](1)} \otimes_A x_{[0](2)} \otimes_A x_{[1]} = x_{(1)[0]} \otimes_A x_{(2)[0]} \otimes_A x_{(2)[1]} x_{(1)[1]},
\end{equation*}
for any $x \in X$, along with
\begin{equation}
  \label{radicchio5a}
t\big( \gve_\ikks(x)\big) = \gve_\ikks(x_{[0]}) \lact x_{[1]}. 
\end{equation}
Finally, braided cocommutativity manifests as
\begin{equation}
  \label{radicchio6a}
\tau \circ \gD_\ikks = \gD_\ikks,
\end{equation}
or, explicitly,
\begin{equation}
  \label{radicchio6}
x_{(2)[0]} \otimes_A x_{(2)[1]} x_{(1)} = x_{(1)} \otimes_A x_{(2)}.
  \end{equation}

\subsection{Operads and Gerstenhaber algebras}
\label{pamukkale1}
A ({\em non-$\gS$}) {\em operad} $\cO$ in the category 
of $\kkk$-modules is a family $\{\cO(n)\}_{n \geq 0}$ of $\kkk$-modules 
with $\kkk$-bilinear operations
%\pagebreak

\noindent $\circ_i\colon \cO(p) \otimes \cO(q) \to \cO({p+q-1})$, $i = 1, \ldots, p$,
subject to 
\begin{eqnarray}
\label{danton}
\nonumber
\gvf \circ_i \psi &=& 0 \qquad \qquad \qquad \qquad \qquad \! \mbox{if} \ p < i \quad \mbox{or} \quad p = 0, \\
(\varphi \circ_i \psi) \circ_j \chi &=& 
\begin{cases}
(\varphi \circ_j \chi) \circ_{i+r-1} \psi \qquad \mbox{if} \  \, j < i, \\
\varphi \circ_i (\psi \circ_{j-i +1} \chi) \qquad \hspace*{1pt} \mbox{if} \ \, i \leq j < q + i, \\
(\varphi \circ_{j-q+1} \chi) \circ_{i} \psi \qquad \mbox{if} \ \, j \geq q + i.
\end{cases}
\end{eqnarray}

In the main text, especially in \S\ref{int}, these operations are referred to as {\em internal} operadic composition, in contrast to the {\em external} one dealt with in \S\ref{external}.
Call an operad {\em unital} if there is an {\em identity} $\mathbb{1} \in \cO(1)$ such that 
$
\gvf \circ_i \mathbb{1} = \mathbb{1} \circ_1 \gvf = \gvf
$ 
for all $\gvf \in \cO(p)$ and $i \leq p$, and call it {\em with multiplication} if there exists a {\em multiplication}  $ \mu  \in \cO(2)$ and a {\em unit} $e \in \cO(0)$ such that $ \mu  \circ_1  \mu  =  \mu  \circ_2  \mu $ as well as
$ \mu  \circ_1 e =  \mu  \circ_2 e = \mathbb{1}$. Such an object will be denoted by the triple $(\cO,  \mu , e)$. It is then a standard result (see \cite{GerSch:ABQGAAD}) that  $(\cO,  \mu , e)$ defines a cocyclic $\kkk$-module the cohomology $H^\bull(\cO)$ of which yields a Gerstenhaber algebra.

%\section*{Conflict  of interest}
%On behalf of all authors, the corresponding author states that there is no %conflict of interest. 

\addtocontents{toc}{\SkipTocEntry}
\section*{Acknowledgements}

The authors wish to thank the referee for careful reading and precious
suggestions. This paper is part of the activities of the MIUR Excellence Department
Projects CUP:B83C23001390001 and MatMod@TOV CUP:E83C23000330006.
D.F.\ has been partially supported by the PRIN 2017 {\em Moduli and Lie
Theory}, Ref.\ 2017YRA3LK.
N.K.\ has been partially supported by the PRIN 2017 {\em Real and Complex
Manifolds: Topology, Geometry, and Holo\-morphic Dynamics}, Ref.\
2017JZ2SW5.
Both authors are members of the {\em Grup\-po Nazionale per le
Strutture Algebriche, Geometriche e le loro Applicazioni} (GNSAGA-INdAM).

\providecommand{\bysame}{\leavevmode\hbox to3em{\hrulefill}\thinspace}
\providecommand{\MR}{\relax\ifhmode\unskip\space\fi M`R }
% \MRhref is called by the amsart/book/proc definition of \MR.
\providecommand{\MRhref}[2]{%
  \href{http://www.ams.org/mathscinet-getitem?mr=#1}{#2}}
\providecommand{\href}[2]{#2}


\begin{thebibliography}{10}

\bibitem[Bae]{Bae:HHIABTC}
  J.~Baez,
  \emph{Hochschild homology in a braided tensor category}, 
  Trans.
  Amer. Math. Soc. {\bf 344} (1994), no.~2, 885--906.


\bibitem[BrzMi]{BrzMil:BBAD}
T. Brzezi{\'n}ski and G. Militaru, \emph{Bialgebroids, {$\times\sb
  A$}-bialgebras and duality}, J. Algebra \textbf{251} (2002), no.~1, 279--294.

  
  \bibitem[BrWi]{BrzWis:CAC}
T.~Brzezi{\'n}ski and R.~Wisbauer, \emph{Corings and comodules}, London
  Mathematical Society Lecture Note Series, vol. 309, Cambridge University
  Press, Cambridge, 2003.

  \bibitem[CaEi]{CarEil:HA}
H. Cartan and S. Eilenberg, \emph{Homological algebra}, Princeton
  University Press, Princeton, N. J., 1956.
  
\bibitem[EtGeNiOs]{EtiGelNikOst:TC}
P. {Etingof}, S. {Gelaki}, D. {Nikshych}, and V. {Ostrik},
\emph{{Tensor categories}}, vol. 205,
Amer. Math. Soc.,
Providence, RI, 2015.

\bibitem[FaSo]{FarSol:GSOTCOHA}
M. Farinati and A. Solotar, \emph{G-structure on the cohomology of
  {H}opf algebras}, Proc. Amer. Math. Soc. \textbf{132} (2004), no.~10,
  2859--2865 (electronic).

  \bibitem[Ge]{Ger:TCSOAAR}
  M. Gerstenhaber, \emph{The cohomology structure of an associative ring},
    Ann. of Math. (2) \textbf{78} (1963), no.~2, 267--288.

    \bibitem[GeSch]{GerSch:ABQGAAD}
 M. Gerstenhaber and S. Schack, \emph{Algebras, bialgebras, quantum groups, and algebraic
   deformations}, Deformation theory and quantum groups with applications to
   mathematical physics (Amherst, MA, 1990), Contemp. Math., vol.\ 134, Amer.
   Math. Soc., Providence, RI, 1992, pp.~51--92.


   \bibitem[He]{Her:MCATGBIHC}
R.~Hermann, \emph{Monoidal categories and the Gerstenhaber bracket in Hochschild cohomology}, Mem. Amer. Math. Soc. \textbf{243} (2016), no.~1151, v+146pp.

\bibitem[Kap]{Kap:HDADC}
M.~Kapranov, 
\emph{Heisenberg doubles and derived categories},
J.\ Algebra \textbf{202} (1998), no.~2, 712--744.

\bibitem[Ka]{Kas:QG}
C.~Kassel, 
\emph{Quantum groups}, Graduate Texts in Mathematics, vol.\ 155,
Springer-Verlag, New York, 1995.

\bibitem[KaRoTu]{KasRosTur:QGAKI}
C.~Kassel, M.~Rosso, and
V.~Turaev,
\emph{Quantum groups and know invariants},
Panoramas et Synth\`eses \textbf{5} (1997),
Soc.\ Math.\ France, vi+115pp.

\bibitem[Ko]{Kow:BVASOCAPB}
 N. Kowalzig, \emph{Batalin-Vilkovisky algebra structures on 
 ${\rm (Co)}\Tor$ and Poisson bialgebroids}, 
J.\ Pure Appl.\ Algebra \textbf{219} (2015), no.~9, 3781--3822.

\bibitem[LoVdB]{LowVdB:TBISOTDEAOTUIAMC}
  W.~Lowen and M.~Van den Bergh,
  \emph{The $B_\infty$-structure on the derived endomorphism algebra of the unit in a monoidal category},
Int.\ Math.\ Res.\ Not.\ \textbf{2023} (2023), no.~2, 1690--1729.

\bibitem[MacL]{McL:H}
S.~Mac~Lane, \emph{Homology}, Classics in Mathematics, Springer-Verlag,
  Berlin, 1995, Reprint of the 1975 edition.

  \bibitem[Me\v{S}kSt]{MelSkoSto:LATNCPSAHA}
S.~Meljanac, Z.~\v{S}koda, and M.~Stoji\'c, \emph{Lie algebra type noncommutative phase spaces are Hopf algebroids}, Lett.\ Math.\ Phys.\ \textbf{107} (2017), 475--503.


\bibitem[Me]{Men:CMCMIALAM}
L.~Menichi, \emph{Connes-{M}oscovici characteristic map is a {L}ie algebra
  morphism}, J. Algebra \textbf{331} (2011), no.~1, 311--337.



  \bibitem[M\"u]{Mue:FSTCATIITQDOTCAS}
M.~M\"uger, \emph{From subfactors to categories and topology. II: The quantum double of tensor categories and subfactors}, 
J. Pure Appl. Algebra \textbf{180} (2003), no.~1--2, 159--219. 
  
  \bibitem[NeRe]{NeeRet:ECATH}
    A. Neeman and V. Retakh,
    \emph{Extension categories and their homotopy}, Compositio Math.
{\bf 102} (1996), no.~2, 203--242.

\bibitem[Reh]{Reh:BGSATSR}
K.-H.~Rehren, \emph{Braid group statistics and their superselection rules}, The Algebraic
Theory of Superselection Sectors, Introduction and Recent Results (Palermo, 1989), World Sci. Publ., River Edge, NJ, 1990, pp.~333--355.

\bibitem[Re]{Ret:HPOCOE}
  V. Retakh, \emph{Homotopy properties of categories of extensions}, Uspekhi Mat. Nauk {\bf 41}
(1986), no.~6 (252), 179--180.
  
\bibitem[Sch]{Schau:DADOQGHA}
P. Schauenburg, \emph{Duals and doubles of quantum groupoids ({$\times\sb
    R$}-{H}opf algebras)}, New trends in Hopf algebra theory (La Falda, 1999),
    Contemp. Math., vol. 267, Amer. Math. Soc., Providence, RI, 2000,
    pp.~273--299.

  \bibitem[Schw]{Schw:AESIOTLBIHC}
    S. Schwede, 
\emph{An exact sequence interpretation of the Lie bracket in
  Hochschild cohomology}, J. Reine Angew. Math. {\bf 498} 
(1998), 153--172.

\bibitem[SchW1]{SchweiWoi:TDGVFATDC}
Ch.~Schweigert and L.~Woike, 
\emph{The differential graded Verlinde formula and the Deligne conjecture},
Proc.\ Lond.\ Math.\ Soc.\ (3) {\bf 126} (2023), no.6, 1811--1841.

\bibitem[SchW2]{SchweiWoi:HIOBCAATDCFFTC}
%Ch.~Schweigert and L.~Woike, 
\bysame,
  \emph{Homotopy invariants of braided commutative algebras and the {D}eligne conjecture for finite tensor categories},
Adv.\ Math.\ {\bf 422} (2023), paper no.~109006, 42 pp.

  \bibitem[Sh1]{Sho:HATANFMC}
B. Shoikhet, \emph{Hopf algebras, tetramodules, and n-fold monoidal categories}, preprint (2009), {\tt arXiv:0907.3335}.


\bibitem[Se]{Sem:HDHBSAABCYDMAOTDD}
A. Semikhatov, \emph{Heisenberg double $\cH(B^*)$ as a braided commutative
{Y}etter-{D}rinfeld module algebra over the {D}rinfeld double}, Comm. Algebra
  \textbf{39} (2011), no.~5, 1883--1906.

\bibitem[Sh2]{Sho:DGCADC}
B. Shoikhet, \emph{Differential graded categories and Deligne conjecture},  Adv. Math. \textbf{289} (2016), 797--843.


\bibitem[Ta]{Tak:GOAOAA}
  M. Takeuchi, \emph{Groups of algebras over {$A\otimes \overline A$}}, J.
    Math. Soc. Japan \textbf{29} (1977), no.~3, 459--492.
  
\bibitem[VoWo]{Volkov-Whiterspoon}  
Y. Volkov and S. Witherspoon, \emph{Graded Lie structure on cohomology of some exact monoidal categories}, preprint (2020), {\tt arXiv:2004.06225}.
\end{thebibliography}
\end{document}